\newcommand\ad{\mathbb{A}}
\newcommand\oink{\mathcal O}
\newcommand\Da{\mathfrak{D}}
\newcommand\Ea{\mathfrak{E}}
\newcommand\bmattrix[4]{\left(\begin{array}{cc}#1&#2\\#3&#4\end{array}\right)}
\newcommand\sbmattrix[4]{\textnormal{\scriptsize$\left(\begin{array}{cc}#1&#2\\#3&#4\end{array}\right)$\normalsize}}
\newcommand\matrici{\mathbb{M}}
\newcommand{\rf}{\mathfrak{r}}
\theoremstyle{definition} 
\newtheorem{theorem}{Theorem}[section]
\newtheorem{lemma}[theorem]{Lemma}
\newtheorem{prop}[theorem]{Proposition}
\newtheorem{corollary}[theorem]{Corollary}
\newtheorem{rem}[theorem]{Remark}
\newtheorem{ex}[theorem]{Example}
\newtheorem{defi}[theorem]{Definition}
\numberwithin{equation}{section}
\title[Quotients of trees by Hecke congruence subgroups]
{Quotients of the Bruhat-Tits tree by function field analogs of the Hecke congruence subgroups}
\author{Claudio Bravo}
\begin{document}
\maketitle

\footnotesize
\textbf{MSC Numbers (2020): 20G30-20E08 (primary), 11R58-14H60 (secondary)}

\textbf{Keywords: Hecke subgroups, Quotient graphs, Global function fields, Eichler orders}
\normalsize

\begin{abstract}
Let $\mathcal{C}$ be a smooth, projective and geometrically integral curve defined over a finite field $\mathbb{F}$. For each closed point $P_{\infty}$ of $\mathcal{C}$, let $R$ be the ring of functions that are regular outside $P_{\infty}$, and let $K$ be the completion at $P_{\infty}$ of the function field of $\mathcal{C}$. In order to study groups of the form $\mathrm{GL}_2(R)$, Serre describes the quotient graph $\mathrm{GL}_2(R) \backslash \mathfrak{t}$, where $\mathfrak{t}$ is the Bruhat-Tits tree defined from $\mathrm{SL}_2(K)$. In particular, Serre shows that $\mathrm{GL}_2(R) \backslash \mathfrak{t}$ is the union of a finite graph and a finite number of ray shaped subgraphs, which are called cusps. It is not hard to see that finite index subgroups inherit this property.

In this work we describe the associated quotient graph $\mathrm{H} \backslash \mathfrak{t}$ for the action on $\mathfrak{t}$ of the group $\mathrm{H}= \left\lbrace \sbmattrix{a}{b}{c}{d} \in \text{GL}_2(R) : c \equiv 0 \, (\text{mod } I)\right\rbrace
\normalsize$, where $I$ is an ideal of $R$. More specifically, we give a explicit formula for the cusp number of $\mathrm{H} \backslash \mathfrak{t}$. Then, by using Bass-Serre Theory, we describe the combinatorial structure of $\mathrm{H}$. These groups play, in the function field context, the same role as the Hecke congruence subgroups of $\mathrm{SL}_2(\mathbb{Z})$.
\end{abstract}


\section{Introduction}\label{Section Intro}

Geometric group theory is the area of mathematics devoted to study group-theoretic problems via topological and geometric methods. A classical example of this approach is the characterization of discrete subgroups of real Lie groups by their action on symmetric spaces. In the latter seventies, Bass and Serre initiated a program for study certain groups by their actions on trees, i.e. on connected graphs without cycles. This theory is currently known as Bass-Serre theory (c.f \cite{S}). Bass-Serre theory can be applied to study discrete subgroups of $\mathrm{GL}_2(K)$, where $K$ is a discrete valuation field. For instance, Serre proves in \cite[Chapter I, \S 3.3]{S} that a discrete subgroup of $\mathrm{SL}_2(K)$ is torsion-free if and only if it acts freely on a tree. This result gives another proof of a theorem due to Ihara, which states that any discrete torsion-free subgroup of $\mathrm{SL}_2(\mathbb{Q}_p)$ is free (cf.~\cite[Theorem 1]{Ihara}). 

In \cite[Chapter II, \S 1.1]{S} Serre introduces the Bruhat-Tits tree $\mathfrak{t}=\mathfrak{t}(K)$ associated to the group $\mathrm{SL}_2$ at a discrete valuated field $K$. Moreover, the same author shows that $\mathrm{GL}_2(K)$ acts on $\mathfrak{t}$. Let
$R$ be the coordinate ring of an affine open set of a smooth, projective, geometrically integral curve $\mathcal{C}$, defined over a field $\mathbb{F}$, with a unique point $P_{\infty}$ at infinity. In \cite[Chapter II, \S 2]{S} Serre uses $\mathfrak{t}$ in order to study groups of the form $\mathrm{GL}_2(R)$. Indeed, the closed point $P_{\infty}$ gives rise to a discrete valuation $\nu$ on the function field $k$ of $\mathcal{C}$ and hence we have an action of $\mathrm{GL}_2(R)$, as a subgroup of  $\mathrm{GL}_2(K)$, on the Bruhat-Tits tree $\mathfrak{t}(K)$ defined from the completion $K$ of $k$ at $P_{\infty}$. In this situation, Serre
gave the following description of the quotient graph $\mathrm{GL}_2(R)  \backslash \mathfrak{t}$. 

\begin{theorem} \cite[Chapter II, Theorem 9]{S}\label{teo serre quot}
Let $\mathfrak{t}$ be the local Bruhat-Tits tree defined by the group $\text{SL}_2$ at the completion $K$ associated to the valuation induced by $P_{\infty}$ (cf.~\S \ref{subsection BTT}). Then, the graph $\mathrm{GL}_2(R) \backslash \mathfrak{t}$ is combinatorially finite, i.e. it is obtained by attaching a finite number of infinite half lines, called cuspidal rays, to a certain finite graph $Y$. The set of such cuspidal rays is indexed by the elements of the Picard group $\mathrm{Pic}(R)=\mathrm{Pic}(\mathcal{C})/\langle \overline{P_{\infty}}\rangle$.
\end{theorem}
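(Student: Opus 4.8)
The plan is to translate the combinatorial problem into the language of rank-$2$ vector bundles on $\mathcal{C}$ and then run reduction theory. Recall that the vertices of $\mathfrak{t}$ are the homothety classes $[L]$ of $\mathcal{O}_K$-lattices $L\subset K^2$, that $\mathrm{GL}_2(K)$ acts transitively on them with vertex stabilizer $K^*\mathrm{GL}_2(\mathcal{O}_K)$, and hence that the vertex set of the quotient is the double coset space $\mathrm{GL}_2(R)\backslash \mathrm{GL}_2(K)/K^*\mathrm{GL}_2(\mathcal{O}_K)$. First I would set up the dictionary with bundles: given a lattice $L$ at $P_\infty$, glue the trivial bundle $R^2$ over $\mathrm{Spec}\,R=\mathcal{C}\smallsetminus\{P_\infty\}$ to $L$ across the punctured formal neighbourhood of $P_\infty$, obtaining a rank-$2$ bundle $E_L$ on $\mathcal{C}$. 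Changing $L$ within its $\mathrm{GL}_2(R)$-orbit yields an isomorphic bundle, while rescaling $L\mapsto \pi L$ by a uniformizer is an elementary modification at $P_\infty$, i.e. a twist by $[P_\infty]$ (note $\mathcal{O}_{\mathcal{C}}(-P_\infty)$ is trivial on $\mathrm{Spec}\,R$). This identifies $\mathrm{GL}_2(R)\backslash\mathfrak{t}^0$ with isomorphism classes of rank-$2$ bundles on $\mathcal{C}$, the residual ambiguity being exactly a twist by $\langle[P_\infty]\rangle$.

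Second, I would invoke reduction theory through the Harder--Narasimhan filtration. Each rank-$2$ bundle $E$ is either semistable or admits a unique destabilizing line sub-bundle $\mathcal{L}\subset E$. I expect to establish a dichotomy on vertices: those whose associated bundle has slope gap bounded by a constant depending only on the genus and $\deg P_\infty$ form a bounded set, while the remaining \emph{unstable} vertices are governed by the pair $(\mathcal{L},E/\mathcal{L})$. The finite graph $Y$ is then the image of the bounded vertices; its finiteness follows from the boundedness of semistable bundles of fixed degree, together with the finiteness of the class number $\#\mathrm{Pic}(R)$ for a function field over a finite field.

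Third, I would analyze the unstable region and show it decomposes into a disjoint union of non-branching rays. Once the slope gap $\deg\mathcal{L}-\deg(E/\mathcal{L})$ exceeds an explicit threshold, the line $\mathcal{L}$ is canonical, and among the $q_\infty+1$ neighbours of $[L]$ in $\mathfrak{t}$ (with $q_\infty$ the size of the residue field at $P_\infty$) exactly one strictly increases the slope gap, the others being $\mathrm{GL}_2(R)$-equivalent to vertices no deeper. This yields a single outgoing edge at each deep vertex, hence a half-line. Along such a ray the slope gap grows without bound while the isomorphism class of $\mathcal{L}$---well defined only up to the $[P_\infty]$-twist by which $E_L$ itself is ambiguous---remains constant. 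Assigning to each ray this class in $\mathrm{Pic}(\mathcal{C})/\langle\overline{P_\infty}\rangle=\mathrm{Pic}(R)$, and checking that every class is realized by a distinct ray, gives the claimed bijection between cuspidal rays and $\mathrm{Pic}(R)$.

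The main obstacle, I expect, is this third step: making the reduction theory precise enough to guarantee that the unstable locus is genuinely a union of rays that are eventually straight (no branching, no backtracking) and mutually disjoint, rather than some thicker tree-like tail. This requires a careful local computation at each deep vertex of how the $q_\infty+1$ neighbours sit relative to the Harder--Narasimhan line, together with bookkeeping for the determinant and the center (the passage between $\mathrm{GL}_2$ and $\mathrm{SL}_2$, and the action of the scalars $R^*$) to ensure that the twist by $\langle[P_\infty]\rangle$ is the only identification surviving at infinity. Once straightness and this single surviving invariant are established, matching the ends with $\mathrm{Pic}(R)$ and concluding combinatorial finiteness is routine.
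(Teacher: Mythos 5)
Your proposal is correct and follows essentially the same route as the proof the paper relies on, namely Serre's original argument (\emph{Trees}, Ch.~II, \S 2): identify vertices of $\mathrm{GL}_2(R)\backslash\mathfrak{t}$ with rank-$2$ bundles on $\mathcal{C}$ trivialized over $\mathrm{Spec}\,R$ modulo the twist by $\langle\overline{P_\infty}\rangle$, show the semistable (bounded slope-gap) locus is finite, and show the unstable locus breaks into non-branching rays indexed by the class of the destabilizing line bundle in $\mathrm{Pic}(R)$. The step you flag as the main obstacle (straightness and disjointness of the deep rays) is exactly where Serre's local computation at a vertex with canonical destabilizing sub-bundle is carried out, so your outline matches the cited proof rather than, say, Mason's more elementary Riemann--Roch approach.
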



Using Bass-Serre Theory, Serre concludes the following structural result for the groups $\mathrm{GL}_2(R)$ defined above.

\begin{theorem}\cite[Chapter II, Theorem 10]{S}\label{teo serre grup}
There exists a finitely generated group $H$, and a family $ \lbrace (I_{\sigma}, \mathcal{P}_{\sigma}, \mathcal{B}_{\sigma}) \rbrace_{\sigma \in \mathrm{Pic}(R)}$, where:
\begin{itemize}
\item[1.] $I_\sigma$ is an $R$-fractional ideal and $\mathcal{P}_\sigma= (\mathbb{F}^{*}\times \mathbb{F}^{*})\ltimes I_\sigma$,
\item[2.] $\mathcal{B}_{\sigma}$ is a group with canonical injections $\mathcal{B}_{\sigma} \rightarrow H$ and $\mathcal{B}_{\sigma} \rightarrow \mathcal{P}_{\sigma}$,
\end{itemize}
such that $\mathrm{GL}_2(R)$ is isomorphic to the sum of $\mathcal{P}_\sigma$, for $\sigma \in \mathrm{Pic}(R)$, and $H$, amalgamated along their common subgroups $\mathcal{B}_\sigma$ according to the above injections.
\end{theorem}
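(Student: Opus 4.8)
The plan is to realize $\mathrm{GL}_2(R)$ as the fundamental group of a graph of groups supported on the quotient graph $\mathrm{GL}_2(R)\backslash\mathfrak{t}$, and then to read off the amalgam decomposition from the combinatorial shape of this quotient provided by Theorem \ref{teo serre quot}. First I would invoke the fundamental theorem of Bass-Serre theory: since $\mathrm{GL}_2(R)$ acts on the tree $\mathfrak{t}$ (without inversions, after passing to the barycentric subdivision if necessary), there is a canonical isomorphism between $\mathrm{GL}_2(R)$ and the fundamental group of the graph of groups that assigns to each vertex and edge of $\mathrm{GL}_2(R)\backslash\mathfrak{t}$ the stabilizer in $\mathrm{GL}_2(R)$ of a chosen lift, equipped with the standard edge-to-vertex injections. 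By Theorem \ref{teo serre quot} the underlying graph is a finite graph $Y$ to which one cuspidal ray is attached for each $\sigma\in\mathrm{Pic}(R)$.

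Next I would split this graph of groups into its finite core and its rays. I define $H$ to be the fundamental group of the restriction of the graph of groups to the finite graph $Y$; since $Y$ has finitely many vertices and edges and each vertex stabilizer in $\mathrm{GL}_2(R)$ is finitely generated, $H$ is finitely generated. For a fixed cusp $\sigma$, let $y_\sigma\in Y$ be the vertex where the corresponding ray is attached and let $\mathcal{B}_\sigma$ be the stabilizer of the first edge $e_\sigma$ of that ray; by construction $\mathcal{B}_\sigma$ injects into $H$ via the inclusion of $e_\sigma$ into $Y$.

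The heart of the argument is the analysis of each ray. I would show that, as one moves outward along the cuspidal ray $v_0^\sigma,v_1^\sigma,v_2^\sigma,\dots$, the stabilizer of each edge coincides with the stabilizer of its inner endpoint, so that the vertex stabilizers form an increasing chain $\mathrm{GL}_2(R)_{v_0^\sigma}\subseteq \mathrm{GL}_2(R)_{v_1^\sigma}\subseteq\cdots$ whose successive amalgamations telescope. Consequently the fundamental group of the graph of groups restricted to the ray is simply the direct limit $\bigcup_n \mathrm{GL}_2(R)_{v_n^\sigma}$. Using the dictionary between ends of $\mathfrak{t}$ and rank-one projective $R$-modules that underlies Theorem \ref{teo serre quot}, I would identify this union with the group of upper-triangular matrices $\bmattrix{a}{b}{0}{d}$ with $a,d\in R^*=\mathbb{F}^*$ and $b$ ranging over a fractional ideal $I_\sigma$ in the class indexed by $\sigma$; this is exactly $\mathcal{P}_\sigma=(\mathbb{F}^*\times\mathbb{F}^*)\ltimes I_\sigma$, and $\mathcal{B}_\sigma$ injects into it as the stabilizer of $e_\sigma$. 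Assembling the core $H$ and the rays $\mathcal{P}_\sigma$ along the common edge groups $\mathcal{B}_\sigma$ then yields the claimed amalgamated sum.

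I expect the main obstacle to be the concrete identification of the stabilizers along the cusps with the groups $\mathcal{P}_\sigma$: one must translate the purely combinatorial telescoping of edge stabilizers into arithmetic, pinning down the diagonal entries as global units $R^*=\mathbb{F}^*$ (which uses that $\mathcal{C}$ is geometrically integral with a single point at infinity, so every function with divisor supported at $P_\infty$ is constant) and recognizing the unipotent part as a genuine fractional ideal representing the class $\sigma\in\mathrm{Pic}(R)$. Verifying that the telescoping really does produce a direct limit, that is, that each cusp edge group equals the adjacent inner vertex group, is the precise point at which the structure of the $\mathrm{GL}_2(R)$-action near the ends of $\mathfrak{t}$ must be used.
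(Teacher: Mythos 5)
Your proposal is correct and follows essentially the same route as the source: this statement is quoted from Serre \cite[Chapter II, Theorem 10]{S} rather than reproved here, but both Serre's argument and the paper's own generalization to $\mathrm{H}_D$ (Theorem \ref{teo grup}, proved in \S \ref{Section amalgams}) proceed exactly as you describe — Bass--Serre realization of the group as $\pi_1$ of the graph of groups on the quotient, splitting into the finite core $Y$ (giving the finitely generated $H$) and the cuspidal rays, telescoping the increasing chain of vertex stabilizers along each ray into a direct limit, and identifying that limit arithmetically with $(\mathbb{F}^{*}\times\mathbb{F}^{*})\ltimes I_\sigma$ via the stabilizer computations (the paper's Lemma \ref{lema mason} plays the role you assign to the "concrete identification" step). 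The one point you flag as the main obstacle — that edge stabilizers along a cusp equal the inner vertex stabilizers so the amalgamations collapse — is precisely conditions (d) and (e) of the "closing enough umbrellas" property the paper isolates in Definition \ref{def good quotient}.
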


Moreover, Serre describes the structure of $\mathrm{GL}_2(R)$ as an amalgamated sum in certain cases, by explicitly describing the corresponding quotient graphs. This work considers, for example, the cases $\mathcal{C}=\mathbb{P}^1_{\mathbb{F}}$, for $\text{deg}(P_{\infty}) \in \lbrace 1, 2,3,4 \rbrace$, or when $\mathcal{C}$ is a curve of genus $0$ without rational points and $\text{deg}(P_{\infty})=2$. The case $\mathcal{C}=\mathbb{P}^1_{\mathbb{F}}$ and $\deg(P_{\infty})=1$ reduces to a classical result, now called Nagao's Theorem (cf.~\cite{N}), where the corresponding quotient graph is a ray. 

Theorem \ref{teo serre quot} and Theorem \ref{teo serre grup} have some interesting consequences for the involved groups. For instance, we can apply Theorem \ref{teo serre grup} in order to show that $\mathrm{GL}_2(R)$ is never finitely generated. Also, the preceding theorems allow us to study the homology and cohomology groups of $\mathrm{GL}_2(R)$. Indeed, in \cite[Chapther II, \S 2.8]{S} Serre applies this approach in order to understand the homology group of $\mathrm{GL}_2(R)$ with coefficient in certain modules. For instance, he concludes that $H_i(\mathrm{GL}_2(R), \mathbb{Q})=0$, for all $i >1$, and $H_1(\mathrm{GL}_2(R), \mathbb{Q})$ is a finite dimensional $\mathbb{Q}$-vector space. Then, in \cite[Chapther II, \S 2.9]{S} Serre interprets the Euler-Poincar\'e characteristic of certain subgroups $G$ of $\mathrm{GL}_2(R)$ in terms of the relative homology of $G$ modulo a representative system of its conjugacy classes of maximal unipotent subgroups.

In order to prove Theorem \ref{teo serre quot}, Serre makes an extensive use of the theory of vector bundles of rank $2$ over $\mathcal{C}$. On the other hand, Mason in \cite{M} gives a more elementary approach which involves substantially less algebraic geometry. This point of view only requires the Riemann-Roch Theorem and some basic notions about Dedekind rings. The price to pay for this simplicity is that one is not able to prove the finiteness of the diameter of graph $Y$ in Theorem \ref{teo serre quot}. However, Mason applies this result on quotient graphs in order to study the lowest index non-congruence subgroups of $\mathrm{GL}_2(R)$.

In a more general context, let $\mathbf{G}$ be an arbitrary reductive algebraic $k$-group. We can define a poly-simplicial complex $\mathcal{X}(\mathbf{G},K)$ associated to the group $\mathbf{G}$ and the field $K$. This topological space is called the building of $\mathbf{G}(K)$, and this notion generalizes the definition of the Bruhat-Tits tree. When $R = \mathbb{F}[t]$ and $\mathbf{G}$ is split over $k$, there exists a result that generalizes Nagao's theorem, which describes the structure of the quotient space $\overline{\mathcal{X}}=  \textbf{G}(\mathbb{F}[t]) \backslash \mathcal{X}$ associated to the action of $\textbf{G}(\mathbb{F}[t])$ on $\mathcal{X}=\mathcal{X}(\textbf{G},\mathbb{F}((t^{-1})))$. This result is due to Soul\'e and described in \cite[Theorem 1]{So}. Soul\'e shows that $\overline{\mathcal{X}}$ is isomorphic to a sector of
$\mathcal{X}$, which is the analog of a ray in the general building context. Then, in the same article, the author describes $\textbf{G}(\mathbb{F}[t])$ as an amalgamated sum of certain well-known subgroups. This structural result can be extended to the context where $\mathbf{G}$ is an isotrivial $k$-group, i.e.~a reductive $k$-group that splits in the composite field $\ell=\mathbb{L} k$, for a finite extension $\mathbb{L}$ of $\mathbb{F}$. This problem has been developed by Margaux in \cite{Margaux}. Indeed, Margaux manages to prove the same result as Soul\'e, replacing the condition ``split'' by ``isotrivial''. Finally, in the context of quasi-split groups, with L.~Arenas-Carmona, B.~Loisel and G.~Lucchini Arteche, we extend Theorem \ref{teo serre quot} and Theorem \ref{teo serre grup} to the special unitary groups of split rank one, which are the smallest quasi-split non-split reductive groups. This results are described in \cite{ABLL}, and they are not covered by the preceding works.

In the context where $\mathbb{F}$ is a finite field, one of the strongest results about the structure of the preceding quotients that exists in the literature is due to Bux, K\"ohl and Witzel in \cite{B}. In order to present this result, let $\mathcal{S}$ be a set of closed points in $\mathcal{C}$, and denote by $\mathcal{O}_\mathcal{S}$ the ring of functions that are regular outside $\mathcal{S}$. In particular, we have $\mathcal{O}_{P_\infty}=R$. Assume that $\mathbf{G}$ is an isotropic and non-commutative algebraic $k$-group and let $\mathcal{X}_\mathcal{S}$ be the product of buildings $\prod_{P \in \mathcal{S}} \mathcal{X}(\mathbf{G}, k_P)$, where $k_P$ is the completion of $k$ at $P$. Choose a particular realization $\mathbf{G}_{\mathrm{real}}$ of $\mathbf{G}$ as an algebraic set of some affine space defined over $k$. Given this realization, we define $G$ as the group of $\mathcal{O}_{\mathcal{S}}$-points of $\mathbf{G}_{\mathrm{real}}$. Then, \cite[Proposition 13.6]{B} shows that there exists a constant $L$ and finitely many pairwise sectors $Q_1, \cdots, Q_s$ of $\mathcal{X}_\mathcal{S}$ such that the $G$-translates of the $L$-neighborhood of $\bigcup_{i=1}^{s} Q_i$ cover $\mathcal{X}_\mathcal{S}$.

Note that almost all the previous results are specific to groups of points of certain reductive groups. Thus, it is natural to seek for extensions to certain subgroups of them.
Widely speaking, the goal of this work is to study a certain family of congruence subgroups $\mathrm{H}$ of $\mathrm{GL}_2(R)$, which is a direct analog of the Hecke congruence subgroups of $\text{SL}_2(\mathbb{Z})$ in the function field context. We do this through the analysis of the associated group actions on trees. Specifically, in analogy with Theorem \ref{teo serre quot}, we show that $\mathrm{H} \backslash \mathfrak{t}$ is combinatorially finite and we give a explicit formula for the number its cuspidal rays. Then, by using Bass-Serre Theory, we describe the combinatorial structure of $\mathrm{H}$ and its abelianization. Finally, we present some interesting examples in the context where $\mathcal{C}=\mathbb{P}^1_\mathbb{F}$.



\section{Context and Main results}\label{Section on the principal problem}

In order to introduce the main results of this work, we use the following definitions and notations. Let $\mathcal{C}$ be a smooth, projective, geometrically integral $\mathbb{F}$-curve and let $k$ be its function field. Since in the sequel we use the spinor genera theory in some proofs, and this theory is set in the context where \underline{the ground field $\mathbb{F}$ is finite}, we assume this throughout and we denote its cardinality by $q=p^s$. A $\mathcal{C}$-order $\mathfrak{D}$ on the matrix algebra $\mathbb{M}_2(k)$ is a locally free sheaf of $\mathcal{O}_{\mathcal{C}}$-algebras whose generic fiber is $\mathbb{M}_2(k)$ (cf.~Definition \ref{definition of max and eichler orders}). Analogously, an $R$-order is a locally free $R$-algebra. As we explain in \S \ref{Section Spinor}, any $R$-order can be extended to a $\mathcal{C}$-order by choosing an arbitrary local order at the point $P_{\infty} \in \mathcal{C}$. We say that a $\mathcal{C}$-order is maximal if its completions are maximal at all places of $\mathcal{C}$. By definition, a split maximal order is an order that is $\text{GL}_2(k)$-conjugate to the sheaf 
$$
\small
\mathfrak{D}_D= \left( \begin{array}{cc}
\mathcal{O}_{\mathcal{C}} &   \mathfrak{L}^D\\
\mathfrak{L}^{-D}  & \mathcal{O}_{\mathcal{C}} \end{array} \right),
\normalsize
$$
where $D$ is an arbitrary divisor on $\mathcal{C}$, and where $\mathfrak{L}^D$ is the invertible sheaf defined in every open set $U \subseteq \mathcal{C}$ by

$$
\mathfrak{L}^D(U)= \left\lbrace f \in k: \text{div}(f)|_{U}+D|_{U} \geq 0 \right\rbrace .
$$

In general, an Eichler $\mathcal{C}$-order is a sheaf-theoretical intersection of two maximal $\mathcal{C}$-orders. We define a specific family of Eichler $\mathcal{C}$-orders $\mathfrak{E}_D$ by taking
\begin{equation}\label{eq de eD}
\mathfrak{E}_D= \mathfrak{D}_D \cap \mathfrak{D}_0,
\end{equation}
 where $D$ is an effective divisor. Let $U_0$ be the open set in $\mathcal{C}$ defined as the complement of $\lbrace P_{\infty} \rbrace$. We define $\mathrm{H}_D$ as the group of invertible elements in $\mathfrak{E}_D(U_0)$. In other words
\begin{equation}\label{eq eichler}
\small
\text{H}_D= \left\lbrace \left( \begin{array}{cc}
a &   b\\
c & d \end{array} \right) \in \text{GL}_2(R) : c \equiv 0 \, (\text{mod } I_D)\right\rbrace ,
\normalsize
\end{equation}
where $I_D$ is the $R$-ideal defined as $I_D=\mathfrak{L}^{-D}(U_0)$. Then, the family of groups $\mathcal{H}=\lbrace \mathrm{H}_D: D \, \,\text{effective divisor} \rbrace$ plays the same role as the Hecke congruence subgroups of $\mathrm{SL}_2(\mathbb{Z})$ in the function field context.

The objective of this work is to characterize the quotient graph associated to the action of $\mathrm{H}_D$ on the Bruhat-Tits tree $\mathfrak{t}$, to subsequently describe the combinatorial structure of $\mathrm{H}_D$. Note that $\mathrm{H}_D$ naturally contains the kernel of $\mathrm{GL}_2(R) \to \mathrm{GL}_2(R/I_D)$. This implies, as we prove in Corollary \ref{Cor comb fin} (which follows from a lemma by Serre in \cite{Se2}), that the quotient graph $\mathrm{H}_D \backslash \mathfrak{t}$ is combinatorially finite, and the number of cuspidal rays of $\mathfrak{t}_D= \text{H}_D \backslash \mathfrak{t}$ is equal to the number of $\text{H}_D$-orbits in $\mathbb{P}^1(k)$. The previous observation is useful in the context where $D$ has small degree. Unfortunately, this set of $\text{H}_D$-orbits is really hard to characterize in the general case. Another obstruction for a direct computation of $\mathfrak{t}_D$ is that $\mathrm{H}_D$ is not a normal subgroup of $\mathrm{GL}_2(R)$. In particular, $\mathrm{GL}_2(R) \backslash \mathfrak{t}$ is not always a quotient of $\text{H}_D \backslash \mathfrak{t}$.

In order to present our main result we introduce some additional notations. For any divisor $D$ on $\mathcal{C}$, we denote by $\overline{D}$ its linear equivalence class. 
Also, we denote by $\lfloor a \rfloor$ the largest integer not exceeding $a \in \mathbb{R}$. Observe that, when $D=0$, we have $\text{H}_D=\text{GL}_2(R)$. In particular, the next theorem can be considered as a partial generalization of Serre's result on the structure of quotient graphs. 
\begin{theorem}\label{teo cusp}
Let $D$ be an effective divisor, which we write as $D=\sum_{i=1}^r n_i P_i$, where the points $P_1, \dots, P_r, P_{\infty}$ are all different. Then, the graph $\mathfrak{t}_D=\mathrm{H}_D\backslash \mathfrak{t}$ is obtained by attaching a finite number of rays, called cuspidal rays, to a certain finite graph $Y\subset \mathfrak{t}_D$. The cardinality $\mathfrak{c}_D$ of the set of such cuspidal rays satisfies
\small
\begin{equation}\label{numero de patas}
\mathfrak{c}_D \leq c(\mathrm{H}_D)
\footnotesize
:=2^r |g(2)| \left| \frac{ 2\mathrm{Pic}(\mathcal{C})+ \langle \overline{P_{\infty}} \rangle}{ \langle \overline{P_{\infty}} \rangle} \right|
\normalsize
\left(  1+ \frac{1}{q-1} \prod_{i=1}^r \left( q^{\text{deg}(P_i)\lfloor  \frac{n_i}{2} \rfloor}-1\right) \right),
\end{equation}
\normalsize
where $g(2)$ is the maximal exponent-2 subgroup of $\mathrm{Pic}(R)$. Moreover, equality holds when $g(2)$ is trivial and each $n_i$ is odd. 
\end{theorem}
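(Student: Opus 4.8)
The plan is to reduce the problem to a count of cuspidal rays via the correspondence between cusps and a certain orbit set, and then to translate that count into the arithmetic of ideals and divisor classes. As already recorded in the excerpt, the number of cuspidal rays equals the number of $\mathrm{H}_D$-orbits on $\mathbb{P}^1(k)$, and the graph is combinatorially finite by Corollary \ref{Cor comb fin}. So the entire content of the theorem is the bound on $\mathfrak{c}_D$; the qualitative statement is inherited from Serre. I would therefore spend no effort on combinatorial finiteness and focus on producing the right-hand side of \eqref{numero de patas} as (an upper bound for) $\#\bigl(\mathrm{H}_D\backslash \mathbb{P}^1(k)\bigr)$.

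The key idea is to interpret cusps in terms of the geometry that underlies Serre's proof, namely rank-$2$ vector bundles, or equivalently fractional ideals, together with the Eichler-order structure. First I would recall Serre's identification (Theorem \ref{teo serre quot}) of the $\mathrm{GL}_2(R)$-cusps with $\mathrm{Pic}(R)=\mathrm{Pic}(\mathcal{C})/\langle\overline{P_\infty}\rangle$, and then describe the fibers of the natural surjection $\mathrm{H}_D\backslash\mathbb{P}^1(k)\to \mathrm{GL}_2(R)\backslash\mathbb{P}^1(k)=\mathrm{Pic}(R)$. The strategy is to stratify: over each $\mathrm{GL}_2(R)$-cusp the $\mathrm{H}_D$-orbits are governed by a double-coset count for the congruence condition $c\equiv 0\ (\mathrm{mod}\ I_D)$, which by the Chinese Remainder Theorem factors over the primes $P_1,\dots,P_r$ dividing $D$. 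At each local place $P_i$ with multiplicity $n_i$, the local congruence contributes a factor controlled by the residue field size $q^{\deg(P_i)}$ and by $\lfloor n_i/2\rfloor$ (the integer that appears because an Eichler order of ``level'' $n_i$ and its opposite meet in a way sensitive only to the parity-truncated valuation). I would compute this local factor directly from the lattice description of $\mathfrak{E}_D$, obtaining the product $\prod_{i=1}^r\bigl(q^{\deg(P_i)\lfloor n_i/2\rfloor}-1\bigr)$ after summing a geometric series, whence the factor $1+\frac{1}{q-1}\prod_i(\cdots)$; the leading $1$ accounts for the orbit corresponding to the trivial coset (the point $\infty$ type cusp), and the $\frac{1}{q-1}$ normalizes by the diagonal torus $\mathbb{F}^*$. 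The remaining global factors $2^r|g(2)|$ and $\left|\frac{2\mathrm{Pic}(\mathcal{C})+\langle\overline{P_\infty}\rangle}{\langle\overline{P_\infty}\rangle}\right|$ arise from the obstruction to lifting local data to global ideals, measured by the $2$-torsion and the squares in $\mathrm{Pic}$; this is exactly where spinor genus theory enters, since the genera of the Eichler order $\mathfrak{E}_D$ are counted by a quotient of $\mathrm{Pic}(\mathcal{C})$ modulo squares and $\langle\overline{P_\infty}\rangle$.

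The main obstacle, and the reason the result is an inequality rather than an equality in general, is the failure of the local-to-global count to be exact: distinct local cusp data may fuse under the global unit group, and conversely some adelic double cosets may not be represented by genuine $R$-points. Concretely, the spinor class group $g(2)$ and the square-class quotient can overcount, because two $\mathrm{H}_D$-orbits that are locally indistinguishable can still be globally inequivalent (or fail to exist), so the product of local orbit counts only dominates the true number. I expect the cleanest route to the bound to be: (i) embed $\mathrm{H}_D\backslash\mathbb{P}^1(k)$ into an adelic double-coset space attached to $\mathfrak{E}_D$, (ii) bound that double-coset space by a product of a class-group factor and local factors using strong approximation away from $P_\infty$, and (iii) identify the local factors by the explicit Eichler lattice computation above. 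For the equality clause I would show that when $g(2)$ is trivial the square-class obstruction vanishes, so the fusion described above cannot occur, and that when every $n_i$ is odd the parity truncation $\lfloor n_i/2\rfloor$ causes the local double cosets to be in exact bijection with the claimed orbits (no degenerate coincidences at the primes $P_i$), making the bound sharp. The hard technical step is controlling (ii)---verifying that strong approximation yields precisely the stated class-group factor $2^r|g(2)|\cdot\bigl|\,(2\mathrm{Pic}(\mathcal{C})+\langle\overline{P_\infty}\rangle)/\langle\overline{P_\infty}\rangle\,\bigr|$ and no more---since this is where the spinor genus machinery must be invoked carefully to avoid double counting.
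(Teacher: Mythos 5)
Your reduction of the theorem to a count of $\mathrm{H}_D\backslash\mathbb{P}^1(k)$ is the right starting point, but the core of your plan --- stratifying over $\mathrm{GL}_2(R)\backslash\mathbb{P}^1(k)\cong\mathrm{Pic}(R)$ and computing each fiber as a product of local double-coset counts via the Chinese Remainder Theorem --- would not produce the stated formula, and the paper does something genuinely different. The quantity $\alpha(D)=1+\frac{1}{q-1}\prod_{i=1}^r(q^{\deg(P_i)\lfloor n_i/2\rfloor}-1)$ is \emph{not} a product of local factors: it is $1$ plus $\frac{1}{q-1}$ times a product, because the $\mathbb{F}^*$-homothety normalization is taken once globally and because the non-split contributions at the various $P_i$ are coupled (in the paper this is the sum over semi-decomposition vectors $(s_1,\dots,s_r)$ in Lemma \ref{lem preimagen de eta}, where the extra $1$ counts the unique totally split grid). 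A place-by-place CRT count would give $\prod_i(1+\cdots)$ and overcount. Moreover, the global factor is not obtained by strong approximation applied to $\mathbb{P}^1(k)$ (the stabilizer of a point of $\mathbb{P}^1$ is a Borel, so strong approximation does not apply in the form you need); instead the paper factors the problem through the intermediate group $\Gamma=\mathrm{Stab}_{\mathrm{PGL}_2(k)}(\mathfrak{E}_D(U_0))\supseteq\mathrm{PH}_D$, uses the trivial orbit bound $\#(\mathbb{P}^1(k)/\mathrm{PH}_D)\le[\Gamma:\mathrm{PH}_D]\cdot\#(\mathbb{P}^1(k)/\Gamma)$, computes $[\Gamma:\mathrm{PH}_D]=2^r|g(2)|/[\Sigma(\mathbb{O}_0,U_0):\Sigma(\mathbb{O}_D,U_0)]$ from spinor class field theory, and computes $\#(\mathbb{P}^1(k)/\Gamma)$ as the cusp number of a connected component of the classifying graph $C_{P_\infty}(\mathbb{O}_D)$ via the ramified covering $\tilde{\mathfrak{d}}:C_{P_\infty}(\mathbb{O}_D)\smallsetminus Y\to C_{P_\infty}(\mathbb{O}_0)$ built from semi-decomposition data of $D$-grids. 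That covering computation (\S \ref{subsection dec. grids}--\ref{subsection combinatorial structure}) is the bulk of the proof and has no counterpart in your outline.

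The equality clause is also missing its key mechanism. You assert that oddness of the $n_i$ prevents degenerate coincidences in the local count, but the actual reason is geometric: an element of $\Gamma$ that fixes some $s\in\mathbb{P}^1(k)$ acts on each local tree $\mathfrak{t}(k_{P_i})$ either fixing the length-$n_i$ path of the grid pointwise or flipping it, and when $n_i$ is odd a flip exchanges the two components of the tree minus the central edge and therefore fixes no end at all, contradicting the existence of a fixed $s$. This forces $\mathrm{Stab}_\Gamma(s)\subseteq\Gamma_0$, and the triviality of $g(2)$ then gives $\Gamma_0/\mathrm{PH}_D\hookrightarrow g(2)=\{e\}$, hence $\mathrm{Stab}_\Gamma(s)\subseteq\mathrm{PH}_D$ and an exact $[\Gamma:\mathrm{PH}_D]$-to-one covering of cusps (Lemma \ref{lemma cubrimiento de cuspides}). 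Without this argument you have no proof that the upper bound is attained.
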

Note that $g(2)$ is trivial in various cases: for example, when $\mathcal{C}=\mathbb{P}^1_{\mathbb{F}}$ and $P_{\infty}$ has odd degree, or when $\mathcal{C}$ is an elliptic curve with no non-trivial $2$-torsion rational points.
The previous results give a more precise description than \cite[\S 3.3, Lemma 8]{Se2}, which in fact only says that the set of cuspidal rays is finite. Indeed, in Theorem \ref{teo cusp}, we have a control on the number of cusps, and, in the case where $g(2)$ is trivial and each $n_i$ is odd, we have an explicit expression to compute it. When $\mathcal{C}=\mathbb{P}^1_\mathbb{F}$, Theorem 3.3 of \cite{A5} characterizes a set of representatives for all but finitely many vertex classes. In particular, this theorem gives a set of representatives for the action of $\mathrm{H}_D$ on the ends of $\mathfrak{t}$. 

Now, by using Bass-Serre theory and Theorem \ref{teo cusp} we can deduce the following general result on the combinatorial structure of $\text{H}_D$. This can be considered as a partial generalization of Theorem \ref{teo serre grup}, and as a more detailed description than \cite[\S 3.3, Lemma 8]{Se2}.

\begin{theorem}\label{teo grup}
In the notation of Theorem \ref{teo cusp}, assume that each $n_i$ is odd and $g(2)= \lbrace e \rbrace$. Then, there exist a finitely generated group $H$, two sets of indices, denoted by $\mathbf{D}$ and $\mathbf{I}$, and a family $\lbrace (I_{\sigma}, \mathcal{P}_{\sigma}, \mathcal{B}_{\sigma}) : \sigma \in  \mathbf{D} \sqcup \mathbf{I} \rbrace$, where
\begin{itemize}
\item[1.] $\mathrm{Card}(\mathbf{D})= 2^r [2\mathrm{Pic}(\mathcal{C})+ \langle \overline{P_{\infty}} \rangle: \langle \overline{P_{\infty}} \rangle],$ and $\mathrm{Card}(\mathbf{I})=c(\mathrm{H}_D)-\mathrm{Card}(\mathbf{D}),$
\item[2.] $ I_\sigma$ is an $R$-ideal contained in $I_D$
\item[3.] $\mathcal{P}_\sigma= (\mathbb{F}^{*}\times \mathbb{F}^{*})\ltimes I_\sigma$, for any $\sigma \in \mathbf{D},$
while $\mathcal{P}_{\sigma} =  \mathbb{F}^{*}\times I_\sigma$, for any $\sigma \in \mathbf{I},$
\item[4.] $\mathcal{B}_{\sigma}$ is a group with canonical injections $\mathcal{B}_{\sigma} \rightarrow H$ and $\mathcal{B}_{\sigma} \rightarrow \mathcal{P}_{\sigma}$, for any $\sigma \in \mathbf{D} \sqcup \mathbf{I}$,
\end{itemize}
such that $\mathrm{H}_D$ is isomorphic to the sum of $\mathcal{P}_\sigma$, for $\sigma \in \mathbf{D}\sqcup \mathbf{I}$, and $H$, amalgamated along their common subgroups $\mathcal{B}_\sigma$ according to the above injections.
\end{theorem}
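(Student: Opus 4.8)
The plan is to run Bass--Serre theory on the action of $\mathrm{H}_D$ on $\mathfrak{t}$, using the combinatorial finiteness already granted by Theorem \ref{teo cusp}. By the structure theorem for groups acting on trees (\cite[Chapter I, \S 5]{S}), $\mathrm{H}_D$ is the fundamental group of the graph of groups carried by $\mathfrak{t}_D = \mathrm{H}_D\backslash\mathfrak{t}$, whose vertex and edge groups are the $\mathrm{H}_D$-stabilizers of chosen lifts. Since $\mathbb{F}$ is finite and the only global regular functions on the projective curve $\mathcal{C}$ are the constants, the stabilizers attached to the vertices and edges of the finite core $Y$ are finite groups; hence the fundamental group $H$ of the restriction of the graph of groups to $Y$ is finitely generated. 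Each of the $\mathfrak{c}_D = c(\mathrm{H}_D)$ cuspidal rays (the count is exact here because $g(2)=\{e\}$ and every $n_i$ is odd, cf.\ Theorem \ref{teo cusp}) is attached to $Y$ along a single edge and contributes to $\mathrm{H}_D$ its end stabilizer, amalgamated with $H$ over the corresponding edge group $\mathcal{B}_\sigma$. This already yields the amalgam shape of the statement; what remains is to identify the end stabilizers $\mathcal{P}_\sigma$ and to split the index set according to their isomorphism type.

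Next I would compute the end stabilizers directly. A cuspidal ray corresponds to an $\mathrm{H}_D$-orbit of a point $s\in\mathbb{P}^1(k)$, and its group $\mathcal{P}_\sigma$ is $\mathrm{H}_D\cap\mathrm{Stab}_{\mathrm{GL}_2(k)}(s)$, a Borel-type subgroup that is a semidirect product of a subtorus of the diagonal by a unipotent part. Writing $s = g\cdot\infty$ with $g=\begin{pmatrix}\alpha&\beta\\\gamma&\delta\end{pmatrix}\in\mathrm{GL}_2(k)$ of determinant $1$, the unipotent elements $g\begin{pmatrix}1&t\\0&1\end{pmatrix}g^{-1}$ lying in $\mathrm{H}_D$ form a group isomorphic to a fractional $R$-ideal $I_\sigma$ (which, after the normalization of cusp representatives used in the proof of Theorem \ref{teo cusp}, may be taken inside $I_D$), while for a diagonal element one computes
\[
g\begin{pmatrix}a&0\\0&d\end{pmatrix}g^{-1}=\begin{pmatrix}a\alpha\delta-d\beta\gamma&(d-a)\alpha\beta\\(a-d)\gamma\delta&d\alpha\delta-a\beta\gamma\end{pmatrix},
\]
so that $g\,\mathrm{diag}(a,d)\,g^{-1}\in\mathrm{H}_D$ if and only if $(a-d)\gamma\delta\in I_D$. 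As $a-d\in\mathbb{F}$, this holds for all $a,d\in\mathbb{F}^*$ exactly when $\gamma\delta\in I_D$, and otherwise only for $a=d$. Thus $\mathcal{P}_\sigma=(\mathbb{F}^*\times\mathbb{F}^*)\ltimes I_\sigma$ when $\gamma\delta\in I_D$, and $\mathcal{P}_\sigma=\mathbb{F}^*\times I_\sigma$ (the surviving $\mathbb{F}^*$ being the scalar diagonal, which centralizes the unipotent part) otherwise. This dichotomy is precisely the partition of the cusps into the two families $\mathbf{D}$ and $\mathbf{I}$.

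It then remains to count each family. The cusps on which the full torus survives are those represented by a $g$ with $\gamma\delta\in I_D$; tracing the enumeration of cusps from the proof of Theorem \ref{teo cusp}, these are exactly the ones contributing the leading summand $1$ inside the factor $1+\frac{1}{q-1}\prod_{i=1}^r(q^{\deg(P_i)\lfloor n_i/2\rfloor}-1)$ of $c(\mathrm{H}_D)$, whence $\mathrm{Card}(\mathbf{D})=2^r[2\mathrm{Pic}(\mathcal{C})+\langle\overline{P_\infty}\rangle:\langle\overline{P_\infty}\rangle]$ and, by subtraction, $\mathrm{Card}(\mathbf{I})=c(\mathrm{H}_D)-\mathrm{Card}(\mathbf{D})$. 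Finally the edge groups $\mathcal{B}_\sigma$ are read off as the stabilizers of the attaching edges, with their canonical injections into $H$ (the vertex group at the foot of the ray) and into $\mathcal{P}_\sigma$ (the union of the vertex groups along the ray), reproducing the structure asserted in the statement and paralleling Theorem \ref{teo serre grup}.

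The main obstacle is the precise matching in the counting step: one must show that the arithmetic condition $\gamma\delta\in I_D$ singles out exactly the $2^r[2\mathrm{Pic}(\mathcal{C})+\langle\overline{P_\infty}\rangle:\langle\overline{P_\infty}\rangle]$ cusps arising from the trivial summand, i.e.\ that the full-torus cusps of $\mathrm{H}_D$ are in natural bijection with the data indexing the $\mathrm{GL}_2(R)$-cusps twisted by the $2^r$ choices of local parities. This forces one to carry the explicit cusp parametrization of Theorem \ref{teo cusp} through the stabilizer computation, and in particular to verify that along each cuspidal ray the vertex stabilizers form an increasing chain whose union is exactly $\mathcal{P}_\sigma$, so that the fundamental-group contribution of the ray is the whole end stabilizer and not a proper subgroup. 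The hypotheses $g(2)=\{e\}$ and $n_i$ odd are used throughout to ensure that $c(\mathrm{H}_D)$ is the exact number of cusps, which is what makes the two cardinalities add up.
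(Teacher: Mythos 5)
Your overall skeleton is the same as the paper's: Bass--Serre theory on $\mathfrak{t}_D$, a finitely generated $H$ from the finite core, one amalgamated factor $\mathcal{P}_\sigma$ per cuspidal ray obtained as the increasing union of vertex stabilizers, and a dichotomy according to whether the full diagonal torus $\mathbb{F}^*\times\mathbb{F}^*$ survives in $\mathcal{P}_\sigma$. However, the two steps that carry the actual content of the theorem are not established. First, your criterion for the dichotomy is wrong as stated: you test only whether $g\,\mathrm{diag}(a,d)\,g^{-1}\in\mathrm{H}_D$, i.e.\ you conjugate diagonal matrices with off-diagonal entry $c=0$. But $\mathcal{P}_\sigma=\mathrm{H}_D\cap gBg^{-1}$ consists of all $g\sbmattrix{a}{c}{0}{d}g^{-1}$, and the question is whether for a given pair $(a,d)$ with $a\neq d$ there exists \emph{some} $c$ making this land in $\mathrm{H}_D$. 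By the paper's Lemma \ref{lema mason}, that is the condition $R(n)\cap I_D\cap Rs^{-1}\cap\bigl((\beta-\alpha)s^{-1}+Rs^{-2}\bigr)\neq\emptyset$, which is a genuinely global arithmetic statement; the paper resolves it not by an entrywise computation but by showing that the full torus survives exactly when the Eichler order attached to the vertex contains a nontrivial global idempotent (equivalently, the corresponding $D$-grid is totally decomposed, Proposition \ref{prop des}), and that in the non-totally-decomposed case any normalizing element is forced by \cite[\S 4, Proposition 4.1]{A1} to have equal eigenvalues. Your condition $\gamma\delta\in I_D$ is only a sufficient condition for the torus to survive, and you do not rule out the torus surviving via $c\neq 0$ when it fails.

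Second, the count $\mathrm{Card}(\mathbf{D})=2^r[2\mathrm{Pic}(\mathcal{C})+\langle\overline{P_\infty}\rangle:\langle\overline{P_\infty}\rangle]$ is asserted (``tracing the enumeration\dots these are exactly the ones contributing the leading summand $1$'') rather than proved, and this is precisely where the bulk of the paper's work lies: one needs the covering $\widetilde{\mathfrak{d}}^\infty$ of cusp sets of classifying graphs, the fact that each fiber contains exactly one split cusp and $\alpha(D)-1$ non-split ones (Proposition \ref{lema 3} and Lemma \ref{lem preimagen de eta}), the count $c(D)/\alpha(D)$ of split cusps per connected component (Proposition \ref{lema4}), and finally Lemma \ref{lemma cubrimiento de cuspides} together with Equation \eqref{eq gamma/phd} to pass from $\Gamma\backslash\mathfrak{t}$ to $\mathfrak{t}_D$. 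Without linking ``full torus survives'' to ``the cuspidal ray is split'' and then invoking this machinery, the identification of $\mathbf{D}$ with the leading summand does not follow. The remaining points you flag (increasing chains of stabilizers along rays, $I_\sigma\subseteq I_D$, the injections $\mathcal{B}_\sigma\to H,\mathcal{P}_\sigma$) are handled correctly and match the paper.
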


As we cite in \S \ref{Section Intro}, Serre in \cite[Chapter II, \S 2.8]{S} gives a series of results that relate the homology of congruence subgroups of $\mathrm{GL}_2(R)$ with the structure of its quotient graphs. We can apply them to our context. In particular, the following result, which gives a more precise description of the abelianization of $\mathrm{H}_D$, is a consequence of Theorem \ref{teo cusp}.

\begin{theorem}\label{teo grup ab}
With the same notation and hypotheses of Theorem \ref{teo grup}, the abelianization $\mathrm{H}_D^{\mathrm{ab}}$ of $\mathrm{H}_D$ is
\begin{itemize}
    \item[1.]  finite generated, if each $n_i$ equals one, or
    \item[2.] the direct product of a finitely generated group with a infinite dimensional $\mathbb{F}_p$-vector space, in any other case.
\end{itemize}
\end{theorem}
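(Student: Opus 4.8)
The plan is to compute $\mathrm{H}_D^{\mathrm{ab}}=H_1(\mathrm{H}_D,\mathbb{Z})$ directly from the amalgam decomposition furnished by Theorem \ref{teo grup}, following the homological method Serre employs in \cite[Chapter II, \S 2.8]{S}. Writing $\jmath_\sigma\colon\mathcal{B}_\sigma\to H$ and $\iota_\sigma\colon\mathcal{B}_\sigma\to\mathcal{P}_\sigma$ for the two canonical injections, abelianizing the standard presentation of the fundamental group of the underlying (tree--shaped) graph of groups gives
\[
\mathrm{H}_D^{\mathrm{ab}}\;\cong\;\operatorname{coker}\left(\bigoplus_{\sigma}\mathcal{B}_\sigma^{\mathrm{ab}}\ \xrightarrow{\ b\mapsto(\jmath_\sigma(b),\,-\iota_\sigma(b))\ }\ H^{\mathrm{ab}}\oplus\bigoplus_{\sigma}\mathcal{P}_\sigma^{\mathrm{ab}}\right),
\]
the sum running over $\sigma\in\mathbf{D}\sqcup\mathbf{I}$. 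Since the loops of the finite graph $Y$ are already absorbed into $H$, no free abelian factor needs to be adjoined. The whole problem therefore reduces to deciding which summands on the right are finitely generated and which are not.

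First I would dispose of the finitely generated contributions. By Theorem \ref{teo grup} the group $H$ is finitely generated, so $H^{\mathrm{ab}}$ is a finitely generated abelian group, and there are only finitely many indices $\sigma$; hence it suffices to understand each $\mathcal{P}_\sigma^{\mathrm{ab}}$ together with the image of its edge group $\mathcal{B}_\sigma$. For $\sigma\in\mathbf{D}$ one has $\mathcal{P}_\sigma=(\mathbb{F}^{*}\times\mathbb{F}^{*})\ltimes I_\sigma$, the torus acting on the additive group $I_\sigma$ through the character $(a,d)\mapsto ad^{-1}$. From the commutator identity
\[
\left[\diag(a,d),\ \bmattrix{1}{x}{0}{1}\right]=\bmattrix{1}{(ad^{-1}-1)x}{0}{1},
\]
choosing $\lambda=ad^{-1}\in\mathbb{F}^{*}$ with $\lambda\neq1$ (possible as soon as $q>2$) yields $(\lambda-1)I_\sigma\subseteq[\mathcal{P}_\sigma,\mathcal{P}_\sigma]$; as $I_\sigma$ is an $\mathbb{F}$-vector space and multiplication by $\lambda-1\in\mathbb{F}^{*}$ is bijective, the whole of $I_\sigma$ lies in the commutator subgroup, so $\mathcal{P}_\sigma^{\mathrm{ab}}\cong\mathbb{F}^{*}\times\mathbb{F}^{*}$ is finite.

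The infinite--dimensional contributions come from the cusps in $\mathbf{I}$. For $\sigma\in\mathbf{I}$ the decomposition reads $\mathcal{P}_\sigma=\mathbb{F}^{*}\times I_\sigma$ as a genuine direct product, the surviving torus being central, whence $\mathcal{P}_\sigma^{\mathrm{ab}}\cong\mathbb{F}^{*}\times I_\sigma$ with $I_\sigma$ a nonzero $R$-ideal, i.e.\ an infinite--dimensional $\mathbb{F}_p$-vector space. In the cokernel the relations $\iota_\sigma(b)=\jmath_\sigma(b)$ identify the image of $\mathcal{B}_\sigma$ with a subgroup of the finitely generated group $H^{\mathrm{ab}}$, so they impose only finitely many conditions on $I_\sigma$; since the unipotent part of $\mathcal{B}_\sigma$ is the base ideal of the ray and hence sits in $I_\sigma$ with infinite codimension, an infinite--dimensional $\mathbb{F}_p$-subspace of $\bigoplus_{\sigma\in\mathbf{I}}I_\sigma$ survives. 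Therefore $\mathrm{H}_D^{\mathrm{ab}}$ is finitely generated if and only if $\mathbf{I}=\varnothing$. By the cardinality formula of Theorem \ref{teo grup} (valid since $g(2)$ is trivial), $\mathrm{Card}(\mathbf{I})=c(\mathrm{H}_D)-\mathrm{Card}(\mathbf{D})$ vanishes precisely when every $n_i$ equals $1$ and is positive as soon as some $n_i\geq 3$, which gives the dichotomy of the statement.

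The step I expect to be the main obstacle is upgrading the qualitative claim ``an infinite--dimensional $\mathbb{F}_p$-space survives'' to the precise assertion that $\mathrm{H}_D^{\mathrm{ab}}$ is a \emph{direct product} of a finitely generated group with an $\mathbb{F}_p$-vector space. Two things must be verified: that the unipotent part of each $\iota_\sigma(\mathcal{B}_\sigma)$ really has infinite codimension in $I_\sigma$ (so that the surviving space is genuinely infinite--dimensional), and that this space splits off as a direct factor rather than sitting inside a nontrivial extension. The splitting should hold because the surviving space is an $\mathbb{F}_p$-vector space while the complementary data is finitely generated, but making the complement canonical requires a choice compatible with all the edge identifications at once. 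A secondary point is the hypothesis $q>2$ implicit in the torus computation of the second paragraph: when $\mathbb{F}^{*}$ is trivial the unipotent part at the $\mathbf{D}$-cusps is no longer killed by the torus, and that argument would have to be replaced by one using the finite vertex groups of $Y$.
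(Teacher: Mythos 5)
Your proof follows essentially the same route as the paper's: where the paper simply cites Serre's exercise in \cite[Chapter II, \S 2.8]{S} for the criterion that $\mathrm{H}_D^{\mathrm{ab}}$ is finitely generated if and only if every $\mathcal{P}_\sigma^{\mathrm{ab}}$ is finite (and is otherwise the product of a finitely generated group with an infinite-dimensional $\mathbb{F}_p$-vector space), you re-derive that criterion via the Mayer--Vietoris/cokernel description of the abelianized graph of groups, and then carry out the same commutator computation of $\mathcal{P}_\sigma^{\mathrm{ab}}$ for split and non-split cusps and the same cusp count. The two caveats you flag are exactly what the citation of Serre's exercise absorbs: the splitting of the infinite-dimensional part as a direct factor is automatic because each edge group $\mathcal{B}_\sigma$ is finite, so the image of $\bigoplus_\sigma\mathcal{B}_\sigma^{\mathrm{ab}}$ is contained in the finitely generated summand after an innocuous change of complement inside the $\mathbb{F}_p$-vector space, while the degenerate-torus issue at $q=2$ is likewise present (and left implicit) in the paper's own ``direct computation'' that $\mathcal{P}_\sigma^{\mathrm{ab}}=\mathbb{F}^*\times\mathbb{F}^*$ for split cusps.
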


Finally, we present an interesting example in the context where $\mathcal{C}=\mathbb{P}^1_{\mathbb{F}}$, $D=P$ is a closed point and $\deg(P_{\infty})=\deg(P)=1$. Indeed, by using these hypotheses on $\mathcal{C}$ and $D$ we can show that the graph $\mathfrak{t}_D=\mathrm{H}_D\backslash \mathfrak{t}$ is isomorphic to a double ray (cf.~Proposition \ref{teo quot t}). We subsequently describe $\mathrm{H}_D$ as an explicit amalgamated sum (cf.~Proposition \ref{teo group t}) and compute its abelianization (cf.~Proposition \ref{teo group ab t}).

\section{Preliminaries on the Bruhat-Tits tree}\label{Section BTT}

\subsection{Conventions and notations for graphs}

We recall some basic definitions on graphs and trees. We define a graph $\mathfrak{g}$ as a pair of sets $\text{V}=\text{V}(\mathfrak{g})$ and $\text{E}=\text{E}(\mathfrak{g})$, and three functions $s,t:E\rightarrow V$ and $r:E\rightarrow E$ satisfying the identities $r(a)\neq a$, $r\big(r(a)\big)=a$ and $s\big(r(a)\big)=t(a)$, for every $a \in E$. In all that follows $V$ and $E$ are called vertex and edge set, respectively, and the functions $s,t$ and $r$ are called respectively source, target and reverse.
Our definition is chosen in a way that allows the existence of multiple edges and loops. Two vertices $v,w \in \text{V}$ are neighbors if there exists an edge $e \in \text{E}$ satisfying $s(e)=v$ and $t(e)=w$. The valency of a vertex $v$ is the cardinality of its set of neighboring vertices. A simplicial map $\gamma:\mathfrak{g} \rightarrow \mathfrak{g}'$ between graphs is a pair of functions $\gamma_V:\text{V}(\mathfrak{g})\rightarrow \text{V}(\mathfrak{g}')$ and $\gamma_E: \mathrm{E}(\mathfrak{g})\rightarrow \mathrm{E}(\mathfrak{g}')$ preserving the source, target and reverse functions. We say that a simplicial map $\gamma:\mathfrak{g} \rightarrow \mathfrak{g}'$ is an isomorphism if there exists another simplicial map $\gamma':\mathfrak{g}' \rightarrow \mathfrak{g}$ such that $\gamma_V \circ \gamma'_V= \mathrm{id}_{\text{V}(\mathfrak{g})}$, $\gamma'_V \circ \gamma_V= \mathrm{id}_{\text{V}(\mathfrak{g'})}$, $\gamma_E \circ \gamma'_E= \mathrm{id}_{\text{E}(\mathfrak{g})}$ and $\gamma'_E \circ \gamma_E= \mathrm{id}_{\text{E}(\mathfrak{g'})}$. The group of automorphisms $\mathrm{Aut}(\mathfrak{g})$ of a graph $\mathfrak{g}$ is the set of isomorphism from $\mathfrak{g}$ to $\mathfrak{g}$, with the composition as a group law.

We say that a group $\Gamma$ acts on a graph $\mathfrak{g}$ is there exists an homomorphism from $\Gamma$ to $\mathrm{Aut}(\mathfrak{g})$. A group action of $\Gamma$ on a graph $\mathfrak{g}$ has no inversions if $g \cdot a\neq r(a)$, for every edge $a$ and every element $g\in\Gamma$. An action without inversions defines a quotient graph in a natural sense. Indeed, if $\Gamma$ acts on $\mathfrak{g}$ without inversions, then the vertex set of $\Gamma \backslash \mathfrak{g}$ corresponds to $\Gamma \backslash V$, and the edge set corresponds to $\Gamma \backslash E$.  When the action of $\Gamma$ has inversions, we can replace $\mathfrak{g}$ by its first 
barycentric subdivision in order to obtain a graph where $\Gamma$ acts without inversions. Then, the quotient graph is also defined in this case.

Let $\mathfrak{g}$ be a graph. A finite line in $\mathfrak{g}$, usually denoted by $\mathfrak{p}$, is a subgraph whose vertex and edge sets are $\mathrm{V}(\mathfrak{p})=\lbrace v_i \rbrace_{i=0}^{n}$ and $\mathrm{E}(\mathfrak{p})=\lbrace e_i, r(e_i) \rbrace_{i=0}^{n-1}$, where $s(e_i)=v_i$ and $t(e_{i})=v_{i+1}$, for all index $0 \leq i \leq n-1$. The length of $\mathfrak{p}$ is, by definition, $n=\mathrm{Card} (\mathrm{V}(\mathfrak{p}))-1=\mathrm{Card} (\mathrm{E}(\mathfrak{p}))/2$. We often emphasize the vertices $v_0$, the initial vertex of $\mathfrak{p}$, and $v_r$, the 
final vertex of $\mathfrak{p}$, by saying ``$\mathfrak{p}$ is a path connecting $v_0$ with $v_r$''. A graph $\mathfrak{g}$ 
is connected if, given two vertices $v,w \in \mathrm{V}(\mathfrak{g})$, there exists finite path $\mathfrak{p}$ connecting them. We define a ray $\mathfrak{r}$ in $\mathfrak{g}$ by replacing $n$ and $n-1$ by $\infty$ in the definition of finite line. A cycle in $\mathfrak{g}$ is a finite line with an additional pair of edges connecting $v_n$ with $v_0$. We define a tree as a connected graph without cycles.

A maximal path in $\mathfrak{g}$ is a doubly infinite line, i.e. the union of two rays intersecting only in one vertex. Let $\mathfrak{r}_1$ and $\mathfrak{r}_2$ be two rays whose vertex sets are respectively denoted by $V_1=
\left\lbrace v_i : i\in \mathbb{Z}_{\geq 0}\right\rbrace $ and $V_2=\left\lbrace v'_i: i\in \mathbb{Z}_{\geq 0}\right\rbrace$. We say that $\mathfrak{r}_1$ and $\mathfrak{r}_2$ are equivalent if there exists $t, i_0 \in \mathbb{Z}_{\geq 0}$ such that $v_{i}= v_{i+t}'$, for all $i \geq i_0$. In this case we write $\mathfrak{r}_1 \sim \mathfrak{r}_2$. We define the visual limit, also called the end set, $\partial_{\infty}(\mathfrak{g})$ of $\mathfrak{g}$ as the set of equivalence classes of rays $\rf$ in $\mathfrak{g}$. We denote the class of $\rf$ by $\partial_{\infty}(\mathfrak{r})$. By a cuspidal ray in a graph $\mathfrak{g}$, we mean a ray such that every non-initial vertex has valency two in $\mathfrak{g}$. A cusp in $\mathfrak{g}$ is an equivalence class of cuspidal rays in $\mathfrak{g}$. We denote the cusp set of $\mathfrak{g}$ by $\partial^{\infty}(\mathfrak{g})$. We say that a graph is combinatorially finite if it is the union of a finite graph and a finite number of cuspidal rays. In particular, when a graph is combinatorially finite its visual limit is also finite.


\subsection{The Bruhat-Tits tree}\label{subsection BTT}

Let $k$ be the function field of a smooth, projective, geometrically integral curve $\mathcal{C}$ defined over a field $\mathbb{F}$. Let $K$ be the completion of $k$ with respect to a discrete valuation $\nu: k^{*} \to \mathbb{Z}$, and let $\mathcal{O}$ be its ring of integers. Recall that a tree is a connected graph without cycles. 

An example of tree is the Bruhat-Tits building $\mathfrak{t}=\mathfrak{t}(K)$ associated to the reductive group $\text{SL}_2$ and the field $K$. In order to introduce this tree, we have to fix some definitions concerning lattices. Let $\pi \in \mathcal{O}$ be a fixed uniformizing parameter of $K$. A lattice in a $K$-vector space $V$ is a finitely generated $\mathcal{O}$-submodule of $V$, which generates $V$ as a vector space. Assume that $V$ is a two-dimensional $K$-vector space. Then, every lattice on $V$ is free of rank $2$. The group $K^{*}$ acts on the set of lattices by homothetic transformations. The vertex set of $\mathfrak{t}(K)$ can be defined as the set of homothetic classes of lattices in $V$. We adopt this convention. Let $\Lambda$ and $\Lambda'$ be two lattices in $V$. By the Invariant Factor Theorem of Algebraic Number Theory, there is an $\mathcal{O}$-basis $\lbrace e_1, e_2 \rbrace$ of $\Lambda$ and integers $a,b$ such that $\lbrace \pi^a e_1, \pi^b e_2 \rbrace$ is an $\mathcal{O}$-basis for $\Lambda'$. The set $\lbrace a,b \rbrace$ does not depend on the choice of basis for $\Lambda, \Lambda'$. Moreover, if we replace $\Lambda$ by $x\Lambda$, and $\Lambda'$ by $y \Lambda'$, where $x,y \in K^{*}$, then $\lbrace a,b \rbrace$ changes into $\lbrace a+c , b+c\rbrace$, where $c=\nu(y/x)$. So, the integer $|a-b|$ is called the distance between the classes $[\Lambda]$ and $[\Lambda']$. We define one pair of mutually reverse edges in $\mathfrak{t}(K)$ for each pair of lattice classes at distance one. This defines a graph, which can be proved to be a tree (cf.~\cite[Chapter II, \S 1, Theorem 1]{S}). The group $\text{GL}_2(K)$ acts on $\mathfrak{t}$ by $g \cdot [\Lambda]=[g(\Lambda)]$, for any $\mathcal{O}$-lattice $\Lambda \subset K^2$ and any $g \in \text{GL}_2(K)$. This induces an action of $\text{PGL}(V)=\text{PGL}_2(K)$ on $\mathfrak{t}$.

An order in $\mathbb{M}_2(K)$ is a lattice with a ring structure induced by the multiplication of $\mathbb{M}_2(K)$. We say that an order is maximal when it fails to be contained in any other order.
One can reinterpret the Bruhat-Tits tree for $\text{SL}_2$ in several ways. One of these arises from the following remark. There exists a bijective map from the vertex set of $\mathfrak{t}(K)$ to the set of maximal orders in $\mathbb{M}_2(K)$. Indeed, this function is defined by $[\Lambda] \mapsto \mathrm{End}_{\mathcal{O}}(\Lambda)$, which is valid, since the endomorphism rings of $\Lambda$ and $x\Lambda$ coincide for any $x \in K^{*}$. Moreover, under this identification, two maximal orders $\mathfrak{D}$ and $\mathfrak{D}'$ are neighbors if the pair $ \left\lbrace \mathfrak{D}, \mathfrak{D}'\right\rbrace$ is $\mathrm{GL}_2(K)$-conjugate to the pair $ \left\lbrace \sbmattrix {\oink}{\oink}{\oink}{\oink}, \sbmattrix {\oink}{\pi^{-1} \oink}{\pi \oink}{\oink} \right\rbrace.$

Another reinterpretation of the Bruhat-Tits tree for $\text{SL}_2$ comes from the topological structure of $K$, which is very useful in order to have a concrete representation of its visual limit. 
We denote by $B_a^{|r|}$ the closed ball in $K$ whose center is $a$ and radius is $|\pi^r|$. 
Then, we can define the function $\Sigma$ between the set of closed balls and the set of maximal orders in $\mathbb{M}_2(K)$ by $ B_a^{|r|} \mapsto  \mathrm{End}_{\mathcal{O}}( \Lambda_B ),$ where $\Lambda_B = \left\langle  
\binom{a}{1}, \binom{\pi^r}{0}
\right\rangle $. 
It follows from \cite[\S 4]{AAC} that $\Sigma$ is bijective. 
Thus, this induces a correspondence between the vertex set of $\mathfrak{t}=\mathfrak{t}(K)$ and the set of closed balls in $K$. 
Moreover, if we say that two balls are neighbors whenever one is a proper maximal sub-ball of the other, then $\Sigma$ induces an isomorphism of graphs. 
In other words, under the previous definition, we have that two balls $B$ and $B'$ are neighbors precisely when $\Sigma(B)$ and $\Sigma(B')$ are neighbors.
So, by using this reinterpretation of the Bruhat-Tits tree in terms of balls, it is straightforward that any ray $\mathfrak{r}$ in $\mathfrak{t}$ satisfies either $V(\mathfrak{r})=\left\lbrace B_{a}^{|r+n|} : n \in \mathbb{Z}_{\geq 0}  \right\rbrace$, for certain $a \in K$ and $r \in \mathbb{Z}$, or $V(\mathfrak{r})=\left\lbrace B_{0}^{|r-n|} : n \in \mathbb{Z}_{\geq 0}  \right\rbrace$, for certain $r \in \mathbb{Z}$. In the first case, the visual limit of $\mathfrak{r}$ can be identified with $a \in K$, and, in the second, we identify it with the point at infinity $\infty$.
This brief remark shows that the visual limit of the Bruhat-Tits tree $\mathfrak{t}=\mathfrak{t}(K)$ is in natural correspondence with the $K$-points of the projective line $\mathbb{P}^1$.
In all that follows, the equivalence classes of rays in $\partial_{\infty}(\mathfrak{t})$ are called ends of $\mathfrak{t}$. 
This set of ends is acted 
on naturally by the group $\mathrm{GL}_2(K)$, via Moebius transformations with coefficients in $K$. In fact, this action is compatible with the previously defined action of $\mathrm{GL}_2(K)$ on lattices, or the subsequent action on balls induced by the former (cf.~\cite[\S 4]{AAC}).
It follows from the density of $k$ in $K$, that for any finite line $\mathfrak{p}$ of $\mathfrak{t}$, there is a ray containing $\mathfrak{p}$ whose end corresponds to a rational element $s \in \mathbb{P}^1(k) \subset \mathbb{P}^1(K)$.


 \section{On combinatorially finite quotients of the Bruhat-Tits tree}\label{Section comb finite}

We keep the notation from last section. Here we give a detailed description of the quotient graphs of the Bruhat-Tits tree $\mathfrak{t}$ by certain subgroups of $\mathrm{GL}_2(k)$. In order to do this, we introduce the following notion.

 \begin{defi}\label{def good quotient}
Let $H$ be a subgroup of $\mathrm{GL}_2(k)$. We say that $H$ \textit{closes enough umbrellas} if there exists a finite family of rays $ \mathfrak{R}_{H}= \lbrace \mathfrak{r}_i \rbrace_{i=1}^{\gamma} \subset \mathfrak{t}$, each with a vertex set $\lbrace v_{n}(i) \rbrace_{n>0}^{\infty}$, where $v_n(i)$ and $v_{n+1}(i)$ are neighbors, satisfying each of the following statements:
\begin{itemize}
    \item[(a)] The set of ends of all rays in $\mathfrak{R}_{H}$ is a representative system of $H \backslash \mathbb{P}^1(k)$.
    \item[(b)] $H \backslash \mathfrak{t}$ is obtained by attaching all the images $\overline{\mathfrak{r}_i} \subseteq H \backslash \mathfrak{t}$ to a certain finite graph $Y_{H}$.
    \item[(c)] No $\mathfrak{r}_i$ contains a pair of vertices in the same $H$-orbit, and $\overline{\mathfrak{r}_i} \cap \overline{\mathfrak{r}_j} = \emptyset$, for each $i \neq j$.
    \item[(d)] For each index $i$ and each $n>0$, we have $ \mathrm{Stab}_H(v_{n}(i))\subseteq \mathrm{Stab}_H(v_{n+1}(i))$.
    \item[(e)] $\mathrm{Stab}_H(v_{n}(i))$ acts transitively on the set of  neighboring vertices  in $\mathfrak{t}$ of $v_{n}(i)$, other than $v_{n+1}(i)$.
\end{itemize}
In particular, if $H$ closes enough umbrellas, then $H \backslash \mathfrak{t}$ is combinatorially finite. Moreover, for any ray $\mathfrak{r} \subset \mathfrak{t}$ whose visual limits belongs to $\mathbb{P}^1(k)$, there exists a subray $\mathfrak{r}' \subseteq \mathfrak{t}$ satisfying conditions (d) and (e). Note that the notion of ``closing umbrellas'' corresponds to these two statements, while (a), (b) and (c) convey the idea of ``closing \textit{enough} umbrellas'', so as to have a good quotient graph.
\end{defi}

We say that a subgroup $H$ of $\mathrm{GL}_2(k)$ is net when each element in $H$ fails to admit a root of unit different than one as an eigenvalue. It follows from \cite[Chapter II, \S 2.1- \S 2.3]{Se2} that every net subgroup of $\mathrm{GL}_2(k)$ closes enough umbrellas. We say that two groups are commensurable if they have a common finite index subgroup. The notion of ``closing enough umbrellas'' behaves well when we pass to commensurable groups as is shown in the following results. This is probably known to experts but, as far as we are aware, a precise reference does not exist.

\begin{theorem}\label{Teo comb finito}
Let $H $ be a discrete subgroup of $\mathrm{GL}_2(k)$. Let $H' \subseteq \mathrm{GL}_2(k)$ be a group that is commensurable with $H$. If $H$ closes enough umbrellas, then $H'$ also closes enough umbrellas.
\end{theorem}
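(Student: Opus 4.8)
The plan is to reduce the statement to two special cases and treat each via a finite-to-one projection of quotient graphs. Setting $H_0 = H \cap H'$, commensurability makes $H_0$ a finite-index subgroup of both $H$ and $H'$, so it suffices to prove: (I) closing enough umbrellas passes from a group to its finite-index subgroups, and (II) it passes from a group to its finite-index \emph{over}groups. Applying (I) to $H_0 \leq H$ and then (II) to $H_0 \leq H'$ will give the theorem. In both steps the main tool is, for a finite-index inclusion $\Gamma_1 \leq \Gamma_2$, the natural simplicial surjection $p \colon \Gamma_1 \backslash \mathfrak{t} \to \Gamma_2 \backslash \mathfrak{t}$ and the induced surjection $\Gamma_1 \backslash \mathbb{P}^1(k) \to \Gamma_2 \backslash \mathbb{P}^1(k)$, both with fibres of cardinality at most $[\Gamma_2 : \Gamma_1]$. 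I will also use freely that, by the remark following Definition \ref{def good quotient}, any ray with end in $\mathbb{P}^1(k)$ has a subray satisfying conditions (d) and (e) for whatever group is under consideration; hence the genuine content to be verified lies in conditions (a), (b) and (c).

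I would dispatch the overgroup case (II) first, as it is the easier direction. Let $\{\mathfrak{r}_i\}$ witness that $H_0$ closes enough umbrellas. The finite-to-one surjection $p \colon H_0 \backslash \mathfrak{t} \to H' \backslash \mathfrak{t}$ carries the finite graph $Y_{H_0}$ to a finite subgraph and carries each cuspidal ray to the image of $\mathfrak{r}_i$ in $H' \backslash \mathfrak{t}$; after shrinking $\mathfrak{r}_i$ to a subray on which (d) and (e) hold for $H'$, each such image is itself cuspidal. Surjectivity of $p$ then forces these images, together with the finite image of $Y_{H_0}$, to cover $H' \backslash \mathfrak{t}$, which gives (b). For (a) and (c) I would keep exactly one ray per $H'$-orbit of ends: since two $H_0$-cuspidal rays whose ends are $H'$-equivalent have eventually equal images, discarding the redundant ones costs no coverage, while arranging that the retained ends form a representative system of $H' \backslash \mathbb{P}^1(k)$ and that the retained images are pairwise disjoint.

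The subgroup case (I) is where the real work lies, and I would organise it around the preimage of a single cuspidal ray. Write $H \backslash \mathfrak{t} = Y_H \cup \bigcup_i \overline{\mathfrak{r}_i}$, lift $\overline{\mathfrak{r}_i}$ to a ray $w_0, w_1, \dots$ in $\mathfrak{t}$, and consider $p \colon H_0 \backslash \mathfrak{t} \to H \backslash \mathfrak{t}$. Since $p^{-1}(Y_H)$ is finite, everything comes down to describing $p^{-1}(\overline{\mathfrak{r}_i})$. First I would observe that the fibre sizes $|p^{-1}(\overline{w}_n)| = |H_0 \backslash H / \mathrm{Stab}_H(w_n)|$ are non-increasing in $n$, because condition (d) makes the groups $\mathrm{Stab}_H(w_n)$ grow, which can only merge double cosets; being bounded by $[H:H_0]$, they stabilise at some value $m_i$. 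Next, conditions (d) and (e) for $H$ single out $w_{n+1}$ as the unique neighbour of $w_n$ fixed by $\mathrm{Stab}_H(w_n)$ and the unique neighbour lying over $\overline{w}_{n+1}$; translating by $H$, every vertex of $p^{-1}(\overline{w}_n)$ then has exactly one ``forward'' neighbour over $\overline{w}_{n+1}$. A short counting argument (each vertex over $\overline{w}_{n+1}$ has a non-empty backward fibre, and the two fibres have equal size) shows that beyond the stabilisation level these forward maps are bijections, so that $p^{-1}(\overline{\mathfrak{r}_i})$, outside a finite initial part, is a disjoint union of $m_i$ rays.

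To finish case (I), each of those $m_i$ rays lifts to a ray in $\mathfrak{t}$ whose end is an $H$-translate of the rational end of $\mathfrak{r}_i$, hence rational; passing to a subray via the remark after Definition \ref{def good quotient} makes (d) and (e) hold for $H_0$, so the image in $H_0 \backslash \mathfrak{t}$ is cuspidal. Declaring $Y_{H_0}$ to be $p^{-1}(Y_H)$ together with the finitely many truncated initial segments yields (b); the forward-bijection, combined with condition (c) for $H$ across distinct $i$, yields the disjointness in (c); and since the $m_i$ disjoint cuspidal rays over $\overline{\mathfrak{r}_i}$ have distinct ends exhausting the $H_0$-orbits inside $H \cdot s_i$, ranging over $i$ produces a representative system for $H_0 \backslash \mathbb{P}^1(k)$, which is (a). I expect the main obstacle to be exactly this preimage analysis: proving that the forward maps stabilise to bijections, so that the preimage of a cuspidal ray eventually decomposes into disjoint rays rather than a subgraph of persistently higher valency. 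This is the step that genuinely uses conditions (d) and (e) for $H$, and everything else is bookkeeping around the finite maps $p$.
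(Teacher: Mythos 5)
Your overall architecture is the right one and matches the paper's: reduce to the two cases of a finite-index subgroup and a finite-index overgroup (the paper's Propositions \ref{prop comb finito down} and \ref{prop comb finito up}), and analyse the finite-to-one projection between quotient graphs. Your double-coset count $|H_0\backslash H/\mathrm{Stab}_H(w_n)|$ and the forward-bijection argument beyond the stabilisation level are a legitimate variant of the paper's argument, which instead tracks the increasing chain $\mathrm{Stab}_H(v_n)/\mathrm{Stab}_{H_0}(v_n)$ inside the finite group $H/H_0$.

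There is, however, a genuine gap, and it sits exactly where the paper does its real work. You announce that you will ``use freely'' that any ray with rational end has a subray satisfying (d) and (e) \emph{for whatever group is under consideration}, citing the remark after Definition \ref{def good quotient}. That remark is stated as a \emph{consequence} of the group in question closing enough umbrellas; it is not a fact about arbitrary discrete subgroups, and invoking it for $H_0$ and $H'$ is circular, since whether those groups close enough umbrellas is precisely what you are proving. Condition (e) for the subgroup $H_0$ is the delicate point: $\mathrm{Stab}_{H_0}(v_n)$ is smaller than $\mathrm{Stab}_H(v_n)$ and need not a priori act transitively on the remaining neighbours, and the paper's Proposition \ref{prop comb finito down} devotes its main computation (stabilisation of the quotients in $H/H_0$, then an index count using finiteness of stabilisers, i.e.\ discreteness of $H$) to exactly this. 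Your vertex-fibre count only yields the weaker statement that the backward neighbours of $v_n$ lie in a single $H_0$-orbit (valency two in the quotient), not that $\mathrm{Stab}_{H_0}(v_n)$ acts transitively on them; to extract (e) you would need to run the same count on \emph{edge} fibres over $(\overline{w}_{n-1},\overline{w}_n)$, which you do not do. Symmetrically, in the overgroup case your one-line claim that the image of a cuspidal ray is eventually cuspidal silently assumes that no two vertices of the lifted ray become $H'$-equivalent; ruling out this folding is the content of the paper's Lemma \ref{lemma comb finite} (and it is also what delivers condition (d) for $H'$, which does not follow from (d) for $H_0$). Both gaps are fillable with the machinery you already set up, but as written the proof assumes the heart of the statement.
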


In order to prove this theorem, we analyze separately in the following two proposition the cases of subgroups of $H$ and of groups containing $H$.

\begin{prop}\label{prop comb finito up}
Let $H$ be a subgroup of $\mathrm{GL}_2(k)$. Assume that $H$ closes enough umbrellas. Let $H'\subset \mathrm{GL}_2(k)$ be a group containing $H$ as a finite index normal subgroup. Then, $H'$ also closes enough umbrellas.
\end{prop}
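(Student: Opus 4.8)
The plan is to build, from the data witnessing that $H$ closes enough umbrellas, an analogous family of rays for $H'$ and to verify conditions (a)--(e) of Definition \ref{def good quotient} one at a time. Write $m=[H':H]$ and $Q=H'/H$, a finite group, and let $\mathfrak{R}_H=\{\mathfrak{r}_i\}_{i=1}^{\gamma}$ be the rays witnessing the hypothesis, with ends $s_i=\partial_{\infty}(\mathfrak{r}_i)\in\mathbb{P}^1(k)$ and vertex sets $\{v_n(i)\}$. Since $H$ is normal of finite index in $H'$, the finite group $Q$ acts on the combinatorially finite graph $H\backslash\mathfrak{t}$ and $H'\backslash\mathfrak{t}=Q\backslash(H\backslash\mathfrak{t})$; this identity is the structural backbone of the argument. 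The family $\mathfrak{R}_{H'}$ will consist of suitable subrays of a sub-collection of the $\mathfrak{r}_i$.

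First I would settle (a). Using $H'=\bigsqcup_j g_jH$ and normality (so $g_jH\cdot x=Hg_j\cdot x$), every $H'$-orbit in $\mathbb{P}^1(k)$ is a union of at most $m$ of the $H$-orbits, hence the canonical surjection $H\backslash\mathbb{P}^1(k)\twoheadrightarrow H'\backslash\mathbb{P}^1(k)$ shows $H'\backslash\mathbb{P}^1(k)$ is finite. Since, by (a) for $H$, the ends $\{s_i\}$ form a system of representatives of $H\backslash\mathbb{P}^1(k)$, I can choose $J\subseteq\{1,\dots,\gamma\}$ so that $\{s_i:i\in J\}$ is a transversal for $H'\backslash\mathbb{P}^1(k)$; as truncation does not change an end, (a) will hold for any subrays of $\{\mathfrak{r}_i\}_{i\in J}$.

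Conditions (b) and (c) I would obtain from $H'\backslash\mathfrak{t}=Q\backslash(H\backslash\mathfrak{t})$. The graph $H\backslash\mathfrak{t}=Y_H\cup\bigcup_i\overline{\mathfrak{r}_i}$ is combinatorially finite and $Q$ acts on it by automorphisms; automorphisms preserve the intrinsic cusp set, so $Q$ permutes $\partial^{\infty}(H\backslash\mathfrak{t})$ with orbit set $H'\backslash\mathbb{P}^1(k)$. For a cusp $\overline{\mathfrak{r}_i}$ its setwise $Q$-stabilizer acts on the half-line $\overline{\mathfrak{r}_i}$ while fixing its end; since a ray admits no nontrivial end-fixing automorphism, this stabilizer fixes a sub-tail pointwise, and as $Q$ is finite a common sub-tail $T_i$ is fixed. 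Hence each $T_i$ with $i\in J$ embeds in $Q\backslash(H\backslash\mathfrak{t})$ as a cuspidal ray, sub-tails from distinct $Q$-orbits have disjoint images, and the complement (the finite core together with finitely many initial segments) maps onto a finite graph. Pulling back through $\mathfrak{t}\to H\backslash\mathfrak{t}$ yields subrays $\mathfrak{r}'_i\subseteq\mathfrak{r}_i$, $i\in J$, realizing (b) and (c) for $H'$.

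The remaining conditions (d) and (e) are exactly the local ``closing umbrellas'' requirements, and I expect them to be the main obstacle: passing from $H$ to the larger $H'$ enlarges the vertex stabilizers and could a priori fuse the forward (cuspidal) direction with an umbrella direction, destroying (d). I would control this as follows. Set $B=\mathrm{Stab}_{H'}(s_i)$ and $B_0=\mathrm{Stab}_H(s_i)=B\cap H$, so $B_0\trianglelefteq B$ with $[B:B_0]\le m$. By (c) for $H$ no two vertices of $\mathfrak{r}_i$ are $H$-equivalent, so $B_0$ contains no hyperbolic element with axis ending at $s_i$ (such an element would translate the ray); since a power of a hyperbolic element is again hyperbolic and $[B:B_0]<\infty$, the group $B$ is likewise free of such elements, whence $B$ fixes a tail of $\mathfrak{r}_i$ pointwise. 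Working in the ball model with $s_i=\infty$, an element of $H'$ fixing a ball of arbitrarily large radius must fix $\infty$ (its complement is a shrinking neighbourhood of the moved point); combined with the uniform bound $[\mathrm{Stab}_{H'}(v_n(i)):\mathrm{Stab}_H(v_n(i))]\le m$ and discreteness, only finitely many elements of $H'$ fix some $v_n(i)$ while moving $s_i$, and beyond their bounded fixed subtrees one has $\mathrm{Stab}_{H'}(v_n(i))=B$, which gives (d). Finally (e) is inherited: by (d) and (e) for $H$ one has $\mathrm{Stab}_H(v_n(i))\subseteq B_0\subseteq B$, acting transitively on the neighbours of $v_n(i)$ other than $v_{n+1}(i)$, and $B$ fixes $v_{n+1}(i)$, so the larger group $B$ remains transitive on that set. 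Intersecting the truncations from all three steps produces the family $\mathfrak{R}_{H'}$, and $H'$ closes enough umbrellas.
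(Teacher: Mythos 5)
Your handling of conditions (a), (b), (c) and the inheritance of (e) is essentially correct and follows the same route as the paper: the identity $H'\backslash\mathfrak{t}=Q\backslash(H\backslash\mathfrak{t})$ for the finite group $Q=H'/H$, together with the observation that a finite-order automorphism preserving the end of a cuspidal ray must fix a tail of it pointwise, is precisely the content of the paper's Lemma on quotients of combinatorially finite graphs by finite groups. The genuine gap is in your argument for (d).

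The claim that $B=\mathrm{Stab}_{H'}(s_i)$ fixes a tail of $\mathfrak{r}_i$ pointwise is false in general, and so is the conclusion that $\mathrm{Stab}_{H'}(v_n(i))=B$ for $n$ large. Already for $H=H'=\mathrm{GL}_2(\mathbb{F}[t])$ and the standard ray towards $\infty$ (Nagao's situation), $B$ is the Borel subgroup of upper triangular matrices: every element is elliptic and fixes $\infty$, but the unipotent $\left(\begin{smallmatrix}1&b\\0&1\end{smallmatrix}\right)$ fixes a vertex of the ray only once its distance from the base vertex is at least $\deg(b)$, and since $\deg$ is unbounded on $\mathbb{F}[t]$ no tail is fixed by all of $B$; moreover $\mathrm{Stab}_{H'}(v_n)$ is a strictly increasing chain of proper subgroups of $B$. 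Your deduction ``no hyperbolic elements, hence a common fixed tail'' is only valid for a \emph{finite} group, where one intersects finitely many fixed rays, and $B$ is typically infinite. Likewise, ``only finitely many elements of $H'$ fix some $v_n(i)$ while moving $s_i$'' is unjustified: the cosets of $\mathrm{Stab}_H(v_n(i))$ inside $\mathrm{Stab}_{H'}(v_n(i))$ that move $s_i$ have cardinality $|\mathrm{Stab}_H(v_n(i))|$, which grows with $n$, so the bound on the index gives nothing. The way to close the argument --- and what the paper does --- is to combine two facts you already have: condition (e) for $H'$ (inherited because $\mathrm{Stab}_H(v_n)\subseteq\mathrm{Stab}_{H'}(v_n)$ and the smaller group is already transitive on the neighbours other than $v_{n+1}$), and the fact, coming from your fixed sub-tails $T_i$, that no two vertices of the truncated ray are $H'$-equivalent. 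Then if some $g\in\mathrm{Stab}_{H'}(v_n)$ moved $v_{n+1}$, the vertex $g\cdot v_{n+1}$ would be a neighbour of $v_n$ other than $v_{n+1}$, hence in the $\mathrm{Stab}_{H'}(v_n)$-orbit of $v_{n-1}$ by (e), making $v_{n-1}$ and $v_{n+1}$ $H'$-equivalent --- a contradiction. This gives (d) with no analysis of the end stabilizer at all. (Two minor points: you invoke discreteness of $H'$, which is not a hypothesis of this proposition; and in your final step for (e) you again conflate $\mathrm{Stab}_{H'}(v_n)$ with $B$, although the correct inclusion $\mathrm{Stab}_H(v_n)\subseteq\mathrm{Stab}_{H'}(v_n)$ is all that is needed there.)
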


In order to prove the proposition, we need the following lemma.

\begin{lemma}\label{lemma comb finite}
Let $\mathfrak{g}$ be a combinatorially finite graph, and let $G$ be a finite group acting on this graph. Then, each cuspidal ray $\mathfrak{r}$ in $\mathfrak{g}$ has a finite number of vertices in the same $G$-orbit. In particular, $\mathfrak{r}$ has a subray whose image in $G \backslash \mathfrak{g}$ is a cuspidal ray, and hence $G \backslash \mathfrak{g}$ is a combinatorially finite graph.

\end{lemma}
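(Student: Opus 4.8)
The plan is to exploit the finiteness of $G$ together with the fact that a combinatorially finite graph has only finitely many ends. Write $\mathfrak{g}=Y\cup\mathfrak{r}_1\cup\dots\cup\mathfrak{r}_m$ as a finite graph $Y$ with finitely many cuspidal rays attached, so that $\partial_{\infty}(\mathfrak{g})$ is a finite set on which $G$ acts by permutations. Fix one cuspidal ray $\mathfrak{r}$, with vertices $v_0,v_1,v_2,\dots$ (each $v_n$, $n>0$, of valency two in $\mathfrak{g}$), let $\xi=\partial_{\infty}(\mathfrak{r})$ be its end, and let $G_\xi=\mathrm{Stab}_G(\xi)$ be the stabilizer of $\xi$ in $G$. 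Everything reduces to understanding how $G$ acts on a far-out tail $\{v_n:n\geq N\}$ of $\mathfrak{r}$.

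The technical heart is the claim that every $g\in G_\xi$ fixes some tail of $\mathfrak{r}$ pointwise. To see this, note that for $n$ large the tail is a line of valency-two vertices, and since $g$ preserves $\xi$ the ray $g(\mathfrak{r})$ is equivalent to $\mathfrak{r}$; by the definition of equivalence of rays, $g$ acts on the tail as an index shift $v_n\mapsto v_{n+s}$ for a fixed $s$ and all large $n$ (an orientation-reversing ``flip'' is impossible, as the decreasing direction runs into the finite graph $Y$ and cannot represent the end $\xi$). Because $G$ is finite, $g$ has finite order, so iterating $v_n\mapsto v_{n+s}$ forces $s=0$; hence $g$ fixes $v_n$ for all large $n$. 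As $G_\xi$ is finite, a single index $N_1$ works simultaneously for all $g\in G_\xi$.

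Next I would treat the elements outside $G_\xi$. If $g\notin G_\xi$ then $g(\xi)\neq\xi$, so $g$ carries a tail of $\mathfrak{r}$ onto a ray representing a different end; such a ray is eventually disjoint from the tail of $\mathfrak{r}$, and again finiteness of $G$ yields a uniform $N_2$ beyond which $g(v_n)\notin\{v_m:m\geq N_2\}$ for every $g\notin G_\xi$. Setting $N=\max(N_1,N_2)$, I obtain that for $n\geq N$ the only elements of $G$ sending $v_n$ back into the tail are those of $G_\xi$, which fix it; in particular distinct vertices $v_n,v_m$ with $n,m\geq N$ never lie in a common $G$-orbit. This already proves the first assertion: each $G$-orbit meets $\mathfrak{r}$ in at most $|G|$ vertices, and all coincidences of orbits among vertices of $\mathfrak{r}$ occur inside the finite initial segment $\{v_0,\dots,v_N\}$.

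Finally, for the subray, observe that for $n>N$ one has $\mathrm{Stab}_G(v_n)=G_\xi$, and by the claim this stabilizer fixes $v_{n-1},v_n,v_{n+1}$ and hence both edges at $v_n$. Thus in $G\backslash\mathfrak{g}$ the image $\overline{v}_n$ again has valency two, its two neighbors $\overline{v}_{n-1},\overline{v}_{n+1}$ are distinct (their preimages lie in distinct orbits), and the quotient map is injective on the tail and free of inversions there; so the image of the subray starting at $v_{N+1}$ is a cuspidal ray in $G\backslash\mathfrak{g}$. Applying this to each of the finitely many cuspidal rays $\mathfrak{r}_j$, and noting that the image of the finite graph $Y$ is finite, shows that $G\backslash\mathfrak{g}$ is the union of a finite graph with finitely many cuspidal rays, i.e.\ combinatorially finite. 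The main obstacle is the uniformity in the two middle steps — producing a single $N$ beyond which $G_\xi$ acts trivially and $G\setminus G_\xi$ moves the tail away — which is precisely where the hypotheses that $G$ is finite and that $\mathfrak{g}$ has finitely many ends are essential.
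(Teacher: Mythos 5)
Your proof is correct and follows essentially the same route as the paper's: you split $G$ according to whether an element preserves the end of the cuspidal ray, use finiteness of $G$ (finite order forcing the shift $s=0$, which is the paper's chain-of-inclusions argument) to show end-preserving elements fix a tail pointwise, and use eventual disjointness for the remaining elements before passing to a subray whose image is cuspidal. The only cosmetic difference is that you organize the argument around the stabilizer $G_\xi$ of the end rather than the paper's case analysis on visual limits, but the content is identical.
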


\begin{proof}
By definition we have that there exists a set of rays $\mathfrak{R}=\lbrace \tilde{\mathfrak{r}}_i \rbrace_{i=1}^{\gamma}$ all contained in $\mathfrak{g}$, such that $\mathfrak{g}$ is obtained by attaching all $\tilde{\mathfrak{r}}_i$ to a certain finite graph $Y$. Let $\tilde{\mathfrak{r}}$ be a ray in $\mathfrak{R}$. Since $G$ acts simplicially on $\mathfrak{g}$, for each $g \in G$, the graph $g \cdot \tilde{\mathfrak{r}}$ is also a ray in $\mathfrak{g}$. Then, since $\mathfrak{g}$ is combinatorially finite, $g \cdot \tilde{\mathfrak{r}}$ has the same visual limit as some ray in $\mathfrak{R}$. First assume that $\partial_{\infty}(\tilde{\mathfrak{r}})=\partial_{\infty}(g \cdot \tilde{\mathfrak{r}})$. Then, $\mathfrak{r}^{\circ}:= \tilde{\mathfrak{r}} \cap (g \cdot \tilde{\mathfrak{r}})$ is a ray. Since each non-initial vertex of $\tilde{\mathfrak{r}}$ and $g \cdot \tilde{\mathfrak{r}}$ has valency two, we get $\mathfrak{r}^{\circ}=  \tilde{\mathfrak{r}}$ or $\mathfrak{r}^{\circ}=g \cdot \tilde{\mathfrak{r}}$. In other words, $\tilde{\mathfrak{r}} \subseteq g \cdot \tilde{\mathfrak{r}}$ or $\tilde{\mathfrak{r}} \supseteq g \cdot \tilde{\mathfrak{r}}$. Assume that $\tilde{\mathfrak{r}} \subseteq g \cdot \tilde{\mathfrak{r}}$, then 
$$\tilde{\mathfrak{r}}\subseteq g \cdot \tilde{\mathfrak{r}} \subseteq \cdots \subseteq g^k \cdot \tilde{\mathfrak{r}} \subseteq g^{k+1} \cdot \tilde{\mathfrak{r}}, \text{ for all } k \in \mathbb{Z}_{\geq 0}.$$
Since $G$ is finite, we get that $\tilde{\mathfrak{r}}=g \cdot \tilde{\mathfrak{r}}$. By an analogous argument we also prove that $\tilde{\mathfrak{r}}=g \cdot \tilde{\mathfrak{r}}$, when $\tilde{\mathfrak{r}} \supseteq g \cdot \tilde{\mathfrak{r}}$. We conclude that $g$ fixes every vertex in this case.

Now, assume that the visual limit of $g \cdot \tilde{\mathfrak{r}}$ is not $\partial_{\infty}(\tilde{\mathfrak{r}})$. Then, $\tilde{\mathfrak{r}} \cap (g \cdot \tilde{\mathfrak{r}})$ is a finite graph. So, for each index $i$, we define the ray $\tilde{\mathfrak{r}}_i'$ as the unique unbounded connected component of
$$\tilde{\mathfrak{r}}_i \smallsetminus \left( \bigcup_{\substack{ h \in G \\ \partial_{\infty}(\tilde{\mathfrak{r}}_i) \neq \partial_{\infty}(h \cdot \tilde{\mathfrak{r}}_i) } } \tilde{\mathfrak{r}}_i \cap (h \cdot \tilde{\mathfrak{r}}_i) \right) .  $$
By definition, and by the final statement in last paragraph, the ray $\tilde{\mathfrak{r}}_i'$ does not have two vertices in the same $G$-orbit. Since $\tilde{\mathfrak{r}}_i$ and $\tilde{\mathfrak{r}}_i'$ differ by a finite graph, the first assertion follows. In order to prove the last assertion, we say that $\tilde{\mathfrak{r}}_i'$ and $\tilde{\mathfrak{r}}_j'$ are $G$-equivalent if $\partial_{\infty}(\tilde{\mathfrak{r}}'_i) = \partial_{\infty}(g \cdot \tilde{\mathfrak{r}}_j')$, for some $g=g(i,j) \in G$. So, we define $\mathfrak{r}''_i \subseteq G \backslash \mathfrak{g}$ as the intersection of the images by $\pi:\mathfrak{g} \to G \backslash \mathfrak{g}$ of all rays $\tilde{\mathfrak{r}}_j'$ in the $G$-equivalence class of $\tilde{\mathfrak{r}}_i'$. We claim that $\mathfrak{r}''_i$ is a cuspidal ray in $G \backslash \mathfrak{g}$. Indeed, any element $g \in G$ sending $\partial_{\infty}(\tilde{\mathfrak{r}}'_i)$ to $\partial_{\infty}(\tilde{\mathfrak{r}}'_j)$ gives an injective simplicial correspondence between the vertices in either ray, whose image contains a pre-image in $\mathfrak{g}$ of $\mathfrak{r}_i''$. This correspondence is independent on the choice of $g$, since a different choice $g'$ defines an element $g'g^{-1}$ fixing every vertex in $\tilde{\mathfrak{r}}'_i$. This proves the claim. Finally, let us define $Y''$ as the union of $\pi(Y)$ with all $\pi(\tilde{\mathfrak{r}}_j) \smallsetminus \mathfrak{r}''_i$, for all pairs $(i,j)$ whose corresponding rays $\tilde{\mathfrak{r}}_i'$ and $\tilde{\mathfrak{r}}_j'$ are $G$-equivalent. Thus, $G \backslash \mathfrak{g}$ is obtained by attaching all $\mathfrak{r}''_i$ to the finite graph $Y''$.
\end{proof}

\begin{proof}[Proof of Proposition \ref{prop comb finito up}]
By hypothesis there exists a family of rays $\mathfrak{R}_{H}=\lbrace \mathfrak{r}_j \rbrace_{j=1}^{\gamma}$ satisfying (a), (b), (c), (d) and (e) in Definition \ref{def good quotient}. For all index $j$, we denote by $\xi_j $ the visual limit of $\mathfrak{r}_j$, and by $\lbrace v_n(\xi_j) \rbrace_{n=1}^{\infty}$ the vertex set of $\mathfrak{r}_j$. So, we have $\mathbb{P}^1(k)= H \cdot \lbrace \xi_j \rbrace_{j=1}^{\gamma} $.

Let $\lbrace \omega_i \rbrace_{i=1}^{\delta}$ be a set of representatives of $H' \backslash \mathbb{P}^1(k)$. Then, each $\omega_i$ can be written as $\omega_i=h \cdot \xi_j$ for some suitable index $j=j(i)$ and some suitable element $h=h(i) \in H$. Thus, we define $\widehat{\mathfrak{r}}'_i$ as the intersection of $h \cdot \mathfrak{r}_j$ with the unique ray in $\mathfrak{t}$ joining $B_0^{|0|}$ with $\omega_i$. Let us write $\mathrm{V}(\widehat{\mathfrak{r}}'_i)=\lbrace v_n(\omega_i) \rbrace_{n=1}^{\infty}$, where $v_{n}(\omega_i)$ and $v_{n+1}(\omega_i)$ are neighbors. By definition, for each vertex $v_n(\omega_i)$, there exists $m=m(n)\in \mathbb{Z}_{>0}$ such that $v_n(\omega_i)=h \cdot v_m(\xi_j)$. Thus, we have $\mathrm{Stab}_H(v_{n}(\omega_i))=h \mathrm{Stab}_H(v_{m}(\xi_j)) h^{-1}$, where $H \subseteq H'$. Hence, condition (e) follows.

Let $G$ be the finite group $H'/H$. Note that $H' \backslash \mathbb{P}^1(k)= G \backslash (H \backslash \mathbb{P}^1(k))$. Moreover, note that the quotient graph $H' \backslash \mathfrak{t}$ is the quotient of the combinatorially finite graph $H \backslash \mathfrak{t}$ by the finite group $G$. Then, it follows from Lemma \ref{lemma comb finite} that $H' \backslash \mathfrak{t}$ is combinatorially finite, and that for each ray $\overline{\mathfrak{r}_j}$ in $H \backslash \mathfrak{t}$ there exists a subray $\tilde{\mathfrak{r}_j}^{\circ}$ not containing two vertices in the same $G$-orbit.

Let $\mathfrak{r}_j^{\circ} \subseteq \mathfrak{r}_j\subset \mathfrak{t}$ be a lift of $\tilde{\mathfrak{r}_j}^{\circ}$. So, for each index $i$, we define $\mathfrak{r}_i'$ as the intersection of $\widehat{\mathfrak{r}}_i'$ with $h(i) \cdot \mathfrak{r}_j^{\circ}$. We write $\mathrm{V}(\mathfrak{r}_i')=\lbrace v_n(\omega_i) \rbrace_{n=N_i}^{\infty}$, where $N_i>0$. Then, for each $n \geq N_i+1$, the vertices $v_{n-1}(\omega_i)$ and $v_{n+1}(\omega_i)$ are not in the same $H'$-orbit. So, since, by condition (e), all other neighbors are in the same $\mathrm{Stab}_{H'}(v_n(\omega_i))$-orbit as $v_{n-1}(\omega_i)$, we see that $\mathrm{Stab}_{H'}(v_n(\omega_i))$ stabilizes $v_{n+1}(\omega_i)$, i.e. condition (d) holds.

In order to check condition (c) on $\mathfrak{R}_{H'}:= \lbrace \mathfrak{r}_i' \rbrace_{i=1}^{\delta}$, we just have to prove the projections $\overline{\mathfrak{r}_i'}$ and $\overline{\mathfrak{r}_l'}$ to $H' \backslash \mathfrak{t}$ do not intersect when $i \neq l$ in $\lbrace 1, \cdots, \delta \rbrace$. Indeed, it follows from Lemma \ref{lemma comb finite}, and the construction of the rays $\mathfrak{r}_i'$, that $\overline{\mathfrak{r}_i'} \cap \overline{\mathfrak{r}_l'} \neq \emptyset $ if and only if $\overline{\mathfrak{r}_i'}=\overline{\mathfrak{r}_l'}$, and also if and only if their visual limits coincide. By definition, the last assertion does not hold if $i \neq l$. Finally, condition (b) is an immediate consequence of Lemma \ref{lemma comb finite}, and condition (a) is immediate by construction.
\end{proof}

\begin{prop}\label{prop comb finito down}
Let $H$ be a discrete subgroup of $\mathrm{GL}_2(k)$. Assume that $H$ closes enough umbrellas. Then, any finite index subgroup $H_0$ of $H$ closes enough umbrellas.
\end{prop}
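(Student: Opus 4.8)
The plan is to use that the inclusion $H_0\subseteq H$ is finite-to-one on orbits, so that the projection $p\colon H_0\backslash\mathfrak{t}\to H\backslash\mathfrak{t}$ is finite-to-one, and then to repair conditions (d) and (e) of Definition \ref{def good quotient} by truncating rays, exactly along the lines of the remark following that definition. Concretely, for a vertex (resp.\ edge) $x$ of $\mathfrak{t}$ the fibre of $p$ over $[x]_H$ is the image of the finite set $H_0\backslash H$ under $H_0h\mapsto[hx]_{H_0}$, so $p$ is at most $[H:H_0]$-to-one on vertices and on edges. The same orbit-counting shows that $H_0\backslash\mathbb{P}^1(k)$ is a finite-to-one cover of the finite set $H\backslash\mathbb{P}^1(k)$ (finite by (a) for $H$), hence finite; fixing coset representatives $H=\bigsqcup_{l=1}^{m}H_0h_l$ and using $\mathbb{P}^1(k)=\bigcup_j H\cdot\xi_j$, the finite family $\{h_l\xi_j\}$ surjects onto $H_0\backslash\mathbb{P}^1(k)$, and after discarding repetitions I obtain representatives $\omega_1,\dots,\omega_\delta$ with $\omega_i=h_{l(i)}\cdot\xi_{j(i)}$ pairwise $H_0$-inequivalent. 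This is condition (a).

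For the rays I would take, for each $i$, the translate $h_{l(i)}\cdot\mathfrak{r}_{j(i)}$, a ray of $\mathfrak{t}$ with visual limit $\omega_i$ and vertices $v_n(\omega_i)=h_{l(i)}\cdot v_n(\xi_{j(i)})$. Condition (d) for $H_0$ comes for free: along this ray the $H$-stabilizers are the $h_{l(i)}$-conjugates of the nested chain $\mathrm{Stab}_H(v_n(\xi_{j(i)}))\subseteq\mathrm{Stab}_H(v_{n+1}(\xi_{j(i)}))$, and intersecting a nested chain with $H_0$ leaves it nested, so $\mathrm{Stab}_{H_0}(v_n(\omega_i))\subseteq\mathrm{Stab}_{H_0}(v_{n+1}(\omega_i))$.

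The crux is condition (e). Since $\mathrm{Stab}_{H_0}(v_n(\omega_i))=\mathrm{Stab}_H(v_n(\omega_i))\cap H_0$ is merely a finite-index subgroup of $\mathrm{Stab}_H(v_n(\omega_i))$, transitivity on the off-ray neighbours can genuinely fail, and correspondingly a valency-two vertex of $\overline{\mathfrak{r}_j}\subset H\backslash\mathfrak{t}$ may acquire preimages of larger valency under $p$; this is the only mechanism by which combinatorial finiteness could be lost. The resolution is to pass to a subray: as $H_0$ is again a discrete subgroup of $\mathrm{GL}_2(k)$ and $\omega_i\in\mathbb{P}^1(k)$, the general statement recorded at the end of Definition \ref{def good quotient}, which rests on Serre \cite[Chapter II, \S 2.1--\S 2.3]{Se2}, furnishes a subray $\mathfrak{r}_i'\subseteq h_{l(i)}\cdot\mathfrak{r}_{j(i)}$ on which both (d) and (e) hold for $H_0$. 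I expect this to be the main obstacle: one must check that the ascending stabilizers $\mathrm{Stab}_{H_0}(v_n(\omega_i))$, finite at each stage by discreteness of $H_0$, recover transitivity on the off-ray neighbours once $n$ is large enough, and that this truncation is consistent with the decomposition of $H_0\backslash\mathfrak{t}$ coming from $p$.

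It then remains to assemble $\mathfrak{R}_{H_0}=\{\mathfrak{r}_i'\}_{i=1}^{\delta}$ and verify (b) and (c). Since each $\overline{\mathfrak{r}_i'}$ is now cuspidal in $H_0\backslash\mathfrak{t}$, the complement of $\bigcup_i\overline{\mathfrak{r}_i'}$ maps under $p$ into $Y_H$ together with the finitely many high-valency tops of the rays $\overline{\mathfrak{r}_j}$; being finite-to-one over a finite set, it is finite, which is (b). Condition (c) follows as in the proof of Proposition \ref{prop comb finito up}: by combinatorial finiteness two cuspidal rays with distinct ends have disjoint images and a single cuspidal ray injects once truncated far enough out, so the pairwise $H_0$-inequivalence of the $\omega_i$ gives $\overline{\mathfrak{r}_i'}\cap\overline{\mathfrak{r}_l'}=\emptyset$ for $i\neq l$ and no repeated $H_0$-orbit within any $\mathfrak{r}_i'$. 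As (a) was already arranged, this completes the verification that $H_0$ closes enough umbrellas.
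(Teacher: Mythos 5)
Your setup (representatives $\omega_i=h_{l(i)}\cdot\xi_{j(i)}$, translated rays, and the verification of (a), (c), (d) and (b)) matches the paper's, but at the step you yourself identify as the crux --- condition (e) --- your argument has a genuine gap. You invoke ``the general statement recorded at the end of Definition \ref{def good quotient}'' to produce a subray of $h_{l(i)}\cdot\mathfrak{r}_{j(i)}$ on which (d) and (e) hold for $H_0$. But that statement is a \emph{consequence} of the hypothesis that the group in question closes enough umbrellas; applying it to $H_0$ presupposes exactly what you are trying to prove. The reference to Serre covers net subgroups (and, via Corollary \ref{Cor comb fin}, groups commensurable with $\mathrm{GL}_2(R')$, but only through Theorem \ref{Teo comb finito}, which itself rests on this proposition), and an arbitrary finite-index subgroup of an arbitrary discrete $H$ that closes enough umbrellas is neither. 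So the claim that transitivity on the off-ray neighbours is ``recovered once $n$ is large enough'' is asserted, not proved, and it is precisely here that the finite-index hypothesis must enter: for an infinite-index subgroup the claim is simply false.

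The missing idea is a stabilization-and-counting argument. First reduce to the case where $H_0$ is normal in $H$ (a finite-index subgroup contains a finite-index normal subgroup, and Proposition \ref{prop comb finito up} lets you climb back up). Then the images of $\mathrm{Stab}_H(v_n(\mu_i))$ in the finite group $G=H/H_0$, which are the quotients $\mathrm{Stab}_H(v_n(\mu_i))/\mathrm{Stab}_{H_0}(v_n(\mu_i))$, form an increasing chain of subgroups of $G$ by condition (d) for $H$, hence stabilize at some $t_0$. For $n>t_0$ one gets
$$\bigl|\mathrm{Stab}_{H_0}(v_{n}(\mu_i))/\mathrm{Stab}_{H_0}(v_{n-1}(\mu_i))\bigr|=\bigl|\mathrm{Stab}_{H}(v_{n}(\mu_i))/\mathrm{Stab}_{H}(v_{n-1}(\mu_i))\bigr|$$
(all these stabilizers are finite by discreteness and compactness of the full vertex stabilizer in $\mathrm{GL}_2(K)$), so the natural injection between the two coset spaces is a bijection. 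Since condition (e) for $H$ together with the orbit--stabilizer relation identifies $\mathrm{Stab}_{H}(v_{n})/\mathrm{Stab}_{H}(v_{n-1})$ with the set of neighbours of $v_n$ other than $v_{n+1}$, the same is true for the $H_0$-cosets, which is exactly transitivity of $\mathrm{Stab}_{H_0}(v_n)$ on those neighbours. Truncating the rays at $t_0$ then yields (e), and the rest of your argument goes through. Without this counting step the proof does not close.
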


\begin{proof} Since any finite index subgroup of $H$ contains a normal subgroup, by Proposition \ref{prop comb finito up} we may assume that $H_0$ is normal in $H$. By hypothesis there exists a family of rays $\mathfrak{R}_{H}=\lbrace \mathfrak{r}_j \rbrace_{j=1}^{\gamma}$ satisfying (a), (b), (c), (d) and (e) in Definition \ref{def good quotient}. For all index $j$, we denote by $\xi_j $ the visual limit of $\mathfrak{r}_j$, and by $\lbrace v_n(\xi_j) \rbrace_{n=1}^{\infty}$ the vertex set of $\mathfrak{r}_j$. So, we have $\mathbb{P}^1(k)= H \cdot \lbrace \xi_j \rbrace_{j=1}^{\gamma} $.

Let $\lbrace \mu_i \rbrace_{i=1}^{\beta}$ be a set of representatives of $H_0 \backslash \mathbb{P}^1(k)$. Then, each $\mu_i$ can be written as $\mu_i=h \cdot \xi_j$ for some suitable index $j=j(i)$ and some suitable element $h=h(i) \in H$. Thus, we define $\widehat{\mathfrak{r}}_i= h \cdot \mathfrak{r}_j$, i.e. $\mathrm{V}(\widehat{\mathfrak{r}}_i)=\lbrace v_n(\mu_i) \rbrace_{n=1}^{\infty}$, where $v_n(\mu_i)=h \cdot v_n(\xi_j)$. So, we have 
$$\mathrm{Stab}_{H_0}(v_{n}(\mu_i))= H_0 \cap h \mathrm{Stab}_H(v_{n}(\xi_j)) h^{-1}.$$
In particular, condition (d) for $H_0$ follows immediately.

Now, we check condition (c) for $H_0$. Indeed, assume that there exist $v_0 \in  \mathrm{V}(\widehat{\mathfrak{r}}_k)$, $w_0 \in \mathrm{V}(\widehat{\mathfrak{r}}_l)$ and $h_0 \in H_0$ such that $h_0 \cdot v_0 = w_0$. Write $v_0= h(k) \cdot v$ and $w_0= h(l) \cdot w $, with $v \in \mathrm{V}(\mathfrak{r}_{j(k)})$ and $w \in  \mathrm{V}(\mathfrak{r}_{j(l)})$. Then $h \cdot v=w$ with $h= h(l)^{-1} h_0 h(k) \in H$, which contradicts condition (c) for $H$. So, condition (c) for $H_0$ follows.

Let $G$ be the finite group $H/H_0$. Let $\pi: \mathrm{Stab}_H(v_n(\mu_i)) \to G$ be the map defined by composing the natural inclusion $\mathrm{Stab}_H(v_n(\mu_i)) \to H$ with the projection $H \to G$. Since, for each $n \in \mathbb{Z}_{\geq 1}$ we have $\mathrm{ker}(\pi)=\mathrm{Stab}_{H_0}(v_n(\mu_i))$, we obtain from condition (d) for $H$ the chain of contentions
$$ \mathrm{Stab}_H(v_{1}(\mu_i))/\mathrm{Stab}_{H_0}(v_{1}(\mu_i)) \subseteq \cdots \subseteq \mathrm{Stab}_H(v_{n}(\mu_i))/\mathrm{Stab}_{H_0}(v_{n}(\mu_i)) \subseteq \cdots$$
Then, since $G$ is a finite set, there exists $t_0=t_0(i) \in \mathbb{Z}_{\geq 1}$ such that, for each $n \geq t_0$
\begin{equation}\label{eq stab in closing umbrellas}
 \mathrm{Stab}_H(v_{n}(\mu_i))/\mathrm{Stab}_{H_0}(v_{n}(\mu_i)) = \mathrm{Stab}_H(v_{n+1}(\mu_i))/\mathrm{Stab}_{H_0}(v_{n+1}(\mu_i)). 
\end{equation}
Recall that, since $K$ is locally compact, we have that $\mathrm{Stab}_{\mathrm{GL}_2(K)}(v_{n}(\mu_i))$ is compact. Then, for each discrete subgroup $D$, for instance $H$ or $H_0$, we get that $\mathrm{Stab}_{D}(v_{n}(\mu_i))$ is finite. Then, Equation \eqref{eq stab in closing umbrellas} implies that, for each $n > t_0$ 
$$
 |\mathrm{Stab}_{H_0}(v_{n}(\mu_i))/\mathrm{Stab}_{H_0}(v_{n-1}(\mu_i))|=  |\mathrm{Stab}_H(v_{n}(\mu_i))/\mathrm{Stab}_{H}(v_{n-1}(\mu_i)) | .
$$
In particular, the injective map 
$$\psi: \mathrm{Stab}_{H_0}(v_{n}(\mu_i))/\mathrm{Stab}_{H_0}(v_{n-1}(\mu_i)) \to \mathrm{Stab}_{H}(v_{n}(\mu_i))/\mathrm{Stab}_{H}(v_{n-1}(\mu_i)),$$
induced by the inclusion $\iota: \mathrm{Stab}_{H_0}(v_{n}(\mu_i)) \to \mathrm{Stab}_{H}(v_{n}(\mu_i))$, is a bijection. It follows from condition (e) for $H$ and the orbit-stabilizer relation that the set $\mathrm{Stab}_{H}(v_{n}(\mu_i))/\mathrm{Stab}_{H}(v_{n-1}(\mu_i))$ parametrizes all the neighboring vertices in $\mathfrak{t}$ of $v_n(\mu_i)$ other than $v_{n+1}(\mu_i)$. So, since $\psi$ is a bijection, we deduce that the set $ \mathrm{Stab}_{H_0}(v_{n}(\mu_i))/\mathrm{Stab}_{H_0}(v_{n-1}(\mu_i))$ also parametrizes the aforementioned set of vertices. In other words, up to replacing $\widehat{\mathfrak{r}}_i$ by the ray $\mathfrak{r}_i'$ defined by the vertex set $\lbrace v_n(\mu_i)\rbrace_{i=t_0+1}^{\infty}$, condition (e) follows for $H_0$.

Now, note that the graph $H \backslash \mathfrak{t}$ is the quotient of the graph $H_0 \backslash \mathfrak{t}$ by the finite group $G$. In particular, the pre-image of the finite graph $Y_{H}$ by the projection $H_0 \backslash \mathfrak{t} \to H \backslash \mathfrak{t} $ is a finite graph. So, since $\widehat{\mathfrak{r}}_i \smallsetminus \mathfrak{r}_i'$ is also a finite graph, we conclude that condition (b) holds for $H_0$. Condition (a) for $H_0$ follows from definition. Thus, we conclude the proof.
\end{proof}

\begin{proof}[Proof of Theorem \ref{Teo comb finito}]
Let $H_0$ be a common finite index subgroup containing in $H$ and $H'$. By replacing $H_0$ by a smaller subgroup if needed, we can assume that $H_0$ is normal in $H'$. Then, it follows from Proposition \ref{prop comb finito down} that $H_0$ closes enough umbrellas. By applying Proposition \ref{prop comb finito up} to $H_0$ and $H'$, we conclude that $H'$ also closes enough umbrellas.
\end{proof}

As in \S \ref{Section on the principal problem} and \S \ref{subsection BTT},
let $k$ be the function field of a smooth, projective, geometrically integral curve $\mathcal{C}$ defined over a field $\mathbb{F}$.
Let $Q$ be a closed point in $\mathcal{C}$, and set $U'=\mathcal{C} \smallsetminus \lbrace Q\rbrace$. Denote by $R'$ the ring of regular functions on $U'$. Let $\nu_Q$ be the discrete valuation map defined from the closed point $Q$. Let us denote by $k_Q$ the completion of $k$ with respect to $\nu_Q$. In the remaining of this section, we give a detailed description of certain quotient of the Bruhat-Tits tree $\mathfrak{t}=\mathfrak{t}(k_Q)$ defined from $\mathrm{SL}_2$ and $k_Q$. In order to do this, let us introduce the following definition:

\begin{defi}\label{definition of max and eichler orders}
A $\mathcal{C}$-order of maximal rank $\mathfrak{R}$ is a locally free sheaf of $\mathcal{O}_{\mathcal{C}}$-algebras whose generic fiber is $\mathbb{M}_2(k)$. We say that a $\mathcal{C}$-order $\mathfrak{D}$ is maximal when it is maximal with respect to inclusion. An Eichler $\mathcal{C}$-order $\mathfrak{E}$ is the sheaf-theoretical intersection of two maximal $\mathcal{C}$-orders.
\end{defi}

\begin{ex}
Let us denote by $\mathfrak{D}_0$ the sheaf $\mathfrak{D}_0=\mathbb{M}_2(\mathcal{O}_{\mathcal{C}})$. Then $\mathfrak{D}_0$ is a maximal $\mathcal{C}$-order. Moreover $\mathfrak{D}_0(U')^{*}=\mathrm{GL}_2(R')$. 
\end{ex}

\begin{corollary}\label{Cor comb fin}
Let $H\subset \mathrm{GL}_2(k)$ be a group commensurable with $\mathrm{GL}_2(R')$. Then $H$ closes enough umbrellas. In particular, for any Eichler $\mathcal{C}$-order $\mathfrak{E}$, we have that $\tilde{H}=\mathfrak{E}(U)^{*}$ and $\tilde{\Gamma}= \mathrm{Stab}_{\mathrm{GL}_2(k)} (\mathfrak{E}(U))$ close enough umbrellas.
\end{corollary}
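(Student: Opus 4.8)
The plan is to reduce Corollary~\ref{Cor comb fin} to Theorem~\ref{Teo comb finito} by exhibiting a single explicit group that closes enough umbrellas and to which all the others are commensurable. First I would invoke the result quoted just before Theorem~\ref{Teo comb finito}, namely that every net subgroup of $\mathrm{GL}_2(k)$ closes enough umbrellas (following \cite[Chapter II, \S 2.1--\S 2.3]{Se2}). Applied to the arithmetic group $\mathrm{GL}_2(R')=\mathfrak{D}_0(U')^{*}$, this requires producing a net subgroup of finite index; the standard way is to pass to a principal congruence subgroup $\ker\!\big(\mathrm{GL}_2(R')\to\mathrm{GL}_2(R'/\mathfrak{m})\big)$ for a suitable maximal ideal $\mathfrak{m}$ (or product of two such ideals), which is net because any root of unity that is an eigenvalue of a matrix congruent to the identity mod $\mathfrak{m}$ must already be $1$. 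Since $\mathbb{F}$ is finite, $R'/\mathfrak{m}$ is finite and this congruence subgroup has finite index, so $\mathrm{GL}_2(R')$ is commensurable with a net group and hence, by Theorem~\ref{Teo comb finito}, closes enough umbrellas. This settles the first sentence of the statement.

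Next I would deduce the assertion for an arbitrary $H$ commensurable with $\mathrm{GL}_2(R')$: this is now immediate from Theorem~\ref{Teo comb finito}, since commensurability is transitive and $\mathrm{GL}_2(R')$ has just been shown to close enough umbrellas. The one point that needs a word is the discreteness hypothesis in Theorem~\ref{Teo comb finito}: I would note that $\mathrm{GL}_2(R')$ is discrete in $\mathrm{GL}_2(k_Q)$ because $R'$ is the ring of functions regular away from the single point $Q$, so its elements have bounded denominators at $Q$, and any group commensurable with a discrete group is discrete.

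For the ``in particular'' clauses the remaining work is to show that $\tilde H=\mathfrak{E}(U)^{*}$ and $\tilde\Gamma=\mathrm{Stab}_{\mathrm{GL}_2(k)}(\mathfrak{E}(U))$ are commensurable with $\mathrm{GL}_2(R')$. Write the Eichler order $\mathfrak{E}$ as the intersection $\mathfrak{D}\cap\mathfrak{D}'$ of two maximal $\mathcal{C}$-orders. The key local fact is that at each closed point $P\neq Q$ the completion of $\mathfrak{E}$ differs from a maximal order only at the finitely many points where $\mathfrak{D}_P$ and $\mathfrak{D}'_P$ are non-equal, and at those points the index is finite; away from that finite set $\mathfrak{E}_P=\mathfrak{D}_{0,P}=\mathbb{M}_2(\mathcal{O}_P)$. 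Thus $\mathfrak{E}(U)$ and $\mathfrak{D}_0(U')$ are two orders in $\mathbb{M}_2(k)$ that agree at almost all places and have mutually finite index at the remaining ones, so $\mathfrak{E}(U)\cap\mathfrak{D}_0(U')$ has finite index in each; passing to unit groups gives that $\tilde H$ is commensurable with $\mathrm{GL}_2(R')$. For $\tilde\Gamma$ one notes that $\tilde H=\tilde\Gamma\cap\mathrm{GL}_2(k)^{+}$ sits inside $\tilde\Gamma$ as a finite-index subgroup, since the normalizer of an Eichler order modulo the units acts through a finite (Atkin--Lehner type) group; hence $\tilde\Gamma$ is again commensurable with $\mathrm{GL}_2(R')$. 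Applying Theorem~\ref{Teo comb finito} once more finishes both cases.

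The main obstacle I anticipate is the bookkeeping in this last paragraph: making precise the commensurability of $\mathfrak{E}(U)^{*}$ with $\mathrm{GL}_2(R')$ via a place-by-place comparison, and controlling the finite quotient by which the order-stabilizer $\tilde\Gamma$ exceeds the unit group $\tilde H$. The net-ness verification for the congruence subgroup is routine once the right ideal is chosen, and the two reduction steps are purely formal given Theorem~\ref{Teo comb finito}; it is the geometric/arithmetic identification of the relevant groups as commensurable that carries the real content.
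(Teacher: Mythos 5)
Your proposal is correct and follows essentially the same route as the paper's proof: everything is reduced to Theorem \ref{Teo comb finito} by showing that $\tilde{H}$ and $\tilde{\Gamma}$ are commensurable with $\mathrm{GL}_2(R')$ through a finite-index intersection of orders (the paper uses $\mathfrak{D}\cap\mathfrak{D}_0$ and cites \cite[Theorem 1.2]{A2} for the finite index of $\tilde H$ in $\tilde\Gamma$, exactly the place-by-place comparison and Atkin--Lehner-type finiteness you describe). The only cosmetic difference is the base case: the paper cites Serre's description of $\mathrm{GL}_2(R')\backslash\mathfrak{t}$ directly, whereas you pass through a net principal congruence subgroup and invoke Theorem \ref{Teo comb finito} once more, which is equally valid.
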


\begin{proof}
First, it follows from \cite[Chapter II, \S 2.1- \S 2.3]{S} that $\mathrm{GL}_2(R')$ closes enough umbrellas. Then, it follows from Theorem \ref{Teo comb finito} that any group $H \subset \mathrm{GL}_2(k)$ commensurable with $\mathrm{GL}_2(R')$ closes enough umbrellas.

Now, we claim that $\tilde{H}$ and $\tilde{\Gamma}$ are commensurable with $\mathrm{GL}_2(R')$. Indeed, let $\mathfrak{D}$ be a maximal $\mathcal{C}$-order containing $\mathfrak{E}$. Let us fix $\tilde{\Gamma}_0= \text{Stab}_{\text{GL}_2(k)}(\mathfrak{D}(U'))$.
Note that $\tilde{\Gamma}_0$ and $\tilde{\Gamma}$ are commensurable, since they contain the respective finite index subgroups $\tilde{H}_0= \mathfrak{D}(U')^{*}$ and $\tilde{H}$, where $\tilde{H}$ is a finite index subgroup of $\tilde{H}_0$ (cf.~\cite[Theorem 1.2]{A2}). Moreover, note that $\tilde{H}_0$ belongs to the same commensurability class as $\mathrm{GL}_2(R')$, since $\mathfrak{D} \cap \mathfrak{D}_0$ is a finite index Eichler $\mathcal{C}$-order simultaneously contained in $\mathfrak{D}$ and $\mathfrak{D}_0$.
\end{proof}

\begin{rem}
Theorem \ref{Teo comb finito} and Corollary \ref{Cor comb fin} can be easily extended to subgroups of $\mathrm{PGL}_2(k)$. 
\end{rem}

\section{Spinor class fields}\label{Section Spinor}

In this section we introduce the basic definitions and results about completions, spinor genera and spinor class fields of orders. See \cite{abelianos} for details.

We denote by $|\mathcal{C}|$ the set of closed points in the smooth projective geometrically integral curve $\mathcal{C}$, and we fix $P_{\infty} \in |\mathcal{C}|$. Let $U_0$ be the affine open set $ \mathcal{C} \smallsetminus \lbrace P_{\infty} \rbrace$. For every point $P \in |\mathcal{C}|$, we denote by $k_P$ the completion at $P$ of the function field $k=k(\mathcal{C})$, and by $\mathcal{O}_{P}$ the ring of integers of the former. For any open set $U \subseteq \mathcal{C}$, we define the ad\`ele ring $\ad_{U}$ of $U$ as the subring of $\prod_{P \in |U|} k_P$ consisting of the tuples $a=(a_{P})_{P}$ where $a_P$ lies in the ring $\mathcal{O}_{P}$ for all but finitely many closed points $P$. We also define the id\`ele group $\mathbb{I}_U$ as the group of invertible ad\`eles $\ad_U^{*}$. We write $\ad=\ad_{\mathcal{C}}$ and $\mathbb{I}=\mathbb{I}_{\mathcal{C}}$.

A $\mathcal{C}$-lattice or $\mathcal{C}$-bundle in a finite dimensional $k$-vector space $V$ is a locally free subsheaf of the constant sheaf $V$. For any sheaf of groups $\Lambda$ on $\mathcal{C}$ we denote by $\Lambda(U)$ its group of $U$-sections. In particular, this convention applies to $\mathcal{C}$-lattices. By definition, the completion at $P$ of $\Lambda$, denoted $\Lambda_P$, is the topological closure of $\Lambda(U)$ in $V_P:=V \otimes_{k} k_{P}$, where $U$ is an affine open subset containing $P$. Thus defined, $\Lambda_P$ does not depend on the choice of $U$. Note that, for every affine open subset $U\subseteq \mathcal{C}$,
the $\oink_\mathcal{C}(U)$-module $\Lambda(U)$ is an $\oink_\mathcal{C}(U)$-lattice. The same property holds for orders. As in the affine context, every $\mathcal{C}$-lattice is determined by its set of local completions $\lbrace \Lambda_P: P\in |\mathcal{C}| \rbrace$, as follows:
\begin{enumerate}
\item[(a)] For any two lattices $\Lambda$ and $\Lambda'$ in $V$, we have $\Lambda_P=\Lambda'_P$ for almost all $P$,
\item[(b)] if $\Lambda_P=\Lambda'_P$ for all $P$, then $\Lambda=\Lambda'$, and 
\item[(c)] every family $\{\Lambda''(P)\}_P$ of local lattices satisfying  $\Lambda''(P)=\Lambda_P$ for almost all $P$ is the family of completions of a global lattice $\Lambda''$ in $V$.
\end{enumerate}
In particular, the preceding properties apply for order. 
Given a finite dimensional vector space $W$ over $k$, we define its adelization $W_{\ad}$ as $W_{\ad}=W \otimes_{k} \ad$. This is a $k$-vector space isomorphic to $\ad^{\mathrm{dim}_k W}$. In particular, this definition applies to $W=\text{End}_k(V)$. We also define the adelization of a lattice $\Lambda$ by $\Lambda_{\mathbb{A}} = \prod_{P \in |\mathcal{C}|} \Lambda_P$, which is an open and compact subgroup of $V_{\mathbb{A}}$. Given an arbitrary $\mathcal{C}$-lattice $\Lambda$ and an adelic matrix  
$$a\in \mathrm{End}_\ad(V_\ad)= \big(\text{End}_k(V)\big)_{\mathbb{A}},$$
we define the adelic image $L=a\Lambda$ of $\Lambda$ as the unique $\mathcal{C}$-lattice satisfying $L_\ad=a\Lambda_\ad$. To each $\mathcal{C}$-lattice $\Lambda$ in $k^2$, we associate the $\mathcal{C}$-order $\Da_{\Lambda}=\mathrm{End}_{\oink_X}(\Lambda)$ in the matrix algebra $\matrici_2(k)$, which is defined on every open set $U\subseteq \mathcal{C}$ by
$$\Da_\Lambda(U)=\left\{a\in \matrici_2(k)\Big|a\Lambda(U)\subseteq\Lambda(U)\right\}.$$
This is a maximal $\mathcal{C}$-order in $\mathbb{M}_2(k)$ (cf.~Definition \ref{definition of max and eichler orders}). Moreover, every maximal $\mathcal{C}$-order in the two-by-two matrix algebra equals $\Da_\Lambda$, for some $\mathcal{C}$-lattice $\Lambda$ in $k^2$. In particular, if we fix a maximal $\mathcal{C}$-order $\Da$, then any other maximal $\mathcal{C}$-order in $\mathbb{M}_2(k)$ is equal to $\Da'=a\Da a^{-1}$, for some $a\in\mathrm{GL}_2(\ad)$. In general, if we fix an $\mathcal{C}$-order $\Da$ of maximal rank, then we can define the genus $\mathrm{gen}(\Da)$ of $\Da$ as the set of all $\mathcal{C}$-orders $a\Da a ^{-1}$, for $a\in \matrici_2(\ad)^*$. So, the previous statement is equivalent to the fact that the set of maximal $\mathcal{C}$-orders is a genus, which we denote by $\mathbb{O}_0$.

Let $\mathfrak{D}$ be a $\mathcal{C}$-order of maximal rank, i.e. of rank $4$. Let $U$ be either an affine open set of $\mathcal{C}$ or the full set $\mathcal{C}$. We define the $U$-spinor class field of $\mathfrak{D}$ as the field corresponding, via class field theory, to the subgroup $k^*H(\Da, U)\subseteq \mathbb{I}=\ad^*$, where
\begin{equation}\label{Eq class fields}
H(\Da, U)=\left \lbrace \mathrm{det}(a)|a\in\matrici_2(\ad)^*,\  a\Da(V) a^{-1}=\Da(V), \, \forall \, \, V \stackrel{\circ}{\subseteq} U \right \rbrace.
\end{equation}
The symbol $\stackrel{\circ}{\subseteq}$ above denotes an open subset. This field depends only on the genus $\mathbb{O}=\mathrm{gen}(\Da)$ of $\Da$, and we denote it by $\Sigma(\mathbb{O},U)$. When $U=\mathcal{C}$ we simplify the notation by using $\Sigma=\Sigma(\mathbb{O})$. Let $\mathbb{I} \to  \mathrm{Gal}(\Sigma/k)$, $t\mapsto [t,\Sigma/k]$ be the Artin map on the id\`ele group (cf.~\cite[Chapter VI, \S 5, p. 387]{Neukirch}). There exists a well-defined distance map $\rho:\mathbb{O}\times\mathbb{O}\rightarrow\mathrm{Gal}\big(\Sigma/k\big)$, given by $\rho(\Da,\Da')=[\det(a),\Sigma/k]$, where $a\in\mathrm{GL}_2(\ad)$ is any adelic element satisfying $\Da'=a\Da a^{-1}$. The distance map has a multiplicative property, in the sense that, for any tuple $(\Da,\Da',\Da'')\in\mathbb{O}^3$, it satisfies $\rho(\Da,\Da'')=\rho(\Da,\Da')\rho(\Da',\Da'')$. The kernel of $\rho$ consists of the pairs $(\Da,\Da')$ such that $\Da(U)$ and $\Da'(U)$ are $\mathrm{GL}_2(\mathcal{O}_{\mathcal{C}}(U))$-conjugate for every affine open subset $U\subseteq \mathcal{C}$. In the case of maximal orders, the map defined above is $\rho_0:\mathbb{O}_0^2\rightarrow\mathrm{Gal}\big(\Sigma(\mathbb{O}_0)/k\big)$, and it can be characterized as follows: The image of $\rho_0$ for a pair $(\Da,\Da')$ of maximal orders is given by the formula $\rho_0(\Da,\Da')=[[D(\Da,\Da'),\Sigma(\mathbb{O}_0)/k]]$, where $D\mapsto [[D,\Sigma(\mathbb{O}_0)/k]]$ is the Artin map on divisors and the divisor $D(\Da,\Da')$ is defined in Equation \eqref{eq divisor def from two max or}.

\section{Eichler orders and grids}\label{Section grids}

In this section notation is as in \S \ref{Section Spinor}. 

\subsection{The grid defined from and Eichler order}

In the local algebra $\matrici_2(k_P)$, any two $\mathcal{O}_{P}$-maximal orders are simultaneously $\mathrm{GL}_2(k_P)$-conjugate to the orders $\Da_P=\sbmattrix {\oink_P}{\oink_P}{\oink_P}{\oink_P}$ and $\Da'_P=\sbmattrix {\oink_P}{\pi_P^d\oink_P}{\pi_P^{-d}\oink_P}{\oink_P}$ for some $d \in \mathbb{Z}_{\geq 0}$, where $\pi_P$ is a local uniformizing parameter in $k_P$. So, we define the local distance $d_P$ between maximal orders in $\matrici_2(k_P)$ by taking $d_\mathfrak{p}(\Da_P,\Da'_P)=d$, where $d$ is as above. As we introduce in Definition \ref{definition of max and eichler orders}, an Eichler $\mathcal{C}$-order, or simply an Eichler order, is the intersection of two maximal $\mathcal{C}$-orders. This is a local definition in the sense that $\Da_P\cap\Da'_P=(\Da\cap\Da')_P$ for every pair of orders. Moreover, locally, for any Eichler order $\mathfrak{E}_{P}$ there exists a unique pair of maximal orders whose intersection is $\mathfrak{E}_{P}$. So, we define the level of a local Eichler order as the distance between the preceding maximal orders. Globally, there exists a well-defined distance map on the set of maximal $\mathcal{C}$-orders, whose image on a pair $(\Da, \Da')$ is the effective divisor
\begin{equation}\label{eq divisor def from two max or}
 D=D(\Da,\Da')=\sum_{P\in|\mathcal{C}|}d_P(\Da_P,\Da'_P)P.   
\end{equation}
In particular, there exists a global level defined, on an Eichler $\mathcal{C}$-order $\Ea = \Da \cap \Da'$, as the distance $D(\Da,\Da')$. A useful property of the level function is that two local Eichler orders are $\mathrm{GL}_2(k_P)$-conjugate precisely when their local levels equal. This property can be interpreted in terms of genera by saying that two Eichler $\mathcal{C}$-orders belong to the same genus exactly when they have the same global level. So, for any effective divisor $D$, there exists a genus of Eichler $\mathcal{C}$-orders of level $D$, which is denoted by $\mathbb{O}_D$.

It follows, from the characterization of the Bruhat-Tits tree in terms of maximal orders (cf.~\S \ref{Section BTT}), that there exists a bijective map between the set of local Eichler orders $ \Ea$ of level $\kappa$ and the set of finite lines $\mathfrak{p}$ of length $\kappa$ in the Bruhat-Tits tree. Formally, a local Eichler order $\mathfrak{E}$ corresponds to the finite line $\mathfrak{p}=\mathfrak{s}(\mathfrak{E})$ whose vertices are the maximal orders containing $\mathfrak{E}$. Let $\Ea$ be an Eichler $\mathcal{C}$-order of level $D=\sum_P n_P P$. Let us denote by $S(\mathfrak{E})$ the product of finite lines $S(\mathfrak{E})=\prod_P\mathfrak{s}(\mathfrak{E}_P)$, where $P$ belongs to the set of closed points such that $n_P>0$. This is called the grid of $\mathfrak{E}$. It follows from Property (c) in \S \ref{Section Spinor} that the set of maximal $\mathcal{C}$-orders containing $\Ea$ corresponds to the vertex set in $S(\Ea)$. Moreover, it is easy to see that this correspondence is compatible with the action of $\mathrm{PGL}_2(k)$ on Eichler $\mathcal{C}$-orders by conjugation. 
In order to compare different Eichler $\mathcal{C}$-orders, we fix an effective divisor
$D=\sum_P n_P P$, and a finite set of places $T \supseteq \mathrm{Supp}(D).$ Denote by $\mathrm{Eich}(D, T)$ the set of Eichler $\mathcal{C}$-orders of level $D$ satisfying $\mathfrak{E}_Q = \mathbb{M}_2(\mathcal{O}_Q)$ for $Q \notin T$. 
Then, given an Eichler $\mathcal{C}$-order in $\mathrm{Eich}(D, T)$, its grid can be seen naturally as a subcomplex of the finite product of Bruhat-Tits trees $\prod_{P \in T} \mathfrak{t}(k_P)$. 
Any grid of the form $S(\mathfrak{E})$, for $\mathfrak{E} \in \mathrm{Eich}(D, T)$, is called a concrete $D$-grid. 
Note that the group $G_T=\mathrm{GL}_2(\mathcal{O}_{\mathcal{C}}(\mathcal{C} \smallsetminus T))$ acts on the set of concrete $D$-grids by conjugation. Indeed, we can define this action as the extension of the conjugacy action of $G_T$ on the set of maximal $\mathcal{C}$-orders to $D$-grids, which is valid since $G_T$ acts simplicially on each local tree. The orbits of concrete $D$-grids by this action are called abstract $D$-grids. Any representative of an abstract grid is called a concrete representative. Note that all these definitions depend on the set $T$. This is why it is important to consider the following result. 

\begin{prop}\cite[Proposition 3.1]{A5}\label{eichler=grillas}
Let $D$ be an effective divisor. Then, there exists a finite set of places $ T $ containing $\mathrm{Supp}(D)$ such that every $\mathrm{PGL}_2(k)$-conjugacy class of Eichler $\mathcal{C}$-orders contains a representative in $\mathrm{Eich}(D, T)$.
\end{prop}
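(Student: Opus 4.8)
The plan is to produce a single finite set $T$ that works simultaneously for every conjugacy class, by arranging that the rank-two bundle underlying an arbitrary Eichler order of level $D$ becomes \emph{free} over the complement of $T$. The point to keep in mind is that one cannot argue class by class: the set of $\mathrm{PGL}_2(k)$-conjugacy classes of Eichler $\mathcal{C}$-orders of level $D$ is \emph{infinite}, being parametrized, roughly, by rank-two bundles on $\mathcal{C}$ modulo line-bundle twists. Thus, choosing for each class a finite set outside which some representative is standard and then taking the union is hopeless. Instead the required uniformity has to come from a property of the base curve, namely the triviality of the Picard group of a suitable affine open subset.

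First I would fix $T$. Since $\mathbb{F}$ is finite, $\mathrm{Pic}^0(\mathcal{C})$ is a finite group, so $\mathrm{Pic}(\mathcal{C})$ is finitely generated; as it is generated by the classes of closed points, finitely many closed points already generate it. I take $T$ to be the union of $\mathrm{Supp}(D)$ with such a finite generating set. Writing $U=\mathcal{C}\smallsetminus T$, the localization sequence for Picard groups gives $\mathrm{Pic}(U)=\mathrm{Pic}(\mathcal{C})/\langle \overline{P}:P\in T\rangle=0$, so that $\mathcal{O}_{\mathcal{C}}(U)$ is a Dedekind domain with trivial class group.

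Now let $\mathfrak{E}$ be any Eichler $\mathcal{C}$-order of level $D$. Choose a maximal $\mathcal{C}$-order $\mathfrak{D}\supseteq\mathfrak{E}$ and, using the fact recalled in \S\ref{Section Spinor} that every maximal order is an endomorphism order, write $\mathfrak{D}=\mathrm{End}_{\mathcal{O}_{\mathcal{C}}}(\Lambda)$ for a $\mathcal{C}$-lattice $\Lambda\subseteq k^2$. Since $\mathrm{Supp}(D)\subseteq T$, the local level of $\mathfrak{E}$ vanishes at every $P\in U$, whence $\mathfrak{E}_P=\mathfrak{D}_P$ there. The $\mathcal{O}_{\mathcal{C}}(U)$-module $\Lambda(U)$ is projective of rank two; by the structure theorem over a Dedekind domain it is isomorphic to $\mathcal{O}_{\mathcal{C}}(U)\oplus I$ for a fractional ideal $I$, and as $\mathrm{Pic}(U)=0$ the ideal $I$ is principal, so $\Lambda(U)$ is free. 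A free basis of $\Lambda(U)$ is a $k$-basis of $k^2$, hence the image of the standard basis under some $g\in\mathrm{GL}_2(k)$; therefore $g^{-1}\Lambda(U)=\mathcal{O}_{\mathcal{C}}(U)^2$ and $(g^{-1}\mathfrak{D}g)_Q=\mathbb{M}_2(\mathcal{O}_Q)$ for every $Q\in U$.

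Finally I read off the conclusion. Conjugation by $g$ preserves the level divisor, so $g^{-1}\mathfrak{E}g$ still has level $D$; and for $Q\notin T$, i.e.\ $Q\in U$, we get $(g^{-1}\mathfrak{E}g)_Q=(g^{-1}\mathfrak{D}g)_Q=\mathbb{M}_2(\mathcal{O}_Q)$. Thus $g^{-1}\mathfrak{E}g\in\mathrm{Eich}(D,T)$, and since the $\mathrm{GL}_2(k)$-conjugation by $g$ descends to $\mathrm{PGL}_2(k)$, the conjugacy class of $\mathfrak{E}$ meets $\mathrm{Eich}(D,T)$. As $\mathfrak{E}$ was arbitrary, this proves the proposition. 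The essential difficulty is exactly the step that turns the infinitude of conjugacy classes into a uniform choice of $T$: it is resolved not by any finiteness of the class set but by forcing $\mathrm{Pic}(U)=0$, after which the freeness of \emph{every} rank-two bundle over $\mathcal{O}_{\mathcal{C}}(U)$ furnishes the conjugating element for all classes at once.
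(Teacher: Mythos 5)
Your proof is correct. Note that the paper does not actually prove this statement --- it is imported verbatim from \cite[Proposition 3.1]{A5} --- so there is no internal proof to compare against; but your argument (enlarge $T$ by a finite set of closed points generating $\mathrm{Pic}(\mathcal{C})$, which is finitely generated because $\mathbb{F}$ is finite, so that $\mathrm{Pic}(\mathcal{C}\smallsetminus T)=0$ and every rank-two bundle trivializes over $\mathcal{C}\smallsetminus T$) is the standard one, and every step checks out: the uniformity over the infinitely many conjugacy classes is correctly obtained from the triviality of the class group rather than from any finiteness of the class set, the Eichler order agrees with an ambient maximal order at every place outside $\mathrm{Supp}(D)\subseteq T$, and conjugation preserves the level divisor.
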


Let $Q$ be a closed point of $\mathcal{C}$, and write $D=D'+n_Q Q$, where $n_Q>0$ and the support of $D'$ fails to contain $Q$. Then, each concrete $D$-grid is a paralellotope having two concrete $D'$-grids as opposite faces. These opposite faces are called the $Q$-faces of the $D$-grid. We say that two concrete $D'$-grids $S$ and $S'$ are $Q$-neighbors if there exists a concrete $D$-grid $\tilde{S}$, with $D=D'+Q$ and $Q \notin \mathrm{Supp}(D')$, such that $S$ and $S'$ are the $Q$-faces of $\tilde{S}$. Let $\mathfrak{D}$ be a maximal $\mathcal{C}$-order corresponding to a vertex $v$ in $S$. Then, there exists one and only one $Q$-neighbor $v'$ among the vertices of $S'$. We call it the $Q$-neighbor of $v$ in $\tilde{S}$.


\subsection{Classifiying graphs}
As an intermediate step to prove Theorem \ref{teo cusp}, we characterize a quotient graph of $\mathfrak{t}$ other than $ \mathfrak{t}_D=\mathrm{H}_D \backslash \mathfrak{t}$. In order to introduce this quotient structure, fix $D$ an effective divisor, and let $\mathbb{O}_D$ be the genus containing all Eichler $\mathcal{C}$-orders of level $D$. Let $Q \in |\mathcal{C}|$ be a closed point not contained in $\mathrm{Supp}(D)$. Let $V_0$ be the affine open set $\mathcal{C} \smallsetminus \lbrace Q \rbrace$. Then, any order in $\mathbb{O}_D$ is maximal at $Q$, i.e. its completion at $Q$ is maximal. For any $\mathfrak{E} \in \mathbb{O}_D$, we define the C-graph $C_Q(\mathfrak{E})=\Gamma\backslash\mathfrak{t}$, where $\mathfrak{t}=\mathfrak{t}(k_Q)$, and $\Gamma$ is the stabilizer of $\mathfrak{E}(V_0)$ in $\mathrm{PGL}_2(k)$. Note that, it follows from Corollary \ref{Cor comb fin} that $C_Q(\Da)$ is combinatorially finite. Two Eichler $\mathcal{C}$-orders $\mathfrak{E}$ and $\mathfrak{E}'$ such that $\rho(\mathfrak{E} ,\mathfrak{E}')$ belongs to the group generated by $[[Q, \Sigma(\mathbb{O}_D)/k]]$ define isomorphic quotient graphs. Indeed, it follows from \cite[\S 2]{abelianos} that, if $\rho(\mathfrak{E} ,\mathfrak{E}') \in \left \langle [[Q, \Sigma(\mathbb{O}_D)/k]] \right \rangle$, then $\mathfrak{E}(U_0)$ and $\mathfrak{E}'(U_0)$ are $\mathrm{GL}_2(k)$-conjugate. In this case we write $\mathfrak{E}\sim \mathfrak{E}'$. We denote by $\mathfrak{Sp}(\mathbb{O}_D, Q)$ the quotient set of $\mathbb{O}_D$ by the previous equivalence relation. The classifying graph $C_{Q}(\mathbb{O}_D)$ is the disjoint union of the finitely many C-graphs corresponding to all elements in $\mathfrak{Sp}(\mathbb{O}_D, Q)$. In particular, it is combinatorially finite.


All definitions and conventions introduced in \S \ref{Section BTT} apply to $C_{Q}(\mathbb{O}_D)$ by adapting them to the context of disjoint union of graphs. In particular, by the cusp set of $C_{Q}(\mathbb{O}_D)$ we mean the disjoint union of the cusp sets of all connected components of $C_{Q}(\mathbb{O}_D)$. In the following section we study the combinatorial structure of the classifying graphs of Eichler orders. With this in mind, we make frequent use of the next result:

\begin{prop}\cite[Proposition 3.2]{A5}\label{func corresp D-grillas}
Let $D$ be an effective divisor supported away from the place $Q$. The vertices of the classifying graph $C_Q(\mathbb{O}_D)$ are in bijection with the abstract $D$-grids, while its pairs of mutually reverse edges are in bijection with the abstract $(D+Q)$-grids. The endpoints of an edge are the vertices of $C_Q(\mathbb{O}_D)$ corresponding to the $Q$-faces of the grid corresponding to that edge.  
\end{prop}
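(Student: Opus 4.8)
The plan is to build the two bijections simultaneously out of the local dictionary at $Q$ between the tree and orders, transport everything to global Eichler orders by the local--global gluing of \S\ref{Section Spinor}, and then read off the grid combinatorics. Recall from \S\ref{Section BTT} that the vertices of $\mathfrak{t}(k_Q)$ are the maximal orders in $\mathbb{M}_2(k_Q)$, while a pair of mutually reverse edges is a finite line of length one, i.e. a local Eichler order of level $1$ at $Q$ whose two endpoints are the two maximal orders containing it. First I would fix, for each class in $\mathfrak{Sp}(\mathbb{O}_D,Q)$, a representative $\mathfrak{E}$ and regard its restriction $\mathfrak{E}(V_0)$ as fixed local data away from $Q$. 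Given a vertex $v$ of $\mathfrak{t}(k_Q)$, i.e. a maximal order $M_v$ at $Q$, property (c) of \S\ref{Section Spinor} glues $\{\mathfrak{E}_P\}_{P\neq Q}$ with $M_v$ into a unique global order $\mathfrak{E}_v$; since $Q\notin\mathrm{Supp}(D)$ its level is still $D$, so $\mathfrak{E}_v\in\mathbb{O}_D$. Replacing $M_v$ by an edge $e$ (a level-$1$ local Eichler order at $Q$) yields instead a global order $\mathfrak{E}_e$ of level $D+Q$, hence $\mathfrak{E}_e\in\mathbb{O}_{D+Q}$.

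The core of the argument is to show that these assignments descend to the claimed bijections, and this is the equivalence-relation bookkeeping. I would first prove that $M_v,M_{v'}$ give $\mathrm{PGL}_2(k)$-conjugate global orders if and only if they lie in the same $\Gamma_\mathfrak{E}$-orbit, where $\Gamma_\mathfrak{E}=\mathrm{Stab}_{\mathrm{PGL}_2(k)}(\mathfrak{E}(V_0))$: a conjugating element must fix the common $V_0$-restriction, hence lies in $\Gamma_\mathfrak{E}$ and carries $M_v$ to $M_{v'}$, and conversely. Assembling over $\mathfrak{Sp}(\mathbb{O}_D,Q)$ then requires that $\mathfrak{E}\sim\mathfrak{E}'$ holds exactly when $\mathfrak{E}(V_0)$ and $\mathfrak{E}'(V_0)$ are $\mathrm{GL}_2(k)$-conjugate; the forward direction is recalled in \S\ref{Section grids}, and the converse follows because two orders sharing a $V_0$-restriction differ only at $Q$, so their distance $\rho$ is supported at $Q$ and lies in $\langle[[Q,\Sigma(\mathbb{O}_D)/k]]\rangle$. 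This identifies the vertex set of $C_Q(\mathbb{O}_D)$ with the set of $\mathrm{PGL}_2(k)$-conjugacy classes of level-$D$ Eichler orders. It remains to match the latter with abstract $D$-grids: for $T$ as in Proposition \ref{eichler=grillas} (enlarged if necessary), one checks that on $\mathrm{Eich}(D,T)$ the $G_T$-conjugacy coincides with $\mathrm{PGL}_2(k)$-conjugacy, so $G_T$-orbits of concrete grids are exactly these conjugacy classes. Running the same argument with $D+Q$ in place of $D$, and edges in place of vertices, yields the bijection between pairs of reverse edges and abstract $(D+Q)$-grids.

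Finally, for the compatibility of endpoints I would use that the grid $S(\mathfrak{E}_e)$ is a parallelotope whose $Q$-faces are obtained by replacing the length-one factor $\mathfrak{s}\big((\mathfrak{E}_e)_Q\big)$ by each of its two endpoints; these are precisely the maximal orders $M_{v_1},M_{v_2}$ bounding $e$, whose associated grids are $S(\mathfrak{E}_{v_1}),S(\mathfrak{E}_{v_2})$, so the $Q$-faces of the $(D+Q)$-grid correspond under the vertex bijection to the endpoints of the edge in $C_Q(\mathbb{O}_D)$. The step I expect to be the main obstacle is the equivalence-relation matching of the second paragraph: reconciling the three a priori different relations --- $\Gamma_\mathfrak{E}$-orbits on $\mathfrak{t}(k_Q)$, $\mathrm{PGL}_2(k)$-conjugacy of global orders, and $G_T$-orbits of grids --- and in particular verifying that the finiteness of $\mathbb{F}$ permits a choice of $T$ for which $G_T$-conjugacy captures all of $\mathrm{PGL}_2(k)$-conjugacy. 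The potential discrepancy there is a $2$-torsion class in $\mathrm{Pic}(\mathcal{C})/\langle T\rangle$ coming from the determinant of the conjugating element, which is exactly the spinor-genus phenomenon and is killed by enlarging $T$; everything else reduces to the routine local--global dictionary.
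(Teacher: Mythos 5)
The paper does not actually prove this statement: it is imported verbatim from \cite[Proposition 3.2]{A5}, so there is no internal proof to compare against. That said, your reconstruction is essentially the intended argument and I see no gap in it: the identification of vertices (resp.\ pairs of reverse edges) of $\mathfrak{t}(k_Q)$ with local maximal orders (resp.\ length-one local Eichler orders) at $Q$, the gluing via property (c) of \S\ref{Section Spinor} with the fixed data $\mathfrak{E}(V_0)$, the matching of $\Gamma_{\mathfrak{E}}$-orbits with $\mathrm{PGL}_2(k)$-conjugacy classes inside a $\sim$-class, and the reading of $Q$-faces as endpoints are exactly the steps needed. You also correctly isolate the one genuinely delicate point, namely that $G_T$-conjugacy on $\mathrm{Eich}(D,T)$ must capture $\mathrm{PGL}_2(k)$-conjugacy; this is precisely what the choice of $T$ in Proposition \ref{eichler=grillas} is designed to guarantee, and your diagnosis of the obstruction as a $2$-torsion class in $\mathrm{Pic}$ modulo the classes of points of $T$ (killed by enlarging $T$, using finiteness of $\mathbb{F}$ so that $\mathrm{Pic}^0(\mathcal{C})$ is finite and every class is represented by closed points) is the right one. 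The only minor point you leave implicit is the possibility of edge inversions in the action of $\Gamma$ on $\mathfrak{t}(k_Q)$, which the paper's conventions handle by barycentric subdivision and which does not affect the stated bijections here.
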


\subsection{Spinor class fields of Eichler orders} We finish this section by recalling some results about the spinor class field associated to the genus of Eichler $\mathcal{C}$-orders of level $D$. Let $D$ be a divisor, which we write as $D=\sum_{i=1}^r n_i P_{i}$, where $P_i \neq P_{\infty}$, and let $U_0$ be the affine set $ \mathcal{C} \smallsetminus \lbrace P_{\infty} \rbrace$ as above. It follows from \cite[Theorem 1.2]{A13} that the spinor class field $\Sigma_D=\Sigma(\mathbb{O}_D)$ (resp. $\Sigma(\mathbb{O}_D,U_0)$), for Eichler $\mathcal{C}$-orders of level $D$, is the maximal subfield of $\Sigma_0=\Sigma(\mathbb{O}_0)$ (resp. $\Sigma(\mathbb{O}_0,U_0)$) splitting at every place $P_i$ for which $n_i$ is odd.

\begin{prop}\label{gal grupo de clase}
Let $J= \lbrace i: n_i \text{ is odd}\rbrace$. The Galois group $\mathrm{Gal}(\Sigma_D/k)$ is isomorphic to the abelian group $\mathrm{Pic}(\mathcal{C})/(2 \mathrm{Pic}(\mathcal{C})+ \langle \overline{P_{j}}: j \in J \rangle)$. Using the same notation, $\mathrm{Gal}(\Sigma(\mathbb{O}_D,U_0)/k)$ is isomorphic to $\mathrm{Pic}(\mathcal{C})/(2 \mathrm{Pic}(\mathcal{C})+ \langle  \overline{P_{\infty}} \rangle+ \langle \overline{P_{j}}: j \in J  \rangle)$.
\end{prop}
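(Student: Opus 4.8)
The plan is to reduce everything to the case of maximal orders and then invoke the cited description of $\Sigma_D$ as a subfield of $\Sigma_0$. Concretely, I would first compute the two base Galois groups
\[
\mathrm{Gal}(\Sigma(\mathbb{O}_0)/k)\cong \mathrm{Pic}(\mathcal{C})/2\mathrm{Pic}(\mathcal{C}),\qquad \mathrm{Gal}(\Sigma(\mathbb{O}_0,U_0)/k)\cong \mathrm{Pic}(\mathcal{C})/\big(2\mathrm{Pic}(\mathcal{C})+\langle \overline{P_{\infty}}\rangle\big),
\]
together with the fact that, under these isomorphisms, the Frobenius (Artin image) of a place $P$ corresponds to the class $\overline{P}$. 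Granting this, the proposition follows from \cite[Theorem 1.2]{A13} by pure Galois theory, since $\Sigma_D$ (resp. $\Sigma(\mathbb{O}_D,U_0)$) is the maximal subfield of $\Sigma_0$ (resp. $\Sigma(\mathbb{O}_0,U_0)$) in which every $P_i$ with $i\in J$ splits completely.

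\textbf{Computation of the base fields.} To establish the two displayed isomorphisms I would compute the idèle subgroup $k^*H(\Da,U)$ of \eqref{Eq class fields} explicitly for a maximal $\mathcal{C}$-order $\Da$. Locally the normalizer of a maximal order $\mathbb{M}_2(\mathcal{O}_P)$ in $\mathrm{GL}_2(k_P)$ is $k_P^*\,\mathrm{GL}_2(\mathcal{O}_P)$, whose determinant set is $(k_P^*)^2\mathcal{O}_P^*$. Hence, for $U=\mathcal{C}$, the group $H(\Da,\mathcal{C})$ is the restricted product $\prod_P (k_P^*)^2\mathcal{O}_P^*$, while for $U=U_0$ the component at $P_{\infty}$ is unconstrained and equals all of $k_{P_{\infty}}^*$. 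Using the standard identification $\mathbb{I}/\big(k^*\prod_P\mathcal{O}_P^*\big)\cong \mathrm{Pic}(\mathcal{C})$, the image of $\prod_P(k_P^*)^2$ is exactly $2\mathrm{Pic}(\mathcal{C})$, and the extra factor $k_{P_{\infty}}^*$ in the $U_0$ case adds the generator $\langle\overline{P_{\infty}}\rangle$; class field theory then yields the two isomorphisms above. The same bookkeeping identifies the Artin image $[[P,\Sigma_0/k]]$ with $\overline{P}$, so that $P$ splits completely in a subextension $L\subseteq\Sigma_0$ precisely when $\overline{P}$ lies in the corresponding kernel. (Alternatively, one may simply cite \cite{abelianos} for these facts about the spinor class field of $\mathbb{O}_0$.)

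\textbf{Passing to level $D$.} By \cite[Theorem 1.2]{A13}, $\Sigma_D$ is the maximal subfield of $\Sigma_0$ splitting at each $P_i$ with $n_i$ odd, i.e. at each $P_i$ with $i\in J$. Since $\Sigma_0/k$ is abelian and unramified at these places, $P_i$ splits completely in a subextension $L$ iff $\mathrm{Frob}_{P_i}$ restricts trivially to $L$, iff $\mathrm{Frob}_{P_i}\in \mathrm{Gal}(\Sigma_0/L)$. The maximal such $L$ is therefore the fixed field of $N:=\langle \mathrm{Frob}_{P_i}:i\in J\rangle$, so $\mathrm{Gal}(\Sigma_D/k)\cong \mathrm{Gal}(\Sigma_0/k)/N$. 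Under the base isomorphism, $N$ corresponds to the image of $\langle \overline{P_j}:j\in J\rangle$ in $\mathrm{Pic}(\mathcal{C})/2\mathrm{Pic}(\mathcal{C})$, and the third isomorphism theorem gives
\[
\mathrm{Gal}(\Sigma_D/k)\cong \mathrm{Pic}(\mathcal{C})/\big(2\mathrm{Pic}(\mathcal{C})+\langle \overline{P_j}:j\in J\rangle\big).
\]
Running the identical argument over $\Sigma(\mathbb{O}_0,U_0)$, whose kernel already contains $\langle\overline{P_{\infty}}\rangle$, produces the second claimed formula.

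\textbf{Main obstacle.} The conceptual content and the only genuinely delicate point is the base computation of $\mathrm{Gal}(\Sigma(\mathbb{O}_0,U_0)/k)$: one must correctly track how restricting the normalizing condition in \eqref{Eq class fields} to open sets contained in $U_0$ frees the local determinant at $P_{\infty}$, and verify that this contributes exactly the summand $\langle\overline{P_{\infty}}\rangle$ (and nothing more) to the relevant subgroup of $\mathrm{Pic}(\mathcal{C})$. Once the two base groups and the $P\mapsto\overline{P}$ Frobenius dictionary are pinned down, the remaining steps are formal Galois theory.
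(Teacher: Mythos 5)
Your proposal is correct and follows essentially the same route as the paper: both compute the norm group $H(\Da,U)$ for maximal orders via the local normalizers $k_P^*\Da_P^*$ (giving $k_P^{*2}\mathcal{O}_P^*$ locally, with the $P_\infty$-component freed to all of $k_{P_\infty}^*$ in the $U_0$ case), identify the resulting quotient of the idèle class group with $\mathrm{Pic}(\mathcal{C})/2\mathrm{Pic}(\mathcal{C})$ (resp.\ with $\langle\overline{P_\infty}\rangle$ added), and then use the maximality statement of \cite[Theorem 1.2]{A13} to account for the places $P_j$, $j\in J$. The only cosmetic difference is that you organize the last step on the Galois side (quotienting $\mathrm{Gal}(\Sigma_0/k)$ by the Frobenius elements of the $P_j$), while the paper works on the idèle side (adjoining the uniformizing idèles $e(P_j)$ to the norm group); these are reciprocity-dual versions of the same argument.
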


\begin{proof}
Let $L/F$ be a finite abelian extension (i.e. Galois with abelian Galois group) of global fields. It follows from \cite[Chapter VI, \S 6, Theorem 6.1 and Corollary 6.6]{Neukirch} that there exists an isomorphism from $\mathrm{Gal}(L/F)$ to $\mathbb{I}_F/F^{*} H(L)$, where $\mathbb{I}_F$ is the id\`ele group of $F$, and $H(L):=\lbrace N_{L/F}(a): a \in \mathbb{I}_L \rbrace$ is the kernel of the Artin map, which satisfies the following properties:
\begin{itemize}
    \item[(i)] $Q$ is unramified in $L/F$ if and only if $\mathcal{O}_Q^{*} \subseteq F^{*} H(L)$.
    \item[(ii)] $Q$ splits completely in $L/F$ if and only if $F_Q^{*} \subseteq F^{*} H(L)$.
\end{itemize}
Apply this when $F$ is the global function field $k= \mathbb{F}(\mathcal{C})$ and $L$ is $\Sigma_D$.
Recall that $H(\Sigma_0)$ equals $H(\mathbb{M}_2(\mathcal{O}_{\mathcal{C}}), \mathcal{C})$ as in Equation \eqref{Eq class fields}. Then, since the localization of $\mathbb{M}_2(\mathcal{O}_{\mathcal{C}})$ at $Q$ is $\mathbb{M}_2(\mathcal{O}_{Q})$, it is easy to see that $k_Q^{*2}\mathcal{O}_Q^{*} \subseteq H(\Sigma_0)$, for all closed points $Q \in \mathcal{C}$. In particular, since $\Sigma_D \subseteq \Sigma_0$, we obtain $ k_Q^{*2}\mathcal{O}_Q^{*} \subseteq H(\Sigma_0) \subseteq H(\Sigma_D)$. So, if we write $\mathbb{I}_{k, \infty}:=\prod_{Q \in \mathcal{C}} \mathcal{O}_Q^{*}$, then $\mathbb{I}_{k}^2 \mathbb{I}_{k, \infty}$ is contained in $k^{*}H(\Sigma_D)$. And, since $\mathbb{I}_k/ k^{*} \mathbb{I}_{k, \infty} \cong \mathrm{Pic}(\mathcal{C})$, the Galois group of $\Sigma_D/k$ is a quotient of $\mathrm{Pic}(\mathcal{C})/2\mathrm{Pic}(\mathcal{C})$.

Let us write $e(Q)$ for the id\`ele whose coordinate at $Q$ is $\pi_Q$ and any other coordinate equals one. Since $\Sigma_D/k$ splits at $P_j$, with $j \in J$, we deduce from (ii) that all $e(P_j)$, with $j \in J$, belong to $k^{*}H(\Sigma_D)$. In particular, we obtain the inclusion
$$k^{*} \mathbb{I}_k^{2} \mathbb{I}_{k, \infty} \langle e(P_j): j \in J \rangle \subseteq k^{*} H(\Sigma_D).$$
Furthermore, the maximality condition on $\Sigma_D$ implies equality. We conclude that $$\mathrm{Gal}(\Sigma_D/k) \cong \frac{\mathbb{I}_k}{k^{*} \mathbb{I}_k^{2} \mathbb{I}_{k, \infty} \langle e(P_j): j \in J \rangle} \cong \frac{\mathrm{Pic}(\mathcal{C})}{2 \mathrm{Pic}(\mathcal{C})+ \langle \overline{P_{j}}: j \in J \rangle}.$$

Moreover, we can analogously prove that $\mathfrak{G} =\mathrm{Gal}(\Sigma(\mathbb{O}_D,U_0)/k)$ is isomorphic to $\mathrm{Pic}(\mathcal{C})/(2 \mathrm{Pic}(\mathcal{C})+ \langle \overline{P_{\infty}}, \overline{P_{j}}: j \in J \rangle)$, by noting that to compute $\mathfrak{G}$ we no longer need a condition at the place $P_{\infty}$, so the corresponding local stabilizer must be replaced by the full local group $\mathrm{GL}_2(k_{P_{\infty}})$.  
\end{proof}

Moreover, it follows from \cite[Proposition 6.1]{A2} that the corresponding distance function $\rho_D$ on the genus of Eichler $\mathcal{C}$-orders of level $D$ is related to $\rho_0$ through restriction, i.e.
\begin{equation}\label{eq rho}
\rho_D(\Ea_{\Lambda,\Lambda'},\Ea_{L,L'})=\rho_0(\Da_\Lambda,\Da_L)\Big|_{\Sigma(D)},
\end{equation}
for any four $\mathcal{C}$-lattices $\Lambda, \Lambda', L$ and $L'$ (cf.~\S \ref{Section Spinor}).

\section{On quotient graphs of Eichler groups}\label{Section Quotient}

The objective of this section is to prove Theorem \ref{teo cusp}. To do so, we extensively use the following remark. As we said in \S \ref{Section BTT}, every subgroup of $\text{GL}_2(k_{P_{\infty}})$ acts on $\mathfrak{t}$ via its image in $\text{PGL}_2(k_{P_{\infty}})$. In particular, the topological space $\mathfrak{t}_D$ equals the quotient of $\mathfrak{t}=\mathfrak{t}(k_{P_{\infty}})$ by the projective image $\text{PH}_D$ of 
$$\small
\text{H}_D= \left\lbrace \left( \begin{array}{cc}
a &   b\\
c & d \end{array} \right) \in \text{GL}_2(R) : c \equiv 0 \, (\text{mod } I_D)\right\rbrace ,
\normalsize $$
where $I_D$ is the $R$-ideal defined as $I_D=\mathfrak{L}^{-D}(U_0)=\mathfrak{L}^{-D}(\mathcal{C} \smallsetminus \lbrace P_{\infty }\rbrace)$. \\

We start this section by presenting a proof of Theorem \ref{teo cusp} assuming the following result, which is implied by Proposition \ref{lema4} below.

\begin{prop}\label{prop aux}
The number of cusps of any connected component of $C_{P_{\infty}}(\mathbb{O}_D)$ is the same, and it equals
\begin{equation}
c(D)= \alpha(D) [2\mathrm{Pic}(\mathcal{C})+\left\langle \overline{P_{a_1}}, \cdots ,\overline{P_{a_u}}, \overline{P_{\infty}}  \right\rangle : \langle \overline{P_{\infty}} \rangle],
\end{equation}
where 
$$ \alpha(D)= 1 + \frac{1}{q-1} \prod_{i=1}^{r} \left( q^{\deg(P_i) \lfloor \frac{n_i}{2}\rfloor}-1\right),$$
and $P_{a_1}, \cdots, P_{a_u}$ are the closed points in $\mathcal{C}$ whose coefficients in $D=\sum_{i=1}^r n_i P_i$ are odd.
\end{prop}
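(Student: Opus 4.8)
The plan is to reduce the statement to an orbit count on $\mathbb{P}^1(k)$ and then to factor that count into a purely local contribution at the finite places $P_1,\dots,P_r$, which will produce $\alpha(D)$, and a global contribution governed by spinor class field theory, which will produce the index factor. Each connected component of $C_{P_\infty}(\mathbb{O}_D)$ has the form $C_{P_\infty}(\mathfrak{E})=\Gamma\backslash\mathfrak{t}$ with $\Gamma=\mathrm{Stab}_{\mathrm{PGL}_2(k)}(\mathfrak{E}(U_0))$, and since $\Gamma$ is commensurable with $\mathrm{GL}_2(R)$, Corollary \ref{Cor comb fin} shows it closes enough umbrellas; hence by condition (a) of Definition \ref{def good quotient} its cusps are in bijection with $\Gamma\backslash\mathbb{P}^1(k)$. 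I would therefore count $\Gamma$-orbits of rational ends, aiming to prove the equivalent clean identity that the \emph{total} number of cusps of $C_{P_\infty}(\mathbb{O}_D)$ is $\alpha(D)\,|\mathrm{Pic}(R)|$, and then to distribute it evenly among the components.

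For the global bookkeeping I would argue as follows. The components of $C_{P_\infty}(\mathbb{O}_D)$ are the classes in $\mathfrak{Sp}(\mathbb{O}_D,P_\infty)$, i.e. the orbits of the genus under the Frobenius $[[P_\infty,\Sigma_D/k]]$; since the distance map $\rho_D$ exhibits the spinor genera as a torsor under $\mathrm{Gal}(\Sigma_D/k)$, their number is $|\mathrm{Gal}(\Sigma(\mathbb{O}_D,U_0)/k)|=[\mathrm{Pic}(\mathcal{C}):M]$, where $M=2\mathrm{Pic}(\mathcal{C})+\langle\overline{P_{a_1}},\dots,\overline{P_{a_u}},\overline{P_\infty}\rangle$, by Proposition \ref{gal grupo de clase}. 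This Galois group permutes the components transitively and compatibly with their graph structure, so all components are isomorphic; in particular they have the same number of cusps. Granting the total count above and using $[\mathrm{Pic}(\mathcal{C}):M]\,[M:\langle\overline{P_\infty}\rangle]=[\mathrm{Pic}(\mathcal{C}):\langle\overline{P_\infty}\rangle]=|\mathrm{Pic}(R)|$, dividing the total by the number of components yields exactly $c(D)=\alpha(D)\,[M:\langle\overline{P_\infty}\rangle]$, as claimed.

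The heart of the argument is the local computation of $\alpha(D)$, for which I would pass to grids. By Proposition \ref{func corresp D-grillas}, a cuspidal ray of $C_{P_\infty}(\mathbb{O}_D)$ limits onto a rational end $\xi$ and is recorded, far out, by the abstract $D$-grid ``at infinity'', i.e. by the relative position of the local Eichler segments $\mathfrak{s}(\mathfrak{E}_{P_i})$ and the local ends $\xi_{P_i}$. Grouping the ends by their class in $\mathrm{Pic}(R)=\mathrm{GL}_2(R)\backslash\mathbb{P}^1(k)$ and using the ball/line dictionary of \S\ref{subsection BTT}, the geodesic toward $\xi_{P_i}$ meets $\mathfrak{s}(\mathfrak{E}_{P_i})$ and branches off at a depth which, modulo the Atkin--Lehner involution flipping the segment, is encoded by an element of $\mathcal{O}_{P_i}/\pi_{P_i}^{\lfloor n_i/2\rfloor}$, a set of cardinality $q^{\deg(P_i)\lfloor n_i/2\rfloor}$. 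The zero element corresponds to branching at the extreme, maximal-order vertex and contributes the single ``degenerate'' cusp responsible for the summand $1$; a choice nonzero at every place contributes the $\prod_i(q^{\deg(P_i)\lfloor n_i/2\rfloor}-1)$ generic configurations; and the cusp stabilizer acts on these through its projective torus, the group of global units $R^*=\mathbb{F}^*$ of order $q-1$, acting freely. Dividing gives $\alpha(D)$ configurations over each class of $\mathrm{Pic}(R)$, hence the total $\alpha(D)\,|\mathrm{Pic}(R)|$.

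I expect this last, local step to be the main obstacle: one must prove that the data surviving the discrete group $\Gamma$ is exactly the truncation $\mathcal{O}_{P_i}/\pi_{P_i}^{\lfloor n_i/2\rfloor}$ and nothing coarser, and that distinct places interact only through the single diagonal copy of $\mathbb{F}^*$. This requires a precise analysis of the cusp stabilizer $\Gamma\cap\mathrm{Stab}_{\mathrm{PGL}_2(k)}(\xi)$, a unipotent-by-$\mathbb{F}^*$ group: checking that its (infinite) unipotent part fixes the depth-$\lfloor n_i/2\rfloor$ truncation, that the involution identifies depth $j$ with depth $n_i-j$ (which forces the floor), and that the residual $\mathbb{F}^*$-action on nonzero truncations is free. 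Via Propositions \ref{eichler=grillas} and \ref{func corresp D-grillas} I would reduce these verifications to finite computations in the groups $\mathrm{PGL}_2(\mathcal{O}_{P_i}/\pi_{P_i}^{n_i})$, which is presumably the content of the auxiliary Proposition \ref{lema4}.
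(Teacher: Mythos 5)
Your overall skeleton (total cusp count $\alpha(D)\,|\mathrm{Pic}(R)|$, number of components $[\mathrm{Pic}(\mathcal{C}):M]$, division via the tower $[\mathrm{Pic}(\mathcal{C}):M]\,[M:\langle\overline{P_\infty}\rangle]=|\mathrm{Pic}(R)|$) matches the paper's, but both pillars rest on unproved claims, and one is justified by an assertion that does not hold in the generality needed. For the even-distribution step you claim that $\mathrm{Gal}(\Sigma(\mathbb{O}_D,U_0)/k)$ permutes the connected components ``compatibly with their graph structure, so all components are isomorphic.'' The Galois action on spinor genera is realized by adelic conjugation, which does not descend to simplicial isomorphisms between the quotients $\Gamma\backslash\mathfrak{t}$ and $\Gamma'\backslash\mathfrak{t}$ for the non-conjugate stabilizers attached to distinct classes in $\mathfrak{Sp}(\mathbb{O}_D,P_\infty)$; there is no reason for the components to be isomorphic as graphs (their finite parts $Y$ may differ), and the paper never claims this --- it only proves equality of cusp numbers. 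The paper's actual mechanism is the covering $\widetilde{\mathfrak{d}}:C_{P_\infty}(\mathbb{O}_D)\smallsetminus Y\to C_{P_\infty}(\mathbb{O}_0)$ built from semi-decomposition data (Proposition \ref{max semi-desc}), combined with the spinor-distance identity $\rho(\mathfrak{E},\mathfrak{E}')=[[B-B',\Sigma_D/k]]$ linking the connected component of a grid to the $\mathrm{Pic}$-class of its principal corner $B$; this shows that the cusps of a fixed component map under $\widetilde{\mathfrak{d}}^{\infty}$ onto exactly one coset of $M/\langle\overline{P_\infty}\rangle$ in the cusp set of $C_{P_\infty}(\mathbb{O}_0)$, each fiber having $\alpha(D)$ elements. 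Without some substitute for this link, ``divide the total evenly among the components'' is unjustified.

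The local step producing $\alpha(D)$ is also a genuine gap rather than a compressed version of the paper's argument. You parametrize the cusps over a fixed class of $\mathrm{Pic}(R)$ by tuples in $\prod_i\mathcal{O}_{P_i}/\pi_{P_i}^{\lfloor n_i/2\rfloor}$ and count only the all-zero tuple plus the nowhere-zero tuples modulo $\mathbb{F}^*$; the mixed tuples (zero at some $P_i$, nonzero at others) are not addressed, and lumping all nonzero tuples together would instead give $\frac{1}{q-1}\bigl(\prod_i q^{\deg(P_i)\lfloor n_i/2\rfloor}-1\bigr)$ rather than $\frac{1}{q-1}\prod_i\bigl(q^{\deg(P_i)\lfloor n_i/2\rfloor}-1\bigr)$, so the bookkeeping of which configurations actually survive the action of $\Gamma$, how the diagonal $\mathbb{F}^*$ acts on them, and how the flip of each local segment folds the depth must genuinely be carried out. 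The paper does this by stratifying the non-split grids by their semi-decomposition vector $(s_1,\dots,s_r)$ with $n_i\geq s_i\geq\lfloor\frac{n_i+1}{2}\rfloor$ and computing each stratum of the fiber of $\widetilde{\mathfrak{d}}$ explicitly (Proposition \ref{lema 3}, Lemma \ref{lem preimagen de eta}, and Corollary \ref{coro semi-desc of neighbor} for the passage from vertices to rays). Finally, deferring these verifications to ``the auxiliary Proposition \ref{lema4}'' is circular: Proposition \ref{lema4} is the strengthened form of the statement you are asked to prove, not an available tool.
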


\subsection{A proof of Theorem \ref{teo cusp}}\label{subsection proof of Theorem} As we just said, we can replace $\text{H}_D$ by its image $\text{PH}_D$ in $\text{PGL}_2(k)$ to compute the cusp number of $\mathfrak{t}_D$.
First, we prove inequality \eqref{numero de patas}. Set $\Gamma= \mathrm{Stab}_{\text{PGL}_2(k)}(\mathfrak{E}_D(U_0))$. On one hand, it follows from Proposition \ref{prop aux} that the cusp number of $\Gamma \backslash \mathfrak{t}$ is equal to $c(D)$. On the other hand, it follows from \cite[Theorem 1.2]{A2} that $$[\Gamma:\text{PH}_D]=\frac{2^r|g(2)|}{[\Sigma(\mathbb{O}_0,U_0):\Sigma(\mathbb{O}_D,U_0)]},$$
where $g(2)$ is the maximal exponent-2 subgroup of $\mathrm{Pic}(R)$.
So, we obtain from Proposition \ref{gal grupo de clase},
\begin{equation}\label{eq gamma/phd}
[\Gamma:\text{PH}_D]=\frac{2^r|g(2)|}{[2 \text{Pic}(\mathcal{C})+ \langle \overline{P_{a_1}}, \cdots, \overline{P_{a_u}}, \overline{P_{\infty}} \rangle: 2 \text{Pic}(\mathcal{C})+ \langle \overline{P_{\infty}} \rangle]}.
\end{equation}
Recall now that, by Corollary \ref{Cor comb fin}, the set of cups of $\mathfrak{t}_D$ (resp. $\Gamma \backslash \mathfrak{t}$) is parametrized by $\mathbb{P}^1(k) / \text{PH}_D$ (resp. $\mathbb{P}^1(k) / \Gamma$). Then, the cusp number of $\mathfrak{t}_D$ cannot exceed $c(\text{H}_D)=c(D)[\Gamma:\text{PH}_D]$ and inequality \eqref{numero de patas} follows. Now, we assume that each $n_i$ is odd and $g(2)$ is trivial.
Then, we have to prove that the cusp number of $\mathfrak{t}_D$ is exactly $c(\text{H}_D)$. This is a consequence of the following lemma.

\begin{lemma}\label{lemma cubrimiento de cuspides}
Assume that each $n_i$ is odd and that $g(2)$ is trivial. Then, there are exactly $[\Gamma: \text{PH}_D]$ cusps in $\mathfrak{t}_D$ with the same image in $\Gamma  \backslash \mathfrak{t}$.
\end{lemma}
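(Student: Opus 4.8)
\emph{Overview.} The plan is to exploit that $\text{PH}_D$ is normal in $\Gamma$, to reinterpret ``cusps lying over a given cusp'' as an orbit-counting problem for the finite quotient $G=\Gamma/\text{PH}_D$, and finally to reduce the whole statement to a local computation of stabilizers of ends at the places dividing $D$, where the hypotheses ``$n_i$ odd'' and ``$g(2)$ trivial'' enter.

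\textbf{Step 1: normality and reduction.} Since $\text{H}_D=\mathfrak{E}_D(U_0)^{*}$ is the unit group of the order $\mathfrak{E}_D(U_0)$ and $\Gamma$ stabilizes this order by conjugation, one has $\gamma\,\text{H}_D\,\gamma^{-1}=\big(\gamma\,\mathfrak{E}_D(U_0)\,\gamma^{-1}\big)^{*}=\text{H}_D$ for $\gamma\in\Gamma$, so $\text{PH}_D\trianglelefteq\Gamma$ and $G:=\Gamma/\text{PH}_D$ is finite of order $[\Gamma:\text{PH}_D]$. By Corollary \ref{Cor comb fin} the cusps of $\mathfrak{t}_D$ (resp.\ of $\Gamma\backslash\mathfrak{t}$) are indexed by $\text{PH}_D\backslash\mathbb{P}^1(k)$ (resp.\ by $\Gamma\backslash\mathbb{P}^1(k)$), and for $\xi\in\mathbb{P}^1(k)$ the cusps of $\mathfrak{t}_D$ lying over the cusp of $\Gamma\backslash\mathfrak{t}$ determined by $\xi$ are exactly the $\text{PH}_D$-orbits contained in $\Gamma\cdot\xi$, i.e.\ the double cosets $\text{PH}_D\backslash\Gamma/\mathrm{Stab}_\Gamma(\xi)$. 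As $\text{PH}_D$ is normal this set is the transitive $G$-set $G/\overline{\mathrm{Stab}_\Gamma(\xi)}$, where $\overline{\mathrm{Stab}_\Gamma(\xi)}$ is the image of $\mathrm{Stab}_\Gamma(\xi)$ in $G$; its cardinality equals $[\Gamma:\text{PH}_D]$ precisely when $\overline{\mathrm{Stab}_\Gamma(\xi)}$ is trivial. Hence the lemma reduces to the claim that $\mathrm{Stab}_\Gamma(\xi)\subseteq\text{PH}_D$ for every $\xi\in\mathbb{P}^1(k)$.

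\textbf{Step 2: detecting the class of $\gamma$ in $G$.} Because $\mathfrak{E}_D$ is maximal at every place of $U_0$ away from $P_1,\dots,P_r$, at such places the normalizer of the local order coincides with $k_P^{*}\mathfrak{E}_{D,P}^{*}$, while at each $P_i$ the normalizer modulo $k_{P_i}^{*}\mathfrak{E}_{D,P_i}^{*}$ is the group of order two generated by the Atkin--Lehner element interchanging the two maximal orders containing $\mathfrak{E}_{D,P_i}$; this interchange occurs exactly when $\nu_{P_i}(\det\gamma)$ is odd. Thus $\gamma\in\Gamma$ lies locally everywhere in $k_P^{*}\mathfrak{E}_{D,P}^{*}$ iff $\nu_{P_i}(\det\gamma)$ is even for all $i$. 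The obstruction to upgrading this local condition to global membership $\gamma\in k^{*}\mathfrak{E}_D(U_0)^{*}$ is controlled by the $U_0$-spinor class field; by the index formula \eqref{eq gamma/phd} together with Proposition \ref{gal grupo de clase} it is governed by $g(2)$. Under $g(2)=\lbrace e\rbrace$ this obstruction vanishes, $[\Gamma:\text{PH}_D]=2^{r}$, and $G\cong(\mathbb{Z}/2)^{r}$, the class of $\gamma$ being read off from the parities $\big(\nu_{P_i}(\det\gamma)\bmod 2\big)_{i=1}^{r}$. In particular $\gamma\in\text{PH}_D$ if and only if $\nu_{P_i}(\det\gamma)$ is even for every $i$.

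\textbf{Step 3: the local dichotomy.} Let $\gamma\in\mathrm{Stab}_\Gamma(\xi)$ with $\xi\in\mathbb{P}^1(k)$. Fixing a $k$-rational point gives a $k$-rational eigenvector, so both eigenvalues $a,d$ of $\gamma$ lie in $k$, hence in every completion $k_{P_i}$. At each $P_i$ the element $\gamma$ stabilizes the finite line $\mathfrak{s}(\mathfrak{E}_{D,P_i})$ of length $n_i$ in $\mathfrak{t}(k_{P_i})$, so it either fixes its two endpoints (and hence the whole geodesic pointwise) or swaps them. In the first case $\gamma$ fixes a vertex, so $a/d\in\mathcal{O}_{P_i}^{*}$ and $\nu_{P_i}(\det\gamma)=2\,\nu_{P_i}(a)$ is even. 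In the second case, \emph{since $n_i$ is odd}, the midpoint of the segment is the midpoint of an edge, so $\gamma$ inverts an edge of $\mathfrak{t}(k_{P_i})$ and therefore has no fixed vertex; such an element has its eigenvalues in a ramified quadratic extension of $k_{P_i}$, contradicting $a,d\in k_{P_i}$. Thus the second case never occurs, $\nu_{P_i}(\det\gamma)$ is even for all $i$, and by Step 2 we conclude $\gamma\in\text{PH}_D$. This proves $\mathrm{Stab}_\Gamma(\xi)\subseteq\text{PH}_D$ and, by Step 1, the lemma.

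\textbf{Expected main obstacle.} The heart of the argument is the local analysis of Step 3 combined with the identification of $G$ in Step 2: the oddness of each $n_i$ is precisely what forces an endpoint-swapping element to invert an edge and hence to be ramified (with irrational eigenvalues), while the triviality of $g(2)$ is what guarantees that $G$ is detected purely by the local Atkin--Lehner swap parities, with no residual global spinor obstruction. Making the passage from the local condition $\nu_{P_i}(\det\gamma)\equiv 0$ to genuine membership in $\text{PH}_D$ rigorous—i.e.\ pinning down the exact role of the spinor class field via \cite[Theorem 1.2]{A2}—is the step requiring the most care.
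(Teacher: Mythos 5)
Your overall strategy coincides with the paper's: reduce, via \mbox{Corollary \ref{Cor comb fin}} and the normality of $\mathrm{PH}_D$ in $\Gamma$, to showing $\mathrm{Stab}_\Gamma(\xi)\subseteq\mathrm{PH}_D$ for every $\xi\in\mathbb{P}^1(k)$; then run a local dichotomy at each $P_i$ (fix or flip the length-$n_i$ segment), use the oddness of $n_i$ to kill the flip case, and use the triviality of $g(2)$ to finish. Your Step 1 is correct, and your Step 3 is a legitimate variant of the paper's argument: the paper rules out the flip by observing that an element swapping the two components of $\mathfrak{t}(k_{P_i})$ minus the central edge fixes no end of $\mathfrak{t}(k_{P_i})$, whereas you rule it out by observing that an edge-inverting element has $\nu_{P_i}(\det\gamma)$ odd and hence cannot have both eigenvalues in $k_{P_i}$; these are equivalent and both correct.

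The genuine gap is in your Step 2, and it is exactly the step you flagged as "requiring the most care." You assert that once $\nu_{P_i}(\det\gamma)$ is even for all $i$ (i.e.\ $\gamma$ lies in $\Gamma_0=\mathrm{Stab}(\mathfrak{D}_0(U_0))\cap\mathrm{Stab}(\mathfrak{D}_D(U_0))$), the "obstruction to global membership in $\mathrm{PH}_D$ is governed by $g(2)$" and vanishes, citing the index formula \eqref{eq gamma/phd} and Proposition \ref{gal grupo de clase}. This is not a proof: the index formula computes $[\Gamma:\mathrm{PH}_D]$, not $[\Gamma:\Gamma_0]$, and comparing the two would require knowing exactly which combinations of local Atkin--Lehner swaps are realized by global elements of $\Gamma$ — which is itself the spinor-genus computation you are trying to avoid. (Relatedly, your claim that $[\Gamma:\mathrm{PH}_D]=2^r$ and $G\cong(\mathbb{Z}/2)^r$ is false in general even when $g(2)$ is trivial: the denominator $[2\mathrm{Pic}(\mathcal{C})+\langle \overline{P_{1}},\dots,\overline{P_{r}},\overline{P_\infty}\rangle : 2\mathrm{Pic}(\mathcal{C})+\langle\overline{P_\infty}\rangle]$ in \eqref{eq gamma/phd} need not equal $1$, so $G$ need not be detected by the swap parities alone.) The paper closes this gap with a concrete construction: for $h\in\Gamma_0$ and a lift $h_0\in\mathrm{GL}_2(k)$, one writes $h_0\Lambda_E=b\Lambda_E$ for an id\`ele $b$ (the same $b$ for $E=0$ and $E=D$ because $\mathcal{O}_{\mathcal{C}}(U_0)$ is Dedekind), obtaining a homomorphism $\Xi:\Gamma_0\to\mathrm{Pic}(R)$ whose image is $2$-torsion (since $b^2$ is principal, generated by $\det(h_0)$) and whose kernel is precisely $\mathrm{PH}_D$ (if $b=\mathrm{div}(\lambda)$ then $\lambda^{-1}h_0\in\mathfrak{E}_D(U_0)^{*}$). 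Hence $\Gamma_0/\mathrm{PH}_D\hookrightarrow g(2)$, and triviality of $g(2)$ gives $\Gamma_0=\mathrm{PH}_D$. Some such explicit argument is indispensable; your proposal does not contain it.
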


\begin{proof}
Let $\Theta=\Theta(\eta)$ be the set of cusps of $\mathfrak{t}_D$ whose image in $\Gamma  \backslash \mathfrak{t}$ is the cusp $\eta$.
Then, $\mathrm{Card}(\Theta)$ is strictly less than $[\Gamma: \text{PH}_D]$ precisely when there exists an element $\overline{g} \in \Gamma/\text{PH}_D$ stabilizing an element of $\Theta$. Since the set of cups of $\mathfrak{t}_D$ (resp. $\Gamma \backslash \mathfrak{t}$) is parametrized by $\mathbb{P}^1(k) / \text{PH}_D$ (resp. $\mathbb{P}^1(k) / \Gamma$), if we prove that, for any $s \in \mathbb{P}^1(k)$, we have
\begin{equation}\label{id stab H}
\mathrm{Stab}_{\Gamma}(s) \subset \text{PH}_D,
\end{equation}
then, the result follows. Let $g \in \Gamma$ and assume that $g$ stabilizes some class of rays corresponding to $s \in \mathbb{P}^1(k)$. By definition, $g \mathfrak{E}_D(U_0) g^{-1}= \mathfrak{E}_D(U_0)$. So, as we saw in \S \ref{Section grids}, $g$ acts on the concrete $D$-grid $S_D$ associated to $\mathfrak{E}_D$ as an automorphism. In particular, we have

\begin{figure}
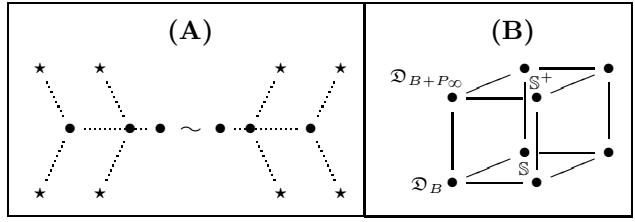

\[ 
\fbox{ \xygraph{
!{<0cm,0cm>;<.8cm,0cm>:<0cm,.8cm>::} 
!{(2.5,2.6) }*+{\textbf{(A)}}="na"
!{(4.5,1) }*+{\bullet}="b11" !{(3.,1) }*+{\bullet}="c11"
!{(0.5,1) }*+{\bullet}="a11" !{(2.,1) }*+{\bullet}="d11"
!{(1.5,1) }*+{\bullet}="z11" !{(3.5,1) }*+{\bullet}="y11"
!{(0,-0.08) }*+{\star}="e1"   
!{(0,2.0) }*+{\star}="e3" 
!{(1,-0.08) }*+{\star}="d1"   
!{(1,2.0) }*+{\star}="d3" 
!{(5,-0.08) }*+{\star}="e2" 
!{(5,2.0) }*+{\star}="e4" 
!{(4,-0.08) }*+{\star}="d2" 
!{(4,2.0) }*+{\star}="d4" 
"a11"-@{.}"e3" "a11"-@{.}"e1" "c11"-@{~}"d11" "c11"-@{.}"b11" "a11"-@{.}"d11" "b11"-@{.}"e2" "b11"-@{.}"e4" "z11"-@{.}"d3" "z11"-@{.}"d1" "y11"-@{.}"d2" "y11"-@{.}"d4"
 } }
\fbox{ \xygraph{
!{<0cm,0cm>;<0.8cm,0cm>:<0cm,0.8cm>::} 
!{(0.8,2.6) }*+{\textbf{(B)}}="na"
 !{(-0.2,0.1) }*+{\bullet}="e2" !{(1.2,0.1) }*+{\bullet}="e3"
 !{(-0.6,-0.0) }*+{{}^{\mathfrak{D}_B}}="e2n" !{(-0.6,1.8) }*+{{}^{\mathfrak{D}_{B+P_{\infty}}}}="e3n" !{(1.3,1.72) }*+{{}^{\mathbb{S}^{+}}}="gn"
!{(-0.2,1.5) }*+{\bullet}="e4" !{(1.2,1.5) }*+{\bullet}="e5"
 !{(1,0.6) }*+{\bullet}="e6" !{(2.4,0.6) }*+{\bullet}="e7"
 !{(1.0,0.25) }*+{{}^{\mathbb{S}}}="gn"
!{(1,2.0) }*+{\bullet}="e8" !{(2.4,2.0) }*+{\bullet}="e9"
"e2"-@{-}"e4" "e2"-@{-}"e3""e4"-@{-}"e5""e3"-@{-}"e5" "e6"-@{-}"e8" "e6"-@{-}"e7""e8"-@{-}"e9""e7"-@{-}"e9" 
 "e2"-@{-}"e6" "e3"-@{-}"e7""e4"-@{-}"e8""e5"-@{-}"e9"   }}
\] 
\caption{Figure \textbf{(A)} shows the Bruhat-Tits tree $\mathfrak{t}(k_{P_i})$, where $\mathfrak{p}_i$ corresponds to the finite central line and where the middle edge of $\mathfrak{p}_i$ is represented by a cursive edge. Figure \textbf{(B)} shows a concrete $(D+P_{\infty})$-grid, or equivalently two $P_{\infty}$-neighboring $D$-grids.}\label{fig 1}
\end{figure}

\begin{itemize}
\item[(1)] $g \in \mathrm{Stab}((\mathfrak{D}_0)_{Q})$ for every $Q \neq P_1, \cdots, P_r, P_{\infty}$, and
\item[(2)] $g (\mathfrak{D}_0)_{P_i} g^{-1}= (\mathfrak{D}_{\epsilon_i D})_{P_i}$ with $\epsilon_i \in \lbrace 0,1 \rbrace$ for any $P_i$ in the support of $D$, i.e., $g$ can either pointwise fix the line in $\mathfrak{t}(k_{P_i})$ joining $(\mathfrak{D}_0)_{P_i}$ with $(\mathfrak{D}_{D})_{P_i}$, or flip it.
\end{itemize}
If some $\epsilon_i =1$, then $g$ acts on $\mathfrak{t}_i=\mathfrak{t}(k_{P_i})$ without fixing any point of the finite path $\mathfrak{p}_{i}= \mathfrak{s}(\mathfrak{E}(U_0)_{P_i})$ whose length is odd. Let $\mathfrak{t}_i^{d}$ be the topological space obtained from $\mathfrak{t}_i$ by removing the central edge of $\mathfrak{p}_i$. Then, the action of $g$ on $\mathfrak{t}_i$ exchanges the two connected components of $\mathfrak{t}_i^{d}$. See Figure \ref{fig 1}\textbf{(A)}. We conclude that $g$ fixes no visual limit in $\mathfrak{t}_i$, whence it fixes no element in $\mathbb{P}^1(k)$, which contradicts the hypothesis on $g$. On the other hand, if every $\epsilon_i=0$, we have 
$$g \in \Gamma_0=\mathrm{Stab}_{\text{PGL}_2(k)}(\mathfrak{D}_0(U_0)) \cap \mathrm{Stab}_{\text{PGL}_2(k)}(\mathfrak{D}_D(U_0)) = \mathrm{Stab}_{\text{PGL}_2(k)}(\mathfrak{E}_D(U_0)).$$ 
We claim that $\Gamma_0/\text{PH}_D \hookrightarrow g(2)$, and since $g(2)$ is trivial, we get $g \in \mathrm{PH}_D$, which concludes the proof.

To prove the claim, we follow \cite[Theorem 1.2]{A2}. For $E \in \lbrace 0,D \rbrace$ we denote by $\Lambda_E$ the $\mathcal{O}_{\mathcal{C}}(U_0)$-lattice satisfying $\mathfrak{D}_E(U_0)=\mathrm{End}_{\mathcal{O}_{\mathcal{C}}(U_0)}( \Lambda_E )$. Let $h \in \Gamma_0$ be an arbitrary element, and fix $h_0 \in \mathrm{GL}_2(k)$ a lift of $h$. Then, by definition, we get $h_0 \in \mathrm{Stab}_{\mathrm{GL}_2(k)}(\mathfrak{D}_E(U_0))$, for $E \in \lbrace 0,D \rbrace$. Then, there exists $b_E \in \mathbb{I}_{U_0}$ such that $h_0 \Lambda_E= b_E \Lambda_E$ (recall \S \ref{Section Spinor}). By taking determinant in the preceding equality, we deduce that $b_E^2 \mathcal{O}_{\mathcal{C}}(U_0)=\det(h_0)\mathcal{O}_{\mathcal{C}}(U_0)$, whence we deduce $b:=b_0=b_D$, since $ \mathcal{O}_{\mathcal{C}}(U_0)$ is a Dedekind domain. Recall that $\mathrm{Pic}(A)$ is isomorphic to the ideal class group $\mathbb{I}_{U_0}/(k^{*} \prod_{Q \in |U_0|} \mathcal{O}_Q^{*})$. Moreover, note that the class $[b]$ of $b=b(h_0)$ in $\mathrm{Pic}(A)$ only depends on the class $h=[h_0] \in \mathrm{PGL}_2(k)$. Indeed, if we change the representative $h_0$ of $h$ by $\lambda h_0$, then we obtain $[b(\lambda h_0)]=[b(h_0) \cdot \mathrm{div}(\lambda)]=[b(h_0)]$. In all that follows we denote by $[b]$ the class of any $b=b(h_0)$, which only depends on $h$. Let us define $\Xi: \Gamma_0 \to \mathrm{Pic}(A)$ as the function satisfying $\Xi(h)=[b]\in \mathrm{Pic}(A)$. On one hand, note that $2 \Xi(h)=[b^2]=[\mathrm{div}(\det(h_0))]=0$. In particular, we have $\mathrm{Im}(\Xi)\subseteq g(2)$. On the other hand, if $\Xi(h)=0$, then $b=\mathrm{div}(\lambda)$, for some $\lambda \in k^{*}$. This implies that $h_0 \lambda^{-1} \in \mathrm{Aut}(\Lambda_E)=\mathfrak{D}_E(U_0)^{*}$, for all $E \in \lbrace 0,D \rbrace$. Thus, we get $h_0 \lambda^{-1} \in \mathfrak{E}_D(U_0)^{*}$, whence $h \in \mathrm{PH}_D$. Hence, we conclude $\Gamma_0/\mathrm{PH}_D$ injects into $g(2)$. 
\end{proof}

This concludes the proof of Theorem \ref{teo cusp}. In the remainder of this section, we prove Proposition \ref{lema4}, which is a stronger version of Proposition \ref{prop aux}. In \S \ref{subsection dec. grids}, we study vertices in the classification graph $C_{P_{\infty}}(\mathbb{O}_D)$. To do so, we present the concept of \textit{semi-decomposition datum} of $D$-grids. Then, in \S \ref{subsection combinatorial structure}, we analyze the topological structure of the classification graph. Specifically, we define and characterize a ramified covering $C_{P_{\infty}}(\mathbb{O}_D) \to C_{P_{\infty}}(\mathbb{O}_0) $ via the characterization of vertices in $C_{P_{\infty}}(\mathbb{O}_D)$ obtained in  \S \ref{subsection dec. grids}. Finally, we use what is known about the classifying graph $C_{P_{\infty}}(\mathbb{O}_0)$, which is summarized in the following result.

\begin{theorem}\cite[Theorem 1.2]{A1}\label{Teo Arenas cusp in C0}
The classifying graph $C_{P_{\infty}}(\mathbb{O}_0)$ is combinatorially finite, and it has exactly $\text{Pic}(R) \cong \text{Pic}(\mathcal{C}) / \langle P \rangle$ cuspidal rays. The vertices corresponding to conjugacy classes of the form $[\mathfrak{D}_{D}]$, for some divisor $D$ on $\mathcal{C}$, are located in the cuspidal rays of $C_{P_{\infty}}(\mathbb{O}_0)$. Conversely, almost every vertex in a cuspidal ray $\mathfrak{r}_{\sigma}$ of $C_{P_{\infty}}(\mathbb{O}_0)$, with $\sigma \in  \mathrm{Pic}(A)$, corresponds to a $\mathrm{GL}_2(k)$-conjugacy class of the form $[\mathfrak{D}_{B+nP_{\infty}}]$, where $B=B(\sigma)$ depends only on $\sigma$.
\end{theorem}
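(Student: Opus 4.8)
The plan is to realise $C_{P_{\infty}}(\mathbb{O}_0)$ as the disjoint union, over the finite set $\mathfrak{Sp}(\mathbb{O}_0,P_{\infty})$, of the C-graphs $\Gamma_{\mathfrak{E}}\backslash\mathfrak{t}$ with $\Gamma_{\mathfrak{E}}=\mathrm{Stab}_{\mathrm{PGL}_2(k)}(\mathfrak{E}(U_0))$, and to bootstrap from Serre's Theorem \ref{teo serre quot} for the single group $\mathrm{GL}_2(R)$. Combinatorial finiteness is then immediate: each $\Gamma_{\mathfrak{E}}$ closes enough umbrellas by Corollary \ref{Cor comb fin} (in its $\mathrm{PGL}_2$ form), the index set $\mathfrak{Sp}(\mathbb{O}_0,P_{\infty})$ is finite because $\mathrm{Gal}(\Sigma(\mathbb{O}_0,U_0)/k)$ is a quotient of $\mathrm{Pic}(\mathcal{C})/2\mathrm{Pic}(\mathcal{C})$ (Proposition \ref{gal grupo de clase}), and a finite disjoint union of combinatorially finite graphs is combinatorially finite. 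Moreover, by Corollary \ref{Cor comb fin} the cusps of each component are parametrised by the rational end-orbits $\mathbb{P}^1(k)/\Gamma_{\mathfrak{E}}$, so the total cusp number is $\sum_{\mathfrak{E}}|\mathbb{P}^1(k)/\Gamma_{\mathfrak{E}}|$.

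First I would record the baseline bijection $\mathbb{P}^1(k)/\mathrm{Aut}_R(M)\cong\mathrm{Pic}(R)$ for the unit group of any maximal order $\mathfrak{E}(U_0)=\mathrm{End}_R(M)$: a line $\ell\subset k^2$ is sent to the ideal class of $\ell\cap M$, and the structure theorem for projective modules over the Dedekind ring $R$ makes this a bijection of the end-orbit set with $\mathrm{Pic}(R)$ (a $\mathrm{Pic}(R)$-torsor); for $\mathfrak{E}=\mathfrak{D}_0$ this is exactly the count behind Theorem \ref{teo serre quot}. Passing from $\mathrm{Aut}_R(M)$ to the full normalizer $\Gamma_{\mathfrak{E}}$, I would reuse the homomorphism $\Xi$ built in the proof of Lemma \ref{lemma cubrimiento de cuspides}: it identifies $\Gamma_{\mathfrak{E}}/\overline{\mathrm{Aut}_R(M)}$ with $g(2)$, and $g(2)$ acts on the end-set $\mathrm{Pic}(R)$ by translation through the ideal class $\Xi(h)$. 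Since translation by a nontrivial element is fixed-point free, each component has exactly $|\mathrm{Pic}(R)|/|g(2)|$ cusps. The count is then closed by the identity $|\mathfrak{Sp}(\mathbb{O}_0,P_{\infty})|=|g(2)|$: writing $G=\mathrm{Pic}(R)=\mathrm{Pic}(\mathcal{C})/\langle\overline{P_{\infty}}\rangle$, Proposition \ref{gal grupo de clase} gives $|\mathfrak{Sp}(\mathbb{O}_0,P_{\infty})|=|\mathrm{Gal}(\Sigma(\mathbb{O}_0,U_0)/k)|=|G/2G|$, while $g(2)=G[2]$ and $|G/2G|=|G[2]|$ for finite abelian $G$. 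Hence the total is $|\mathfrak{Sp}|\cdot|\mathrm{Pic}(R)|/|g(2)|=|\mathrm{Pic}(R)|$, which is the cusp count; this is also consistent with the index $[\Gamma_{\mathfrak{E}}:\overline{\mathrm{Aut}_R(M)}]=|g(2)|$ of \cite[Theorem 1.2]{A2}.

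For the location of the split orders I would pass to the vertex/edge dictionary of Proposition \ref{func corresp D-grillas} with $D=0$: vertices of $C_{P_{\infty}}(\mathbb{O}_0)$ are the $\mathrm{PGL}_2(k)$-conjugacy classes of maximal $\mathcal{C}$-orders, i.e. rank-$2$ bundles on $\mathcal{C}$ up to twist by $\mathrm{Pic}(\mathcal{C})$, the $P_{\infty}$-edges being elementary modifications at $P_{\infty}$, and $\mathfrak{D}_D$ corresponding to the decomposable bundle $\mathcal{O}_{\mathcal{C}}\oplus\mathfrak{L}^{D}$. Fixing a cuspidal ray toward an end $\xi\in\mathbb{P}^1(k)$ in the ball model of \S\ref{subsection BTT}, the associated bundles become increasingly unbalanced along the ray; transporting Serre's vector-bundle analysis behind Theorem \ref{teo serre quot} through this dictionary, they split, deep enough, as $\mathcal{O}_{\mathcal{C}}\oplus\mathfrak{L}^{B(\sigma)+nP_{\infty}}$ with splitting type determined by $\sigma=[\xi\cap R^2]\in\mathrm{Pic}(R)$ and the ray direction adding $nP_{\infty}$. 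This places each $[\mathfrak{D}_{B+nP_{\infty}}]$ in a cuspidal ray and identifies the eventual vertices of $\mathfrak{r}_{\sigma}$ with the classes $[\mathfrak{D}_{B(\sigma)+nP_{\infty}}]$, yielding the last two assertions.

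I expect the main obstacle to be the counting step: reconciling the enlargement from the unit group $\mathrm{Aut}_R(M)$ to its normalizer $\Gamma_{\mathfrak{E}}$, which divides each component's cusp number by $|g(2)|$, against the proliferation of the $|\mathfrak{Sp}|=|g(2)|$ spinor-class components, so that the product is exactly $|\mathrm{Pic}(R)|$. The delicate points are the freeness (translation structure) of the $g(2)$-action on the end-set, which must be extracted from the $\Xi$-map as in Lemma \ref{lemma cubrimiento de cuspides}, together with the identity $|G/2G|=|G[2]|$ and the spinor-index input of \cite[Theorem 1.2]{A2} and Proposition \ref{gal grupo de clase}. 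A secondary difficulty is making the asymptotic splitting precise, namely that $B(\sigma)$ stabilises and depends only on $\sigma$, which rests on Serre's Riemann--Roch estimate controlling the finitely many exceptional vertices near the finite core $Y$.
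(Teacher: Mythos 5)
First, note what you are up against: the paper never proves this statement — it is imported wholesale from \cite[Theorem 1.2]{A1} — so there is no internal proof to match, and your reconstruction must stand on its own. Its counting half does. The decomposition of $C_{P_{\infty}}(\mathbb{O}_0)$ into C-graphs over $\mathfrak{Sp}(\mathbb{O}_0,P_{\infty})$, the identification of the cusps of each component with $\mathbb{P}^1(k)/\Gamma_{\mathfrak{E}}$ via Corollary \ref{Cor comb fin}, the bijection of unit-group end-orbits with $\mathrm{Pic}(R)$, the free translation action of $g(2)$ extracted from the $\Xi$-map of Lemma \ref{lemma cubrimiento de cuspides} together with the index $[\Gamma_{\mathfrak{E}}:\overline{\mathrm{Aut}}]=|g(2)|$ from \cite[Theorem 1.2]{A2}, and the bookkeeping $|G/2G|\cdot|G|/|G[2]|=|G|$ are all correct; importantly, you avoid Propositions \ref{max semi-desc} and \ref{lema4}, whose proofs depend (through Example \ref{ex0}) on the very theorem under review, so there is no circularity, and your per-component count $|\mathrm{Pic}(R)|/|g(2)|=|2\,\mathrm{Pic}(R)|$ agrees with the $D=0$ specialization of Proposition \ref{lema4}.

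The genuine gap is your third paragraph, which is a plan rather than a proof, and it sits exactly where the cited theorem has content beyond the cusp count. Concretely: (1) ``the bundles become increasingly unbalanced along the ray'' can at best show that vertices \emph{deep} in a cuspidal ray are decomposable; it does not show that \emph{every} class $[\mathfrak{D}_D]$ — including balanced ones such as $[\mathfrak{D}_0]$ — lies on a cuspidal ray, nor that passing from Serre's quotient $\mathrm{GL}_2(R)\backslash\mathfrak{t}$ to the quotient by the strictly larger group $\Gamma_{\mathfrak{E}}$ cannot push split classes into the finite core $Y$; you invoke the conclusion of Serre's vector-bundle analysis where you would need to rerun its mechanism (stabilizer growth along combs via the Riemann--Roch bound) in the enlarged setting. (2) The well-definedness of $B(\sigma)$ and the ``almost every'' are unaccounted for: to know that the deep vertices of $\mathfrak{r}_{\sigma}$ are $[\mathfrak{D}_{B+nP_{\infty}}]$ for pairwise \emph{distinct} $n$, and that $B$ is pinned down modulo $\langle\overline{P_{\infty}}\rangle$, you need to know exactly when $\mathfrak{D}_B\cong\mathfrak{D}_{B'}$; this is the injectivity statement \cite[\S 4, Proposition 4.1]{A1} ($B'$ linearly equivalent to $\pm B$), which the present paper uses repeatedly (e.g.\ inside Lemma \ref{grado bien definido}) but which your sketch never supplies — without it the coincidences $[\mathfrak{D}_{B+nP_{\infty}}]=[\mathfrak{D}_{-B-nP_{\infty}}]$ at small $n$, the actual source of the finitely many exceptional vertices, are invisible. (3) Finally, the claim that the $P_{\infty}$-edge relation on these classes is exactly $B\mapsto B\pm P_{\infty}$, so that the deep split classes string into single rays rather than scattering, is asserted rather than derived; in the paper's framework this is the $D=0$ case of Corollary \ref{coro semi-desc of neighbor}, which again rests on the theorem you are proving, so you would have to re-derive it directly from the edge dictionary of Proposition \ref{func corresp D-grillas} (an elementary modification computation that your proposal names but does not perform).
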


In Theorem \ref{Teo Arenas cusp in C0} and hereafter, by ``almost every'' we mean all but finitely many.



\subsection{On the decomposition of grids}\label{subsection dec. grids}
Here the main goal is to establish and prove a ``decomposition criterion'' for grids, and subsequently for Eichler orders. 

We fix the following notation for the rest of this section. Let $D$ be an effective divisor, and write $D=\sum_{i=1}^{r} n_i P_i$ where the points $P_1, \cdots, P_r, P_{\infty}$ are all different. Denote by $d_D$ the degree of $D$. Using Proposition \ref{eichler=grillas} we fix a finite set of places $T$ such that every every $\mathrm{GL}_2(k)$-conjugacy class of Eichler $\mathcal{C}$-orders contains a representative in $\mathrm{Eich}(D, T)$. 

For any pair of divisors $(B,B')$ such that $B+B'$ is effective, consider the Eichler $\mathcal{C}$-order
$$
\Ea[B,B']:=\sbmattrix {\oink_\mathcal{C}}{\mathfrak{L}^{-B'}}{\mathfrak{L}^{-B}}{\oink_\mathcal{C}},
$$
whose level is $B+B'$. Any $\mathrm{GL}_2(k)$-conjugate of such an order is called split. We denote an abstract grid by $\mathbb{S}$, and we often choose a representative of this class by writing $S \in \mathbb{S}$, or any verbal analog. 

\begin{defi}
For any basis $\beta \subset k^2$ we denote by $A(\beta)$ the matrix whose columns are the vectors in $\beta$. A maximal $\mathcal{C}$-order $\mathfrak{D}$ is called $\beta$-split if $A(\beta) \mathfrak{D} A(\beta)^{-1} = \mathfrak{D}_E$, for some divisor $E$. We say that a $D$-grid $S$ is $\beta$-split if every vertex $S$ is $\beta$-split as an order. This is equivalent to
\begin{equation}\label{semi-desc pi}
A(\beta) S A(\beta)^{-1} = S \left( \Ea \left[ B,B+D \right] \right),
\end{equation}
for some divisor $B$. A corner of a $D$-grid $S$ is a vertex of $S$ having a unique $P$-neighbor, for each $P$ in the support of $D$. Let $D' \leq D$ be an effective divisor. A $D'$-corner of $S$ is a $D'$-grid $S' \subset S$ containing a corner of $S$.
\end{defi}

\begin{defi}\label{defi semi-desc}
A semi-decomposition datum of $S$ is a $3$-tuple $(\beta, B, D')$, where:
\begin{itemize}
    \item[(a)] $\beta$ is a basis of $k^2$,
    \item[(b)] $B$ and $D'$ are two divisors on $\mathcal{C}$ satisfying $2D \geq 2D' \geq D$, 
    \item[(c)] there exists a corner of $S$ of the form $v_0=A(\beta)^{-1} \mathfrak{D}_{B}A(\beta)$,
    \item[(d)] there is a $\beta$-split $D'$-corner $\Pi_{\mathrm{SD}} \subseteq S$ whose set of vertices is
    $$ \mathrm{V}(\Pi_{\mathrm{SD}}) = \left\lbrace  A(\beta)^{-1}  \mathfrak{D}_{E} A(\beta) :  B \leq E\leq  B+D' \right \rbrace, $$
    \item[(e)] no vertex outside $\Pi_{\mathrm{SD}}$ is $\beta$-split.
\end{itemize}
If $D'=D$, then $(\beta,B, D')$ is called a total decomposition datum. The basis $\beta$ is called the semi-decomposition basis of $S$. The subgrid $\Pi_{\mathrm{SD}}$ is called the decomposed subgrid of $S$ associated to the datum. The degree of a semi-decomposition datum $(\beta, B , D')$ is the degree of $B$. 
\end{defi}

\begin{ex}\label{ex semi desc for mult free} Assume that $D$ is multiplicity free.
Then, condition (b) in Definition \ref{defi semi-desc} implies that any semi-decomposition datum of a concrete $D$-grid is a total decomposition datum.
\end{ex}

Note that the pair of divisors $(B,D')$ in the previous definition depends only on the $\mathrm{GL}_2(k)$-conjugacy class of $D$-grids. Indeed, if $(\beta, B, D')$ is a semi-decomposition datum of $S$, and $S=G S'G^{-1}$ with $G \in \text{GL}_2(k)$, then $(\beta', B, D')$ is a semi-decomposition datum of $S'$, where $\beta'=G(\beta)$. Furthermore, we have $A(\beta')=GA(\beta)$. This allows us to extend the definition of semi-decomposition data to abstract $D$-grids. However, in order to (partially) extend the notion of degree, we need the following result.

\begin{lemma}\label{grado bien definido}
Let $S$ be a concrete $D$-grid. Let $(\beta, B, D')$ and $(\beta^{\circ}, B^{\circ}, D^{ \circ '})$ be two semi-decomposition data of $S$ with positive degree. Then $B$ and $B^{\circ}$ are linearly equivalent.
\end{lemma}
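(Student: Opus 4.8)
The plan is to show that the divisor class $[B]$ is an intrinsic invariant of the concrete $D$-grid $S$, and that the positive-degree hypothesis removes the only ambiguity (a sign coming from a Weyl symmetry). First I would exploit condition (b) of Definition \ref{defi semi-desc}: writing $D' = \sum_i n_i' P_i$ and $D^{\circ'} = \sum_i m_i P_i$, the inequalities $2D' \geq D$ and $2D^{\circ'} \geq D$ give $n_i', m_i \geq n_i/2$, hence $n_i' + m_i \geq n_i$ for every $i$. Since $\Pi_{\mathrm{SD}}$ and $\Pi_{\mathrm{SD}}^{\circ}$ are sub-boxes of $S$ anchored at corners (by (c) and (d)), with side length at least half of the corresponding side of $S$ in each direction, they must overlap in each coordinate; therefore $\Pi_{\mathrm{SD}} \cap \Pi_{\mathrm{SD}}^{\circ} \neq \emptyset$, and I fix a common vertex $w$ there.

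Next I would compare the two data at $w$ by extracting a single element of $\mathrm{GL}_2(k)$. By (d) we may write $w = A(\beta)^{-1}\mathfrak{D}_{E}A(\beta) = A(\beta^{\circ})^{-1}\mathfrak{D}_{E^{\circ}}A(\beta^{\circ})$ with $B \leq E \leq B + D'$ and $B^{\circ} \leq E^{\circ} \leq B^{\circ}+D^{\circ'}$. Setting $g = A(\beta^{\circ})A(\beta)^{-1}$ gives $g\mathfrak{D}_{E}g^{-1} = \mathfrak{D}_{E^{\circ}}$. Because both data have positive degree, $\deg E \geq \deg B > 0$, so $[E] \neq 0$ in $\mathrm{Pic}(\mathcal{C})$; thus the rank-two bundle $\mathcal{O}_{\mathcal{C}} \oplus \mathfrak{L}^{-E}$ whose endomorphism order is $\mathfrak{D}_{E}$ has non-isomorphic line-bundle summands, and by Krull--Schmidt its decomposition into rank-one subbundles, namely the two coordinate lines $ke_1, ke_2$, is unique. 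As conjugation by $g$ carries this canonical decomposition of $\mathfrak{D}_{E}$ to that of $\mathfrak{D}_{E^{\circ}}$, the element $g$ must permute $\{ke_1, ke_2\}$; that is, $g$ lies in the normalizer of the diagonal torus and is either diagonal or antidiagonal.

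I would then eliminate the antidiagonal case using positive degree again. Conjugation by $\sbmattrix{0}{1}{1}{0}$ sends $\mathfrak{D}_{E}$ to $\mathfrak{D}_{-E}$, so an antidiagonal $g$ yields $E^{\circ} = -E + \mathrm{div}(\lambda)$ for some $\lambda \in k^{*}$; taking degrees gives $\deg E^{\circ} = -\deg E < 0$, contradicting $\deg E^{\circ} \geq \deg B^{\circ} > 0$. Hence $g = \mathrm{diag}(a,d)$ is diagonal. A diagonal $g$ fixes each coordinate line, so $\beta$ and $\beta^{\circ}$ determine the same pair of line subbundles of $k^2$, hence the same apartment in every local tree $\mathfrak{t}(k_{P_i})$. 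Consequently the $\beta$-split and $\beta^{\circ}$-split vertices of $S$ coincide, and by condition (e) both equal the maximal split run along those common apartments; thus $\Pi_{\mathrm{SD}} = \Pi_{\mathrm{SD}}^{\circ}$ and the distinguished corners agree, $v_0 = v_0^{\circ}$. Evaluating $g\mathfrak{D}_{B}g^{-1} = \mathfrak{D}_{B^{\circ}}$ and using that diagonal conjugation shifts the parameter by the principal divisor $\mathrm{div}(a/d)$, I obtain $B^{\circ} = B + \mathrm{div}(a/d)$, i.e. $B$ and $B^{\circ}$ are linearly equivalent.

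The step I expect to be the main obstacle is the implication \emph{``$g$ diagonal $\Rightarrow$ the two decomposed subgrids coincide''}: it requires showing that condition (e) pins $\Pi_{\mathrm{SD}}$ down to exactly the set of split vertices lying on the common apartment, so that the geodesic $S|_{P_i}$ cannot enter $\beta$'s apartment, leave it, and re-enter with a different corner assignment. Making this precise means analyzing, direction by direction in each $\mathfrak{t}(k_{P_i})$, how the geodesic segment $S|_{P_i}$ meets the (now common) apartment and verifying that (e) forbids any split vertex beyond $\Pi_{\mathrm{SD}}$. I emphasize that the positive-degree hypothesis is used essentially twice: once to force $[E] \neq 0$, so that the line-bundle decomposition is rigid and $g$ normalizes the torus, and once to exclude the orientation-reversing (antidiagonal) alternative, which would otherwise only yield $[B] = -[B^{\circ}]$.
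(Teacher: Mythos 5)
Your overall strategy (compare the two data through the single change-of-basis matrix $g=A(\beta^{\circ})A(\beta)^{-1}$, use positivity of the degree to rigidify the split decomposition, and kill the orientation-reversing case by a degree count) is close in spirit to the paper's, but there is a genuine gap at the central step. Krull--Schmidt only gives uniqueness of the \emph{isomorphism classes} of the line-bundle summands of $\mathcal{O}_{\mathcal{C}}\oplus\mathfrak{L}^{-E}$, not uniqueness of the summands as subsheaves of $k^2$. When $\deg(E)>0$ one has $\mathrm{Hom}(\mathcal{O}_{\mathcal{C}},\mathfrak{L}^{-E})=0$, so the degree-zero summand $ke_1$ is indeed canonical; but $\mathrm{Hom}(\mathfrak{L}^{-E},\mathcal{O}_{\mathcal{C}})=H^0(\mathcal{C},\mathfrak{L}^{E})$ is nonzero for $\deg(E)$ large (Riemann--Roch), and the resulting unipotent automorphisms $\sbmattrix 1f01$ of the bundle move the second summand. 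Hence your conclusion that $g$ ``lies in the normalizer of the diagonal torus'' is false: the correct dichotomy, which the paper imports from \cite[\S 4, Proposition 4.1]{A1}, is that $g$ is \emph{upper triangular} $\sbmattrix xy0z$ (with $B-B''=\mathrm{div}(x^{-1}z)$) or swaps the two lines. This matters downstream: an upper triangular $g$ with $y\neq 0$ does \emph{not} carry the apartment of $\beta$ to that of $\beta^{\circ}$, so you cannot conclude $\Pi_{\mathrm{SD}}=\Pi_{\mathrm{SD}}^{\circ}$ or $v_0=v_0^{\circ}$; and without matching the corners, the relation $E\sim E^{\circ}$ at one common vertex does not yield $B\sim B^{\circ}$, since nothing forces the offsets $E-B$ and $E^{\circ}-B^{\circ}$ to coincide. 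You correctly flagged this implication as the main obstacle, but it is not a technical refinement --- it is where the argument currently breaks.

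The repair is essentially what the paper does. First reduce to the multiplicity-free case by passing to the central $D_c$-subgrid $\Delta$ (so that \emph{every} vertex is split in both bases and both data become total); then, knowing only that $g$ is upper triangular, compute $g\,\mathfrak{D}_{B+E}\,g^{-1}$ for all $0\leq E\leq D$ at once and use the fact that each image is again a canonically split maximal order to identify it with $\mathfrak{D}_{B''+E}$, $B''=B-\mathrm{div}(x^{-1}z)$; comparing the two descriptions of the whole box (rather than of a single vertex) forces $B''=B^{\circ}$, while the swapped case gives $B\sim -B^{\circ}-D$ and is excluded by positivity exactly as you propose. Your overlap observation from condition (b) is correct but ends up being unnecessary once one conjugates the entire decomposed subgrid.
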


\begin{proof} Set $A=A(\beta)$, $A^{\circ}=A(\beta^{\circ})$, and let $G$ be the base change matrix from $\beta$ to $\beta^{\circ}$. We start by showing that we can restrict our proof to the context where $D$ is multiplicity free. Indeed, $A^{\circ} S(A^{\circ})^{-1}$ is the image of the concrete $D$-grid $ASA^{-1}$ by the conjugation map induced by $G$. Let us write $D= \sum_{i=1}^{r} n_i P_i$, and let $\lbrace P_{b_1}, \cdots, P_{b_s}\rbrace$ be the set of such points with an odd coefficient $n_i$. Set $D_c=P_{b_1}+ \cdots+ P_{b_s}$. Let $\Delta$ be the subcomplex of $S$ whose vertex set is
$$ \text{V}(\Delta) = \left\lbrace  A^{-1} \mathfrak{D}_{E} A : B_0 \leq E \leq B_0+D_c \right\rbrace,$$
where $B_0=B+\sum_{i=1}^r \lfloor \frac{n_i}{2}\rfloor P_i$. Note that the intersection of the maximal $\mathcal{C}$-orders corresponding to the vertex set of $\Delta$ is an Eichler $\mathcal{C}$-order of level $D_c$. Equivalently, we get that $\Delta$ is a concrete $D_c$-grid. Since $\text{PGL}_2(k)$ acts simplicially on each grid, we get $G A \Delta A^{-1} G^{-1} = A^{\circ} \Delta (A^{\circ})^{-1}$. Moreover, we have that 
$$ \text{V}(\Delta)= \left\lbrace  (A^{\circ})^{-1} \mathfrak{D}_{E} A^{\circ} : B_0^{\circ} \leq E \leq B_0^{\circ}+D_c \right\rbrace,$$
where $B_0^{\circ}= B^{\circ}+\sum_{i=1}^r \lfloor \frac{n_i}{2} \rfloor P_i$. Thus, we conclude that the $D_c$-grid $\Delta$ has two induced positive degree total-decomposition data $(\beta, B_0, D_c)$ and $(\beta^{\circ}, B_0^{\circ}, D_c)$. Note that if $B_0$ and $B_0^{\circ}$ are linearly equivalent, then $B$ and $B^{\circ}$ are also. Therefore, by replacing $S$ by $\Delta$, we can assume that $D$ is multiplicity free.

Now, assume that $D$ is multiplicity free. In this case, every vertex in $A SA^{-1} $ and $A^{\circ} S(A^{\circ})^{-1}$ correspond to a split maximal $\mathcal{C}$-order (cf.~Example \ref{ex semi desc for mult free}). Set $\mathfrak{D}_{B''}=G\mathfrak{D}_{B} G^{-1}$. Then one of the following holds:
\begin{enumerate}
    \item $B''$ is principal,
    \item $\mathfrak{L}^{-B''}(\mathcal{C})=\lbrace 0 \rbrace$, or
    \item $\mathfrak{L}^{B''}(\mathcal{C})=\lbrace 0 \rbrace$.
\end{enumerate}
If $B''$ is principal, then $\mathfrak{D}_{B''}(\mathcal{C})= \mathbb{M}_2(\mathbb{F})$. On the other hand, since $\deg(B)>0$, we obtain $\mathfrak{L}^{-B}(\mathcal{C})=\lbrace 0 \rbrace$. Thus, we conclude $\mathfrak{D}_{B}(\mathcal{C})= \sbmattrix {\mathbb{F}^{*}}{\mathfrak{L}^{B}(\mathcal{C})}{0}{\mathbb{F}^{*}}$, which is not a simple algebra. So, the first case is impossible.

Now, assume the second case, i.e. $\mathfrak{L}^{-B}(\mathcal{C})=\mathfrak{L}^{-B''}(\mathcal{C})=\lbrace 0 \rbrace$. Then, it follows from \cite[\S 4, Proposition 4.1]{A1} that one of the following conditions holds:
\begin{itemize}
\item[(a)] $G= \sbmattrix {x}{y}{0}{z}$ and $B-B''= \text{div}(x^{-1}z)$, or
\item[(b)] $G=\sbmattrix {0}{x}{z}{0}$ and $B+B''= \text{div}(x^{-1}z)$.
\end{itemize}
In case (a), for any divisor $0 \leq E \leq D$ we have that 
\begin{equation}
 G\mathfrak{D}_{B+E} G^{-1} \subseteq \sbmattrix {\mathcal{O}_{\mathcal{C}}-yz^{-1}\mathfrak{L}^{-B-E}}{\mathfrak{J}}{xz^{-1}\mathfrak{L}^{-B-E}}{\mathcal{O}_{\mathcal{C}}+yz^{-1}\mathfrak{L}^{-B-E}},
\end{equation}
for some invertible sheaf $\mathfrak{J}$, where the other coefficients are optimal. Since  $$xz^{-1}\mathfrak{L}^{-B-E}=\mathfrak{L}^{-B-\text{div}(xz^{-1})-E}=\mathfrak{L}^{-B''-E},$$
and $G\mathfrak{D}_{B+E} G^{-1}$ is a split maximal $\mathcal{C}$-order, we deduce that $G\mathfrak{D}_{B+E} G^{-1}=\mathfrak{D}_{B''+E}$, for any divisor $0 \leq E \leq D$. This implies that $B''=B^{\circ}$, and then $B$ and $B^{\circ}$ are linearly equivalent.

In case (b), for any divisor $0 \leq E \leq D$ we have that 
\begin{equation}
 G\mathfrak{D}_{B+E} G^{-1}=\sbmattrix {\mathcal{O}_{\mathcal{C}}}{xz^{-1}\mathfrak{L}^{-B-E}}{zx^{-1}\mathfrak{L}^{B+E}}{\mathcal{O}_{\mathcal{C}}}.
\end{equation}
Then, it follows directly from the previous equation that $G\mathfrak{D}_{B+E} G^{-1}= \mathfrak{D}_{B''-E}$, for any divisor $0 \leq E \leq D$. Therefore $B''=B^{\circ}+D$, whence $B$ is linearly equivalent to $-B^{\circ}-D$. But, the last condition contradicts the hypotheses of positive degree on $B$ and $B^{\circ}$. We conclude that only case (a) can hold. 

Finally, if $\mathfrak{L}^{B''}=\lbrace 0 \rbrace$ we can replace $B''$ by $-B''$ in the preceding argument.
\end{proof}

\begin{defi}
Let $\mathbb{S}$ be an abstract $D$-grid.
A semi-decomposition datum of $\mathbb{S}$ is a pair $(B, D')$, where $(\beta, B, D')$ is a semi-decomposition datum of some concrete representative $S \in \mathbb{S}$. When $D'=D$, we say that $(B, D')$ is a total decomposition datum of $\mathbb{S}$. When $\deg(B)>0$, the degree of this datum is by definition $\deg(B)$, which is well-defined by Lemma \ref{grado bien definido}. 
\end{defi}

Let $\beta= \lbrace e_1, e_2 \rbrace$ be a basis of $k^2$. We say that a two-dimensional vector bundle $\mathfrak{L}$ on $\mathcal{C}$ is $\beta$-split if $\mathfrak{L}= \mathfrak{L}^B e_1 \oplus \mathfrak{L}^C e_2 $, where $B$ and $C$ are divisors on $\mathcal{C}$. Then, a maximal $\mathcal{C}$-order $\mathfrak{D}_{\Lambda}$ splits in the base $\beta$ if and only if at least one (and therefore every) vector bundle in the class $[\Lambda]$ is $\beta$-split. Moreover, either condition is equivalent to $\sbmattrix 1000, \sbmattrix 0001\in \mathfrak{D}_{\Lambda}(\mathcal{C})$. More generally, an Eichler $\mathcal{C}$-order is split precisely when it contains a non-trivial idempotent as a global section, or equivalently, when the corresponding grid has a total-decomposition datum. In fact, we have a more precise result that follows immediately from the current paragraph and \cite[Theorem 1.2]{A5}:

\begin{prop}\label{prop des}
Let $\mathfrak{E}$ be an Eichler $\mathcal{C}$-order of level $D$. Then the following statements are equivalent:
\begin{itemize}
\item[(1)] $\mathfrak{E}$ is split,
\item[(2)] $S=S(\mathfrak{E})$ has a total-decomposition datum,
\item[(3)] The ring of global sections $\mathfrak{E}(\mathcal{C}) \subseteq \mathbb{M}_2(k)$ contains a non trivial idempotent matrix,
\item[(4)] There exists a nontrivial idempotent matrix of $\mathbb{M}_2(k)$ contained in the ring of global sections $\mathfrak{D}(\mathcal{C})$ for every maximal $\mathcal{C}$-order $\mathfrak{D}$ corresponding to a vertex of $S$.
\end{itemize}
Assume moreover that $D$ is multiplicity free. Then, for almost all conjugacy classes in $\mathbb{O}_D$, the orders in this class are split.
\end{prop}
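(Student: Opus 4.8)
The plan is to pass everything to the classifying graph and exploit its combinatorial finiteness. By Proposition \ref{func corresp D-grillas} the $\mathrm{GL}_2(k)$-conjugacy classes in $\mathbb{O}_D$ are exactly the vertices of $C_{P_\infty}(\mathbb{O}_D)$, and this graph is combinatorially finite; hence all but finitely many of these classes correspond to vertices lying deep on some cuspidal ray. By the equivalence (1)$\Leftrightarrow$(2) of the present proposition, together with Example \ref{ex semi desc for mult free} (which, since $D$ is multiplicity free, upgrades every semi-decomposition datum to a total one), it therefore suffices to show that an abstract $D$-grid lying far enough out on a cuspidal ray admits a total-decomposition datum, i.e. corresponds to a split Eichler order.

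For such a grid I would study its corner maximal orders. Using Proposition \ref{func corresp D-grillas} again — edges of $C_{P_\infty}(\mathbb{O}_D)$ are indexed by $(D+P_\infty)$-grids, so moving along a cuspidal ray is moving in the $P_\infty$-direction — each corner of the grid traces out a ray of $C_{P_\infty}(\mathbb{O}_0)$ heading to one and the same end $s\in\mathbb{P}^1(k)$ of the local tree $\mathfrak{t}=\mathfrak{t}(k_{P_\infty})$. Invoking Theorem \ref{Teo Arenas cusp in C0}, deep on a cuspidal ray of $C_{P_\infty}(\mathbb{O}_0)$ every vertex is a class of the form $[\mathfrak{D}_{B+nP_\infty}]$, so for a grid sufficiently far out each of its corner maximal orders is split.

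It then remains to assemble these corner splittings into a single splitting of the whole grid. Since an Eichler order is the intersection of exactly its two antipodal corners $\mathfrak{D}$ and $\mathfrak{D}'$, one has $\mathfrak{E}(\mathcal{C})=\mathfrak{D}(\mathcal{C})\cap\mathfrak{D}'(\mathcal{C})$, and by criterion (3) it is enough to exhibit one nontrivial idempotent lying in both rings of global sections. The common end $s$ furnishes a line $\ell_s\subset k^2$ which, deep on the ray, is a sub-bundle summand of both corner bundles; choosing a basis adapted to $s$ makes both corners diagonal in a \emph{common} basis, so the associated diagonal projection is a global idempotent of $\mathfrak{E}(\mathcal{C})$, yielding the total-decomposition datum. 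This is precisely the point where I would lean on the dictionary of \cite[Theorem 1.2]{A5} referenced in the paragraph preceding the statement.

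The main obstacle is exactly this last step, because a single split corner never by itself forces the grid to split: two distinct $D$-grids may share a corner with one split and the other not. The real content is the \textbf{common} splitting basis, and its existence must be extracted from the shared rational end $s$ of the cuspidal ray — this is what supplies the common line $\ell_s$ — made uniform over both antipodal corners. The rigidity in Theorem \ref{Teo Arenas cusp in C0} (the divisor $B(\sigma)$, and hence the diagonalizing basis, depends only on the ray) is what I expect to pin this down, with care needed to ensure the two corners produce compatible complements rather than merely a shared summand; the finitely many exceptional classes near the finite core of $C_{P_\infty}(\mathbb{O}_D)$ are then harmless and are absorbed into the ``almost all''.
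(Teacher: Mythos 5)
Your proposal leaves the equivalences (1)--(4) essentially untouched (the paper disposes of them in the paragraph immediately preceding the statement, so this is minor), and concentrates on the ``almost all'' claim, which the paper itself obtains simply by invoking \cite[Theorem 1.2]{A5}. Your attempt at a self-contained proof of that claim has a genuine gap at precisely the step you flag, and the gap cannot be closed by the route you propose. The data you feed into the assembly step --- both antipodal corners are individually split maximal orders, and both admit the line $\ell_s$ determined by the rational end $s$ of the lifted cuspidal ray as their canonical maximal-degree summand --- is available for an \emph{arbitrary} effective $D$, not just a multiplicity-free one. But for, say, $D=2P$ one has $\alpha(D)=1+\frac{q^{\deg P}-1}{q-1}\geq 2$, and Proposition \ref{lema4} (together with the discussion after Proposition \ref{prop cuspides en C}) shows that each fibre of $\widetilde{\mathfrak{d}}^{\infty}$ then contains $\alpha(D)-1\geq 1$ \emph{non-split} cusps: entire cuspidal rays of $C_{P_{\infty}}(\mathbb{O}_D)$ consisting of non-split Eichler orders, all of whose corners are nevertheless individually split and share the summand direction $\ell_s$. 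So ``corners split $+$ shared end $s$'' cannot imply a common diagonalizing basis; the obstruction is exactly the incompatibility of the complements, and it genuinely occurs. The rigidity you invoke from Theorem \ref{Teo Arenas cusp in C0} fixes only the divisor class $B(\sigma)$ of the canonical summand, not a complement; moreover the various corners $\mathfrak{D}_{B+E}$ of a single grid lie on \emph{different} cuspidal rays of $C_{P_{\infty}}(\mathbb{O}_0)$, indexed by the classes $\overline{B+E}\in\mathrm{Pic}(R)$, so there is no single $\sigma$ to appeal to.

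Any correct argument must use the multiplicity-freeness of $D$ in the assembly step, and yours does not: Example \ref{ex semi desc for mult free} is mentioned at the outset but never used afterwards. In the framework of this paper the common basis is produced not at $P_{\infty}$ but at the points of $\mathrm{Supp}(D)$: Lemma \ref{Lemma buenos caminos} and the induction in Proposition \ref{max semi-desc} show that the $P_i$-strata of a generic grid fold back along a lift of a cuspidal ray of $C_{P_i}(\mathbb{O}_{D_0})$, producing a subgrid split in a \emph{common} basis $\beta$ that covers strictly more than half of each axis, i.e.\ a semi-decomposition datum with $2D'\geq D$; only at that point does multiplicity-freeness force $D'=D$ and hence total decomposition. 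If you do not wish to reconstruct that argument (which in any case appears later in the paper), the honest proof of the final claim is the one the paper gives: cite \cite[Theorem 1.2]{A5}.
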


\begin{ex}\label{ex0} 
Assume that $D=0$. Let $Q$ be a closed point on $\mathcal{C}$, and set $R'=\mathcal{O}_{\mathcal{C}}(\mathcal{C} \smallsetminus \lbrace Q \rbrace)$. Then, each concrete $D$-grid consists in precisely one vertex, which represents a maximal $\mathcal{C}$-order. According to Theorem \ref{Teo Arenas cusp in C0}, almost all abstract $0$-grids admit a total decomposition datum, and these $0$-grids correspond to vertices located in a finite union of rays in $C_{Q}(\mathbb{O}_0)$, which are parametrized by $\text{Pic}(R') \cong \text{Pic}(\mathcal{C}) / \langle \overline{Q} \rangle$. Moreover, almost every vertex in a cuspidal ray $\mathfrak{r}_{\sigma}$ of $C_{P_{\infty}}(\mathbb{O}_0)$, with $\sigma \in  \mathrm{Pic}(A)$, corresponds to a $\mathrm{GL}_2(k)$-conjugacy class of the form $[\mathfrak{D}_{B+nP_{\infty}}]$, where $B=B(\sigma)$ depends only on $\sigma$. In particular, given a divisor $D_0$, almost all classes of maximal $\mathcal{C}$-orders have a representative of the form $\mathfrak{D}_E$ with $\deg(E) > \deg(D_0)$.
\end{ex}

The rest of this sub-section is exclusively devoted to proving the following proposition, which generalizes the previous example.

\begin{prop}\label{max semi-desc} Let $D$ be an effective divisor and $d_D=\deg(D)$. Then, for almost every abstract $D$-grid $\mathbb{S}$ there exists a semi-decomposition datum $(B,D')$ of degree $d$ for $\mathbb{S}$ with $d > d_D$. Moreover, $D'$ is unique and the class of $B$ in $\mathrm{Pic}(\mathcal{C})$ is unique. In particular, $d$ is unique.
\end{prop}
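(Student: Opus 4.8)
The plan is to deduce the statement from the $D=0$ case (Example \ref{ex0}, i.e.\ Theorem \ref{Teo Arenas cusp in C0}) applied to the corners of the grid, together with a finiteness argument, and to settle uniqueness by upgrading Lemma \ref{grado bien definido}.

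\textbf{Existence.} First I would reduce to the situation where all corners of $\mathbb{S}$ are ``good''. Call a $\mathrm{GL}_2(k)$-conjugacy class of maximal $\mathcal{C}$-orders \emph{bad} if it is non-split or admits no representative $\mathfrak{D}_E$ with $\deg(E)>d_D$. By Example \ref{ex0} there are only finitely many bad classes. Now, for a fixed maximal order $\mathfrak{D}$, the concrete $D$-grids having $\mathfrak{D}$ as a corner are finite in number, since at each $P_i$ there are only finitely many geodesic segments of length $n_i$ issuing from $\mathfrak{D}_{P_i}$ in the locally finite tree $\mathfrak{t}(k_{P_i})$; hence only finitely many abstract $D$-grids have a corner in a given class. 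Ranging over the finitely many bad classes and the $2^r$ corner positions, I conclude that all but finitely many abstract $D$-grids have every corner good. Fixing such an $\mathbb{S}$ and a concrete representative $S$, I choose the corner $v_0$ and basis $\beta$ furnished by Theorem \ref{Teo Arenas cusp in C0}, so that $v_0=A(\beta)^{-1}\mathfrak{D}_B A(\beta)$ with $\deg(B)>d_D$ and the decomposed subgrid grows from $v_0$ in the direction of increasing $E$. Straightening $S$ by $A(\beta)$, I let $n_i'$ be the number of initial steps for which the local line of $A(\beta)SA(\beta)^{-1}$ at $P_i$ stays on the standard apartment, i.e.\ remains of the form $\mathfrak{D}_{B+mP_i}$, and I set $D'=\sum_i n_i'P_i$.

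With these choices the subgrid $\Pi_{\mathrm{SD}}$ having vertex set $\{A(\beta)^{-1}\mathfrak{D}_E A(\beta):B\le E\le B+D'\}$ is a $\beta$-split $D'$-corner, so conditions (a), (c), (d) of Definition \ref{defi semi-desc} hold by construction; condition (e) is immediate from the maximality of each $n_i'$, since in a tree a geodesic that leaves the apartment never returns, so no vertex beyond $\Pi_{\mathrm{SD}}$ can be $\beta$-split. Moreover $\deg(B)>d_D$ gives the degree bound, and $D'\le D$ (that is $2D\ge 2D'$) is clear because $\Pi_{\mathrm{SD}}\subseteq S$.

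\textbf{Main obstacle.} The crux is the remaining half of condition (b), the inequality $2D'\ge D$, i.e.\ that at each $P_i$ the non-split overhang satisfies $n_i-n_i'\le\lfloor n_i/2\rfloor$. This cannot follow from $v_0$ alone, since splitness of the opposite corner holds in a \emph{different} basis and places no constraint on the apartment through $v_0$. I would instead exploit that $\mathbb{S}$ lies far out on a cuspidal ray of $C_{P_\infty}(\mathbb{O}_D)$ (Corollary \ref{Cor comb fin}): using Theorem \ref{Teo Arenas cusp in C0} to pin down \emph{both} endpoints of the local line $\mathfrak{p}_i$ as orders of the form $\mathfrak{D}_{\,\cdot\,+nP_\infty}$ of large degree, one analyzes the unique geodesic joining them and shows it can fail to meet the apartment determined by the cusp for at most half of its length. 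This local comparison of the two split corners along the single line $\mathfrak{p}_i$ is exactly where the bound $\lfloor n_i/2\rfloor$ — the same quantity appearing in $\alpha(D)$ — enters, and it is the only genuinely delicate point of the proof.

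\textbf{Uniqueness.} For two semi-decomposition data $(\beta,B,D')$ and $(\beta^\circ,B^\circ,D'^{\circ})$ of $S$ of degree $>d_D\ge 0$, hence of positive degree, Lemma \ref{grado bien definido} gives $B\sim B^\circ$, so the class of $B$ in $\mathrm{Pic}(\mathcal{C})$, and therefore $d=\deg(B)$, is unique. For $D'$ I would refine that lemma: its proof shows the base change $G$ between the two bases is upper triangular (case (a)), whence $G\mathfrak{D}_{E}G^{-1}=\mathfrak{D}_{E+\mathrm{div}(xz^{-1})}$ preserves both $\beta$-splitness and the ordering of vertices by $E$. Thus $G$ carries the $\beta$-split locus of $S$ onto the $\beta^\circ$-split locus; by condition (e) these loci are precisely $\Pi_{\mathrm{SD}}$ and $\Pi_{\mathrm{SD}}^{\circ}$, so the two decomposed subgrids correspond under $G$ and have equal level, giving $D'=D'^{\circ}$. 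In particular $d$ is unique, completing the argument.
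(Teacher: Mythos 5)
There is a genuine gap, and you have located it yourself: the inequality $2D'\ge D$ in condition (b) of Definition \ref{defi semi-desc} is never proved. Everything else in your existence argument (finitely many bad corner classes via Example \ref{ex0}, finitely many grids with a prescribed corner as in Lemma \ref{lemma finitas D-grillas}, conditions (a), (c), (d), (e) from convexity of the apartment in a tree) is routine; the entire content of the proposition is precisely that the non-split overhang at each $P_i$ has length at most $\lfloor n_i/2\rfloor$, since this is what produces the factor $q^{\deg(P_i)\lfloor n_i/2\rfloor}-1$ in the cusp count. Your one-sentence sketch --- pin down both corners as split orders of large degree and show the geodesic joining them misses ``the apartment determined by the cusp'' for at most half its length --- does not work as stated: the two corners are split in two a priori unrelated bases, so there is no single apartment against which to measure the overhang, and relating the two bases is exactly the unknown. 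The paper's mechanism is entirely different: it inducts on the number of points in $\mathrm{Supp}(D)$, slices $S$ into its $P_1$-strata, and proves Lemma \ref{Lemma buenos caminos}, which uses the combinatorial finiteness of $C_{P_1}(\mathbb{O}_{D_0})$ and the transitivity of vertex stabilizers far out on its cuspidal rays (the ``closing umbrellas'' property) to \emph{fold} the length-$n_1$ line of strata by elements of $\Gamma$ so that a sub-line of length $s$ with $s\geq\lfloor\frac{n_1+1}{2}\rfloor$ lies on a lift of a cuspidal ray, with the remaining vertices $\zeta_{s+i}$ being $\Gamma$-equivalent to $z_{s-i}$. That folding, not a geodesic comparison of two split corners, is what yields $2D'\ge D$.

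Two smaller points. For uniqueness of $D'$ you propose to upgrade Lemma \ref{grado bien definido} so that an upper-triangular base change carries the $\beta$-split locus onto the $\beta^{\circ}$-split locus; this is plausible (in case (a) the condition $G\mathfrak{D}_BG^{-1}=\mathfrak{D}_{B''}$ forces $yx^{-1}\mathfrak{L}^{-B-E}\subseteq\mathcal{O}_{\mathcal{C}}$ for $E\ge 0$), but you would still need to rule out that the two data are based at opposite corners, and the paper's route is cleaner: by condition (v) of Lemma \ref{Lemma buenos caminos} the vertex $v_j$ of the $P_i$-axis is isomorphic to $\mathfrak{D}_{B+jP_i}$ for $j\le s_i$ and to $\mathfrak{D}_{B+(2s_i-j)P_i}$ for $j\ge s_i$, so the $s_i$ are read off from the intrinsic isomorphism types of the vertices, with Lemma \ref{grado bien definido} distinguishing them. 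Finally, the bound $\deg(B)>d_D$ is obtained in the paper not at the level of a single grid but by translating along a cuspidal ray of $C_{P_{\infty}}(\mathbb{O}_D)$, where each step replaces $B$ by $B+P_{\infty}$ (Corollary \ref{coro semi-desc of neighbor}); your corner-by-corner reduction gives the same conclusion but does not by itself set up the covering map $\widetilde{\mathfrak{d}}$ that the rest of Section \ref{Section Quotient} relies on.
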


In order to prove Proposition \ref{max semi-desc} we extensively work with some subgrids defined by a certain stratification, which we formalize in Definition \ref{defi strata} and Lemma \ref{Lemma buenos caminos}.

\begin{defi}\label{defi strata}
Let $S$ be a concrete $D$-grid.

We define the $P_i$-axis $S(P_i) \subseteq S$ as the finite line in $S$ whose vertex set is $\lbrace v_j \rbrace_{j=0}^{n_i}$, where $v_0$ is as in Definition \ref{defi semi-desc}, and where $v_j$ is $P_i$-neighbor of $v_{j+1}$, whenever $0 \leq j \leq n_{i-1}$. 

Write $D=D_0+nQ$, with $D_0$ supported away from $Q$ and $n>0$. Then, the vertex set of $S$ can be naturally written as the disjoint union of the vertex sets of $n+1$ different $D_0$-grids, denoted by $S_0, \cdots, S_n$. We do this in a way such that $S_0$ is a $Q$-face of $S$, and $S_i$ is a $Q$-neighbor of $S_{i+1}$, for each $i \in \lbrace 0, \cdots, n-1 \rbrace$. These $D_0$-grids are called the $Q$-strata of $S$. See Figure \ref{fig 01}\textbf{(B)}. The numbering of the strata can be inverted if necessary. 
\end{defi}

The sequence of strata $\lbrace S_0, \cdots, S_n \rbrace$ defines a finite line in $\mathfrak{t}(k_Q)$ of length $n$. The following lemma characterizes, for almost every grid, the image of this line in the classifying graph.

\begin{lemma}\label{Lemma buenos caminos}
Let us write $D=D_0+nQ$ as above. Then, for almost every abstract $D$-grid $\mathbb{S}$, there exists $S \in \mathbb{S}$ such that the corresponding line $\mathfrak{c}(S)$ in $\mathfrak{t}(k_Q)$ is defined by vertices $ z_0, \cdots, z_s, \zeta_{s+1}, \cdots \zeta_{n} \in \mathfrak{t}(k_{Q})$ satisfying:
\begin{itemize}
\item[(i)] vertices in each pair $(z_i$, $z_{i+1})$, $(\zeta_i$, $\zeta_{i+1})$, or $(z_{s}$, $\zeta_{s+1})$ are neighbors,
\item[(ii)] $s> \lfloor \frac{n+1}{2}\rfloor $, 
\item[(iii)] $z_0, \cdots, z_{s}$ are pairwise non-$\Gamma$-equivalent vertices,
\item[(iv)] $z_0, \cdots, z_{s}$ are on the maximal path joining $\infty$ with some $\epsilon \in k$, and
\item[(v)] $\zeta_{s+i}$ and $z_{s-i}$ are $\Gamma$-equivalent, for any $i\in \lbrace 1, \cdots, s\rbrace$.
\end{itemize}
In particular, the image $\mathfrak{c}(\mathbb{S})$ of $\mathfrak{c}(S)$ in $C_Q(\mathbb{O}_{D_0})$ is a line of length $s$ contained in a cuspidal ray.
\end{lemma}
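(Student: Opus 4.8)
The plan is to analyze the image, in the quotient graph $\Gamma\backslash\mathfrak{t}(k_Q)$, of the geodesic $\mathfrak{c}(S)=(z_0,\dots,z_n)$ cut out by the $Q$-strata of $S$; here $\Gamma$ is the group defining the relevant connected component of $C_Q(\mathbb{O}_{D_0})$, i.e. $\Gamma=\mathrm{Stab}_{\mathrm{PGL}_2(k)}(\mathfrak{E}(V_0))$, which is the same for all strata since sections over $V_0=\mathcal{C}\smallsetminus\{Q\}$ do not see the place $Q$. Condition (i) is automatic, since consecutive strata are $Q$-neighbors. By Corollary~\ref{Cor comb fin} the group $\Gamma$ closes enough umbrellas, so $\Gamma\backslash\mathfrak{t}(k_Q)$ is a finite core $Y$ with finitely many cuspidal rays attached; each such ray lifts to a ray $\mathfrak{r}=(v_0,v_1,\dots)$ in $\mathfrak{t}(k_Q)$ converging to a rational end $\xi\in\mathbb{P}^1(k)$ and satisfying the umbrella conditions (d) and (e) of Definition~\ref{def good quotient}.

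The first main step is the reduction to grids sitting deep inside a cusp. Since $\mathbb{F}$ is finite, the tree $\mathfrak{t}(k_Q)$ is locally finite; as $Y$ is finite there are only finitely many $\Gamma$-orbits of length-$n$ geodesics meeting a lift of $Y$, and together with the combinatorial finiteness of $C_Q(\mathbb{O}_D)$ this shows that for all but finitely many abstract $D$-grids $\mathbb{S}$ one may choose a representative $S$ whose geodesic $\mathfrak{c}(S)$ projects entirely into a single cuspidal ray, high up in its lift. For such an $S$ let $z_s=v_h$ be the apex of $\mathfrak{c}(S)$, i.e. the unique vertex of the geodesic closest to $\xi$; inverting the numbering of the strata if necessary (as allowed in Definition~\ref{defi strata}) we arrange that the ascending part $z_0,\dots,z_s$ is the longer of the two sides, and deepness yields the strict bound $s>\lfloor\frac{n+1}{2}\rfloor$ of (ii). The vertices $z_0,\dots,z_s$ then lie on $\mathfrak{r}$ at consecutive heights $h-s,\dots,h$, hence are pairwise non-$\Gamma$-equivalent, which is (iii); and they lie on the geodesic ray to $\xi=:\infty$, whose downward continuation beyond $z_0$ may be taken to end at a rational $\epsilon\in k$ by the density remark at the end of \S\ref{subsection BTT}, giving (iv).

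For condition (v) the key observation is that in the quotient the cuspidal ray is simply a half-line, so any edge-path whose image lies in it is determined by its heights. Since $z_s$ is the unique vertex of $\mathfrak{c}(S)$ closest to $\xi$, the height strictly decreases along the geodesic on both sides of the apex; hence the descending part $\zeta_{s+1},\dots,\zeta_n$ projects onto $\overline{v}_{h-1},\overline{v}_{h-2},\dots$, that is, $\zeta_{s+i}$ is $\Gamma$-equivalent to $v_{h-i}=z_{s-i}$ for each $i$. Because $s>\lfloor\frac{n+1}{2}\rfloor$ forces $n-s<s$, this fold lands inside the ascending image, so $\mathfrak{c}(\mathbb{S})$ is exactly the length-$s$ path $\overline{z}_0,\dots,\overline{z}_s$ contained in the cuspidal ray, as claimed.

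The main obstacle I anticipate is the reduction carried out in the second paragraph: translating the combinatorial statement ``$\mathbb{S}$ is almost any abstract $D$-grid'' into the metric statement ``$\mathfrak{c}(S)$ ascends deep into a single cusp, with apex strictly past the midpoint.'' One must verify that the finitely many excluded grids are precisely those whose geodesic meets the core or peaks too centrally, which requires matching the combinatorial finiteness of $C_Q(\mathbb{O}_D)$ against the cusp geometry of $\Gamma\backslash\mathfrak{t}(k_Q)$, using Theorem~\ref{Teo Arenas cusp in C0} to locate where the high vertices of the classifying graph sit. Once the geodesic is known to live high in a single cusp, conditions (i)--(v) follow from the umbrella structure essentially formally.
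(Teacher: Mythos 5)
Your proposal follows essentially the same route as the paper's proof: use the combinatorial finiteness of $C_Q(\mathbb{O}_{D_0})$ to discard finitely many grids and push the image of $\mathfrak{c}(S)$ deep into a single cuspidal ray, then exploit the umbrella conditions (nested stabilizers and transitivity on non-distinguished neighbors) to get the unimodal ``apex'' picture, the folding property (v), the non-equivalence (iii), and the bound (ii) by renumbering the strata. The only cosmetic difference is that the paper makes the $\Gamma$-translations explicit (normalizing the intersection with the lifted subray so the ascending part starts at its beginning, and flipping when the ascending side is too short), whereas you absorb these into the ``deepness'' reduction; the substance is the same.
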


\begin{proof} Let $\mathbb{S}$ be an abstract $D$-grid, and let $S \in \mathbb{S}$ be a concrete representative. Let $\mathfrak{c}=\mathfrak{c}(S)$ be the finite line in $\mathfrak{t}(k_{Q})$ corresponding to $S$. The image $\mathfrak{c}(\mathbb{S})$ of $\mathfrak{c}$ in $C_{Q}(\mathbb{O}_{D_0})$ is a line of length $s \leq n$, which only depends on $\mathbb{S}$ by Proposition \ref{func corresp D-grillas}.

Now, as we noted in \S \ref{Section grids} (see the paragraphs before Proposition \ref{func corresp D-grillas}) the graph $C_{Q}(\mathbb{O}_{D_0})$ is combinatorially finite. Thus, there exist finitely many lines of length at most $n$ that are not contained in a cuspidal ray. Hence, we can assume that $\mathfrak{c}(\mathbb{S})$ is contained in a cuspidal ray $\mathfrak{r}$ of $C_{Q}(\mathbb{O}_{D_0})$. Let $\mathfrak{X}=  \Gamma \backslash\mathfrak{t}(k_{Q})$ be the connected component in $C_{Q}(\mathbb{O}_{D_0})$ containing $\mathfrak{r}$. Let $T$ be a maximal subtree of $\mathfrak{X}$, let $j: T \rightarrow \mathfrak{t}(k_{Q})$ be a lift of $T$, and let $\pi: \mathfrak{t}(k_{Q}) \rightarrow \mathfrak{X}$ be the canonical projection. By Corollary \ref{Cor comb fin} we may assume that the visual limit $\epsilon=\epsilon(\mathfrak{r})$ of $j(\mathfrak{r})$ belongs to $\mathbb{P}^1(k)$, and, up to changing the lift, we can assume $\epsilon\neq \infty$. Moreover, there exists a subray $\mathfrak{r}_0 \subset j(\mathfrak{r})$ such that:
\begin{itemize}
    \item $\text{V}(\mathfrak{r}_0)= \lbrace w_i\rbrace_{i=0}^{\infty}$, where $w_i$ and $w_{i+1}$ are adjacent,
    \item $\text{Stab}_{\Gamma}(w_i) \subset \text{Stab}_{\Gamma}(w_{i+1})$, and
    \item $\text{Stab}_{\Gamma}(w_i)$ acts transitively on the set of neighboring vertices of $w_i$ other than $w_{i+1}$. 
\end{itemize}
Note that the last statement implies that all neighbors of $w_i$, besides $w_{i+1}$, are in the same $\Gamma$-orbit. By induction on $L \geq 1$, we can show that, for any $i \geq 0$ and for any vertex $v$ at distance $\leq L$ of $w_{i+L}$ such that the line connecting $v$ and $w_{i+L}$ does not contain $w_{i+L+1}$, $v$ is in the same $\mathrm{Stab}_{\Gamma}(w_{i+L})$-orbit than $w_{i} \in V(\mathfrak{r}_0)$. 

Define $\mathfrak{r}_0'$ as the subray of $\mathfrak{r}_0$ with vertices are $\lbrace w_i\rbrace_{i=n}^{\infty}$. In particular, this last statement applies to $L < n$ and every vertex in $\mathfrak{r}_0'$. Since $\mathfrak{r} \smallsetminus \pi(\mathfrak{r}_0')$ is a finite line, arguing as above, we may assume that $\pi(\mathfrak{c})$ is contained in $\pi(\mathfrak{r}_0')$. This implies that $\mathfrak{c}$ is $\Gamma$-equivalent to a finite line that intersects $\mathfrak{r}_0'$ in a finite line of length $s \leq n$ (recall that $\mathfrak{t}(k_Q)$ is a tree), so we may assume that this is the case for $\mathfrak{c}$. Let us write $\mathrm{V}(\mathfrak{c})= \lbrace \varpi_i \rbrace_{i=0}^n$, where $\varpi_i$ and $\varpi_{i+1}$ are adjacent, and $\mathrm{V}(\mathfrak{c} \cap \mathfrak{r}_0') = \lbrace \varpi_i \rbrace_{i=t}^{s+t+1}$, where $\varpi_{s+t+1}$ is the vertex that is the closest to $\epsilon$. In particular, there exists $k >n$ such that, for each $i \in \lbrace t, \cdots, s+t+1 \rbrace$, the vertex $\varpi_i$ equals $w_{i+k}$. Since $t<n$ and $w_{k}$ and $\varpi_0$ are both at distance $t$ from $\varpi_t=w_{t+k}$, we see that $w_{k}$ is $\mathrm{Stab}_{\Gamma}(\varpi_{t})$-equivalent to $\varpi_0$. Then, up to replacing $\mathfrak{c}$ by a $\Gamma$-equivalent line, we may assume that $t=0$.

We claim that we can assume that $s+1 \geq \lfloor \frac{n+1}{2}\rfloor$. Indeed, if $s+1 < \lfloor \frac{n+1}{2}\rfloor$ we argue as follows. Since $n-s-1<n$ and $w_{2s+2-n+k}$ and $\varpi_n$ are both at distance $n-s-1$ from $\varpi_{s+1}=w_{s+1+k}$, we note that $w_{2s+2-n+k}$ is $\mathrm{Stab}_{\Gamma}(\varpi_{s+1})$-equivalent to $\varpi_n$. Equivalently, there exists $\gamma \in \Gamma$ such that $\gamma \cdot \lbrace \varpi_i \rbrace_{i=s+1}^{n} \subset \mathfrak{r}_0'$, and we may replace $\mathfrak{c}$ by $\gamma \cdot \mathfrak{c}$.

Write $V(\mathfrak{c})= \lbrace z_0, \cdots, z_s, \zeta_{s+1}, \cdots \zeta_{n}\rbrace$, which satisfies conditions (i), (ii) in Lemma \ref{Lemma buenos caminos}. Condition (iii) follows from the fact that the image of $\pi(\mathfrak{c})=\mathfrak{c}(\mathbb{S})$ belongs to a cuspidal ray. Condition (iv) is immediate since we can always extend a ray to a maximal path reaching infinity. Finally, condition (v) follows by the same argument used above.
\end{proof}
\begin{figure}
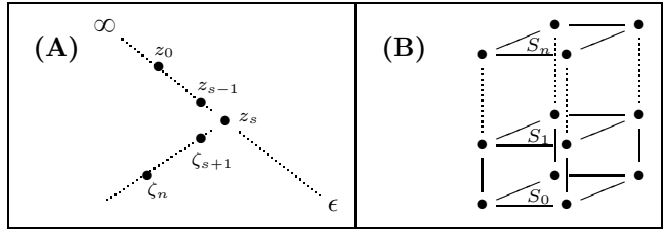

\[ 
\fbox{ \xygraph{ 
!{<0cm,0cm>;<.8cm,0cm>:<0cm,.8cm>::} 
!{(-0.2,2.6) }*+{\textbf{(A)}}="na"
!{(0.6,2.995) }*+{\infty}="e3" !{(2.6,1.4) }*+{\bullet}="f3"  !{(3,1.4) }*+{{}^{z_s}}="fi3" !{(1.5,2.3) }*+{\bullet}="f4" !{(1.6,2.5) }*+{{}^{z_0}}="f4" 
!{(2.2,1.7) }*+{\bullet}="f5"  !{(2.5,1.9) }*+{{}^{z_{s-1}}}="fi5"
 !{(4.4,0) }*+{\epsilon}="e5" 
 !{(1.5,0.2)}*+{{}^{\zeta_{n}}}="fi2" !{(1.3,0.5)}*+{\bullet}="f2"  !{(2.4,0.7)}*+{{}^{\zeta_{s+1}}}="fi4" !{(2.2,1.1)}*+{\bullet}="f4"
 !{(0.5,0) }*+{}="e4" 
 "e3"-@{.}"f3"  "f3"-@{.}"e5" "f3"-@{.}"e4"
 } }
\fbox{ \xygraph{
!{<0cm,0cm>;<0.8cm,0cm>:<0cm,0.8cm>::} 
!{(-1.5,2.6) }*+{\textbf{(B)}}="na"
 !{(-0.2,0.0) }*+{\bullet}="e2" !{(1.2,0.0) }*+{\bullet}="e3" !{(0.75,0.09) }*+{{}^{S_0}}="s1" !{(0.75,1.09) }*+{{}^{S_1}}="s2" !{(0.75,2.59) }*+{{}^{S_{n}}}="s2"
!{(-0.2,1.0) }*+{\bullet}="e4" !{(1.2,1.0) }*+{\bullet}="e5"
 !{(1,0.5) }*+{\bullet}="e6" !{(2.4,0.5) }*+{\bullet}="e7"
!{(-0.2,2.5) }*+{\bullet}="e10" !{(1.2,2.5) }*+{\bullet}="e11"
 !{(1,3.0) }*+{\bullet}="e12" !{(2.4,3.0) }*+{\bullet}="e13"
!{(1,1.5) }*+{\bullet}="e8" !{(2.4,1.5) }*+{\bullet}="e9"
 "e2"-@{-}"e4" "e2"-@{-}"e3""e4"-@{-}"e5""e3"-@{-}"e5" "e6"-@{-}"e8" "e6"-@{-}"e7""e8"-@{-}"e9""e7"-@{-}"e9" 
 "e2"-@{-}"e6" "e3"-@{-}"e7""e4"-@{-}"e8""e5"-@{-}"e9"  
"e10"-@{-}"e11" "e10"-@{-}"e12""e13"-@{-}"e11""e13"-@{-}"e12"
  "e4"-@{.}"e10" "e11"-@{.}"e5" "e12"-@{.}"e8" "e13"-@{.}"e9" 
 }}
\] 
\caption{Figure \textbf{(A)} shows the finite line $\mathfrak{c}(S)$ as in Lemma \ref{Lemma buenos caminos}, while Figure \textbf{(B)} shows the strata of a concrete $D$-grid. In the latter, vertical neighbors are $P_1$-neighbors.}\label{fig 01}
\end{figure}

We prove the existence of semi-decomposition data by induction on $r$. In order to be able to use the inductive hypothesis we need the following result.

\begin{lemma}\label{lemma finitas D-grillas} Let $D$ be an effective divisor, and write $D=D_0+nQ$, where $D_0$ is supported away from $Q$ and $n>0$. Let $\mathbb{S}_0$ be an abstract $D_0$-grid. Then the set of abstract $D$-grids $\mathbb{S}$ such that there exist $S_0 \in \mathbb{S}_0$ and $S \in \mathbb{S}$ with $S_0 \subset S$, is finite.
\end{lemma}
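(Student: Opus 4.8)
The plan is to reduce the statement to a finiteness assertion about geodesic segments in the single local tree $\mathfrak{t}(k_Q)$, where local finiteness (a consequence of $\mathbb{F}$ being finite) does all the work. First I would fix once and for all a concrete representative $S_0 \in \mathbb{S}_0$ lying in $\mathrm{Eich}(D_0,T)$. This is harmless: if some other representative $g S_0 g^{-1}$, with $g \in G_T$, sits inside a concrete $D$-grid $S$, then $S_0 \subset g^{-1} S g$ and $g^{-1}Sg$ belongs to the same abstract grid as $S$, since abstract grids are $G_T$-orbits. Hence it suffices to bound the number of abstract $D$-grids admitting a concrete representative that contains this one fixed $S_0$ as a subgrid.

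Next I would show that $S_0 \subset S$ forces $S_0$ to be one of the $Q$-strata of $S$ (cf.~Definition \ref{defi strata}). Since $D_0$ is supported away from $Q$, the Eichler order $\mathfrak{E}_0$ defining $S_0$ has maximal $Q$-completion, so every vertex of $S_0$, being a maximal $\mathcal{C}$-order containing $\mathfrak{E}_0$, has one and the same $Q$-localization, namely $w_\ast := (\mathfrak{E}_0)_Q \in \mathfrak{t}(k_Q)$. On the other hand, writing the $Q$-completion of the order defining $S$ as the length-$n$ line with vertices $w_0, \dots, w_n$, the stratum $S_i$ is exactly the set of vertices of $S$ whose $Q$-localization equals $w_i$. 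Thus $V(S_0) \subseteq V(S)$ forces $w_\ast = w_i$ for some $i$ and $V(S_0) \subseteq V(S_i)$; as $S_0$ and $S_i$ are both $D_0$-grids they have the same finite number of vertices, so $S_0 = S_i$.

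I would then parametrize the relevant concrete grids. By the characterization of $\mathcal{C}$-orders through their completions (properties (a)--(c) of \S \ref{Section Spinor}), a concrete $D$-grid $S$ having $S_0$ as a stratum is completely determined by its completions: away from $Q$ these must coincide with those of $\mathfrak{E}_0$, while at $Q$ they form a length-$n$ geodesic $(w_0,\dots,w_n)$ in $\mathfrak{t}(k_Q)$ containing $w_\ast$. Conversely, any such geodesic defines, via property (c), a genuine order of level $D = D_0 + nQ$ lying in $\mathrm{Eich}(D,T)$, since $Q \in T$ and the completions away from $Q$ are those of $S_0 \in \mathrm{Eich}(D_0,T)$, so no new ramification is introduced. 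Therefore the assignment $S \mapsto (w_0,\dots,w_n)$ injects the set of concrete $D$-grids containing $S_0$ into the set of length-$n$ geodesics of $\mathfrak{t}(k_Q)$ through $w_\ast$.

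Finally I would invoke local finiteness: because $\mathbb{F}$ is finite, $\mathfrak{t}(k_Q)$ has finite valency $q^{\deg(Q)}+1$, so the closed ball of radius $n$ about $w_\ast$ is finite; every vertex $w_j$ of such a geodesic lies in this ball, whence there are only finitely many such geodesics. Consequently only finitely many concrete $D$-grids contain $S_0$, and a fortiori only finitely many abstract $D$-grids occur in the statement. The one point demanding care---the main, if modest, obstacle---is the second step: confirming that the set-theoretic containment $S_0 \subset S$ genuinely imposes the rigid stratum structure, rather than some skew inclusion. This is precisely where I use that $D_0$ omits $Q$ (so that $S_0$ has constant $Q$-localization) together with the equality of the vertex-counts of any two $D_0$-grids.
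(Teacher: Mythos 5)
Your proposal is correct and follows essentially the same route as the paper: fix one concrete representative of $\mathbb{S}_0$ (the surjectivity of $S\mapsto[S]$ handles the reduction), observe that a concrete $D$-grid containing it is determined away from $Q$ and corresponds at $Q$ to a length-$n$ line through the fixed vertex $(\mathfrak{E}_0)_Q$ in $\mathfrak{t}(k_Q)$, and conclude by local finiteness of that tree. Your explicit verification that the containment $S_0\subset S$ forces $S_0$ to be a $Q$-stratum is a useful elaboration of the order-theoretic bijection the paper invokes, but it is the same argument.
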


\begin{proof}
Let $q'$ be the cardinality of the residue field of $k_Q$. First we claim that any concrete $D_0$-grid is contained in finitely many concrete $D$-grids. In order to prove the claim, let us fix an Eichler $\mathcal{C}$-order $\mathfrak{E}_0$ of level $D_0$. Let $\mathfrak{E}$ be an Eichler $\mathcal{C}$-order of level $D$ contained in $\mathfrak{E}_0$. Since, for each $P\neq Q$, the $P$-coefficient of $D$ and $D_0$ is the same, we have $(\mathfrak{E}_0)_P=(\mathfrak{E})_P$, for all $P \neq Q$. Moreover, we have that $(\mathfrak{E}_0)_Q$ is a maximal order, while $(\mathfrak{E})_Q$ is a local Eichler order of level $n$. Denote by $v_0$ the vertex in $\mathfrak{t}(k_Q)$ corresponding to $(\mathfrak{E}_0)_Q$. Then $(\mathfrak{E})_Q$ corresponds to the intersection of all maximal orders in a finite line of length $n$ in $\mathfrak{t}(k_Q)$ containing $v_0$. Moreover, it follows from the local-global principles (b) and (c) in \S \ref{Section Spinor} that there exists a bijective map between the set of Eichler $\mathcal{C}$-orders $\mathfrak{E}$ of level $D$ contained in $\mathfrak{E}_0$, and the set of finite lines of length $n$ in $\mathfrak{t}(k_Q)$ containing $v_0$. In particular, there are finitely many Eichler $\mathcal{C}$-orders $\mathfrak{E}$ of level $D$ contained in $\mathfrak{E}_0$. Since there exists a bijective map between the set of concrete $D$-grids containing $S_0=S(\mathfrak{E}_0)$, and the set of Eichler $\mathcal{C}$-orders $\mathfrak{E}$ of level $D$ contained in $\mathfrak{E}_0$ (cf.~\S \ref{Section grids}), the claim follows.

Let us define $\mathbb{W}$ as the set of abstract $D$-grids $\mathbb{S}$ such that there exist $S_0 \in \mathbb{S}_0$ and $S \in \mathbb{S}$ with $S_0 \subseteq S$. Fix a concrete representative $S_0^{\circ} \in \mathbb{S}_0$. We define $W$ as the set of concrete $D$-grids containing $S_0^{\circ}$. Then, it follows from the previous claim that $W$ is finite. Now, we claim that the map $\phi: W \to \mathbb{W}$, defined by $\phi(S) = [S]$, is surjective. Indeed, let $\mathbb{S} \in \mathbb{W}$. Then, by definition, there exist $S_0' \in \mathbb{S}_0$ and $S' \in \mathbb{S}$ with $S_0' \subseteq S'$. Since $S_0^{\circ}$ and $S_0'$ belong to $\mathbb{S}_0$, there exists $\gamma \in \mathrm{GL}_2(k)$ such that $S_0^{\circ}=\gamma \cdot S_0'$. Thus, the $D$-grid $S=\gamma \cdot S'$ contains $S_0^{\circ}$. This implies that $S \in W$, and, by definition, we have $\phi(S)=[\gamma \cdot S']=[S']=\mathbb{S}$. So, the claim follows. In particular, since $\phi$ is surjective, we conclude that $\mathbb{W}$ is finite, which completes the proof.
\end{proof}

We are now ready to prove Proposition \ref{max semi-desc}.

\begin{proof}[Proof of Proposition \ref{max semi-desc}]
Recall that $D=\sum_{i=1}^r n_i P_i$. We prove the existence of semi-decomposition data by induction on $r \in \mathbb{N}$. Note that, if $r=0$, then $D$ is trivial, and, in such case, the result follows from Example \ref{ex0}. Now we prove the result for $r>0$. We may assume that $P_1$ has minimal degree in the support of $D$. Set $U'= \mathcal{C } \smallsetminus \lbrace P_1\rbrace$. Then, we can write $D=n_1P_1+ D_0$, where $D_0$ is supported in $U'$. Let $\mathbb{S}$ be an abstract $D$-grid, and fix a concrete representative $S \in \mathbb{S}$. Let $S_0, \cdots, S_{n_1}$ be the $D_0$-grids in the $P_1$-strata of $S$. See Definition \ref{defi strata} and Figure \ref{fig 01}\textbf{(B)}. These define a line $\mathfrak{c}(S)$ in $\mathfrak{t}(k_{P_1})$, and we may assume that it satisfies statements (i), (ii), (iii), (iv) and (v) in Lemma \ref{Lemma buenos caminos}, and that its image $\mathfrak{c}(\mathbb{S})$ is contained in a cuspidal ray of $C_{P_1}(\mathbb{O}_{D_0})$. Moreover, we can enumerate the strata of $S$ in a way such that $S_i$ corresponds to $z_i \in \mathrm{V}\big(\mathfrak{t}(k_{P_1})\big)$, for each $i \in \lbrace 0, \cdots, s \rbrace$. Now, by the inductive hypothesis and Lemma \ref{lemma finitas D-grillas}, we may assume that there exists a semi-decomposition datum $(\beta, B, D_0')$ of degree $\deg(B)>d_{D}>d_{D_0}$ for $S_0$. We claim that $(\beta, B, sP_1+D_0')$ is a semi-decomposition datum for $S$. Let $A=A(\beta)$ be the base change matrix, as defined in Definition \ref{defi semi-desc}. Then, by definition of a semi-decomposition datum, $\text{V}(S_0)$ contains the vertex set $ \mathrm{V}(\Pi(S_0))=\left\lbrace  A \mathfrak{D}_{E} A^{-1}: B \leq E \leq B+D_0'  \right\rbrace$. We already know that the vertex $\pi_V(z_i)$ corresponding to the class $\mathbb{S}_i$ of $S_i$ has valency two in $C_{P_{1}}(\mathbb{O}_{D_0})$. Denote by $\mathbb{S}_{-1} \neq \mathbb{S}_1$ the other $P_1$-neighbor of $\mathbb{S}_0$. Then, we can describe a decomposed subgrid of some concrete representatives of $\mathbb{S}_{1}$ and $\mathbb{S}_{-1}$. Indeed, we know that the $D'_0$-grid $\Pi(S_0)$ is $P_1$-neighbor to the $D_0'$-grids $\nabla_{-1}, \nabla_{1}$, whose vertex sets are respectively
$$ \text{V}(\nabla_{-1})= \lbrace A^{-1} \mathfrak{D}_{E} A : B-P_1 \leq E \leq  B-P_1+D_0' \rbrace ,$$
and
$$ \text{V}(\nabla_{1})= \lbrace A^{-1} \mathfrak{D}_{E} A : B+P_1 \leq E \leq B+P_1+D_0' \rbrace .$$

Then, we can complete $\nabla_{-1}, \nabla_1$ in order to obtain two $D_0$-grids, denoted respectively by $S_{-1}$ and $S_{1}$, which are $P_1$-neighbors to $S_0$. We claim that $S_{1}$ and $S_{-1}$ belong to different abstract $D_0$-grids. Indeed, if $S_{1}$ and $S_{-1}$ define the same abstract $D_0$-grid, then each concrete $D_0$-grid in the class has two positive degree semi-decomposition data of the form $(\beta_1, B-P_1, D_0')$ and $(\beta_2, B+P_1, D_0')$.
Note that $\deg(B)> \deg(P_1)$ by hypothesis, whence $\deg(B+P_1)>0$ and $\deg(B-P_1)>0$. Thus, by Lemma \ref{grado bien definido} we deduce that $B+P_1$ is linearly equivalent to $B-P_1$, which is impossible. So, the claim follows.

Now, we claim that $S_1 \in \mathbb{S}_1$ and $S_{-1} \in \mathbb{S}_{-1}$. In order to prove this, let $\mathfrak{E}_0'$ and $\mathfrak{E}_{-1}'$ be the Eichler $\mathcal{C}$-orders defined as the intersection of all the maximal orders corresponding to vertices of the respective grids $S_0$ and $S_{-1}$.
Since $S_0$ and $S_{-1}$ are $D_0$-grids, the level of $\mathfrak{E}_0'$ and $\mathfrak{E}_{-1}'$ is $D_0$. In particular, these Eichler $\mathcal{C}$-orders are maximal at $P_1$.
By definition $\Pi(S_0) \subseteq S_0$ and $\nabla_{-1}\subseteq S_{-1}$. This implies that $$ \mathfrak{E}_0' \subseteq \bigcap_{B \leq E \leq B+D_0'} A^{-1}\mathfrak{D}_E A, \, \, \text{  and  }  \, \, \mathfrak{E}_{-1}' \subseteq \bigcap_{B-P_1 \leq E \leq B-P_1+D_0'} A^{-1} \mathfrak{D}_E A .$$
Thus, if $m$ is the coefficient of $P_1$ in $B$, then 
$(\mathfrak{E}_0')_{P_1} \subseteq A^{-1}(\mathfrak{D}_{mP_1})_{P_1} A$ and $(\mathfrak{E}_1')_{P_1} \subseteq A^{-1}(\mathfrak{D}_{(m-1)P_1})_{P_1} A$. Moreover, since $(\mathfrak{E}_0')_{P_1}$ and $(\mathfrak{E}_{-1}')_{P_1}$ are maximal, we get that $(\mathfrak{E}_0')_{P_1} = A^{-1}(\mathfrak{D}_{mP_1})_{P_1} A$ and $(\mathfrak{E}_{-1}')_{P_1}=  A^{-1} (\mathfrak{D}_{(m-1) P_1})_{P_1} A$. 
Since the vertex $v_{-1}$ in $\mathfrak{t}(k_{P_1})$ corresponding to $S_{-1}$ is the projection of some (any) maximal $\mathcal{C}$-order in $\mathrm{V}(S_{-1})$ by localizing at $P_1$, it coincides with $A^{-1} (\mathfrak{D}_{(m-1) P_1})_{P_1} A$. This implies that $v_{-1}$ is the unique vertex in the maximal path joining the ends $\epsilon$ and $\infty$ that is further from $\epsilon$ than $z_0$. Thus, we deduce that $S_{-1} \in \mathbb{S}_{-1}$, whence we also obtain $S_{1} \in \mathbb{S}_{1}$. We conclude from this that $(\beta, B+P_1, D_0')$ is a semi-decomposition datum of degree $d_{D_0}$ of $S_1$. So, by an inductive argument, we can show that, for each $i \in \lbrace 0, \cdots, n_1 \rbrace$, we have that $(\beta, B+ i P_1, D_0')$ is a semi-decomposition datum of degree $d_{D_0}$ for $S_i$. Therefore, $(\beta, B, sP_1+D_0')$ is a semi-decomposition datum for $S$.

We are only left to prove the condition $\deg(B)> d_D$. Let $\mathbb{S}$ be an abstract $D$-grid as above, and set $S \in \mathbb{S}$. Let us denote by $v_{\mathbb{S}} \in \mathrm{V}(C_{P_{\infty}}(\mathbb{O}_D))$ the vertex corresponding to $\mathbb{S}$. Then we know that almost every vertex $v_{\mathbb{S}}$ belongs to a cuspidal ray $\mathfrak{r} \subset C_{P_{\infty}}(\mathbb{O}_D)$. In particular, almost every vertex $v_{\mathbb{S}}$ has exactly two $P_{\infty}$-neighbors. Let $v_{\mathbb{S}}^{+}$ be the neighbor of $v_{\mathbb{S}}$ that is closest to the end of $\mathfrak{r}$. Then, the argument here above shows that, for almost every abstract $D$-grid $\mathbb{S}$, with a semi-decomposition datum $(B,D')$, the abstract grid corresponding to $v_{\mathbb{S}}^{+}$ has the semi-decomposition datum $(B+P_{\infty}, D')$. See Figure \ref{fig X}. Recalling that there are finitely many cuspidal rays in $C_{P_{\infty}}(\mathbb{O}_D)$, we see that, up to increasing $n \in \mathbb{Z}$ at the divisor $B+nP_{\infty}$, for almost every abstract grid, and any concrete representative, there is a semi-decomposition datum of degree $>d_D$.

\begin{figure}
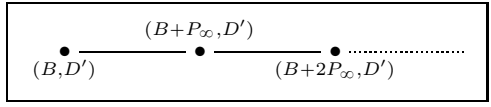

\[ 
\fbox{ \xygraph{
!{<0cm,0cm>;<0.9cm,0cm>:<0cm,0.9cm>::} 
!{(-1,0.0) }*+{{}^{(B, D')}}="i0" !{(1.0,0.6) }*+{{}^{ (B+P_{\infty}, D') }}="i1" !{(3,0.0)}*+{{}^{(B+2P_{\infty}, D')    }}="i2"
 !{(-1,0.3) }*+{\bullet}="e0"  !{(1.0,0.3) }*+{\bullet}="e1"!{(3,0.3) }*+{\bullet}="e2"
 !{(5,0.3) }*+{}="e3"   
"e2"-@{.}"e3" "e0"-@{-}"e1" "e1"-@{-}"e2"  }}
\]
\caption{A cuspidal ray $\mathfrak{r} \subseteq C_{P_{\infty}}(\mathbb{O}_D)$. Each vertex in $\mathrm{V}(\mathfrak{r})$ represents an abstract $D$-grid with a semi-decomposition datum of the form $(B+nP_{\infty}, D')$.}\label{fig X}
\end{figure}

We prove now the uniqueness of semi-decomposition data. Let $\mathbb{S}$ be an abstract $D$-grid as above, and set $S \in \mathbb{S}$ be a concrete representative. In particular, $S$ has a semi-decomposition datum $(\beta, B, D')$ of degree $>d_D$. Write $D'=\sum_{i=1}^r s_i P_i$, where $s_i \leq \lfloor \frac{n_i+1}{2}\rfloor$.
We can characterize the isomorphism class of non-$\beta$-split vertices in $\mathrm{V}(S)$, i.e. the vertices in $\mathrm{V}(S) \smallsetminus \mathrm{V}(\Pi_{\mathrm{SD}})$. Indeed, condition (v) in Lemma \ref{Lemma buenos caminos} implies that any non-$\beta$-split maximal $\mathcal{C}$-order in $\mathrm{V}(S)$ is isomorphic to a split maximal $\mathcal{C}$-order in $\Pi_{\mathrm{SD}}$. Then, the vertex set $\lbrace v_j \rbrace_{j=0}^{n_i}$ of the $P_i$-axis of $S$ satisfies that $v_j \cong \mathfrak{D}_{B+j P_i}$, if $j\leq s_i$, and $v_{j} \cong \mathfrak{D}_{B+(2s_i-j)P_i}$, if $j \geq s_i$. On the other hand, by hypothesis we have $\deg(B)> d_D \geq 0$. Then, Lemma \ref{grado bien definido} implies that $\mathfrak{D}_{B+(2s_i-j)P_i}$ is not isomorphic to $\mathfrak{D}_{B+jP_i}$, for any $j \geq s_i$.
This implies that the integers $\lbrace s_i \rbrace_{i=1}^{r}$ are unique, and then $D'$ is unique. The uniqueness of the class of $B$ follows from Lemma \ref{grado bien definido}.
\end{proof}

An immediate corollary of the end of the preceding proof is the following statement, which we state here for further reference.

\begin{corollary}\label{coro semi-desc of neighbor}
Let $\mathbb{S}$ be an abstract $D$-grid corresponding to a vertex in a cuspidal ray in $C_{P_{\infty}}(\mathbb{O}_D)$. Let $(B,D')$ be a semi-decomposition datum of $\mathbb{S}$ of degree $>d_D$. Let $\mathbb{S}^{+}$ be the abstract $D$-grid corresponding to the unique neighbor in $C_{P_{\infty}}(\mathbb{O}_D)$ that is closer to the end of the cuspidal ray. Then $(B+P_{\infty},D')$ is a semi-decomposition datum of $\mathbb{S}^{+}$.
\end{corollary}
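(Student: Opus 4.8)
The statement is precisely the content of the final paragraph of the proof of Proposition \ref{max semi-desc}, so the plan is to isolate that computation and pin down the sign of the translation. Let $v_{\mathbb{S}}$ and $v_{\mathbb{S}}^{+}$ denote the two adjacent vertices of the cuspidal ray. By Proposition \ref{func corresp D-grillas}, the pair of mutually reverse edges joining them corresponds to an abstract $(D+P_{\infty})$-grid $\tilde{\mathbb{S}}$ whose two $P_{\infty}$-faces are $\mathbb{S}$ and $\mathbb{S}^{+}$. Fixing a concrete representative $\tilde{S}$, its $P_{\infty}$-faces $S$ and $S^{+}$ are concrete representatives of $\mathbb{S}$ and $\mathbb{S}^{+}$, and after a $\mathrm{GL}_2(k)$-conjugation I may assume that $(\beta, B, D')$ is a semi-decomposition datum of $S$, with decomposed subgrid $\Pi_{\mathrm{SD}}$ and $A=A(\beta)$.

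\emph{Transporting the decomposed subgrid.} Inside $\tilde{S}$, each vertex $A^{-1}\mathfrak{D}_{E}A$ of $\Pi_{\mathrm{SD}}$ (with $B\leq E\leq B+D'$) has a unique $P_{\infty}$-neighbor lying in $S^{+}$. Since $P_{\infty}\notin\mathrm{Supp}(D)$ and the order $A^{-1}\mathfrak{D}_{E}A$ is $\beta$-split, this neighbor is again $\beta$-split and equals $A^{-1}\mathfrak{D}_{E+\eta P_{\infty}}A$, where the sign $\eta\in\{+1,-1\}$ depends only on which of the two faces of $\tilde{S}$ is $S^{+}$. Consequently the $P_{\infty}$-neighbor face $\nabla\subseteq S^{+}$ of $\Pi_{\mathrm{SD}}$ is a $\beta$-split $D'$-corner with vertex set $\{A^{-1}\mathfrak{D}_{E}A : B+\eta P_{\infty}\leq E\leq B+\eta P_{\infty}+D'\}$. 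Conditions (a)--(d) of Definition \ref{defi semi-desc} for the triple $(\beta, B+\eta P_{\infty}, D')$ on $S^{+}$ follow at once, since $D$ and $D'$ are unchanged (so (b) holds) and the corner vertex of $\nabla$ is $A^{-1}\mathfrak{D}_{B+\eta P_{\infty}}A$. For condition (e) I would argue exactly as for $S$: by the folding of the $P_i$-axes recorded in condition (v) of Lemma \ref{Lemma buenos caminos}, any vertex of $S^{+}$ lying outside $\nabla$ is $\mathrm{GL}_2(k)$-conjugate to a split order appearing in $\nabla$, while Lemma \ref{grado bien definido} forbids it from being genuinely $\beta$-split once $\deg(B+\eta P_{\infty})>0$. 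Thus $(\beta, B+\eta P_{\infty}, D')$ is a semi-decomposition datum of $S^{+}$, and hence $(B+\eta P_{\infty}, D')$ is one of $\mathbb{S}^{+}$.

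\emph{Fixing the sign.} The only genuine point is to show $\eta=+1$, i.e. that the neighbor closer to the end of the cuspidal ray carries the datum of larger degree $\deg(B+\eta P_{\infty})=\deg(B)+\eta\deg(P_{\infty})$. This is exactly what the close of the proof of Proposition \ref{max semi-desc} establishes: the corner of a grid with a datum of degree $>d_D$ represents a $\mathrm{GL}_2(k)$-conjugacy class of maximal $\mathcal{C}$-orders of that degree, and by Example \ref{ex0} only finitely many such classes have bounded degree. Since the vertices along the infinite cuspidal ray are pairwise non-conjugate, the datum degree grows without bound along the ray, which (as in that final paragraph) forces the step toward the end to be $+P_{\infty}$ at every vertex far enough out. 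Hence $\eta=+1$ and $(B+P_{\infty},D')$ is the semi-decomposition datum of $\mathbb{S}^{+}$.

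The main obstacle is precisely this last step: controlling the \emph{direction} of the datum along the ray, which is where the hypothesis $\deg(B)>d_D$ is used, placing $v_{\mathbb{S}}$ far enough out that the behaviour governed by Theorem \ref{Teo Arenas cusp in C0} and Example \ref{ex0} applies to the corner. Everything else is the bookkeeping of transporting $\Pi_{\mathrm{SD}}$ across the single $P_{\infty}$-edge of $\tilde{S}$, so the corollary is indeed immediate once the computation closing the proof of Proposition \ref{max semi-desc} is extracted.
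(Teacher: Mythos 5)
Your overall plan --- extract the computation closing the proof of Proposition \ref{max semi-desc}, transport the decomposed corner across the single $P_{\infty}$-edge, and then pin down the sign --- is indeed the paper's plan, but your execution has two gaps, and the second one is serious. First, for an \emph{arbitrary} concrete $(D+P_{\infty})$-grid $\tilde{S}$ with face $S$, the $P_{\infty}$-neighbor in $\tilde{S}$ of a $\beta$-split vertex $A^{-1}\mathfrak{D}_{E}A$ need not be $\beta$-split: in $\mathfrak{t}(k_{P_{\infty}})$ that vertex has $q^{\deg(P_{\infty})}+1$ neighbors, and only the two lying on the apartment through $0$ and $\infty$ (after conjugating by $A$) have the form $A^{-1}(\mathfrak{D}_{E\pm P_{\infty}})_{P_{\infty}}A$; your justification ``since $P_{\infty}\notin\mathrm{Supp}(D)$ and the order is $\beta$-split'' is a non sequitur. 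The correct move, as in the proof of Proposition \ref{max semi-desc}, is to build the two $\beta$-split candidates $\nabla_{\pm1}$ explicitly, complete them to $D$-grids $S_{\pm1}$, use Lemma \ref{grado bien definido} to see that $[S_{+1}]\neq[S_{-1}]$, and invoke the valency-two property of the vertex to conclude that these are exactly the two neighbor classes.

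Second, your determination of the sign $\eta=+1$ does not work. From ``only finitely many conjugacy classes $[\mathfrak{D}_{B}]$ have bounded degree'' and ``the vertices along the ray are pairwise distinct'' you may conclude that $\deg(B_n)$ is unbounded along the ray, but not that the steps are eventually all $+P_{\infty}$: a nearest-neighbor walk on $\mathbb{Z}$ that visits each integer at most $\alpha(D)$ times need not be eventually monotone (consider $0,1,2,1,2,3,2,3,4,\dots$), and the fibers of $\widetilde{\mathfrak{d}}$ do contain $\alpha(D)>1$ grids in general. Moreover, the corollary asserts the conclusion for \emph{every} vertex whose datum has degree $>d_D$, not merely for vertices far enough out along the ray. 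The mechanism the paper actually uses is local, not a counting argument: as in condition (iv) of Lemma \ref{Lemma buenos caminos}, the lifted cuspidal ray converges to a rational end $\epsilon$, the $\beta$-split vertices $A^{-1}(\mathfrak{D}_{B+nP_{\infty}})_{P_{\infty}}A$ all lie on the maximal path joining $\infty$ and $\epsilon$, and the candidate $S_{-1}$ localizes at $P_{\infty}$ to the unique vertex of that path \emph{further} from $\epsilon$ than $z_0$; this identifies $[S_{-1}]$ with the neighbor pointing away from the end, and hence $[S_{+1}]$ with $\mathbb{S}^{+}$. Your proof needs this local identification (or an equivalent one) in place of the unboundedness argument.
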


\begin{rem} Any semi-decomposed grid with a sufficiently negative degree datum is totally decomposed. Indeed, let $S$ be a $D$-grid, and let $(\beta, B, D')$ be a semi-decomposition datum for $S$. Replacing $S$ by another representative in the same class if needed, we can assume that $\beta$ is the canonical basis, or equivalently that $A(\beta)=\mathrm{Id}$. Let us write $D=\sum_{i=1}^{r} n_iP_i$ and $D'=\sum_{i=1}^{r} s_iP_i$. For each $i \in \lbrace 1, \cdots, r \rbrace$, we denote by $S(P_i) \subset \mathfrak{t}(k_{P_i})$ the $P_i$-axis of $S$. Note that $S(P_i)$ is a length-$n_i$ line. Moreover, if we write $\mathrm{V}(S(P_i))=\lbrace v_j\rbrace_{j=0}^{n_i}$, then, for any $0 \leq j \leq s_i$, the vertex $v_j$ corresponds to the local maximal order $(\mathfrak{D}_{B+jP_i})_{P_i}$. Thus, all vertices in $\lbrace v_j \rbrace_{j=0}^{s_i}$ are located on the maximal path $\mathfrak{p}(0,\infty)$ joining the the visual limits $0$ and $\infty$ in  $\partial_{\infty}(\mathfrak{t}(k_{P_i}))= \mathbb{P}^1(k_{P_i})$. Let $\mathfrak{E}$ be the Eichler $\mathcal{C}$-order corresponding to $\mathbb{S}$, i.e., assume that $\mathbb{S}=[S(\mathfrak{E})]$. Set $U_i= \mathcal{C} \smallsetminus \lbrace P_i \rbrace$ and $\Gamma= \mathrm{Stab}_{\mathrm{PGL}_2(k)}(\mathfrak{E}(U_i))$. Arguing as in the proof of Lemma \ref{Lemma buenos caminos} we can prove that, for each $s_i+1 \leq j \leq n_i$, $v_j$ is $\Gamma$-equivalent to a vertex in $\mathfrak{p}(0,\infty)$ not in $\lbrace v_l \rbrace_{l=0}^{j-1}$. Thus, we get that $s_i=n_i$, and hence any semi-decomposition datum $(\beta, B, D')$ of $S$ is a total decomposition datum. 

Finally, note that, if $\mathfrak{I}=\sbmattrix {0}{1}{1}{0}$, then $S$ is in the same class as $\mathfrak{I} A(\beta) S A(\beta)^{-1}\mathfrak{I}$, whose total-decomposition datum $(\beta_0, D-B,D)$ has a positive degree.

\end{rem}

\subsection{On the combinatorial structure of the classifying graph}\label{subsection combinatorial structure} 

Here the main goal is to compute the cusp number of the classifying graph $C_{P_{\infty}}(\mathbb{O}_D)$, which corresponds to Proposition \ref{prop aux}.
Actually, we prove a more precise result stated in Proposition \ref{lema4} below.
To do this, we use the existence and uniqueness of semi-decomposition data of almost every abstract $D$-grid proved in the previous section. More specifically, using Proposition \ref{max semi-desc} we define a natural simplicial map $\tilde{\mathfrak{d}}: C_{P_{\infty}}(\mathbb{O}_D) \smallsetminus Y \to C_{P_{\infty}}(\mathbb{O}_0)$, where $Y$ is a finite subgraph, which is a regular cover on the cuspidal rays of $C_{P_{\infty}}(\mathbb{O}_0)$. Then, we compute the number of pre-images of a given cuspidal ray and we apply Theorem \ref{Teo Arenas cusp in C0} in order to obtain the desired result.

\begin{defi}\label{ideal tip}
Let $S$ be a concrete $D$-grid with a semi-decompostion datum $(\beta, B, D')$ of positive degree. We denote by $\delta(S)$ the $\text{PGL}_2(k)$-conjugacy class of the maximal $\mathcal{C}$-order $\mathfrak{D}_{B}$.
Let $\mathbb{S}$ be an abstract $D$-grid. Assume that $\mathbb{S}$ has a semi-decomposition datum $(B, D')$ with positive degree. We define the principal corner of $\mathbb{S}$ as $\mathfrak{d}(\mathbb{S}):=\delta(S)$, where $S \in \mathbb{S}$.
\end{defi}

It follows from Lemma \ref{grado bien definido} that, if $S,S^{\circ} \in \mathbb{S}$ have respective semi-decomposition data $(\beta, B, D')$ and $(\beta^{\circ}, B^{\circ}, {D'}^{\circ})$ with positive degree, then $\mathfrak{D}_{B}$ is $\mathrm{GL}_2(k)$-conjugate to $\mathfrak{D}_{B^{\circ}}$. Hence the previous definition is valid.

\begin{defi}
Let $\mathbb{S}$ be an abstract $D$-grid with a semi-decomposition datum $(B, D')$. Let us write $D=\sum_{i=1}^r n_i P_i$ and $D'=\sum_{i=1}^r s_i P_i$. We define the semi-decomposition vector associated to the previous datum as $l=l(\mathbb{S}):=(s_1, \cdots, s_r)$. Note that $(B, D')$ is a total decomposition datum exactly when $l=(n_1, \cdots, n_r)$.

\end{defi}

Now, it follows from Proposition \ref{max semi-desc} that there exists a finite graph $Y \subset C_{P_{\infty}}(\mathbb{O}_D)$ such that, for each vertex $v \in \mathrm{V}(C_{P_{\infty}}(\mathbb{O}_D) \smallsetminus Y)$, the corresponding abstract $D$-grid $\mathbb{S}=\mathbb{S}(v)$ has a representative with a semi-decomposition datum of degree $>\deg(D)$. Then, $\mathfrak{d}$ induces a well-defined function $\widetilde{\mathfrak{d}}: \mathrm{V}(C_{P_{\infty}}(\mathbb{O}_D) \smallsetminus Y) \rightarrow \mathrm{V}(C_{P_{\infty}}(\mathbb{O}_0))$. Let $\mathbb{S}$ be an abstract $D$-grid with a semi-decomposition datum $(B,D')$ of degree $>\deg(D)$. Let $\mathbb{S}^{+}$ be the abstract $D$-grid corresponding to the unique neighbor $v_{\mathbb{S}}^{+}$ of $v_{\mathbb{S}}$ in $C_{P_{\infty}}(\mathbb{O}_D)$ that is closer to the end of the cuspidal ray containing $v_{\mathbb{S}}$. By Corollary \ref{coro semi-desc of neighbor}, $(B+P_{\infty},D')$ is a semi-decomposition datum of $\mathbb{S}^{+}$. In this case, we get $\mathfrak{d}(\mathbb{S}^{+}) = [\mathfrak{D}_{B+P_{\infty}}]$. See Figure \ref{fig 1}\textbf{(B)}. In particular, this implies that the function $\widetilde{\mathfrak{d}}$ sends neighboring vertices into neighboring vertices. So, we can extend $\widetilde{\mathfrak{d}}$ to a simplicial map from $C_{P_{\infty}}(\mathbb{O}_D) \smallsetminus Y$ to $C_{P_{\infty}}(\mathbb{O}_0)$, which we also denote by $\widetilde{\mathfrak{d}}$. The following proposition describes the fibers of $\widetilde{\mathfrak{d}}$.

\begin{prop}\label{lema 3}
Let $\mathfrak{D}=\mathfrak{D}_B$ be a split maximal $\mathcal{C}$-order satisfying $\deg(B)> \deg(D)$. Assume that $[\mathfrak{D}] \neq \mathfrak{d}(\mathbb{S})$ for $\mathbb{S}$ corresponding to a vertex in $Y$. Then, $\widetilde{\mathfrak{d}}^{-1}([\mathfrak{D}])$ contains
\begin{itemize}
\item[(1)] Exactly one totally decomposed $D$-grid, and 
\item[(2)] Exactly $\frac{1}{q-1} \prod_{s_i \neq n_i} (q^{\deg(P_i)}-1) q^{\deg(P_i)(n_i-s_i-1)}$ semi-decomposed $D$-grids whose semi-decomposed vector is $(s_1, \cdots, s_r) \neq (n_1, \cdots, n_r)$.
\end{itemize}
\end{prop}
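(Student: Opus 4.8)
The plan is to fix the split maximal order $\mathfrak{D}=\mathfrak{D}_B$ with $\deg(B)>\deg(D)$ and to parametrize the abstract $D$-grids in the fibre $\widetilde{\mathfrak{d}}^{-1}([\mathfrak{D}])$ carrying a prescribed semi-decomposition vector $(s_1,\dots,s_r)$ by local ``non-split tails'', and then to divide by the residual symmetry of the corner. By Lemma \ref{grado bien definido} and Proposition \ref{max semi-desc} the principal corner and the divisor $D'=\sum_i s_iP_i$ are intrinsic to the grid, so each such grid has a concrete representative whose corner equals $\mathfrak{D}_B$ and whose decomposed subgrid is the standard one $\Pi_{\mathrm{SD}}=\lbrace\mathfrak{D}_E:B\le E\le B+D'\rbrace$. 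Since an Eichler $\mathcal{C}$-order is determined by its local completions (properties (b), (c) of \S\ref{Section Spinor}) and its grid is the product $\prod_i\mathfrak{s}(\mathfrak{E}_{P_i})$, fixing this data reduces a representative to an independent choice, at each $P_i$, of the non-split part of the $P_i$-axis, namely a non-backtracking path of length $n_i-s_i$ in $\mathfrak{t}(k_{P_i})$ leaving $v_{s_i}$ in a direction other than the two apartment directions.

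Next I would count these tails in the ball model of \S\ref{subsection BTT}. Placing the apartment of $\Pi_{\mathrm{SD}}$ along the geodesic $0$--$\infty$, the vertex $v_{s_i}$ is a ball whose backward and split-forward neighbors are exactly the two apartment directions; the remaining $q^{\deg(P_i)}-1$ neighbors are non-split, and each subsequent step away from the apartment may be chosen among the $q^{\deg(P_i)}$ neighbors other than the one just traversed. This gives exactly $(q^{\deg(P_i)}-1)\,q^{\deg(P_i)(n_i-s_i-1)}$ concrete tails at $P_i$ when $s_i<n_i$, and a single (empty) tail when $s_i=n_i$. In the totally decomposed case $(s_i)=(n_i)$ there is therefore a unique concrete representative, namely $S(\Ea[B,B+D])$, which proves (1); for a general vector the number of standardized concrete representatives is $\prod_{s_i\neq n_i}(q^{\deg(P_i)}-1)q^{\deg(P_i)(n_i-s_i-1)}$, and it remains only to pass from concrete representatives to abstract grids.

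The heart of the argument, and the step I expect to be the main obstacle, is to identify the symmetry group $N$ by which two standardized representatives can differ and to compute its orbits. Because $\deg(B)>0$ forces $\mathfrak{L}^{-B}(\mathcal{C})=\lbrace0\rbrace$ and $\mathcal{O}_{\mathcal{C}}(\mathcal{C})=\mathbb{F}$, one has $\mathfrak{D}_B(\mathcal{C})^{*}=\sbmattrix{\mathbb{F}^{*}}{\mathfrak{L}^{B}(\mathcal{C})}{0}{\mathbb{F}^{*}}$, and any element of $\mathrm{GL}_2(k)$ carrying one standardized representative to another must preserve $\mathfrak{D}_B$ and $\Pi_{\mathrm{SD}}$ with its orientation, the flip being excluded by degree (it would send $[\mathfrak{D}_B]$ to $[\mathfrak{D}_{-B}]$). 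Thus $N$ is the image of $\mathfrak{D}_B(\mathcal{C})^{*}$ in $\mathrm{PGL}_2(k)$, i.e.\ $\mathbb{F}^{*}\ltimes\mathfrak{L}^{B}(\mathcal{C})$. The decisive computation is that the unipotent part acts trivially on tails while the torus $\mathbb{F}^{*}$ acts freely. For $u\in\mathfrak{L}^{B}(\mathcal{C})$ the translation $z\mapsto z+u$ fixes a ball $B_a^{|r|}$ precisely when $v_{P_i}(u)\ge r$; a direct bookkeeping in the ball model shows that, thanks to $2s_i\ge n_i$ (condition (b) of Definition \ref{defi semi-desc}), every vertex of the grid is a ball $B_a^{|r|}$ with $r\le-\mathrm{ord}_{P_i}(B)$, and since $u\in\mathfrak{L}^{B}(\mathcal{C})$ yields $v_{P_i}(u)\ge-\mathrm{ord}_{P_i}(B)$, each such ball, hence the whole grid, is fixed. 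A torus element $t\in\mathbb{F}^{*}$ with $t\ne1$ is a unit at $P_i$, so $(t-1)c$ has the same $P_i$-valuation as the centre $c$ of the first non-split neighbor; hence $t$ carries that neighbor to a different ball, and as $(s_i)\neq(n_i)$ provides at least one turning place, the $\mathbb{F}^{*}$-action on tails is free.

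Finally I would assemble the count: the $N$-orbits on the set of standardized concrete tails all have size $|\mathbb{F}^{*}|=q-1$, whence the number of abstract grids with vector $(s_1,\dots,s_r)\neq(n_1,\dots,n_r)$ equals $\frac{1}{q-1}\prod_{s_i\neq n_i}(q^{\deg(P_i)}-1)q^{\deg(P_i)(n_i-s_i-1)}$, which is (2). That distinct $N$-orbits yield non-conjugate grids, and that every grid in the fibre arises in this way with the asserted folding of isomorphism classes along each axis, follows from the uniqueness of the principal corner and of $D'$ in Proposition \ref{max semi-desc} together with Lemma \ref{Lemma buenos caminos}; the hypothesis that $[\mathfrak{D}]\neq\mathfrak{d}(\mathbb{S})$ for the finitely many $\mathbb{S}$ attached to $Y$ keeps us in the range where these uniqueness statements apply.
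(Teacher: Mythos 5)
Your proposal is correct and follows essentially the same route as the paper's proof: you standardize each grid in the fibre so that its decomposed subgrid is the fixed one $\lbrace \mathfrak{D}_E : B\le E\le B+D'\rbrace$ (the paper's Lemma \ref{lemma equal tip}), parametrize the non-split tails place by place to get $\prod_{s_i\neq n_i}(q^{\deg(P_i)}-1)q^{\deg(P_i)(n_i-s_i-1)}$ concrete possibilities, and then quotient by the residual upper-triangular symmetry, checking that the unipotent part acts trivially (via $2s_i\ge n_i$, exactly the role condition (1D) and the bound $\nu_{P_i}(y)\ge -m_i$ play in the paper) while the torus $\mathbb{F}^{*}$ acts freely through $x^{-1}z$. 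The only difference is presentational: you describe the tails as non-backtracking paths in the ball model and name the symmetry group as the image of $\mathfrak{D}_B(\mathcal{C})^{*}$, whereas the paper expands the ball centres in coefficients $a_1,\dots,a_j$ and derives the triangular form of $g$ from conditions (1D)--(2D); the resulting orbit count is identical.
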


In order to prove this proposition we have to use the following lemma.

\begin{lemma}\label{lemma equal tip}
Let $\mathbb{S}$ and $\mathbb{S}^{\circ}$ two $D$-grids with semi-decomposition data of degree $> \deg(D)$ such that $\mathfrak{d}(\mathbb{S})=\mathfrak{d}(\mathbb{S}^{\circ})$ and $l(\mathbb{S})=l(\mathbb{S}^{\circ})$. Then there exists concrete $D$-grids $S \in \mathbb{S}$ and $S^{\circ} \in \mathbb{S}^{\circ}$ such that their respective decomposed subgrids $\Pi_{\mathrm{SD}}$ and $\Pi_{\mathrm{SD}}^{\circ}$ are equal.
\end{lemma}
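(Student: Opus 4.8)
The plan is to produce the required representatives in two moves: first normalize both semi-decomposition bases to the canonical basis, and then slide one decomposed subgrid onto the other by a \emph{diagonal} conjugation coming from a linear equivalence of the two principal corners.

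First I would fix concrete representatives $S_1 \in \mathbb{S}$ and $S_1^{\circ} \in \mathbb{S}^{\circ}$ carrying semi-decomposition data $(\beta, B, D')$ and $(\beta^{\circ}, B^{\circ}, {D'}^{\circ})$ of degree $> \deg(D)$. Since $l(\mathbb{S}) = l(\mathbb{S}^{\circ})$, the divisors $D'$ and ${D'}^{\circ}$ have the same coefficient at every $P_i$, whence $D' = {D'}^{\circ}$. Conjugating $S_1$ by $A(\beta)$ and $S_1^{\circ}$ by $A(\beta^{\circ})$ — both operations yielding representatives of the same abstract grids, by the transformation rule for data recorded just after Definition \ref{defi semi-desc} — I may assume from the outset that $\beta = \beta^{\circ}$ is the canonical basis. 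After this normalization, Definition \ref{defi semi-desc}(d) shows that the two decomposed subgrids become the standard split $D'$-grids
$$\Pi_{\mathrm{SD}} = \{\mathfrak{D}_E : B \le E \le B + D'\}, \qquad \Pi_{\mathrm{SD}}^{\circ} = \{\mathfrak{D}_E : B^{\circ} \le E \le B^{\circ} + D'\}.$$

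Next I would use the hypothesis $\mathfrak{d}(\mathbb{S}) = \mathfrak{d}(\mathbb{S}^{\circ})$, i.e.\ $[\mathfrak{D}_B] = [\mathfrak{D}_{B^{\circ}}]$ as $\mathrm{PGL}_2(k)$-conjugacy classes. Because both data have degree $> \deg(D) \ge 0$, we have $\deg(B), \deg(B^{\circ}) > 0$; by the classification of conjugacies between split maximal orders invoked in the proof of Lemma \ref{grado bien definido} (namely \cite[Proposition 4.1]{A1}), the alternative $B \sim -B^{\circ}$ is excluded on degree grounds, leaving $B \sim B^{\circ}$. Hence there is $\lambda \in k^{*}$ with $\mathrm{div}(\lambda) = B^{\circ} - B$. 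Setting $G = \diag(1,\lambda) \in \mathrm{GL}_2(k)$, a one-line computation gives $G \mathfrak{D}_E G^{-1} = \mathfrak{D}_{E + \mathrm{div}(\lambda)}$ for \emph{every} divisor $E$, since diagonal conjugation fixes the two standard idempotents and shifts the off-diagonal sheaves $\mathfrak{L}^{\pm E}$ by $\mp\,\mathrm{div}(\lambda)$. Conjugation by $G$ is an isomorphism of the order structure, so it carries the decomposed subgrid of $S$ onto that of $GSG^{-1}$, and therefore
$$\Pi_{\mathrm{SD}}(GSG^{-1}) = G\,\Pi_{\mathrm{SD}}\,G^{-1} = \{\mathfrak{D}_{E + \mathrm{div}(\lambda)} : B \le E \le B + D'\} = \{\mathfrak{D}_{E'} : B^{\circ} \le E' \le B^{\circ} + D'\} = \Pi_{\mathrm{SD}}^{\circ}.$$
Thus the representatives $GSG^{-1} \in \mathbb{S}$ and $S^{\circ} \in \mathbb{S}^{\circ}$ have equal decomposed subgrids, which is the assertion.

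I expect the only delicate points to be bookkeeping rather than substance: verifying that conjugating a concrete representative by an element of $\mathrm{GL}_2(k)$ (here by $A(\beta)$ and by $G$) again produces a concrete representative of the \emph{same} abstract grid, and that the decomposed-subgrid operation is genuinely equivariant under such conjugation — so that only this explicit equivariance is needed and not the uniqueness statement of Proposition \ref{max semi-desc}. Choosing $G$ diagonal, rather than a general upper-triangular conjugator, is precisely what makes the transport of the whole split flag $\{\mathfrak{D}_E\}_{B \le E \le B+D'}$ transparent and sidesteps the off-diagonal subtleties met in the proof of Lemma \ref{grado bien definido}.
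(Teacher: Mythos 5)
Your proof is correct and follows essentially the same route as the paper's: both use the positivity of $\deg(B)$ and $\deg(B^{\circ})$ together with \cite[\S 4, Proposition 4.1]{A1} to upgrade the conjugacy $[\mathfrak{D}_B]=[\mathfrak{D}_{B^{\circ}}]$ to a linear equivalence $B\sim B^{\circ}$, and then conjugate by a diagonal matrix built from the function realizing that equivalence so that $G\mathfrak{D}_E G^{-1}=\mathfrak{D}_{E+\mathrm{div}(\lambda)}$ carries one decomposed subgrid onto the other. Your preliminary normalization of both bases to the canonical one is only a cosmetic repackaging of the paper's bookkeeping with $A(\beta)$ and $A(\beta^{\circ})$.
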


\begin{proof}
Let $\mathbb{S}$ and $\mathbb{S}^{\circ}$ be two abstract $D$-grids such that $\mathfrak{d}(\mathbb{S})=\mathfrak{d}(\mathbb{S}^{\circ})$. Set $S_0\in \mathbb{S}$ and $S_0^{\circ} \in \mathbb{S}^{\circ}$, and let $(\beta, B, D')$ and $(\beta^{\circ}, B^{\circ}, D')$ be their respective semi-decomposition data. Let $\Pi_{\mathrm{SD}} \subset S_0$ and $\Pi_{\mathrm{SD}}^{\circ} \subset S^{\circ}_0$ be the respective decomposed subgrids, and write $A=A(\beta)$ and $A^{\circ}=A(\beta^{\circ})$. By hypothesis $\mathfrak{d}(\mathbb{S})=\mathfrak{d}(\mathbb{S}^{\circ})$, whence we have that $\delta(S_0)= \mathfrak{D}_{B}$ is $\mathrm{GL}_2(k)$-conjugate to $\delta(S^{\circ}_0)= \mathfrak{D}_{B^{\circ}}$. Moreover, by hypothesis, $\deg(B), \deg(B^{\circ})> \deg(D) \geq 0$. So, by \cite[\S 4, Proposition 4.1]{A1}, there exists $f \in k^{*}$ such that $B-B^{\circ}= \text{div}(f)$. Set $G= \sbmattrix {f}{0}{0}{1} \in \text{GL}_2(k)$ so that $G \mathfrak{D}_{B^{\circ}} G^{-1}=\mathfrak{D}_{B}$. We claim that $S:=(GA) S_0 (GA)^{-1} \in \mathbb{S}$ and $S^{\circ}:=(A^{\circ}) S_0^{\circ} (A^{\circ})^{-1} \in \mathbb{S}^{\circ}$ have the same decomposed subgrids. Indeed, for any divisor $E$, satisfying $0 \leq E \leq D'$, we have
$$ G \mathfrak{D}_{B^{\circ}+E} G^{-1} = \sbmattrix {\mathcal{O}_{\mathcal{C}}}{f^{-1}\mathfrak{L}^{-B^{\circ}-E}}{f \mathfrak{L}^{B^{\circ}+E}}{\mathcal{O}_{\mathcal{C}}}= \mathfrak{D}_{B+E}.$$
In particular, we obtain that $(GA) \Pi_{\mathrm{SD}} (GA)^{-1}= (A^{\circ}) \Pi_{\mathrm{SD}}^{\circ} (A^{\circ})^{-1}$, i.e. the decomposed subgrid of $S$ and $S^{\circ}$ are equal.
\end{proof}

\begin{proof}[Proof of Proposition \ref{lema 3}]
Let $\mathbb{S}$ and $\mathbb{S}^{\circ}$ be two abstract $D$-grids such that $\mathfrak{d}(\mathbb{S})=\mathfrak{d}(\mathbb{S}^{\circ})=[\mathfrak{D}_B]$, where $\deg(B) > \deg(B)$. It follows from Lemma \ref{lemma equal tip} that there exist $S \in \mathbb{S}$ and $S^{\circ} \in\mathbb{S}^{\circ}$ with the same decomposed subgrid $\Pi_{\mathrm{SD}}$. So, if all $s_i=n_i$, then $S=S'$, whence $\mathbb{S}=\mathbb{S}^{\circ}$. On the other hand, we can always consider the totally decomposed $D$-grid whose vertices are $\mathfrak{D}_{B+E}$, where $0 \leq E \leq D$. Thus (1) follows. More generally, $\mathbb{S}=\mathbb{S}^{\circ}$ if and only if there exists $g \in \text{GL}_2(k)$ such that
\begin{itemize}
\item[(1D)] $g \Pi_{\mathrm{SD}} g^{-1}= \Pi_{\mathrm{SD}}$, and
\item[(2D)] $g (S \smallsetminus \Pi_{\mathrm{SD}})g^{-1}= S^{\circ} \smallsetminus \Pi_{\mathrm{SD}}$,
\end{itemize} 
where we recall that no vertex in $S \smallsetminus \Pi_{\mathrm{SD}}$ and $S^{\circ} \smallsetminus \Pi_{\mathrm{SD}}$ is split in the canonical basis. As $\Pi_{\mathrm{SD}}$ is totally decomposed, by \cite[\S 4, Proposition 4.1]{A1}, we have $g=\sbmattrix {x}{y}{0}{z}$, where $\text{div}(x^{-1}z)=0$, i.e. $x^{-1}z \in \mathbb{F}^{*}$. Let $m_i$ be the multiplicity of $P_i$ in $B$. Then, Condition (1D) is equivalent to the following statement: \textit{For every $i \in \lbrace 1, \cdots, r \rbrace$, the action of $g \in \mathrm{GL}_2(k)$ on the tree $\mathfrak{t}_i=\mathfrak{t}(k_{P_i})$ point-wisely stabilizes the finite path $\mathfrak{c}_i$ whose vertex set is
\begin{equation}
\left\lbrace 
 \sbmattrix {\mathcal{O}_{P_{i}}}{ \pi_{i}^{-m_i-t}\mathcal{O}_{P_{i}}}{\pi_{i}^{m_i+t}\mathcal{O}_{P_{i}}}{\mathcal{O}_{P_{i}}}  : t \in \lbrace 0, \cdots, s_i \rbrace \right\rbrace,
\end{equation}
where $\pi_i \in \mathcal{O}_{P_i}$ is a local uniformizing parameter}. Moreover, this condition is equivalent to $\nu_{P_i}(y) \geq -m_i$. On the other hand, when $s_i \neq n_i$, the localizations at $P_i$ of orders in $S$ that are different from the localizations of vertices in $ \Pi_{\mathrm{SD}}$, correspond to the vertices of a line $\widehat{\mathfrak{p}}_i$ of length $(n_i-s_i-1)$ in $\mathfrak{t}(k_{P_i})$, which does not intersect the maximal path joining $0$ and $\infty$. See Figure \ref{fig 01}\textbf{(A)}. Hence, vertices in $\widehat{\mathfrak{p}}_i$ are in correspondence with the local rings of endomorphisms of the lattices
\begin{equation}
\small
\binom{a}{1}\mathcal{O}_{P_i}+ \binom{\pi_i^{-m_i-s_i-j}}{0} \mathcal{O}_{P_i},
 \normalsize
\end{equation}
where $a = a_1 \pi_{i}^{-m_i-s_i+1}+ \cdots+ a_j \pi_{i}^{-m_i-s_i+j}$, and where, for any $k>1$, we have $a_k \in \mathbb{F}(P_i)= \mathcal{O}_{P_i}/ \pi_i\mathcal{O}_{P_i}$, while $a_1 \in  \mathbb{F}(P_i)^{*}$. The same characterization holds for the localizations at $P_i$ of orders in $S^{\circ}$, by replacing $a_i$ by $a_i^{\circ} \in \mathbb{F}(P_i)$. Since $\nu_{P_i}(y) \geq -m_i$, Condition (2D) is equivalent to $a_j= a_j^{\circ}(x^{-1}z)$, for each $j \in \lbrace 1, \cdots, n_i-s_i \rbrace$. Since two $D$-grids are equal if and only if all their $P_i$-projections coincide, it follows that $\mathrm{Card}(\tilde{\mathfrak{d}}^{-1}[\mathfrak{D}_B])$ equals the number of $\mathbb{F}^{*}$-homothety classes in 
$$\prod_{s_i \neq n_i} \left( \mathbb{F}(P_i)^{*} \times \mathbb{F}(P_i)^{n_i-s_i-1}\right),$$
which is $\frac{1}{q-1} \prod_{s_i \neq n_i} (q^{\deg(P_i)}-1) q^{\deg(P_i)(n_i-s_i-1)}$.
\end{proof}

Let us denote by $\alpha(D)$ the positive integer
$$\alpha(D)=1+\frac{1}{q-1} \prod_{i=1}^r \left(  q^{\deg(P_i)\lfloor \frac{n_i}{2}\rfloor }-1\right).$$

\begin{lemma}\label{lem preimagen de eta}
Let $B$ be a divisor, whose degree is greater than $\deg(D)$, and assume that $[\mathfrak{D}_B] \neq \mathfrak{d}(\mathbb{S})$ for $\mathbb{S}$ corresponding to a vertex in $Y$. Then, $\tilde{\mathfrak{d}}^{-1}([\mathfrak{D}_B])$ has $\alpha(D)$ elements. In particular, there are $\alpha(D)$ different cuspidal rays in $C_{P_{\infty}}(\mathbb{O}_D)$ whose initial vertex corresponds to an abstract $D$-grid $\mathbb{S}$ satisfying that $\mathfrak{d}(\mathbb{S})= [\mathfrak{D}_{B}]$.
\end{lemma}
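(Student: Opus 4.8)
The plan is to compute the fiber $\tilde{\mathfrak{d}}^{-1}([\mathfrak{D}_B])$ by stratifying it according to the semi-decomposition vector $l=(s_1,\dots,s_r)$, which is well defined by the uniqueness part of Proposition \ref{max semi-desc}, and then summing the per-stratum counts supplied by Proposition \ref{lema 3}. Every abstract $D$-grid $\mathbb{S}$ in the fiber carries a unique vector $l(\mathbb{S})$, with each $s_i$ ranging over $\lceil n_i/2\rceil\le s_i\le n_i$; the value $l=(n_1,\dots,n_r)$ singles out the totally decomposed grid, contributing the summand $1$ of Proposition \ref{lema 3}(1), while every other vector contributes the quantity in Proposition \ref{lema 3}(2). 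Thus the whole matter reduces to evaluating a finite sum of the explicit expressions $\frac{1}{q-1}\prod_{s_i\ne n_i}(q^{\deg(P_i)}-1)q^{\deg(P_i)(n_i-s_i-1)}$.

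The heart of the computation is a geometric series in each coordinate. Fixing $i$ and writing $d_i=\deg(P_i)$, the substitution $t=n_i-s_i$ turns the contribution of the $i$-th factor, as $s_i$ runs over $\lceil n_i/2\rceil\le s_i\le n_i-1$, into
\begin{equation*}
\sum_{t=1}^{\lfloor n_i/2\rfloor}(q^{d_i}-1)q^{d_i(t-1)}=(q^{d_i}-1)\,\frac{q^{d_i\lfloor n_i/2\rfloor}-1}{q^{d_i}-1}=q^{d_i\lfloor n_i/2\rfloor}-1.
\end{equation*}
Since the gluing parameters attached to distinct points $P_1,\dots,P_r$ are chosen independently — they live in the product $\prod_i\big(\mathbb{F}(P_i)^*\times\mathbb{F}(P_i)^{n_i-s_i-1}\big)$ that appears in the proof of Proposition \ref{lema 3} — these one-variable sums multiply across $i$, and the grids for which every folded coordinate is nontrivial account for $\frac{1}{q-1}\prod_{i=1}^r\big(q^{d_i\lfloor n_i/2\rfloor}-1\big)$ elements, the single global $\mathbb{F}^*$-homothety producing the one factor $\frac{1}{q-1}$. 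Adding the isolated $1$ from the totally decomposed grid gives exactly $\alpha(D)$.

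The delicate point, and the step I expect to be the main obstacle, is the bookkeeping of the interaction between the $r$ points so that the contributions genuinely multiply into a \emph{product} rather than collapsing into a single exponential $q^{\sum_i d_i\lfloor n_i/2\rfloor}$: one must treat the totally decomposed grid as one separate stratum, divide only once by $|\mathbb{F}^*|=q-1$, and verify that no spurious identifications occur between grids attached to different vectors, which is exactly what Lemma \ref{lemma equal tip} and Lemma \ref{grado bien definido} guarantee. I would isolate this as a clean combinatorial assembly lemma to keep the geometric-series estimate separate from the count of $\mathbb{F}^*$-orbits.

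Finally, for the ``in particular'' clause, each $\mathbb{S}\in\tilde{\mathfrak{d}}^{-1}([\mathfrak{D}_B])$ corresponds to a vertex of $C_{P_{\infty}}(\mathbb{O}_D)$ lying over the vertex $[\mathfrak{D}_B]$ of $C_{P_{\infty}}(\mathbb{O}_0)$, which sits in a cuspidal ray by Theorem \ref{Teo Arenas cusp in C0} (here $\deg(B)>\deg(D)$ is what places $[\mathfrak{D}_B]$ in such a ray). By Corollary \ref{coro semi-desc of neighbor} the map $\tilde{\mathfrak{d}}$ sends the neighbor of $\mathbb{S}$ pointing toward the end of its ray to the vertex $[\mathfrak{D}_{B+P_{\infty}}]$, so each such $\mathbb{S}$ is the initial vertex of a genuine cuspidal ray of $C_{P_{\infty}}(\mathbb{O}_D)$; distinct elements of the fiber have distinct initial vertices and hence determine distinct rays. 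This produces exactly $\alpha(D)$ cuspidal rays above the chosen cuspidal ray of $C_{P_{\infty}}(\mathbb{O}_0)$, as asserted.
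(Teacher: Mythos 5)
Your overall strategy coincides with the paper's: stratify the fiber $\tilde{\mathfrak{d}}^{-1}([\mathfrak{D}_B])$ by the semi-decomposition vector $l=(s_1,\dots,s_r)$ (well defined by Proposition \ref{max semi-desc}), count each stratum with Proposition \ref{lema 3}, sum the resulting geometric series, and deduce the assertion about cuspidal rays from Corollary \ref{coro semi-desc of neighbor} together with the fact that $[\mathfrak{D}_B]$ lies in a cuspidal ray of $C_{P_{\infty}}(\mathbb{O}_0)$. Your per-coordinate evaluation $\sum_{t=1}^{\lfloor n_i/2\rfloor}(q^{d_i}-1)q^{d_i(t-1)}=q^{d_i\lfloor n_i/2\rfloor}-1$ is correct, as is the single division by $|\mathbb{F}^*|$, and the ``in particular'' clause is handled exactly as in the paper.

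The gap is precisely the point you flag as ``the main obstacle'', and the lemmas you invoke do not close it. Your sum runs over only two kinds of strata: the totally decomposed one ($s_i=n_i$ for all $i$) and those in which \emph{every} coordinate is folded ($s_i<n_i$ for all $i$). But the admissible vectors form the box $\Upsilon=\{(s_1,\dots,s_r): n_i\ge s_i\ge\lceil n_i/2\rceil\}$, which for $r\ge 2$ with at least one $n_j\ge 2$ also contains mixed vectors having some but not all $s_i$ equal to $n_i$; Proposition \ref{lema 3}(2), read literally, assigns each such vector the positive count $\frac{1}{q-1}\prod_{s_i\ne n_i}(q^{\deg(P_i)}-1)q^{\deg(P_i)(n_i-s_i-1)}$. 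Concretely, for $r=2$, $n_1=n_2=3$, $\deg(P_1)=\deg(P_2)=1$, your two strata give $1+(q-1)=\alpha(D)=q$, while the mixed vectors $(2,3)$ and $(3,2)$ would each contribute one further grid, yielding $q+2$. Lemma \ref{lemma equal tip} and Lemma \ref{grado bien definido} guarantee uniqueness of the datum and the absence of identifications \emph{within} a stratum; they say nothing about whether the mixed strata are empty, which is what your computation tacitly assumes. To complete the argument you must either prove that no grid in the fiber admits a mixed semi-decomposition vector, or include those strata (which would change the total). Note that the paper's own proof sums over all of $\Upsilon\smallsetminus\{(n_1,\dots,n_r)\}$ and asserts that this sum equals $\frac{1}{q-1}\prod_{i}(q^{\deg(P_i)\lfloor n_i/2\rfloor}-1)$, an identity that likewise holds only if the mixed strata contribute nothing; so you have correctly isolated the crux of the count, but left it unproved.
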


\begin{proof}
It follows directly from Proposition \ref{lema 3} that $\tilde{\mathfrak{d}}^{-1}([\mathfrak{D}_{B}])$ contains one and only one split abstract $D$-grid, and, for each possible semi-decomposition vector $l=(s_1, \cdots, s_r) \neq (n_1, \cdots, n_r)$, there are precisely $\frac{1}{q-1} \prod_{i=1}^{r} (q^{\deg(P_i)}-1) q^{\deg(P_i)(n_i-s_i-1)}$ non-split abstract $D$-grids whose semi-decomposition vector is $l$. By definition of semi-decomposition data we know that $l \in \Upsilon= \lbrace (s_1, \cdots, s_r): n_i \geq s_i \geq \lfloor \frac{n_i+1}{2}\rfloor \rbrace$. Then, $\tilde{\mathfrak{d}}^{-1}([\mathfrak{D}_{B}])$ contains
$$ 1+ \sum_{\Upsilon'} \frac{1}{q-1} \prod_{s_i \neq n_i} (q^{\deg(P_i)}-1) q^{\deg(P_i)(n_i-s_i-1)}= 1+\frac{1}{q-1} \prod_{} \left(  q^{\deg(P_i)\lfloor \frac{n_i}{2}\rfloor }-1\right)$$
different abstract $D$-grids, where $\Upsilon':= \Upsilon \smallsetminus \lbrace (n_1, \cdots, n_r) \rbrace$. Hence, the first claim is proved. Now, recall that, by Corollary \ref{coro semi-desc of neighbor}, if $\mathbb{S}$ and $\mathbb{S}^{\circ}$ are two $P_{\infty}$-neighboring $D$-grids satisfying $\mathfrak{d}(\mathbb{S})=[\mathfrak{D}_{B}]$ and $\mathfrak{d}(\mathbb{S}^{\circ})=[\mathfrak{D}_{B^{\circ}}]$, then $(B,D')$ and $( B+P_{\infty},D')$ are respective semi-decomposition data of $\mathbb{S}$ and $\mathbb{S}^{\circ}$. So, it follows that $\mathbb{S}$ and $\mathbb{S}^{\circ}$ have the same semi-decomposition vectors. Thus, the second claim follows.
\end{proof}

Recall that, by definition, the number of cusps of a disjoint finite union of graphs is the sum of the number of cusps in each of its connected components.

\begin{prop}\label{prop cuspides en C}
The number of cusps of $C_{P_{\infty}}(\mathbb{O}_D)$ equals $\mathrm{Card}(\mathrm{Pic}(R))\alpha(D)$. 
\end{prop}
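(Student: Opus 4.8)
The plan is to count the cusps of $C_{P_{\infty}}(\mathbb{O}_D)$ by transporting the problem, through the simplicial map $\widetilde{\mathfrak{d}}\colon C_{P_{\infty}}(\mathbb{O}_D)\smallsetminus Y\to C_{P_{\infty}}(\mathbb{O}_0)$ constructed above, to the already understood graph $C_{P_{\infty}}(\mathbb{O}_0)$, whose cuspidal rays are enumerated by $\mathrm{Pic}(R)$ in Theorem \ref{Teo Arenas cusp in C0}. First I would check that $\widetilde{\mathfrak{d}}$ induces a well-defined map on cusp sets $\partial^{\infty}C_{P_{\infty}}(\mathbb{O}_D)\to\partial^{\infty}C_{P_{\infty}}(\mathbb{O}_0)$. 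Given a cuspidal ray $\mathfrak{r}$ of $C_{P_{\infty}}(\mathbb{O}_D)$, its tail lies outside the finite graph $Y$, so $\widetilde{\mathfrak{d}}$ is defined there; by Corollary \ref{coro semi-desc of neighbor} the principal corners of the successive vertices of the tail are $[\mathfrak{D}_{B+nP_{\infty}}]$ with $n\to\infty$, so $\widetilde{\mathfrak{d}}$ sends this tail onto the tail of the cuspidal ray $\mathfrak{r}_{\sigma}$ of $C_{P_{\infty}}(\mathbb{O}_0)$ with $\sigma=[B]\in\mathrm{Pic}(R)$, using the description of such vertices in Theorem \ref{Teo Arenas cusp in C0}. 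Since this assignment only depends on the equivalence class of $\mathfrak{r}$, it descends to the desired map on cusps.

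Next I would compute the fibers of this map. Surjectivity is immediate, since for each $\sigma$ Lemma \ref{lem preimagen de eta} produces at least $\alpha(D)\ge 1$ preimages lying on cuspidal rays. To count the fiber over a fixed $\sigma$, I would fix a representative $B=B(\sigma)+n_0P_{\infty}$ with $\deg(B)$ large enough that $[\mathfrak{D}_B]$ is a vertex of the tail of $\mathfrak{r}_{\sigma}$, lying outside $\widetilde{\mathfrak{d}}(Y)$ and with $\deg(B)>\deg(D)$. Any cusp of $C_{P_{\infty}}(\mathbb{O}_D)$ mapping to $\sigma$ is represented by a cuspidal ray whose principal corners run through every $[\mathfrak{D}_{B(\sigma)+nP_{\infty}}]$, hence in particular through $[\mathfrak{D}_B]$; truncating the ray at that vertex, every such cusp is represented by a cuspidal ray whose initial vertex $\mathbb{S}$ satisfies $\mathfrak{d}(\mathbb{S})=[\mathfrak{D}_B]$. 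By Lemma \ref{lem preimagen de eta} there are exactly $\alpha(D)$ such cuspidal rays, corresponding to the $\alpha(D)$ elements of the fiber $\widetilde{\mathfrak{d}}^{-1}([\mathfrak{D}_B])$.

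It then remains to verify that these $\alpha(D)$ cuspidal rays represent pairwise distinct cusps, which I expect to be the only genuine subtlety: one must ensure that the per-vertex fiber count of Lemma \ref{lem preimagen de eta} coincides with the per-cusp fiber count, i.e.\ that no lifted rays collapse together or branch. Here I would use that $\widetilde{\mathfrak{d}}$ is a covering over the cuspidal rays: along any cuspidal ray every non-initial vertex has valency two, so by Corollary \ref{coro semi-desc of neighbor} each vertex of the fiber over $[\mathfrak{D}_B]$ has a unique forward $P_{\infty}$-neighbor closer to the end, whence the lift of the tail of $\mathfrak{r}_{\sigma}$ starting at a prescribed fiber point is unique. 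Consequently two lifted rays starting at distinct points of $\widetilde{\mathfrak{d}}^{-1}([\mathfrak{D}_B])$ cannot be eventually equal, since a common vertex would, running backwards along the valency-two ray, force their initial vertices to coincide. Thus the cusp map has constant fiber size exactly $\alpha(D)$.

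Finally I would conclude by multiplying: since $C_{P_{\infty}}(\mathbb{O}_0)$ has exactly $\mathrm{Card}(\mathrm{Pic}(R))$ cuspidal rays by Theorem \ref{Teo Arenas cusp in C0} and the induced map on cusp sets is surjective with all fibers of cardinality $\alpha(D)$, the number of cusps of $C_{P_{\infty}}(\mathbb{O}_D)$ equals $\mathrm{Card}(\mathrm{Pic}(R))\,\alpha(D)$, as claimed.
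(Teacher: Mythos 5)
Your proposal is correct and follows essentially the same route as the paper: both reduce the count to the fibers of the simplicial map $\widetilde{\mathfrak{d}}$ over cuspidal rays of $C_{P_{\infty}}(\mathbb{O}_0)$, invoke Lemma \ref{lem preimagen de eta} for the fiber size $\alpha(D)$, and multiply by the $\mathrm{Card}(\mathrm{Pic}(R))$ cusps given by Theorem \ref{Teo Arenas cusp in C0}. Your explicit covering argument showing that per-vertex and per-cusp fiber counts agree is a detail the paper leaves implicit, but it is consistent with (and already supported by) the second assertion of Lemma \ref{lem preimagen de eta}.
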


\begin{proof} It follows from Theorem \ref{Teo Arenas cusp in C0} that the vertices of $C_{P_{\infty}}(\mathbb{O}_0)$ corresponding to the $\text{GL}_2(k)$-classes of split maximal $\mathcal{C}$-orders $\mathfrak{D}_B$ are located in a finite disjoint union of infinite lines or half lines in $C_{P_{\infty}}(\mathbb{O}_0)$. If we assume that a infinite line is the union of two half lines, then the number of such half lines is equal to the number of cusps of $C_{P_{\infty}}(\mathbb{O}_0)$, which coincide with $\text{Pic}(R)$.

Recall that we can remove a finite set of vertices in the classifying graphs in order to simplify some arguments. Thus, we can consider only vertices associated to abstract $D$-grids with a semi-decomposition datum of degree $>\deg(D)$ (cf.~Proposition \ref{max semi-desc}). 
Recall also that $\tilde{\mathfrak{d}}$ is a simplicial map from the disjoint union of a set of cuspidal rays in $C_{P_{\infty}}(\mathbb{O}_D)$, representing all classes of cuspidal rays, to an analog set in $C_{P_{\infty}}(\mathbb{O}_0)$. In particular, the image of a cuspidal ray of the former set under $\tilde{\mathfrak{d}}$ is also a cuspidal ray. So, $\tilde{\mathfrak{d}}$ can be seen as a function between such cusps. In particular, to compute the cusp number in $C_{P_{\infty}}(\mathbb{O}_D)$ it suffices to compute the number of pre-images of each cusp in $C_{P_{\infty}}(\mathbb{O}_0)$. Moreover, this can be reduced to computing the number of pre-images of any vertex in a cuspidal ray of $C_{P_{\infty}}(\mathbb{O}_0)$. Hence, the proposition follows from Lemma \ref{lem preimagen de eta}.
\end{proof}


We say that a cusp $\eta \in \partial^{\infty}(C_{P_{\infty}}(\mathbb{O}_D))$ is split if it is represented by a cuspidal ray formed only by vertices corresponding to totally decomposed $D$-grids. In any other case we say that the cusp is non-split. Note that the arguments given in the proof above imply that $\widetilde{\mathfrak{d}}$ induces a natural function $\widetilde{\mathfrak{d}}^{\infty}:\partial^{\infty}( C_{P_{\infty}}(\mathbb{O}_D)) \rightarrow \partial^{\infty}( C_{P_{\infty}}(\mathbb{O}_0))$ between the respective cusp sets of $C_{P_{\infty}}(\mathbb{O}_D)$ and $C_{P_{\infty}}(\mathbb{O}_0)$. Then, Lemma \ref{lema 3}.(1) and Lemma \ref{lem preimagen de eta} imply that for each $\eta \in \partial^{\infty}(C_{P_{\infty}}(\mathbb{O}_0))$ there exists a unique split cusp in $(\widetilde{\mathfrak{d}}^{\infty})^{-1}(\eta)$, and $\alpha(D)-1$ non split cusps.

\begin{rem}\label{rem 3}
We can characterize the unique split cusp in $(\widetilde{\mathfrak{d}}^{\infty})^{-1}(\eta)$. Indeed, let $\eta'_B \in \partial^{\infty}( C_{P_{\infty}}(\mathbb{O}_D))$ be the class of the cuspidal ray $\mathfrak{r}_{B,D} \subseteq C_{P_{\infty}}(\mathbb{O}_D)$, whose vertices correspond to the $\text{PGL}_2(k)$-classes of decomposable Eichler $\mathcal{C}$-orders $\Ea_{B,n}$ (equiv. the totally-decomposable abstract $D$-grids $\mathbb{S}=[S(\Ea_{B,n})]$) defined by
\begin{equation}
\Ea_{B,n}=\sbmattrix {\oink_\mathcal{C}}{\mathfrak{L}^{B+nP_{\infty}}}{\mathfrak{L}^{-B-nP_{\infty}+D}}{\oink_\mathcal{C}}, \quad n \geq 0.
\end{equation}
Then, the image by $\widetilde{\mathfrak{d}}^{\infty}$ of $\eta'_B$ is the class $\eta_B \in \partial^{\infty}( C_{P_{\infty}}(\mathbb{O}_0))$ of the cuspidal ray $\mathfrak{r}_B \subseteq C_{P_{\infty}}(\mathbb{O}_0)$, whose vertices correspond to $ \lbrace [\mathfrak{D}_{B+nP_{\infty}}] : n \geq 0 \rbrace$. This is the unique split cusp in $(\widetilde{\mathfrak{d}}^{\infty})^{-1}(\eta)$. Moreover, it follows from Theorem \ref{Teo Arenas cusp in C0} that, if we fix a representative set $\Delta_R \subset \mathrm{Div}(\mathcal{C})$ of $\mathrm{Pic}(R)$, then $\lbrace \eta_B : B \in \Delta_R \rbrace$ is a representative set of $\partial^{\infty}(C_{P_{\infty}}(\mathbb{O}_0))$.
\end{rem}

We are finally ready to prove Proposition \ref{prop aux} as an immediate consequence of the following result, which concludes the proof of Theorem \ref{teo cusp}.

\begin{prop}\label{lema4}
The number of cusps of any connected component of $C_{P_{\infty}}(\mathbb{O}_D)$ is the same, and it equals
\begin{equation}
c(D)= \alpha(D) [2\mathrm{Pic}(\mathcal{C})+\left\langle \overline{P_{a_1}}, \cdots ,\overline{P_{a_u}}, \overline{P_{\infty}}  \right\rangle : \langle \overline{P_{\infty}} \rangle],
\end{equation}
where $P_{a_1}, \cdots, P_{a_u}$ are the closed points in $\mathrm{Supp}(D) \subseteq \mathcal{C}$ whose coefficients are odd. Moreover, there are $c(D)/\alpha(D)$ split cusps in any connected component of $C_{P_{\infty}}(\mathbb{O}_D)$.
\end{prop}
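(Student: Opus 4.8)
The plan is to reduce the computation of the cusp number of each connected component to two ingredients already available: the fiber count of the covering map $\widetilde{\mathfrak{d}}$ established in Lemma \ref{lem preimagen de eta}, and the identification of connected components of $C_{P_{\infty}}(\mathbb{O}_D)$ with the spinor classes in $\mathfrak{Sp}(\mathbb{O}_D, P_{\infty})$. First I would recall that $C_{P_{\infty}}(\mathbb{O}_0)$ has exactly $\mathrm{Card}(\mathrm{Pic}(R))$ cusps by Theorem \ref{Teo Arenas cusp in C0}, and by Remark \ref{rem 3} these are represented by the split cuspidal rays $\mathfrak{r}_B$ for $B$ ranging over a representative set $\Delta_R$ of $\mathrm{Pic}(R)$. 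The simplicial map $\widetilde{\mathfrak{d}}$ sends split cusps of $C_{P_{\infty}}(\mathbb{O}_D)$ to cusps of $C_{P_{\infty}}(\mathbb{O}_0)$, with exactly one split preimage and $\alpha(D)-1$ non-split preimages over each target cusp (as noted just before the statement). Thus the total cusp number of $C_{P_{\infty}}(\mathbb{O}_D)$ is $\mathrm{Card}(\mathrm{Pic}(R))\,\alpha(D)$, which is Proposition \ref{prop cuspides en C}. The content of the present proposition is to distribute these cusps evenly among the connected components and to count the split ones per component.

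Next I would count the connected components. Each component is a C-graph $C_{P_{\infty}}(\mathfrak{E})=\Gamma\backslash\mathfrak{t}$ indexed by an element of $\mathfrak{Sp}(\mathbb{O}_D, P_{\infty})$, i.e.\ by the quotient of $\mathbb{O}_D$ under the relation $\mathfrak{E}\sim\mathfrak{E}'$ iff $\rho_D(\mathfrak{E},\mathfrak{E}')\in\langle[[P_{\infty},\Sigma(\mathbb{O}_D)/k]]\rangle$. Since the distance map $\rho_D$ takes values in $\mathrm{Gal}(\Sigma_D/k)$ and is surjective with the multiplicative (transitivity) property, the number of spinor classes equals the index of $\langle[[P_{\infty},\Sigma(\mathbb{O}_D)/k]]\rangle$ in $\mathrm{Gal}(\Sigma_D/k)$. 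Using the restriction relation \eqref{eq rho} together with the description of $\mathrm{Gal}(\Sigma_D/k)$ and $\mathrm{Gal}(\Sigma(\mathbb{O}_D,U_0)/k)$ in Proposition \ref{gal grupo de clase}, this index is exactly
\[
N := [2\mathrm{Pic}(\mathcal{C})+\langle\overline{P_{a_1}},\dots,\overline{P_{a_u}},\overline{P_{\infty}}\rangle : \langle\overline{P_{\infty}}\rangle],
\]
since quotienting $\mathrm{Gal}(\Sigma_D/k)\cong\mathrm{Pic}(\mathcal{C})/(2\mathrm{Pic}(\mathcal{C})+\langle\overline{P_{a_j}}\rangle)$ by the subgroup generated by the class of $P_{\infty}$ yields the group whose order is $N$. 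So there are $N$ connected components, and I would verify that $\mathrm{Card}(\mathrm{Pic}(R))=N\cdot[2\mathrm{Pic}(\mathcal{C})+\langle\overline{P_{\infty}}\rangle:\langle\overline{P_{\infty}}\rangle]\cdots$, or more directly that the split cusps distribute according to the action of $\mathrm{Gal}(\Sigma_D/k)$ on $\Delta_R$.

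The key step — and the main obstacle — is to show the number of cusps is \emph{the same} in every component, rather than merely that the total is $\mathrm{Card}(\mathrm{Pic}(R))\alpha(D)$. For this I would argue that the group $\mathrm{Gal}(\Sigma_D/k)$ acts transitively, via the distance function $\rho_D$ and conjugation by suitable adelic elements, on the set of components, and that this action is compatible with $\widetilde{\mathfrak{d}}$ and with the split-cusp structure described in Remark \ref{rem 3}. Concretely, translating a split cuspidal ray $\mathfrak{r}_{B,D}$ by an adelic element realizing a given Galois element permutes the components while preserving the fiber structure of $\widetilde{\mathfrak{d}}$, so every component receives the same number $\mathrm{Card}(\mathrm{Pic}(R))\alpha(D)/N$ of cusps; since $\mathrm{Card}(\mathrm{Pic}(R))/N=c(D)/\alpha(D)$ counts exactly the split cusps landing in a fixed component (one per orbit of $\Delta_R$ under the relevant subgroup), we obtain $c(D)=\alpha(D)\cdot N$ cusps per component with $c(D)/\alpha(D)$ of them split. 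The delicate point is checking that this Galois action is genuinely transitive on components and free enough on the split cusps to force the even distribution; I expect this to rest on the kernel description of $\rho$ in \S\ref{Section Spinor} and the fact that $U_0$-conjugacy (versus $\mathcal{C}$-conjugacy) is exactly what distinguishes components from cusps.
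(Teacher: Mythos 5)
There is a genuine gap, and it sits exactly where you flag the ``delicate point''; there is also an index error upstream that would corrupt the final count even if that step were repaired. The number of connected components of $C_{P_{\infty}}(\mathbb{O}_D)$ is the index of the subgroup generated by the image of $\overline{P_{\infty}}$ inside $\mathrm{Gal}(\Sigma_D/k)\cong\mathrm{Pic}(\mathcal{C})/(2\mathrm{Pic}(\mathcal{C})+\langle\overline{P_{a_j}}\rangle)$, which is $[\mathrm{Pic}(\mathcal{C}):2\mathrm{Pic}(\mathcal{C})+\langle\overline{P_{a_1}},\dots,\overline{P_{a_u}},\overline{P_{\infty}}\rangle]$, i.e.\ the order of $\mathrm{Gal}(\Sigma(\mathbb{O}_D,U_0)/k)$ --- \emph{not} your $N=[2\mathrm{Pic}(\mathcal{C})+\langle\overline{P_{a_j}},\overline{P_{\infty}}\rangle:\langle\overline{P_{\infty}}\rangle]$. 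The two indices multiply to $\mathrm{Card}(\mathrm{Pic}(R))$ but need not coincide: for $\mathrm{Pic}(\mathcal{C})\cong\mathbb{Z}/8\times\mathbb{Z}$ with $\overline{P_{\infty}}=(0,1)$ and every $n_i$ even, the component count is $2$ while $N=4$. Your claimed identity $\mathrm{Card}(\mathrm{Pic}(R))/N=c(D)/\alpha(D)$ amounts to $\mathrm{Card}(\mathrm{Pic}(R))=N^2$, which fails in that example, so dividing the total $\mathrm{Card}(\mathrm{Pic}(R))\,\alpha(D)$ by your component count does not return $c(D)$.

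Even with the component count corrected, the uniform distribution of cusps among components is precisely the content of the proposition and cannot be assumed; the mechanism you propose --- a transitive action of $\mathrm{Gal}(\Sigma_D/k)$ on the set of components by adelic conjugation, compatible with the graph structure --- is not available. An adelic element realizing a nontrivial spinor class carries $\mathfrak{E}$ to an order whose $U_0$-sections are not $\mathrm{GL}_2(k)$-conjugate to those of $\mathfrak{E}$, so it induces no simplicial isomorphism between the corresponding C-graphs; if it did, all components would be isomorphic as graphs, which is stronger than what is true. The paper avoids this entirely by counting inside a single fixed component: for split orders with principal corners $[\mathfrak{D}_B]$ and $[\mathfrak{D}_{B'}]$, Equation \eqref{eq rho} gives $\rho(\mathfrak{E},\mathfrak{E}')=[[B-B',\Sigma_D/k]]$, so two such cusps lie in the same component iff $\overline{B-B'}\in 2\mathrm{Pic}(\mathcal{C})+\langle\overline{P_{a_j}},\overline{P_{\infty}}\rangle$, while $\overline{B}$ modulo $\langle\overline{P_{\infty}}\rangle$ determines the image under $\widetilde{\mathfrak{d}}^{\infty}$. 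Hence the images of the cusps of a fixed component are parametrized by a coset of $(2\mathrm{Pic}(\mathcal{C})+\langle\overline{P_{a_j}},\overline{P_{\infty}}\rangle)/\langle\overline{P_{\infty}}\rangle$, of cardinality exactly $N$; Lemma \ref{lem preimagen de eta} then supplies the factor $\alpha(D)$, and Lemma \ref{lema 3}(1) the single split cusp per fiber. Replacing your component-count-and-transitivity scheme by this direct coset-plus-fiber count closes the argument.
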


\begin{proof}
We will use the bijection between the set of abstract $D$-grids and the set of Eichler $\mathcal{C}$-orders of level $D$ (cf.~Proposition \ref{eichler=grillas}). In particular, we will assign to any Eichler $\mathcal{C}$-order of level $D$ a vertex in $C_{P_{\infty}}(\mathbb{O}_D)$. Let $\mathfrak{E}$ and $\mathfrak{E'}$ be two split Eichler $\mathcal{C}$-orders associated to two vertices $v,v'$ in the same connected component of $C_{P_{\infty}}(\mathbb{O}_D)$. We denote by $\mathbb{S}$ and $\mathbb{S}'$ be the respective abstract $D$-grids corresponding to $\mathfrak{E}$ and $\mathfrak{E'}$. Assume that $\mathfrak{d}(\mathbb{S})= [\mathfrak{D}_B]$, $\mathfrak{d}(\mathbb{S}')= [\mathfrak{D}_{B'}]$, and $\deg(B), \deg(B')> \deg(D)$. Let $\Sigma_D=\Sigma(\mathbb{O}_D)$ be the spinor class field of Eichler $\mathcal{C}$-orders of level $D$. By definition of $C_{P_{\infty}}(\mathbb{O}_D)$, the vertices $v$ and $v'$ are in the same connected component if and only if $\rho(\mathfrak{E},\mathfrak{E'}) \in \lbrace \text{id}_{\Sigma_D}, [[P_{\infty},\Sigma_D/k]]\rbrace$. It follows from the Equation \eqref{eq rho} that $\rho(\mathfrak{E},\mathfrak{E'})= \rho(\mathfrak{D}_B, \mathfrak{D}_{B'})=[[B-B', \Sigma_D/k]]$. Hence, by Proposition \ref{gal grupo de clase}, $v$ and $v'$ are in the same connected component precisely when $\overline{B-B'} \in 2\text{Pic}(\mathcal{C})+\left\langle \overline{P_{a_1}}, \cdots ,\overline{P_{a_u}}, \overline{P_{\infty}} \right\rangle$.

Moreover, since the cuspidal ray $\mathfrak{r} \subseteq C_{P_{\infty}}(\mathbb{O}_{D})$ containing $v$ consists of vertices corresponding to $[\mathfrak{D}_{B+nP_{\infty}}]$ (cf.~Corollary \ref{coro semi-desc of neighbor}), we see that $\overline{B+\langle P_{\infty} \rangle}$ determines the image in $C_{P_{\infty}}(\mathbb{O}_{0})$ of $\mathfrak{r}$. This tells us that there are $[2\mathrm{Pic}(\mathcal{C})+\left\langle \overline{P_{a_1}}, \cdots ,\overline{P_{a_u}}, \overline{P_{\infty}}  \right\rangle : \langle \overline{P_{\infty}} \rangle]$ possible images via $\widetilde{\mathfrak{d}}^{\infty}$ for a cusp in a given connected component of $C_{P_{\infty}}(\mathbb{O}_D)$. Since by Lemma \ref{lem preimagen de eta} there are $\alpha(D)$ non equivalent cuspidal rays in $C_{P_{\infty}}(\mathbb{O}_D)$ covering each cuspidal ray in $C_{P_{\infty}}(\mathbb{O}_{0})$, we get that there are $c(D)$ different cusps in any connected component of $C_{P_{\infty}}(\mathbb{O}_D)$. This proves the first statement.

For the last statement, it follows from Lemma \ref{lema 3}.(1) that there exists only one split cuspidal ray in each fiber of $\widetilde{\mathfrak{d}}^{\infty}$. Thus, it suffices to count the number of cusps in the image of $\widetilde{\mathfrak{d}}^{\infty}$ in any given connected component. As we proved above, this number is precisely $c(D)/\alpha(D)$.
\end{proof}


\section{Stabilizers and amalgams}\label{Section amalgams}

In this section we analyze the structure of $\text{H}_D$ as an amalgam. More specifically, the main goal of this section is to prove Theorems \ref{teo grup} and \ref{teo grup ab}. For this reason, in all that follows we assume that $g(2)$ is trivial and that each $n_i$ is an odd positive integer. We also assume that $\mathrm{Supp}(D) \neq \emptyset$, since any other case can be reduced to Serre's result. In order to prove the aforementioned results we extensively use Bass-Serre theory (cf.~\cite[Chapter I, \S 5]{S}).

Let $\mathbf{C}$ be a set indexing all the cusps in $\mathfrak{t}_D$. It follows from Theorem \ref{teo cusp} that $\mathfrak{t}_D$ is the union of a finite graph $Y$ with a finite number of cuspidal rays, namely $\mathfrak{r}(\sigma)$, for $\sigma \in \mathbf{C}$. Moreover, the same results implies the following identity:
$$\mathrm{Card}(\mathbf{C})=c(\mathrm{H}_D)=2^r |g(2)| \left| \frac{ 2\mathrm{Pic}(\mathcal{C})+ \langle \overline{P_{\infty}} \rangle}{ \langle \overline{P_{\infty}} \rangle} \right|
\normalsize
\left(  1+ \frac{1}{q-1} \prod_{i=1}^r \left( q^{\text{deg}(P_i)\lfloor  \frac{n_i}{2} \rfloor}-1\right) \right).$$
Now, we choose a maximal tree $T$ of $\mathfrak{t}_D$ and a lift $j: T \rightarrow \mathfrak{t}=\mathfrak{t}(K)$. Note that each cuspidal ray $\mathfrak{r}(\sigma)$ of $\mathfrak{t}_D$ is contained in $T$, whence $j(\mathfrak{r}(\sigma))$ is a ray of $\mathfrak{t}$. More explicitly, we can fix a tree $T$ by taking the union of the cuspidal rays $\mathfrak{r}(\sigma)$ with a maximal tree in the finite graph $Y \subseteq \mathfrak{t}_D$.

\subsection{Review of Bass-Serre Theory}\label{subsection recalls on BS Theory} Let us recall some definitions from \S \ref{Section BTT}. Let $(s, t, r)$ and $(\tilde{s}, \tilde{t}, \tilde{r})$ be the triplets indicating source, target and reverse maps for the graphs $\mathfrak{t}$ or $\mathfrak{t}_D$ respectively. An orientation on $\mathfrak{t}_D$ is a subset $O$ of $\mathrm{E}(\mathfrak{t}_D)$ such that $\mathrm{E}(\mathfrak{t}_D)$ is the disjoint union of $O$ and $\tilde{r}(O)$. In order to simplify some of the subsequent definitions, let us fix an orientation $O$ for $\mathfrak{t}_D$, and set $o(y)=0$, if $y \in O$, while $o(y)=1$, if $y \not\in O$, i.e., if $\tilde{r}(y) \in O$. 

We extend $j$ to a function $j: \mathrm{E}(\mathfrak{t}_D) \to \mathrm{E}(\mathfrak{t})$ satisfying the relation
\begin{equation}\label{eq j ext}
j(\tilde{r}(y))=r(j(y)),
\end{equation}
as follows: For each $y \in O \smallsetminus \mathrm{E}(T)$, we choose $j(y)$ so that $s(j(y)) \in \mathrm{V}(j(T))$. For the remaining edges we define $j(y)$ by the relation \eqref{eq j ext}. Note that we have $s(j(y))=j(\tilde{s}(y))$, for all $y \in O$. In general, however, the corresponding relation for the target does not hold. Next, for each $y \in O \smallsetminus \mathrm{E}(T)$ we choose $g_{y} \in \mathrm{H}_D$ satisfying $t(j(y))=g_{y}\cdot j(\tilde{t}(y))$. This is always possible since $t(j(y))$ and $j(\tilde{t}(y))$ have the same image $\tilde{t}(y)$ in the quotient set $\mathrm{V}(\mathfrak{t}_D)$. Now, we extend the map $y \mapsto g_{y}$ to all edges in $\mathfrak{t}_D$ by setting
$g_{y}=\mathrm{id}$, for all $y \in \mathrm{E}(T)$, and for all remaining edges
$g_{r(y)}=g_{y}^{-1}$. Note that the latter relation holds for each pair of reverse edges. Therefore, for each edge $y$ in the quotient graph, we get $s(j(y))= g_{y}^{-o(y)} j(\tilde{s}(y))$ and $ t(j(y))=g_{y}^{1-o(y)}j(\tilde{t}(y))$. 

For each vertex $\overline{v}\in \mathrm{V}(\mathfrak{t}_D)$, we define $\mathrm{Stab}_{\mathrm{H}_D}({\overline{v}})$ as the stabilizer in $\mathrm{H}_D$ of the lift $j(\overline{v})$. An analogous convention applies to an edge $y$. Thus, for each pair $(\overline{v},y)$ where $\overline{v}=\tilde{t}(y)$, we have a morphism $f_y: \mathrm{Stab}_{\mathrm{H}_D}(y)\to \mathrm{Stab}_{\mathrm{H}_D}(\overline{v})$ defined by $g \mapsto g_{y}^{o(y)-1} g g_{y}^{1-o(y)}$. This function is well defined since
$$g_{y}^{o(y)-1} \mathrm{Stab}_{\mathrm{H}_D}\big(j(y)\big) g_{y}^{1-o(y)} \subseteq \mathrm{Stab}_{\mathrm{H}_D}\Big(j\big(\tilde{t}(y)\big)\Big).$$
Thus, the data presented above allow us to define the graph of groups $(\mathfrak{h}_D, \mathfrak{t}_D)=(\mathfrak{h}_D, T, \mathfrak{t}_D)$ associated to the action of $\mathrm{H}_D$ on $\mathfrak{t}$ (cf.~\cite[Chapter~I, \S 4.4]{S}). 

Now, we can define the fundamental group associated to this graph of groups. Indeed, let $F(\mathfrak{h}_D, \mathfrak{t}_D)$ be the group generated by all $\mathrm{Stab}_{\mathrm{H}_D}(\overline{v})$, where $\overline{v} \in \mathrm{V}(\mathfrak{t}_D)$, and elements $a_y$, for each $y\in \mathrm{E}(\mathfrak{t}_D)$, subject to the relations
$$ a_{\tilde{r}(y)}=a_y^{-1}, \, \,  \mathrm{ and } \, \, a_y f_y(b) a_y^{-1}= f_{\tilde{r}(y)}(b), \, \, \forall y \in \mathrm{E}(\mathfrak{t}_D), \, \,  \forall b \in \mathrm{Stab}_{\mathrm{H}_D}(y). $$
The fundamental group $\pi_1(\mathfrak{h}_D)=\pi_1(\mathfrak{h}_D,\mathfrak{t}_D)$ is, by definition, the quotient of $F(\mathfrak{h}_D,\mathfrak{t}_D)$ by the normal subgroup generated by the elements $a_y$ for $y\in \mathrm{E}(T)$. In other words, if we denote by $h_y$ the image of $a_y$ in $\pi_1(\mathfrak{h}_D, \mathfrak{t}_D)$, then the group $\pi_1(\mathfrak{h}_D, \mathfrak{t}_D)$ is generated by all $\mathrm{Stab}_{\mathrm{H}_D}(\overline{v})$, where $\overline{v} \in \mathrm{V}(\mathfrak{t}_D)$, and the elements $h_y$, for $y \in \mathrm{E}(\mathfrak{t}_D)$, subject to the relations
\[\begin{array}{cl}
    h_{\tilde{r}(y)}=h_y^{-1}, \quad
    h_y  f_y(b) h_y^{-1}= f_{\tilde{r}(y)}(b), \quad \mathrm{and} \quad
    h_z=\mathrm{id},
\end{array}\]
for all $(z,y) \in \mathrm{E}(T) \times \mathrm{E}(\mathfrak{t}_D)$ and for all $b \in \mathrm{Stab}_{\mathrm{H}_D}(y)$. It can be proven that the group $\pi_1$ is independent, up to isomorphism, 
of the choice of the graph of groups $\mathfrak{h}_D$, and in particular 
of the tree $T\subset \mathfrak{t}_D$.

As mentioned in \S \ref{Section Intro}, Bass-Serre Theory implies that all subgroups of $\mathrm{GL}_2(K)$ can be described from their actions on $\mathfrak{t}(K)$ (cf.~\cite[Chapter~I, \S 5.4]{S}). More specifically, they are isomorphic to their corresponding fundamental groups, as defined above (cf.~\cite[Chapter~I, \S 5, Theorem 13]{S}). In our case, $\mathrm{H}_D$ isomorphic to the fundamental group $\pi_1(\mathfrak{h}_D)=\pi_1(\mathfrak{h}_D, \mathfrak{t}_D)$.

Let $\mathfrak{r}(\sigma)$ be a cuspidal ray in $\mathfrak{t}_D$. We denote by $\mathcal{P}_{\sigma}$ the fundamental group $\pi_1(\mathfrak{h}_D|_{\mathfrak{r}(\sigma)})$ of the restriction of $\mathfrak{h}_D$ to $\mathfrak{r}(\sigma)$. Analogously, we define $H=\pi_1(\mathfrak{h}_D|_{Y})$, for $Y$ as in Theorem \ref{teo cusp}. For each $\sigma$ as above, let $\mathcal{B}_{\sigma}$ be the vertex stabilizer in $\mathrm{H}_D$ of the unique vertex in $Y \cap \mathfrak{r}(\sigma)$. We have canonical injections $\mathcal{B}_{\sigma} \to \mathcal{P}_{\sigma}$ and $\mathcal{B}_{\sigma} \to H$. Now, as Serre points out in \cite[Chapter~II, \S 2.5, Theorem 10]{S}, if we have a graph of groups $\mathfrak{h}$, which is obtained by ``gluing'' two graphs of groups $\mathfrak{h}_1$ and $\mathfrak{h}_2$ by a tree of groups $\mathfrak{h}_{12}$, then there exist two injections $\iota_1:\mathfrak{h}_{12} \to \mathfrak{h}_{1}$ and $\iota_2: \mathfrak{h}_{12} \to \mathfrak{h}_{2}$, such that $\pi_1(\mathfrak{h})$ is isomorphic to the sum of $\pi_1(\mathfrak{h}_1)$ and $\pi_1(\mathfrak{h}_2)$, amalgamated along $\pi_1(\mathfrak{h}_{12})$ according to $\iota_1$ and $\iota_2$. In our context, we conclude that $\mathrm{H}_D$ is isomorphic to the sum of $\mathcal{P}_\sigma$, for all $\sigma$, and $H$, amalgamated along their common subgroups $\mathcal{B}_\sigma$ according to the above injections. Since $\mathfrak{r}(\sigma) \subseteq T$, each $\mathcal{P}_{\sigma}$ coincides with the direct limit of the vertex stabilizers defined by all $v \in \mathrm{V}(\mathfrak{r}(\sigma))$. In all that follows, we exploit this property of the groups $\mathcal{P}_{\sigma}$, in order to describe them in more detail.

We start with some comments on the previous choice that simplify our work. Note that, in our context, each vertex stabilizer is finite, since it is the intersection of a compact set with a discrete subgroup of $\mathrm{GL}_2(k)$. So, replacing $\mathfrak{r}(\sigma)$ by another equivalent cuspidal ray has no effect in the statements in Theorem \ref{teo grup}. Hence, to prove the aforementioned theorem, we only need to describe the groups $\mathcal{P}_{\sigma}$ for a suitable set of cusp rays. We describe a convenient choice in what follows.

\subsection{On vertex stabilizers} For any closed point $Q$ in $\mathcal{C}$, any $s \in k$ and any $n \in \mathbb{Z}$, let $\mathfrak{D}(s,n,Q)$ be the $\mathcal{O}_{Q}$-maximal order defined by
\begin{equation}
\mathfrak{D}(s,n,Q)=  \sbmattrix {1}{0}{s}{\pi_{Q}^n} \mathbb{M}_2(\mathcal{O}_{Q}) \sbmattrix {1}{0}{s}{\pi_{Q}^n}^{-1}.
\end{equation}
We denote by $\mathcal{O}$ the ring of local integers at $P_{\infty}$, and we fix a uniformizing parameter $\pi \in \mathcal{O}$. We define $v_n(s) \in \text{V}(\mathfrak{t})$ as the vertex corresponding to the $\mathcal{O}$-maximal order $\mathfrak{D}(s,n,P_{\infty})$. 
For any $s \in \mathbb{P}^1(k)$, we define the $R$-ideal $\mathcal{Q}_s$ by $\mathcal{Q}_s= R \cap s^{-1}R \cap s^{-2}R$, when $s \in k^{*}$, and by $\mathcal{Q}_s=R$, when $s \in \lbrace 0, \infty \rbrace$. Let us write $R(n)=\lbrace a \in R: \nu(a)\geq -n \rbrace$. Recall that $I_D$ denotes the $R$-ideal $\mathfrak{L}^{-D}(U_0)$, where $U_0= \mathcal{C} \smallsetminus \lbrace P_{\infty} \rbrace$ (cf.~Equation \eqref{eq eichler}). 
So, the next lemma follows immediately from \cite[Lemma 3.2]{M} and \cite[Lemma 3.4]{M}.

\begin{lemma}\label{lema mason}
Assume first that $s =0$ and $n<0$. Then
\begin{equation}\label{eq 1 stab H_D}
\mathrm{Stab}_{\mathrm{H}_D}(v_n(0))= \left\lbrace \sbmattrix {\alpha}{c}{0}{\beta}: \alpha, \beta \in \mathbb{F}^{*} \text{ and } c \in R(-n)\right\rbrace. 
\end{equation}
On the other hand, if $s=0$ and $n\geq 1$,  then
\begin{equation}\label{eq 2 stab H_D}
\mathrm{Stab}_{\mathrm{H}_D}(v_n(0))= \left\lbrace \sbmattrix {\alpha}{0}{c}{\beta}: \alpha, \beta \in \mathbb{F}^{*} \text{ and } c \in R(n)\cap I_D \right\rbrace. 
\end{equation}
Finally, if $s \in k \smallsetminus \mathbb{F}$ and $n \deg(P_{\infty})> \deg(\mathcal{Q}_s)$, then the element $g \in \mathrm{GL}_2(k)$ belongs to $\mathrm{Stab}_{\mathrm{H}_D}(v_n(s))$ if and only if it has the form
\begin{equation}\label{eq 3 stab H_D}
g=A(\alpha, \beta, c)= \sbmattrix {\beta-sc}{(\alpha-\beta)s+s^2c}{-c}{\alpha+sc} = \sbmattrix {0}{-1}{1}{-s} ^{-1} \sbmattrix {\alpha}{c}{0}{\beta} \sbmattrix {0}{-1}{1}{-s},
\end{equation}
where
\begin{itemize}
\item[(a)] $\alpha, \beta \in \mathbb{F}^{*}$, and
\item[(b)] $ c \in R(n) \cap I_D \cap Rs^{-1} \cap ((\beta-\alpha)s^{-1}+Rs^{-2})$.
\end{itemize}
In all cases, the stabilizer group $\mathrm{Stab}_{\mathrm{H}_D}(v_n(s))$ contains triangularizable matrices only. Moreover, in \eqref{eq 2 stab H_D} and \eqref{eq 3 stab H_D}, the matrix group $$\text{Sb}_s(n):=\lbrace A(\alpha, \alpha,c): \alpha \in \mathbb{F}^{*}, c\in I_D \cap R(n) \cap \mathcal{Q}_s \rbrace,$$
which is always isomorphic to $\mathbb{F}^{*} \times ( R(n) \cap \mathcal{Q}_s \cap I_D)$, is contained in $\mathrm{Stab}_{\mathrm{H}_D}(v_n(s))$.
\end{lemma}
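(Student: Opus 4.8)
The plan is to obtain each stabilizer by intersecting the corresponding stabilizer inside the full group $\mathrm{GL}_2(R)=\mathrm{H}_0$, computed by Mason, with the single congruence condition that defines $\mathrm{H}_D$. Since $\mathrm{H}_D=\{g\in\mathrm{GL}_2(R): g_{21}\in I_D\}$, for every vertex $v$ one has $\mathrm{Stab}_{\mathrm{H}_D}(v)=\mathrm{Stab}_{\mathrm{GL}_2(R)}(v)\cap\mathrm{H}_D$, so the only effect of passing from $\mathrm{GL}_2(R)$ to $\mathrm{H}_D$ is to require that the lower-left entry lie in $I_D$. First I would record that $R^{*}=\mathbb{F}^{*}$: a unit of $R$ has divisor supported at $P_{\infty}$, and being principal this divisor has degree zero, hence vanishes. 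Consequently, for a triangular matrix in $\mathrm{GL}_2(R)$ the determinant lies in $R^{*}=\mathbb{F}^{*}$ and both diagonal entries are units, which explains why the scalars $\alpha,\beta$ are forced into $\mathbb{F}^{*}$ throughout.

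Next I would invoke \cite[Lemma 3.2]{M} for $s=0$ and \cite[Lemma 3.4]{M} for $s\in k\smallsetminus\mathbb{F}$, the latter exactly under the standing hypothesis $n\deg(P_{\infty})>\deg(\mathcal{Q}_s)$ needed for Mason's clean triangular description to hold. For $s=0$ and $n<0$ Mason's stabilizer consists of the upper-triangular matrices $\sbmattrix{\alpha}{c}{0}{\beta}$ with $c\in R(-n)$; here $g_{21}=0\in I_D$, so the congruence is automatic and \eqref{eq 1 stab H_D} is unchanged. For $s=0$ and $n\geq 1$ the stabilizer is lower-triangular, $\sbmattrix{\alpha}{0}{c}{\beta}$ with $c\in R(n)$, and imposing $g_{21}=c\in I_D$ gives $c\in R(n)\cap I_D$, which is \eqref{eq 2 stab H_D}. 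For $s\in k\smallsetminus\mathbb{F}$ Mason's stabilizer is the set of matrices $A(\alpha,\beta,c)$ with $c\in R(n)\cap Rs^{-1}\cap((\beta-\alpha)s^{-1}+Rs^{-2})$; since the lower-left entry of $A(\alpha,\beta,c)$ equals $-c$, the congruence reads $c\in I_D$, yielding condition (b) and hence \eqref{eq 3 stab H_D}.

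The triangularizability statement is then immediate, because in the first two cases the matrices are literally triangular and in the third they are $k$-conjugate, via the fixed matrix $\sbmattrix{0}{-1}{1}{-s}$, to upper-triangular matrices. For the assertion about $\mathrm{Sb}_s(n)$ I would set $\alpha=\beta$: the term $(\beta-\alpha)s^{-1}$ vanishes, so condition (b) collapses to $c\in R(n)\cap I_D\cap Rs^{-1}\cap Rs^{-2}$, and since $R(n)\subseteq R$ this intersection equals $R(n)\cap I_D\cap\mathcal{Q}_s$ with $\mathcal{Q}_s=R\cap s^{-1}R\cap s^{-2}R$ (the convention $\mathcal{Q}_0=R$ covering $s=0$). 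This shows $\mathrm{Sb}_s(n)$ lies in the stabilizer. To identify its isomorphism type I would write $A(\alpha,\alpha,c)=\sbmattrix{0}{-1}{1}{-s}^{-1}\bigl(\alpha\,\sbmattrix{1}{c/\alpha}{0}{1}\bigr)\sbmattrix{0}{-1}{1}{-s}$ and reparametrize by $u=c/\alpha$, which ranges over $R(n)\cap\mathcal{Q}_s\cap I_D$ because $\mathbb{F}^{*}$ preserves this module; the group law then becomes componentwise, $(\alpha,u)(\alpha',u')=(\alpha\alpha',u+u')$, giving $\mathrm{Sb}_s(n)\cong\mathbb{F}^{*}\times(R(n)\cap\mathcal{Q}_s\cap I_D)$.

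The work here is essentially bookkeeping, the substantive stabilizer computations being already due to Mason; the point requiring genuine care will be to track the congruence onto the correct matrix coefficient, namely the true lower-left entry $-c$ of the conjugated matrix $A(\alpha,\beta,c)$ rather than the entry $c$ of its triangular pre-image, and to verify that the fourfold intersection in condition (b) simplifies to $R(n)\cap I_D\cap\mathcal{Q}_s$ precisely when $\alpha=\beta$. These are the only steps where an error could creep in.
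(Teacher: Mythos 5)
Your proposal is correct and follows the same route as the paper, which simply observes that the lemma "follows immediately from \cite[Lemma 3.2]{M} and \cite[Lemma 3.4]{M}"; you have merely made explicit the bookkeeping (intersecting Mason's stabilizers with the congruence condition on the lower-left entry, which is $-c$ in the conjugated case) that the paper leaves to the reader. The verification that condition (b) collapses to $R(n)\cap I_D\cap\mathcal{Q}_s$ when $\alpha=\beta$ and the identification of the group structure of $\mathrm{Sb}_s(n)$ are both accurate.
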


Let $\mathbf{C}$ be a set indexing all the cusps in $\mathfrak{t}_D$. For each $\sigma \in \mathbf{C}$, let $\mathfrak{r}(\sigma)$ be a cuspidal ray in $\mathfrak{t}_D$ representing the corresponding cusp, and let $j(\mathfrak{r}(\sigma))$ be its lift to $\mathfrak{t}$. We can assume that its visual limit $\xi=\xi_{\sigma}$ is in $\mathbb{P}^1(k)$ by Corollary \ref{Cor comb fin}, and we assume moreover that one of them is $\infty$. It follows from the previous lemma that there exists a ray $\mathfrak{r}'(\sigma)$, equivalent to $\mathfrak{r}(\sigma)$, with a vertex set $\lbrace \overline{v_i} : i=1, \cdots, \infty \rbrace$, where any pair of neighboring vertices $(\overline{v_i}, \overline{v_{i+1}})$ has the property $\mathrm{Stab}_{\text{H}_D}(j(\overline{v_i})) \subset \mathrm{Stab}_{\text{H}_D}(j(\overline{v_{i+1}}))$. Then, up to changing the representing cuspidal ray for each class, we can assume that the previous inclusion holds for each $\mathfrak{r}(\sigma)$ in $\mathfrak{t}_D$. In particular, in this case, the direct limit $\mathcal{P}_{\sigma}$ coincides with the increasing union $\bigcup_{i=1}^{\infty}\mathrm{Stab}_{\text{H}_D}(v_i)$. Thus, in order to describe $\mathcal{P}_{\sigma}$, we only need to study the stabilizers of some unbounded subset $\lbrace v_{\alpha(i)} \rbrace_{i=1}^{\infty} \subseteq \text{V}(j(\mathfrak{r}(\sigma)))$. So, let $\mathfrak{r}({\infty}) \subseteq \mathfrak{t}_D$ be the cuspidal ray at infinity, i.e., the projection on $\mathfrak{t}_D$ of a ray whose vertex set is $\lbrace v_{-n}(0): n \geq N_0\rbrace$, for certain suitable integer $N_0$. Then, Lemma \ref{lema mason} directly shows that $\mathcal{P}_{\infty}$ is isomorphic to $ (\mathbb{F}^{*} \times \mathbb{F}^{*} )\ltimes R$.

When $\mathfrak{r}(\sigma)$ is different from $\mathfrak{r}(\infty)$ we have to work with other tools. We do this in what follows. Let us fix a cuspidal ray $\mathfrak{r}( \sigma)$ different from $\mathfrak{r}(\infty)$. Then, the vertex set of $j(\mathfrak{r}(\sigma))$ equals $\lbrace v_n(\xi) : n \geq N_0 \rbrace $, where $\xi$ and $N_0$ depend on $\sigma$. It follows from Lemma \ref{lema mason} that the maximal unipotent subgroup of each $\mathrm{Stab}_{\mathrm{H}_D}(v_n(\xi))$ is isomorphic to the intersection of $R(n)$ with the $R$-ideal $\mathcal{Q}_{\xi} \cap I_D$. Therefore, since $\bigcup_{n>N_0} R(n)=R$, it follows that the maximal unipotent subgroup of $\mathcal{P}_{\sigma}$ is isomorphic to the $R$-ideal $\mathcal{Q}_{\xi} \cap I_D$. Thus, by Equation \eqref{eq 3 stab H_D}, in order to describe $\mathcal{P}_{\sigma}$, we only need to characterize the eigenvalues of some elements in $\mathrm{Stab}_{\mathrm{H}_D}(v_n(\xi))$.

We start by recalling some relevant definitions from \S \ref{Section grids}. As before, $\Gamma \subset \text{PGL}_2(k)$ denotes the stabilizer of $\mathfrak{E}_D(U_0)$, where $\mathfrak{E}_D$ is the Eichler $\mathcal{C}$-order defined in \eqref{eq de eD}. So, for each $v \in \text{V}(\mathfrak{t})$, its image $\overline{v} \in \mathrm{V}(\Gamma \backslash \mathfrak{t})$ represents one and only one $\mathcal{C}$-Eichler order of level $D$, or equivalently one abstract $D$-grid. Next, we make explicit this correspondence for any vertex $v=v_n(\xi)$. It follows from condition (c) in \S \ref{Section Spinor} that, given the family of local orders $\lbrace \mathfrak{E}(P) : P \in |\mathcal{C}| \rbrace$ defined by
\begin{itemize}
\item[($\mathrm{E}_1$)] $\mathfrak{E}({P_i})= \mathfrak{D}(0,0,P_i) \cap \mathfrak{D}(0, n_i, P_i) $, for any $i \in \lbrace 1, \cdots, n \rbrace$,
\item[($\mathrm{E}_2$)] $\mathfrak{E}({P_{\infty}})= \mathfrak{D}(\xi,n, P_{\infty})$, and
\item[($\mathrm{E}_3$)] $\mathfrak{E}({Q})=\mathfrak{D}(0,0,Q)$, for any $Q\neq P_1, \cdots, P_n, P_{\infty}$,
\end{itemize}
there exists an Eichler $\mathcal{C}$-order $\mathfrak{E}=\mathfrak{E}[v]$ such that $\mathfrak{E}_P=\mathfrak{E}(P)$, for all $P \in |\mathcal{C}|$. Then, the abstract $D$-grid corresponding to $v$ is $\mathbb{S}(v)=\big[S(\mathfrak{E}[v])\big]$. Observe that the level of $\mathfrak{E}$ is equal to $D=\sum_{i=1}^n n_i P_i$.


Now, by definition, $g \in \mathrm{Stab}_{\text{H}_D}(v_n(\xi))$ if and only if $g \in \mathrm{H}_D=\mathfrak{E}_D(U_0)^{*}$ and $g \in \mathrm{Stab}_{\text{GL}_2(k)}(v_n(\xi))$. Observe that $g \in \mathfrak{E}_D(U_0)^{*}$ is equivalent to $g \in \mathfrak{E}_D(U_0)$ and $\det(g) \in R^{*}=\mathbb{F}^{*}$. The following result describes the normalizer of local maximal orders. See \cite[\S 3]{A2} for details.

\begin{lemma}\label{lemma normalizer of maximal order}
 For any closed point $Q$, the normalizer in $\mathrm{GL}_2(k_Q)$ of a local maximal order $\mathfrak{D}(s,n,Q)$ is $k_Q^{*} \mathfrak{D}(s,n,Q)^{*}$.
\end{lemma}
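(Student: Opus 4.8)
The plan is to reduce the statement to the case of the standard maximal order $\matrici_2(\oink_Q)$ and then invoke the lattice description of maximal orders recalled in \S\ref{subsection BTT}. Writing $M=\sbmattrix{1}{0}{s}{\pi_{Q}^n}\in\mathrm{GL}_2(k_Q)$, we have by definition $\mathfrak{D}(s,n,Q)=M\,\matrici_2(\oink_Q)\,M^{-1}$. Since the normalizer of a conjugate subring is the conjugate of the normalizer, and since scalar matrices are central (so $k_Q^{*}$ is fixed by conjugation by $M$), I would first record the two identities
\[
N_{\mathrm{GL}_2(k_Q)}\big(\mathfrak{D}(s,n,Q)\big)=M\,N_{\mathrm{GL}_2(k_Q)}\big(\matrici_2(\oink_Q)\big)\,M^{-1},\qquad k_Q^{*}\,\mathfrak{D}(s,n,Q)^{*}=M\big(k_Q^{*}\,\mathrm{GL}_2(\oink_Q)\big)M^{-1},
\]
where the second uses $\mathfrak{D}(s,n,Q)^{*}=M\,\matrici_2(\oink_Q)^{*}M^{-1}=M\,\mathrm{GL}_2(\oink_Q)M^{-1}$. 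Hence it suffices to prove the special case $N_{\mathrm{GL}_2(k_Q)}\big(\matrici_2(\oink_Q)\big)=k_Q^{*}\,\mathrm{GL}_2(\oink_Q)$.

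For this special case, the inclusion $\supseteq$ is immediate, as scalars are central and $\mathrm{GL}_2(\oink_Q)=\matrici_2(\oink_Q)^{*}$ normalizes its own ring. For the reverse inclusion I would use the bijection $[\Lambda]\mapsto\mathrm{End}_{\oink_Q}(\Lambda)$ between homothety classes of $\oink_Q$-lattices in $k_Q^2$ and maximal orders of $\matrici_2(k_Q)$ stated in \S\ref{subsection BTT}, together with the compatibility $g\,\mathrm{End}_{\oink_Q}(\Lambda)\,g^{-1}=\mathrm{End}_{\oink_Q}(g\Lambda)$ coming from the action of $\mathrm{GL}_2(k_Q)$ on lattices. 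If $g$ normalizes $\matrici_2(\oink_Q)=\mathrm{End}_{\oink_Q}(\oink_Q^2)$, then $\mathrm{End}_{\oink_Q}(g\,\oink_Q^2)=\mathrm{End}_{\oink_Q}(\oink_Q^2)$, and injectivity of the correspondence forces $[g\,\oink_Q^2]=[\oink_Q^2]$, i.e. $g\,\oink_Q^2=\lambda\,\oink_Q^2$ for some $\lambda\in k_Q^{*}$. Since the stabilizer of the lattice $\oink_Q^2$ in $\mathrm{GL}_2(k_Q)$ is exactly $\mathrm{GL}_2(\oink_Q)$, I conclude $\lambda^{-1}g\in\mathrm{GL}_2(\oink_Q)$, and therefore $g\in k_Q^{*}\,\mathrm{GL}_2(\oink_Q)$, completing the argument.

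As a self-contained alternative that avoids the tree, I could argue purely with matrices via the Cartan (elementary divisor) decomposition $g=u_1\,\mathrm{diag}(\pi_Q^{a},\pi_Q^{b})\,u_2$ with $u_1,u_2\in\mathrm{GL}_2(\oink_Q)$ and $a\le b$: because $u_1,u_2$ already normalize $\matrici_2(\oink_Q)$, the diagonal part must normalize it too, and conjugating the elementary matrix $E_{12}$ by $\mathrm{diag}(\pi_Q^{a},\pi_Q^{b})$ produces $\pi_Q^{a-b}E_{12}$, which lies in $\matrici_2(\oink_Q)$ only if $a\ge b$; combined with $a\le b$ this gives $a=b$, so the diagonal part is scalar. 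In either route the only genuinely non-formal point is the injectivity of the order-to-lattice correspondence (equivalently, that a maximal order determines its defining lattice up to homothety); everything else is the centrality of scalars and the standard identification of $\mathrm{GL}_2(\oink_Q)$ as the stabilizer of $\oink_Q^2$. I therefore expect this injectivity to be the main obstacle, though it is already supplied by the results quoted in \S\ref{subsection BTT}.
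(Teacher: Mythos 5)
Your argument is correct and is essentially the paper's own proof: both rest on the fact that $g\,\mathrm{End}_{\mathcal{O}_Q}(\Lambda)\,g^{-1}=\mathrm{End}_{\mathcal{O}_Q}(g\Lambda)$ together with the injectivity of the lattice-class-to-maximal-order correspondence, which forces $g\Lambda=\lambda\Lambda$ and hence $\lambda^{-1}g\in\mathfrak{D}(s,n,Q)^{*}$. The only cosmetic difference is that the paper applies this directly to the lattice $\Lambda(s,n,Q)$ defining $\mathfrak{D}(s,n,Q)$ rather than first conjugating to the standard order $\mathbb{M}_2(\mathcal{O}_Q)$, so your reduction step (and the Cartan-decomposition alternative) is extra but harmless.
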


\begin{proof}
Since any $\mathcal{O}_Q$-maximal order is the ring of endomorphisms of an $\mathcal{O}_Q$-lattice, we can write $\mathfrak{D}(s,n,Q)=\mathrm{End}_{\mathcal{O}_Q}(\Lambda)$, for some lattice $\Lambda=\Lambda(s,n,Q)$ of $k_Q \times k_Q$. Then $g \in \mathrm{GL}_2(k_Q)$ normalizes $\mathfrak{D}(s,n,Q)$ if and only if $\mathrm{End}_{\mathcal{O}_Q}(\Lambda)= \mathrm{End}_{\mathcal{O}_Q}(g(\Lambda))$. So, since two lattices have the same endomorphism rings precisely when they belong to the same homothety class, we have that $\mathrm{End}_{\mathcal{O}_Q}(\Lambda)= \mathrm{End}_{\mathcal{O}_Q}(g(\Lambda))$ precisely when $g(\Lambda)=  \lambda \Lambda$, for some $\lambda \in k_Q^{*}$, i.e. $\lambda^{-1}g \in \mathrm{End}_{\mathcal{O}_Q}(\Lambda)^{*}=\mathfrak{D}(s,n,Q)^{*}$.
\end{proof}

 We deduce from the preceding lemma that $g \in \mathfrak{E}_D(U_0)^{*}$ if and only if the following conditions hold:
\begin{itemize}
    \item $\det(g) \in \mathbb{F}^{*}$,
    \item $g$ normalizes $\mathfrak{D}(0,m, P_i)$, for each $P_i \in \mathrm{Supp}(D)$ and $m \in \lbrace 0, \cdots, n_i \rbrace$, and
    \item $g$ normalizes $\mathfrak{D}(0,0,Q)$, for every $Q\neq P_1, \cdots, P_r, P_{\infty}$.
\end{itemize}
Thus, we conclude that $g$ belongs to $\mathrm{Stab}_{\text{H}_D}(v_n(\xi))$ precisely when it satisfies conditions $(\mathrm{A}_1)-(\mathrm{A}_4)$ below:
\begin{itemize}
\item[($\mathrm{A}_1$)] $\text{det}(g) \in \mathbb{F}^{*}$,
\item[($\mathrm{A}_2$)] $g$ normalizes $\mathfrak{D}(0,m,P_i)$, for each $P_i \in \mathrm{Supp}(D)$ and $m \in \lbrace 0, \cdots, n_i \rbrace$,
\item[($\mathrm{A}_3$)] $g$ normalizes $\mathfrak{D}(0,0,Q)$, for every $Q\neq P_1, \cdots, P_r, P_{\infty}$, and
\item[($\mathrm{A}_4$)] $g$ normalizes $\mathfrak{D}(\xi ,n,P_{\infty})$.
\end{itemize}
Recall that we only have to describe the vertex stabilizers for an arbitrary unbounded set of vertex of $j(\mathfrak{r}(\sigma))$. Then,
by changing $v_n(\xi)$ to $v_{n+1}(\xi)$ if needed, we can assume without loss of generality that the type of $v_n(\xi)$ coincides with the type of $v_0(0)$.
Thus, there exists $h_{P_\infty} \in \mathrm{SL}_2(k_{P_{\infty}})$ such that $h_{P_\infty} \cdot v_n(\xi)=v_0(0)$, i.e., $h_{P_{\infty}} \mathfrak{D}(\sigma,n,P_{\infty}) h_{P_{\infty}}^{-1}= \mathfrak{D}(0,0,P_{\infty})$. Now, it follow from Lemma \ref{lemma normalizer of maximal order} that the $\mathrm{GL}_2(k_Q)$-normalizer of local maximal orders $\mathfrak{D}_Q$ are open. So, by the Strong Approximation Theorem applied on the open set $\mathcal{C} \smallsetminus \mathrm{Supp}(D)$, there exists $h=h(v) \in \text{SL}_2(k)$ satisfying $h \mathfrak{D}(\sigma,n,P_{\infty}) h^{-1}= \mathfrak{D}(0,0,P_{\infty})$ and normalizing each $\mathfrak{D}(0,0,Q)$, for $Q\neq P_1, \cdots, P_r, P_{\infty}$. For each $P_i \in \mathrm{Supp}(D)$, let $\mathfrak{s}_i$ be the finite line in $\mathfrak{t}(k_{P_i})$ whose vertex set is $ \lbrace h^{-1}\mathfrak{D}(0,m,P_i)h : m \in \lbrace 0, \cdots, n_i\rbrace \rbrace$. We define $S=S(v)$ as the concrete $D$-grid $S= \prod_{i=1}^n \mathfrak{s}_i$. Note that $S= h S(\mathfrak{E}[v]) h^{-1}$ is another representative of $\mathbb{S}=\mathbb{S}(v)$. Thus, we deduce from conditions $(\mathrm{A}_1)-(\mathrm{A}_4)$ that, $g$ belongs to $\mathrm{Stab}_{\text{H}_D}(v_n(\xi))$ if and only if $\widetilde{g}=hgh^{-1} \in \text{GL}_2(k)$ satisfies the following:
\begin{itemize}
\item[($\mathrm{B}_1$)] $\text{det}(\widetilde{g}) \in \mathbb{F}^{*}$,
\item[($\mathrm{B}_2$)] $\widetilde{g}$ normalizes each maximal $\mathcal{C}$-order in the vertex set of $S$, and
\item[($\mathrm{B}_3$)] $\widetilde{g}$ normalizes each local maximal order $\mathfrak{D}(0,0,Q)$, where $Q\neq P_1, \cdots, P_r$.
\end{itemize}

Let $v_{n+2}(\xi)$ be the vertex at distance two from $v_n(\xi)$ towards $\xi$. Since $C_{P_{\infty}}(\mathbb{O}_D)$ is combinatorially finite, for $n \gg 1$, we have $  \overline{v_n(\xi)} \neq \overline{v_{n+2}(\xi)}$. Moreover, it follows from Proposition \ref{max semi-desc} and Corollary \ref{coro semi-desc of neighbor} that there exists a suitable integer $N_{\sigma}$ such that for all $n> N_{\sigma}$ the actual $D$-grid $S=S(v_{n}(\sigma))$ has a semi-decomposition datum with positive degree.

First, assume that $S$ has a total-decomposition datum $(\beta, B, (n_i)_{i=1}^r)$. 
Let $A=A(\beta)$ be be the base change matrix from the canonical basis to $\beta$, and let $\mathfrak{E}= A^{-1} \mathfrak{E}[B,B+D] A$ be the split Eichler $\mathcal{C}$-order, defined as the intersection of all maximal orders in the vertex set of $S$. Then, it follows from Proposition \ref{prop des}, that there exists a global idempotent matrix $\epsilon_1 \in \mathfrak{E}(\mathcal{C})$. Since $\mathcal{T}:= \mathbb{F}^{*} \epsilon_1 + \mathbb{F}^{*}(\text{id}-\epsilon_1)$ is contined in $\mathfrak{E}(\mathcal{C})^{*}$, any matrix in $\mathcal{T}$ normalizes the Eichler $\mathcal{C}$-order $\mathfrak{E}$. In other words, any matrix $g \in \mathcal{T}$ satisfies the properties $(\mathrm{B}_1),(\mathrm{B}_2)$ and $(\mathrm{B}_3)$. Thus, for any pair of elements $a,b \in \mathbb{F}^{*}$ there is a matrix $g_{a,b} \in \mathrm{Stab}_{\mathrm{H}_D}(v_n(\xi))$ whose eigenvalues are $a$ and $b$. Then, since the group generated by $\lbrace g_{a,b}: a,b \in \mathbb{F}^{*} \rbrace$ and the maximal unipotent subgroup of $\mathrm{Stab}_{\mathrm{H}_D}(v_n(\xi))$ equals
$$\lbrace A(\alpha, \beta,c): \alpha, \beta \in \mathbb{F}^{*}, c \in R(n) \cap \mathcal{Q}_{\xi} \cap I_D \rbrace,$$
we conclude from Lemma \ref{lema mason} that
\begin{equation}
\mathrm{Stab}_{\mathrm{H}_D}(v_n(\xi)) \cong (\mathbb{F}^{*} \times \mathbb{F}^{*}) \ltimes (R(n) \cap Q_{\xi} \cap I_D).
\end{equation}
Now, assume that $S$ has a non total semi-decomposition datum  $(\beta, B, (s_i)_{i=1}^r)$, and let $A=A(\beta)$ as before. 
Then $A^{-1} \widetilde{g}A$ normalizes each maximal $\mathcal{C}$-order in the vertex set of $A^{-1} S A$. In particular, the matrix $A^{-1} \widetilde{g}A$ fixes $\mathfrak{D}_B$, with $\deg(B)>0$, whence $A^{-1} \widetilde{g}A= \sbmattrix {x}{y}{0}{z}$ with $x^{-1}z \in \mathbb{F}^{*}$ (cf.~\cite[\S 4, Proposition 4.1]{A1}). So, by taking $S^{\circ}=S$ in the proof of Proposition \ref{lema 3}, we deduce that $x=z$. Thus, the eigenvalues of $\widetilde{g}$ are equal. The same holds for $g$. Therefore, we conclude from Lemma \ref{lema mason} that $\mathrm{Stab}_{\mathrm{H}_D}(v_n(\xi))= \text{Sb}_{\xi}(n)$ in this case. In particular, we have that
\begin{equation}
\mathrm{Stab}_{\mathrm{H}_D}(v_n(\xi)) \cong \mathbb{F}^{*} \times (R(n) \cap Q_{\xi} \cap I_D).
\end{equation}

\subsection{End of proof of Theorem \ref{teo grup} and \ref{teo grup ab}} Note that it follows from Corollary \ref{coro semi-desc of neighbor} that we can assume that the semi-decomposition vectors of $\mathbb{S}(v_n(\xi))$ and $\mathbb{S}(v_{n+2}(\xi))$ are equal.
Then, by the arguments presented above, we deduce that:
\begin{itemize}
    \item $\mathrm{Stab}_{\mathrm{H}_D}(v_{n+2}(\xi)) \cong (\mathbb{F}^{*} \times \mathbb{F}^{*}) \ltimes (R(n+2) \cap Q_{\xi})$, if $\mathbb{S}(v_{n+2}(\xi))$ is split, and
    \item $\mathrm{Stab}_{\mathrm{H}_D}(v_{n+2}(\xi)) \cong \mathbb{F}^{*} \times (R(n+2) \cap Q_{\xi})$, if not.
\end{itemize}
An inductive argument shows that, for each $t \in \mathbb{Z}_{\geq 0}$, $\mathrm{Stab}_{\mathrm{H}_D}(v_{n+2t}(\xi))$ is isomorphic to
\begin{itemize}
	\item $(\mathbb{F}^{*} \times \mathbb{F}^{*}) \ltimes (R(n+2t) \cap Q_{\xi} \cap I_D)$, if  $\mathbb{S}(v_{n+2t}(\xi))$ is split, and 
	\item $\mathbb{F}^{*} \times (R(n+2t) \cap Q_{\xi} \cap I_D)$, if not.
\end{itemize}
Now, we say that $\mathfrak{r}(\sigma)$ is split when it only contains vertices corresponding to split abstract grids. Since we can assume that $\mathbb{S}(v_{n}(\xi))$ and $\mathbb{S}(v_{n+t}(\xi))$ correspond to vertices at distance $t>0$ in the same cuspidal ray of $C_{P_{\infty}}(\mathbb{O}_D)$, we get from Corollary \ref{coro semi-desc of neighbor} that they have the same semi-decomposition vectors. In particular, if $\mathfrak{r}(\sigma)$ is not split, then every vertex in $\mathfrak{r}(\sigma)$ corresponds to a nonsplit abstract grid. Thus, we conclude
\begin{equation}
\mathcal{P}_{\sigma} \cong \left\{
	       \begin{array}{ll}
		 (\mathbb{F}^{*} \times \mathbb{F}^{*}) \ltimes (Q_{\xi} \cap I_D )    & \text{if } \mathfrak{r}(\sigma) \text{ is split}, \\
		 \mathbb{F}^{*} \times ( Q_{\xi} \cap I_D) & \text{if not}.\\
	       \end{array}
	     \right.
\end{equation}
Since each $n_i$ is odd by hypothesis, it follows from Proposition \ref{lema4} that there are 
$$c(D)/\alpha(D)=[2\mathrm{Pic}(\mathcal{C})+\left\langle \overline{P_{1}}, \cdots ,\overline{P_{r}}, \overline{P_{\infty}}  \right\rangle : \langle \overline{P_{\infty}} \rangle]$$ 
split cusps in $\Gamma \backslash \mathfrak{t}$. Moreover, it follows from Lemma \ref{lemma cubrimiento de cuspides} that there are exactly $[\Gamma: \text{PH}_D]$ cusps of $\mathfrak{t}_D$ with the same image in $\Gamma \backslash \mathfrak{t}$. Thus, we conclude from Equation \eqref{eq gamma/phd} that there are $2^r [2\mathrm{Pic}(\mathcal{C})+ \langle \overline{P_{\infty}} \rangle: \langle \overline{P_{\infty}} \rangle]$ elements $\sigma \in \mathbf{C}$ such that $\mathfrak{r}(\sigma)$ is split. Then, Theorem \ref{teo grup} follows.

Now, it directly follows \cite[Chapter II, S 2.8, Exercise  2]{S} that $\mathrm{H}_D^{\mathrm{ab}}$ is finitely generated if and only if each group $\mathcal{P}_{\sigma}^{\mathrm{ab}}$ is finite, and, in any other case, $\mathrm{H}_D^{\mathrm{ab}}$ is the product of a finitely generated group with a $\mathbb{F}_p$-vector space of infinite dimension, where $p=\mathrm{Char}(\mathbb{F})$. Moreover, it follows from a direct computation that $\mathcal{P}_{\sigma}^{\mathrm{ab}}=\mathbb{F}^*\times \mathbb{F}^*$, if $\mathfrak{r}(\sigma)$ is split, while $\mathcal{P}_{\sigma}^{\mathrm{ab}}=\mathcal{P}_{\sigma}$, otherwise. Thus, the group $\mathrm{H}_D^{\mathrm{ab}}$ is finitely generated if and only if $\mathfrak{t}_D$ has only split cusps or, equivalently, if $\mathrm{Card}(\mathbf{D})=c(\mathrm{H}_D)$ (cf.~Theorem \ref{teo grup}). Then, it follows from Theorem \ref{teo cusp} that $\mathrm{H}_D^{\mathrm{ab}}$ is finitely generated if and only if each $n_i$ equals one.

\subsection{An example where \texorpdfstring{$\mathcal{C}=\mathbb{P}^1_{\mathbb{F}}$}{CP}}
Let $D$ be a square free divisor supported at a closed point $P$ of degree one. Let $P_{\infty}$ be another closed point of degree one. Without loss of generality, we assume that $P_{\infty}$ is the point at infinity of $\mathbb{P}^1_{\mathbb{F}}$ and $D= \mathrm{div}(t)|_{U_0}$, whence $R=\mathbb{F}[t]$. Then $\mathcal{O}=\mathcal{O}_{P_{\infty}}=\mathbb{F}[[t^{-1}]]$ and $\pi=1/t$ is a uniformizing parameter. Since $r=1$, $\deg(P)=1$ and $\deg(P_{\infty})=1$, Theorem \ref{teo cusp} implies that $\mathfrak{t}_D=\mathrm{H}_D\backslash \mathfrak{t}$ is the union of a doubly infinity ray with a certain finite graph.

In order to perform some explicit computations, in all that follows, we interpret the Bruhat-Tits tree $\mathfrak{t}$ in terms of closed balls in $K$, as we describe in \S \ref{subsection BTT}. Using this interpretation, it is not hard to see that the set of vertices in the doubly infinity ray $\mathfrak{p}_{a,\infty}$, joining $a \in k$ with $\infty$, corresponds to the set of closed balls $B_a^{|r|}$ whose center is $a$ and its radius is $|\pi^r|$. Then, it is immediately that the union of all these doubly infinity rays covers $\mathfrak{t}$.

\begin{lemma}\label{lemma quot t} One has
$\mathrm{H}_D \cdot \mathrm{V}(\mathfrak{p}_{0,\infty})=\mathrm{V}(\mathfrak{t})$ and $\mathrm{H}_D \cdot \mathrm{E}(\mathfrak{p}_{0,\infty})=\mathrm{E}(\mathfrak{t})$.
\end{lemma}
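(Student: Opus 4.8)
The plan is to pass to the ball model of $\mathfrak{t}$ from \S\ref{subsection BTT} and reduce the statement to a density property of a single $\mathrm{H}_D$-orbit on $\mathbb{P}^1(k)$. Under the present hypotheses $R=\mathbb{F}[t]$, $\mathcal{O}=\mathbb{F}[[t^{-1}]]$ and $\pi=t^{-1}$, and since $D=\mathrm{div}(t)|_{U_0}$ is the point $t=0$, the ideal $I_D=\mathfrak{L}^{-D}(U_0)$ equals $t\mathbb{F}[t]$; hence $\mathrm{H}_D=\{\,g\in\mathrm{GL}_2(\mathbb{F}[t]):c\equiv 0\ (\mathrm{mod}\ t)\,\}$ is the function-field analogue of $\Gamma_0(t)$. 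First I would record the key geometric fact: the vertices of $\mathfrak{p}_{0,\infty}$ are precisely the balls $B_0^{|r|}$ with $r\in\mathbb{Z}$, and these are exactly the balls that contain the point $0\in K$. Together with the compatibility of the $\mathrm{GL}_2(K)$-action on balls with the Möbius action on $\mathbb{P}^1(K)$ (\S\ref{subsection BTT}), this yields the equivariant incidence $0\in h^{-1}B\Leftrightarrow h\cdot 0\in B$.

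The reduction step is then purely formal. For a vertex $B$ of $\mathfrak{t}$ one has $B\in\mathrm{H}_D\cdot\mathrm{V}(\mathfrak{p}_{0,\infty})$ if and only if $h^{-1}B\in\mathrm{V}(\mathfrak{p}_{0,\infty})$ for some $h\in\mathrm{H}_D$, i.e. if and only if $0\in h^{-1}B$, i.e. $h\cdot 0\in B$ for some $h$. Thus $\mathrm{H}_D\cdot\mathrm{V}(\mathfrak{p}_{0,\infty})=\mathrm{V}(\mathfrak{t})$ is equivalent to the assertion that every ball contains a point of the orbit $\mathrm{H}_D\cdot 0$, that is, that $\mathrm{H}_D\cdot 0$ is dense in $K$. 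The edge statement follows with the same element $h$: an edge $\{B,B'\}$ with $B\subsetneq B'$ lifts into $\mathfrak{p}_{0,\infty}$ as soon as $0\in h^{-1}B$ (the smaller ball), since then $h^{-1}B=B_0^{|s+1|}$ and its unique larger neighbour $h^{-1}B'=B_0^{|s|}$ are consecutive vertices of $\mathfrak{p}_{0,\infty}$. So both assertions reduce to the density of $\mathrm{H}_D\cdot 0$.

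To prove density I would compute the orbit explicitly. For coprime $b,d\in\mathbb{F}[t]$ with $d(0)\neq 0$ one can solve $ad-cb=1$ with $c\in t\mathbb{F}[t]$: the congruence $cb\equiv-1\ (\mathrm{mod}\ d)$ is solvable since $\gcd(b,d)=1$, and because $\gcd(t,d)=1$ the Chinese Remainder Theorem lets one choose such a $c$ also satisfying $c\equiv 0\ (\mathrm{mod}\ t)$; then $a=(1+cb)/d\in\mathbb{F}[t]$ and $\sbmattrix{a}{b}{c}{d}\in\mathrm{H}_D$ sends $0$ to $b/d$. Hence $\mathrm{H}_D\cdot 0$ contains every $b/d$ regular at the place $P$ given by $t=0$, i.e. it contains the local ring $\mathbb{F}[t]_{(t)}=\{f\in k:\nu_P(f)\ge 0\}$. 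Since $P$ and $P_{\infty}$ are distinct places of $k$, weak approximation shows that the functions regular at $P$ are dense in the completion $k_{P_{\infty}}=K$: given $a\in K$ and $r$, choose $f\in k$ with $\nu_{P_{\infty}}(f-a)\ge r$ and $\nu_P(f)\ge 0$. This yields the required density and completes both parts.

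The only genuinely nontrivial point is this density of $\mathrm{H}_D\cdot 0$, and I expect it to be the main obstacle to present carefully. It is tempting but wrong to approximate $a\in K$ by truncating its $t^{-1}$-expansion, since such truncations have denominators that are powers of $t$ and therefore lie in the \emph{other} cusp class; one must instead produce approximants regular at $t=0$ (for example via geometric-series expansions such as $1/(t+1)=t^{-1}-t^{-2}+\cdots$), which is exactly what weak approximation supplies. Everything else, namely the ball description of $\mathfrak{p}_{0,\infty}$, the equivariance of incidence, and the passage from vertices to edges, is routine bookkeeping within the framework of \S\ref{subsection BTT}.
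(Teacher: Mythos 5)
Your reduction to the density of $\mathrm{H}_D\cdot 0$ in $K$ contains a genuine gap: the claimed equivariance $0\in h^{-1}B\Leftrightarrow h\cdot 0\in B$ is false. The bijection between vertices and closed balls is \emph{not} equivariant for the set-theoretic Möbius action on subsets of $K$; the relations used in this very section already show this, e.g. $I\cdot B_0^{|p|}=B_0^{|-p|}$ for $I=\sbmattrix 0110$, where the vertex $I\cdot B_0^{|p|}$ still contains $0$ although $I\cdot 0=\infty$. What is true is that $h$ carries $\mathfrak{p}_{0,\infty}$ to the geodesic $\mathfrak{p}_{h\cdot 0,\,h\cdot\infty}$, and the vertex set of that geodesic is \emph{not} the set of all balls containing $h\cdot 0$: it consists only of the balls containing exactly one of $h\cdot 0$, $h\cdot\infty$, together with the smallest ball containing both. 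Concretely, take $h=\sigma_f=\sbmattrix 10{-tf}1\in\mathrm{H}_D$: it fixes $0$, so $h\cdot 0=0\in B_0^{|r|}$ for every $r$, yet $h$ maps $\mathfrak{p}_{0,\infty}$ to $\mathfrak{p}_{0,-1/(tf)}$, which omits every ball $B_0^{|r|}$ with $r<\deg(tf)$. Thus "$h\cdot 0\in B$ for some $h$" does not imply $B\in\mathrm{H}_D\cdot\mathrm{V}(\mathfrak{p}_{0,\infty})$, and the lemma does not reduce to density of the orbit.

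To repair your route you would need a quantitative statement, not a qualitative one: for $B=B_a^{|r|}$ you must produce $h=\sbmattrix abcd\in\mathrm{H}_D$ with $\nu(h\cdot 0-a)\geq r$ \emph{and} $\nu(h\cdot 0-h\cdot\infty)=\deg(c)+\deg(d)\leq r$, i.e. a good rational approximation $b/d$ of $a$ with denominator of controlled size relative to the quality of approximation — a Dirichlet/continued-fraction type bound, which your weak-approximation argument does not supply (it gives no control on $\deg(c)+\deg(d)$). Your density computation itself (solving $ad-bc=1$ with $c\equiv 0\ (\mathrm{mod}\ t)$ via CRT, then weak approximation) is correct but strictly weaker than what is needed. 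The paper's proof sidesteps all of this by a descent: assuming a vertex outside the orbit exists, it takes one at minimal distance from $\mathfrak{p}_{0,\infty}$, reduces to distance one, writes it as $B_s^{|\nu(s)+1|}$, and moves it into $\mathfrak{p}_{0,\infty}$ explicitly with $\tau_{f_0}$ (when $\nu(s)\leq 0$) or $\sigma_{f_0}$ (when $\nu(s)\geq 1$), obtaining a contradiction; the edge statement then follows because the action is simplicial.
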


\begin{proof}
Firstly, we prove $\mathrm{H}_D \cdot \mathrm{V}(\mathfrak{p}_{0,\infty})=\mathrm{V}(\mathfrak{t})$. Assume, by contradiction, that there exists a vertex in $\mathfrak{t}$ that fails to belong to the same $\mathrm{H}_D$-orbit as any vertex in $\mathfrak{p}_{0,\infty}$. Let $B=B_s^{|r|}$ be a such vertex at the minimal distance from $\mathfrak{p}_{0,\infty}$. Since $B'=B_s^{|r-1|}$ is closer than $B$ to $\mathfrak{p}_{0,\infty}$, we have that $h \cdot B' \in \mathfrak{p}_{0,\infty}$, for some $h \in \mathrm{H}_D$. Thus, up to replace $B$ by $h \cdot B$, we assume that $B$ is at distance one from $\mathfrak{p}_{0,\infty}$. In other words, we assume that $B=B_s^{|\nu(s)+1|}$. For each $f \in R$, let us write:
$$\tau_f= \sbmattrix {1}{-f}{0}{1} , \quad I= \sbmattrix {0}{1}{1}{0}, \, \, \text{and} \quad \sigma_f= \sbmattrix {1}{0}{-tf}{1}= I \cdot \tau_{tf}  \cdot  I .$$
Note that $\tau_f, \sigma_f \in \mathrm{H}_D$, for any $f \in R$. 
In all that follows we extensively use the fact that, for any $p \in \mathbb{Z}$ and any pair $x,s \in k$, we have the following relations:
$$ \tau_x\cdot B_s^{|p|} = B_{s-x}^{|p|}, \quad I \cdot B_0^{|p|}= B_0^{|-p|}, \quad \mathrm{and} \quad I \cdot B_s^{|p|}= B_{1/s}^{|p-2\nu(s)|}, \, \, \mathrm{if} \, \, 0 \notin B_s^{|p|}.$$ 
Assume that $\nu(s) \leq 0$. Then, we can write $s=f_0+\epsilon_0$, where $f_0 \in R=\mathbb{F}[t]$ and $\nu(\epsilon_0) \geq 1 $. 
 Note that $\tau_{f_0} \cdot B_s^{|p|}=B_{\epsilon_0}^{|p|}$ belongs to $\mathrm{V}(\mathfrak{p}_{0,\infty})$ if and only if $\nu(\epsilon_0) \geq p$. In particular, we obtain $\tau_{f_0} \cdot B \in \mathrm{V}(\mathfrak{p}_{0,\infty})$. Thus, this case is impossible. 
Now, assume that $\nu(s) \geq 1$. Then, $1/s= tf_0 + \epsilon_0$, where $f_0 \in R$ and $\nu(\epsilon_0) \geq 1 $. 
Assuming $p > \nu(s)$, we get
$$\sigma_{f_0} \cdot B_s^{|p|}= (I \cdot \tau_{t f_0} \cdot I )\cdot B_s^{|p|}= (I \cdot \tau_{t f_0}) \cdot B_{1/s}^{|p-2\nu(s)|} = I \cdot B_{\epsilon_0}^{|p-2\nu(s)|}. $$
So, since $I \cdot \mathfrak{p}_{0,\infty}=\mathfrak{p}_{0,\infty}$, we conclude that $\sigma_{f_0} \cdot B_s^{|p|} \in \mathrm{V}(\mathfrak{p}_{0,\infty})$ precisely when $B_{\epsilon_0}^{|p-2\nu(s)|} \in \mathrm{V}(\mathfrak{p}_{0,\infty})$, or equivalently, when $\nu(\epsilon_0)+2\nu(s) \geq p $. Thus, we get that $\sigma_{f_0} \cdot B \in \mathrm{V}(\mathfrak{p}_{0, \infty})$, which is also absurd. Therefore, we conclude $\mathrm{H}_D \cdot \mathrm{V}(\mathfrak{p}_{0,\infty})=\mathrm{V}(\mathfrak{t})$. Moreover, since $\mathrm{H}_D$ acts simplicially on $\mathfrak{t}$, we also conclude $\mathrm{H}_D \cdot \mathrm{E}(\mathfrak{p}_{0,\infty})=\mathrm{E}(\mathfrak{t})$.
\end{proof}

For each $m \in \mathbb{Z}_{\geq 0}$, let us denote by $\mathbb{F}[t]_m$ the set of polynomials whose degree is less of equal than $m$. 

\begin{lemma}\label{lemma stab t}
The stabilizers in $\mathrm{H}_D$ of the vertex $v_n \in \mathrm{V}(\mathfrak{p}_{0, \infty})$ corresponding to the ball $B_{0}^{|n|}$ is 
$\sbmattrix {\mathbb{F}^*}{\mathbb{F}[t]_{-n}}{0}{\mathbb{F}^*}$, if $n <0$, $\sbmattrix {\mathbb{F}^*}{0}{0}{\mathbb{F}^*}$, if $n=0$, and $\sbmattrix {\mathbb{F}^*}{0}{t\mathbb{F}[t]_{n-1}}{\mathbb{F}^*}$, if $n>0$.
\end{lemma}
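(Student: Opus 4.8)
The plan is to work entirely inside the lattice model of $\mathfrak{t}$ from \S\ref{subsection BTT}, via the dictionary $B\mapsto\mathrm{End}_{\mathcal{O}}(\Lambda_B)$. Under it the vertex $v_n=B_0^{|n|}$ is the homothety class of $\Lambda_n=\mathcal{O}\binom{\pi^n}{0}\oplus\mathcal{O}\binom{0}{1}=\pi^n\mathcal{O}\oplus\mathcal{O}$, where $\pi=t^{-1}$ is the uniformizer at $P_\infty$. Writing $M_n=\diag(\pi^n,1)$ so that $\Lambda_n=M_n\mathcal{O}^2$, an element $g\in\mathrm{GL}_2(K)$ fixes $v_n$ exactly when $M_n^{-1}gM_n\in K^*\mathrm{GL}_2(\mathcal{O})$. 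The first step is to remove the scalar ambiguity: for $g\in\mathrm{H}_D\subseteq\mathrm{GL}_2(\mathbb{F}[t])$ one has $\det(g)\in\mathbb{F}^*\subseteq\mathcal{O}^*$, and since $\det(M_n^{-1}gM_n)=\det(g)$ is already a unit of $\mathcal{O}$, a valuation count on the determinant forces the homothety factor to be a unit. Hence the condition collapses to $M_n^{-1}gM_n\in\mathrm{GL}_2(\mathcal{O})$, i.e.\ to the entrywise integrality of
\[
M_n^{-1}\,\bmattrix{a}{b}{c}{d}\,M_n=\bmattrix{a}{t^{\,n}b}{t^{-n}c}{d}.
\]

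The core of the argument is the translation of integrality into degree bounds. Since $\nu(t)=-1$, a polynomial $f\in\mathbb{F}[t]$ satisfies $t^{m}f\in\mathcal{O}$ if and only if $\deg f\le -m$. Applying this to the two off-diagonal entries yields $\deg b\le -n$ and $\deg c\le n$, while the diagonal entries lie in $\mathbb{F}[t]\cap\mathcal{O}=\mathbb{F}$; the determinant condition $ad-bc\in\mathbb{F}^*$ then upgrades $a,d$ to $\mathbb{F}^*$. It remains to feed in the single defining congruence of $\mathrm{H}_D$, namely $c\in I_D=\mathfrak{L}^{-D}(U_0)=t\mathbb{F}[t]$, which is precisely what breaks the symmetry between the two off-diagonal entries.

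Reading off the sign of $n$ then gives the three cases. For $n<0$ the bound $\deg c\le n<0$ forces $c=0$, while $\deg b\le -n$ leaves $b\in\mathbb{F}[t]_{-n}$, giving $\sbmattrix{\mathbb{F}^*}{\mathbb{F}[t]_{-n}}{0}{\mathbb{F}^*}$; for $n>0$ the bound $\deg b\le -n<0$ forces $b=0$, while $\deg c\le n$ together with $c\in t\mathbb{F}[t]$ gives $c\in t\mathbb{F}[t]_{n-1}$, producing $\sbmattrix{\mathbb{F}^*}{0}{t\mathbb{F}[t]_{n-1}}{\mathbb{F}^*}$; and the degenerate case $n=0$, where $\Lambda_0=\mathcal{O}^2$ and the stabilizer reduces to $\mathrm{H}_D\cap\mathrm{GL}_2(\mathcal{O})=\mathrm{H}_D\cap\mathrm{GL}_2(\mathbb{F})$, follows from the same computation. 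I expect the only delicate point to be bookkeeping rather than ideas: fixing the normalization of the ball-to-lattice correspondence so that $v_n$ really is $\pi^n\mathcal{O}\oplus\mathcal{O}$ (rather than its coordinate-swap $\mathcal{O}\oplus\pi^n\mathcal{O}$, which would flip the two triangular shapes), and then tracking the opposite integrality inequalities $\deg b\le -n$ and $\deg c\le n$ against the congruence $c\in t\mathbb{F}[t]$ without sign slips.
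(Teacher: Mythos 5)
Your argument is correct for $n\neq 0$ and follows essentially the same route as the paper: the paper identifies $\mathrm{Stab}_{\mathrm{GL}_2(K)}(v_n)$ with $K^{*}\sbmattrix{\mathcal{O}}{\pi^{n}\mathcal{O}}{\pi^{-n}\mathcal{O}}{\mathcal{O}}^{*}$ (which is exactly your $K^{*}M_n\mathrm{GL}_2(\mathcal{O})M_n^{-1}$), uses $\det(\mathrm{H}_D)=\mathbb{F}^{*}$ to absorb the homothety factor, and leaves the entrywise degree count as a ``straightforward computation''; you have simply written that computation out, with the correct normalization of the ball-to-lattice dictionary, so the $n<0$ and $n>0$ cases are fine.

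The one point that needs attention is $n=0$. Your identification $\mathrm{Stab}_{\mathrm{H}_D}(v_0)=\mathrm{H}_D\cap\mathrm{GL}_2(\mathcal{O})$ is right, but carrying out ``the same computation'' there gives $a,b,d\in\mathbb{F}[t]\cap\mathcal{O}=\mathbb{F}$ and $c\in t\mathbb{F}[t]\cap\mathcal{O}=\{0\}$, i.e.\ the full upper-triangular group $\sbmattrix{\mathbb{F}^{*}}{\mathbb{F}}{0}{\mathbb{F}^{*}}$, not the diagonal group asserted in the statement: the congruence defining $\mathrm{H}_D$ constrains only the lower-left entry, so nothing kills a constant upper-right entry. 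Concretely, $\sbmattrix{1}{1}{0}{1}$ lies in $\mathrm{H}_D$ and fixes $B_0^{|0|}$ (it sends it to $B_1^{|0|}=B_0^{|0|}$, since $|1|\leq|\pi^0|$), yet it is not diagonal. So the phrase ``follows from the same computation'' does not establish the stated $n=0$ case; what the computation actually shows is that the $n=0$ entry should read $\sbmattrix{\mathbb{F}^{*}}{\mathbb{F}}{0}{\mathbb{F}^{*}}$ (the value of the $n<0$ formula at $n=0$, not of the $n>0$ one). You should either record this explicitly and flag the discrepancy with the lemma as written, or explain why those extra unipotent elements would be excluded --- and they cannot be.
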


\begin{proof} 
By definition, we have $\mathrm{Stab}_{\mathrm{H}_D}(v_n)= \mathrm{Stab}_{\mathrm{GL}_2(K)}(v_n) \cap \mathrm{H}_D$, where the stabilizer in $\mathrm{GL}_2(K)$ of $v_n$ equals to $K^{*}\sbmattrix {\oink}{\pi^{n} \oink}{\pi^{-n} \oink}{\oink} ^{*}$. Since $\det(\mathrm{H}_D)=\mathbb{F}^{*}$, we have $\mathrm{Stab}_{\mathrm{H}_D}(v_n)= \sbmattrix {\oink}{\pi^{n} \oink}{\pi^{-n} \oink}{\oink} ^{*} \cap \mathrm{H}_D$. Then, the result follows from a straightforward computation. 
\end{proof}

\begin{prop}\label{teo quot t} If $\mathcal{C}=\mathbb{P}^1_\mathbb{F}$, $D=P$ is a closed point and $\deg(P_{\infty})=\deg(P)=1$, then
the doubly infinity ray $\mathfrak{p}_{0,\infty}$ is isomorphic to the quotient graph $\mathfrak{t}_D=\mathrm{H}_D\backslash \mathfrak{t}$.
\end{prop}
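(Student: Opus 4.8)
The plan is to show that the restriction to $\mathfrak{p}_{0,\infty}$ of the canonical projection $\pi:\mathfrak{t}\to\mathfrak{t}_D=\mathrm{H}_D\backslash\mathfrak{t}$ is an isomorphism of graphs. Lemma \ref{lemma quot t} already provides surjectivity on both vertices and edges, so the entire proof reduces to injectivity. Since $\mathfrak{p}_{0,\infty}$ is a line, whose edges are determined by their pairs of endpoints, it suffices to prove that the vertices $v_n$ (the balls $B_0^{|n|}$), for $n\in\mathbb{Z}$, lie in pairwise distinct $\mathrm{H}_D$-orbits: vertex-injectivity then forces edge-injectivity and, in particular, forbids any inversion of an edge of $\mathfrak{p}_{0,\infty}$, so that $\pi|_{\mathfrak{p}_{0,\infty}}$ is a simplicial bijection and hence an isomorphism.

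To separate the orbits I would combine two invariants. First, the \emph{type}: every $g\in\mathrm{H}_D\subset\mathrm{GL}_2(R)$ satisfies $\det(g)\in R^{*}=\mathbb{F}^{*}$, so $\nu(\det g)=0$ and $g$ preserves the natural bipartition of the tree. Consequently $v_n$ and $v_m$ can be $\mathrm{H}_D$-equivalent only if $n\equiv m \pmod 2$. Second, the \emph{stabilizer cardinality}: if $h\cdot v_n=v_m$ then $\mathrm{Stab}_{\mathrm{H}_D}(v_m)=h\,\mathrm{Stab}_{\mathrm{H}_D}(v_n)\,h^{-1}$, so these groups have equal order. Reading off the three cases of Lemma \ref{lemma stab t}, one finds $|\mathrm{Stab}_{\mathrm{H}_D}(v_n)|=(q-1)^2 q^{f(n)}$, where $f(n)=n$ for $n>0$, $f(0)=0$, and $f(n)=1-n$ for $n<0$.

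Now I would put the two invariants together. Fix a parity class. For $n>0$ the exponent $f(n)=n$ runs through the positive integers of that parity, while for $n<0$ the exponent $f(n)=1-n$ runs through the positive integers of the \emph{opposite} parity, and $n=0$ contributes the isolated value $0$. Hence $f$ is injective on each parity class. Therefore $v_n\sim_{\mathrm{H}_D}v_m$ implies both $n\equiv m\pmod 2$ and $f(n)=f(m)$, which together force $n=m$. This establishes the required vertex-injectivity, and together with Lemma \ref{lemma quot t} identifies $\mathfrak{t}_D$ with $\mathfrak{p}_{0,\infty}$.

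The main obstacle is precisely the point where the two invariants must act jointly: the stabilizer cardinality does \emph{not} by itself distinguish $v_{-(k-1)}$ from $v_{k}$, since $f(-(k-1))=k=f(k)$ for every $k\geq 2$. It is exactly the type invariant, together with the observation that $-(k-1)$ and $k$ differ by the odd number $2k-1$ and hence have opposite parity, that breaks this ambiguity. Once vertex-injectivity is secured, the passage to edges and the exclusion of inversions are routine, as indicated above, so no further genuine difficulty remains.
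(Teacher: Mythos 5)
Your proof is correct and follows essentially the same route as the paper: surjectivity from Lemma \ref{lemma quot t}, then comparison of stabilizer orders via Lemma \ref{lemma stab t} to reduce to $n=m$ or $n=1-m$, and finally a determinant-valuation argument to exclude the latter. Your phrasing of that last step via type (bipartition) preservation is just a repackaging of the paper's observation that $\det(h\tau)$ would have odd valuation while the stabilizer $K^{*}\mathfrak{D}_n^{*}$ only admits determinants in $K^{*2}\mathcal{O}^{*}$.
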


\begin{proof}
Firstly, we claim that $\mathrm{V}(\mathfrak{p}_{0,\infty})$ contains one and only one representative of each $\mathrm{H}_D$-orbit in $\mathrm{V}(\mathfrak{t})$. It follows from Lemma \ref{lemma quot t} that, in order to prove this claim, we just have to prove that any two different vertices in $\mathfrak{p}_{0,\infty}$ belong to different $\mathrm{H}_D$-orbits. Let $v_n=B_0^{|n|}$ and $v_m=B_0^{|m|}$ be two vertices in $\mathrm{V}(\mathfrak{p}_{0,\infty})$, which belong in the same $\mathrm{H}_D$-orbit. Then, the stabilizers $S_n, S_m \subseteq \mathrm{H}_D$ of $v_n$ and $v_m$ are conjugates, say $S_n= h S_m h^{-1}$, for some $h \in \mathrm{H}_D$. In particular, they have the same number of elements. Thus, it follows from Lemma \ref{lemma stab t} that $n=m$ or $n=-m+1$. If the second alternative holds, then $h\tau \in \mathrm{Stab}_{\mathrm{GL}_2(K)}(v_n)$, where $\tau=\sbmattrix {\pi^{-2m+1}}{0}{0}{1}$. Since Lemma \ref{lemma normalizer of maximal order} implies that $\mathrm{Stab}_{\mathrm{GL}_2(K)}(v_n)=K^{*}\mathfrak{D}_{n}^*$, where $\mathfrak{D}_n$ is the maximal order corresponding to $v_n$, we have $\det(h\tau)$ belongs to $K^{*2}\mathcal{O}^{*}$, which is impossible since $\det(\tau) \in \pi K^{*2}\mathcal{O}^{*}$ and $\det(h) \in \mathbb{F}^{*}$. Then $n=m$, which proves the claim.

Now, we claim that $\mathrm{E}(\mathfrak{p}_{0,\infty})$ contains exactly one representative of each $\mathrm{H}_D$-orbit in $\mathrm{E}(\mathfrak{t})$. Note that, since $\mathfrak{p}_{0,\infty}$ does not contain two vertices in the same $\mathrm{H}_D$-orbit, it also fails to have two edges in the same $\mathrm{H}_D$-orbit. Thus, the latter claim follows from Lemma \ref{lemma quot t}.
\end{proof}


\begin{prop}\label{teo group t}
The group $\mathrm{H}_D$ of invertible matrices in $\sbmattrix {\mathbb{F}[t]}{\mathbb{F}[t]}{t\mathbb{F}[t]}{\mathbb{F}[t]}$ is isomorphic to the sum of $\sbmattrix {\mathbb{F}^{*}}{\mathbb{F}[t]}{0}{\mathbb{F}^{*}}$ and $\sbmattrix {\mathbb{F}^{*}}{0}{t\mathbb{F}[t]}{\mathbb{F}^{*}}$ amalgamated along the diagonal group $\mathbb{F}^{*} \times \mathbb{F}^{*}$.
\end{prop}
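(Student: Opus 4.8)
The plan is to read off the decomposition directly from the Bass--Serre machinery of \S\ref{subsection recalls on BS Theory}, applied to the action of $\mathrm{H}_D$ on $\mathfrak{t}$. By Proposition \ref{teo quot t} the quotient $\mathfrak{t}_D=\mathrm{H}_D\backslash\mathfrak{t}$ is the doubly infinite line $\mathfrak{p}_{0,\infty}$, whose vertices are the classes $\overline{v_n}$ of the balls $B_0^{|n|}$, $n\in\mathbb{Z}$. Since this line is itself a tree, I would take it as the maximal tree $T=\mathfrak{t}_D$. Then $\mathrm{E}(\mathfrak{t}_D)\smallsetminus\mathrm{E}(T)=\emptyset$, so no stable letters $h_y$ survive in $\pi_1(\mathfrak{h}_D,\mathfrak{t}_D)$, and by \cite[Chapter~I, \S 5, Theorem 13]{S} the group $\mathrm{H}_D\cong\pi_1(\mathfrak{h}_D,\mathfrak{t}_D)$ is simply the \emph{tree of groups} carried by $\mathfrak{p}_{0,\infty}$, i.e.\ the iterated amalgam of the vertex stabilizers along their edge groups.

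Next I would compute these stabilizers from Lemma \ref{lemma stab t}. Writing $G_n=\mathrm{Stab}_{\mathrm{H}_D}(v_n)$, one has $G_n=\sbmattrix{\mathbb{F}^*}{\mathbb{F}[t]_{-n}}{0}{\mathbb{F}^*}$ for $n<0$, $G_0=\sbmattrix{\mathbb{F}^*}{0}{0}{\mathbb{F}^*}$, and $G_n=\sbmattrix{\mathbb{F}^*}{0}{t\mathbb{F}[t]_{n-1}}{\mathbb{F}^*}$ for $n>0$. In particular the chain is nested on each side: $G_0\subseteq G_{-1}\subseteq G_{-2}\subseteq\cdots$ toward $-\infty$ and $G_0\subseteq G_1\subseteq G_2\subseteq\cdots$ toward $+\infty$. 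Hence each edge group (the intersection of the two adjacent vertex groups) coincides with the vertex group nearer to $v_0$, and the fundamental group of each half-line is exactly the increasing union of its vertex stabilizers. Using $\bigcup_{m\geq 0}\mathbb{F}[t]_m=\mathbb{F}[t]$ and $\bigcup_{m\geq 0}t\mathbb{F}[t]_m=t\mathbb{F}[t]$, these two direct limits are $\mathcal{P}_\infty=\varinjlim_{n\to-\infty}G_n=\sbmattrix{\mathbb{F}^*}{\mathbb{F}[t]}{0}{\mathbb{F}^*}$ and $\mathcal{P}_0=\varinjlim_{n\to+\infty}G_n=\sbmattrix{\mathbb{F}^*}{0}{t\mathbb{F}[t]}{\mathbb{F}^*}$, matching the two (split) cusps of $\mathfrak{t}_D$.

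Finally, the two half-lines meet only at the central vertex $v_0$, whose stabilizer $G_0$ is the diagonal group $\mathbb{F}^*\times\mathbb{F}^*$; this is simultaneously the boundary subgroup $\mathcal{B}_\sigma$ attached to each ray and the whole of $H=\pi_1(\mathfrak{h}_D|_{Y})$ for the finite part $Y=\{v_0\}$. Amalgamating $\mathcal{P}_\infty$ and $\mathcal{P}_0$ with the middle factor $H=\mathbb{F}^*\times\mathbb{F}^*$ collapses, since $H$ equals both edge groups, to the single amalgamated product $\mathcal{P}_\infty *_{\mathbb{F}^*\times\mathbb{F}^*}\mathcal{P}_0$ along the diagonal, which is precisely the asserted decomposition. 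This can also be viewed as the special case $r=1$, $n_1=1$ of Theorem \ref{teo grup}, in which $g(2)$ is trivial and both cusps are split.

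I expect the only genuine verifications to be bookkeeping: confirming the exhaustion of the unipotent parts by $\bigcup_m\mathbb{F}[t]_m$ and $\bigcup_m t\mathbb{F}[t]_m$, and checking that $\mathcal{P}_\infty\cap\mathcal{P}_0=\mathbb{F}^*\times\mathbb{F}^*$ inside $\mathrm{H}_D$ so that the amalgamated subgroup is indeed the diagonal. The one conceptual point worth stating explicitly is that taking $T$ to be the entire (tree) quotient graph removes any HNN contribution, so that the Bass--Serre fundamental group reduces to a pure amalgam; everything else follows from Proposition \ref{teo quot t} and Lemma \ref{lemma stab t}.
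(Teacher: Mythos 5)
Your proposal is correct and follows essentially the same route as the paper: both arguments use Proposition \ref{teo quot t} to identify $\mathfrak{t}_D$ with the doubly infinite line, split it at $v_0$ into two rays whose fundamental groups are the increasing unions of the nested stabilizers from Lemma \ref{lemma stab t}, and amalgamate along $\mathrm{Stab}_{\mathrm{H}_D}(v_0)=\mathbb{F}^*\times\mathbb{F}^*$. Your explicit remark that taking $T$ equal to the whole (tree) quotient kills all stable letters is just a rephrasing of the paper's appeal to simple connectedness of the rays, and your labeling of which half-line yields the upper- versus lower-triangular factor is in fact more careful than the paper's own.
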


\begin{proof}
Let $\mathfrak{r}_1$ (resp. $\mathfrak{r}_2)$ be the ray whose vertex set is, in the notations of Lemma \ref{lemma stab t}, the set $\lbrace v_n: n\geq 0\rbrace$ (resp. $\lbrace v_n: n\leq 0\rbrace$). Let $\mathcal{P}_1$ (resp. $\mathcal{P}_2)$ be the fundamental group of the graph of groups defined on $\mathfrak{r}_1$ (resp. $\mathfrak{r}_2$) by restriction. Denote by $\mathcal{B}_0$ the fundamental group of the graph of groups defined on the intersection $\mathfrak{r}_1 \cap \mathfrak{r}_2$. Since $\mathfrak{p}_{0, \infty}=\mathfrak{r}_1 \cup \mathfrak{r}_2$, it is not hard to see that the fundamental group defined on $\mathfrak{p}_{0, \infty}$, i.e. $\mathrm{H}_D$, is the sum of $\mathcal{P}_1$ with $\mathcal{P}_2$ amalgamated along $\mathcal{B}_0$. Moreover, since $\mathfrak{r}_1 \cap \mathfrak{r}_2= \lbrace v_0 \rbrace$, we have that $\mathcal{B}_0$ is the stabilizer of $v_0$, which coincides with a diagonal group isomorphic to $\mathbb{F}^{*} \times \mathbb{F}^{*}$. Since $\mathfrak{r}_1$ is simply connected and Lemma \ref{lemma stab t} implies that $\mathrm{Stab}_{\mathrm{H}_D}(v_n) \subset \mathrm{Stab}_{\mathrm{H}_D}(v_{n+1})$, for any $n \geq 0$, we have that $\mathcal{P}_1$ the union of the preceding vertex stabilizers. Thus, Lemma \ref{lemma stab t} implies that $\mathcal{P}_1$ equals $\sbmattrix {\mathbb{F}^{*}}{\mathbb{F}[t]}{0}{\mathbb{F}^{*}}$ . Analogously, $\mathcal{P}_2$ equals $\sbmattrix {\mathbb{F}^{*}}{0}{t\mathbb{F}[t]}{\mathbb{F}^{*}}$, whence the result follows.
\end{proof}

\begin{prop}\label{teo group ab t}
With the same notation and hypotheses of Theorem \ref{teo group t}, the group $\mathrm{H}_D^{\mathrm{ab}}$ is isomorphic to $\mathbb{F}^* \times \mathbb{F}^*$.
\end{prop}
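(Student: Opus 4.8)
The plan is to read off $\mathrm{H}_D^{\mathrm{ab}}$ directly from the amalgam decomposition of Proposition \ref{teo group t}, namely $\mathrm{H}_D \cong \mathcal{P}_1 *_{\mathcal{B}_0} \mathcal{P}_2$ with $\mathcal{B}_0 \cong \mathbb{F}^* \times \mathbb{F}^*$ the diagonal torus. The guiding principle is that abelianization, being a left adjoint, preserves pushouts; since the amalgam is exactly the pushout of $\mathcal{P}_1 \leftarrow \mathcal{B}_0 \rightarrow \mathcal{P}_2$ in the category of groups, passing to abelianizations yields a pushout of abelian groups
$$\mathrm{H}_D^{\mathrm{ab}} \cong \big(\mathcal{P}_1^{\mathrm{ab}} \oplus \mathcal{P}_2^{\mathrm{ab}}\big)\big/\big\langle (\iota_1(c), -\iota_2(c)) : c \in \mathcal{B}_0^{\mathrm{ab}} \big\rangle .$$
Thus the task reduces to computing the three abelianizations and the two structural maps $\iota_1, \iota_2$.

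First I would compute $\mathcal{P}_1^{\mathrm{ab}}$. Writing $\mathcal{P}_1 = \sbmattrix{\mathbb{F}^*}{\mathbb{F}[t]}{0}{\mathbb{F}^*} = (\mathbb{F}^* \times \mathbb{F}^*) \ltimes \mathbb{F}[t]$, the decisive computation is the commutator identity
$$\Big[ \sbmattrix{\alpha}{0}{0}{\beta}, \sbmattrix{1}{b}{0}{1} \Big] = \sbmattrix{1}{(\alpha\beta^{-1}-1)b}{0}{1}.$$
Choosing $\alpha \neq \beta$ makes $\alpha\beta^{-1}-1$ a unit of $\mathbb{F}$, so as $b$ ranges over $\mathbb{F}[t]$ these commutators sweep out the entire additive unipotent radical $\sbmattrix{1}{\mathbb{F}[t]}{0}{1}$. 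Hence this radical lies in the commutator subgroup, and $\mathcal{P}_1^{\mathrm{ab}} \cong \mathbb{F}^* \times \mathbb{F}^*$, with the projection map being restriction to the diagonal torus. The identical argument applied to $\mathcal{P}_2 = \sbmattrix{\mathbb{F}^*}{0}{t\mathbb{F}[t]}{\mathbb{F}^*}$ gives $\mathcal{P}_2^{\mathrm{ab}} \cong \mathbb{F}^* \times \mathbb{F}^*$.

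Finally I would assemble the pushout. By construction $\mathcal{B}_0$ is the common diagonal torus, and the abelianization map of each $\mathcal{P}_i$ sends $\sbmattrix{\alpha}{\ast}{\ast}{\beta} \mapsto (\alpha,\beta)$; consequently each $\iota_i \colon \mathcal{B}_0^{\mathrm{ab}} = \mathbb{F}^* \times \mathbb{F}^* \to \mathcal{P}_i^{\mathrm{ab}}$ is the identity isomorphism. The pushout of two copies of an isomorphism $C \xrightarrow{\sim} C$ collapses to $C$ itself, since the fold map $C \oplus C \to C$, $(x,y)\mapsto x+y$ (in additive notation), is surjective with kernel exactly $\{(c,-c)\}$; therefore $\mathrm{H}_D^{\mathrm{ab}} \cong \mathbb{F}^* \times \mathbb{F}^*$. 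The single delicate point in this plan is the commutator step: it hinges on the torus acting with a nontrivial weight on the unipotent part, i.e.\ on producing $\alpha,\beta \in \mathbb{F}^*$ with $\alpha\beta^{-1}-1$ invertible, so that the whole additive ideal is annihilated in the abelianization rather than merely the image of multiplication by $\alpha\beta^{-1}-1$.
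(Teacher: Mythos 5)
Your proof is correct and in substance coincides with the paper's: both arguments reduce the computation to the fact that $\mathrm{H}_D^{\mathrm{ab}}$ is the cokernel of the map $\mathcal{B}_0^{\mathrm{ab}} \to \mathcal{P}_1^{\mathrm{ab}} \oplus \mathcal{P}_2^{\mathrm{ab}}$, $c \mapsto (\iota_1(c), -\iota_2(c))$, together with the claim that each induced map $\mathcal{B}_0 \to \mathcal{P}_i^{\mathrm{ab}}$ is an isomorphism. The packaging differs: the paper extracts the cokernel description from the low-degree end of the Mayer--Vietoris sequence in group homology for an amalgam (citing Hutchinson), whereas you obtain it from the purely categorical fact that abelianization, being a left adjoint, preserves pushouts --- a more elementary route that bypasses homology entirely. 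You also supply, via the commutator identity $\left[\sbmattrix{\alpha}{0}{0}{\beta}, \sbmattrix{1}{b}{0}{1}\right] = \sbmattrix{1}{(\alpha\beta^{-1}-1)b}{0}{1}$, the justification for $\mathcal{P}_i^{\mathrm{ab}} \cong \mathbb{F}^*\times\mathbb{F}^*$, a step the paper merely asserts. One remark on the ``delicate point'' you flag: producing $\alpha,\beta \in \mathbb{F}^*$ with $\alpha\beta^{-1}-1$ invertible requires $|\mathbb{F}| > 2$. For $\mathbb{F}=\mathbb{F}_2$ the tori are trivial, each $\mathcal{P}_i$ is an infinite elementary abelian $2$-group, and $\mathrm{H}_D^{\mathrm{ab}}$ is infinite, so the statement itself fails in that case; this caveat is shared by, and left implicit in, the paper's proof, so it is not a gap relative to the paper, but it is worth recording since the paper's examples do use $\mathbb{F}=\mathbb{F}_2$ elsewhere.
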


\begin{proof}
Let us write $\mathrm{H}_D$ as the sum of $\mathcal{P}_1= \sbmattrix {\mathbb{F}^{*}}{\mathbb{F}[t]}{0}{\mathbb{F}^{*}}$ and $\mathcal{P}_2
=\sbmattrix {\mathbb{F}^{*}}{0}{t\mathbb{F}[t]}{\mathbb{F}^{*}}$ amalgamated along the diagonal group $\mathcal{B}_0 \cong \mathbb{F}^* \times \mathbb{F}^*$. It follows from \cite[\S 3]{Hu} that, naturally defined from the preceding amalgamated sum, we have the following Mayer-Vietoris exact sequence
$$
  \cdots \xrightarrow{\delta} H_i(\mathcal{B}_0, \mathbb{Z}) \xrightarrow{\iota_*} H_i(\mathcal{P}_1, \mathbb{Z}) \oplus H_i(\mathcal{P}_2, \mathbb{Z}) \xrightarrow{d_*}  H_{i}(\mathrm{H}_D, \mathbb{Z}) \xrightarrow{\delta} \cdots,
$$
where any group acts trivially on $\mathbb{Z}$, $\delta$ is the level morphism, and the morphisms $\iota_*$ and $d_*$ are defined by $\iota(x)=(x,x)$ and $d(x,y)=x-y$, respectively. In particular, at level $i=1$, we obtain that $
 H_1(\mathcal{B}_0, \mathbb{Z}) \xrightarrow{\iota_*} H_1(\mathcal{P}_1, \mathbb{Z}) \oplus H_1(\mathcal{P}_2, \mathbb{Z}) \xrightarrow{d_*}  H_{1}(\mathrm{H}_D, \mathbb{Z}) \rightarrow 0
$ is exact. Thus, the morphism  $d_*:\mathcal{P}_1^{\mathrm{ab}} \oplus\mathcal{P}_2^{\mathrm{ab}} \to \mathrm{H}_D^{\mathrm{ab}}$ defined by $d_*(x,y)=x-y$ is surjective. Since the canonical projection restricts to an isomorphism between the group $\mathcal{B}_0$ and either $\mathcal{P}_1^{\mathrm{ab}}$ or $ \mathcal{P}_2^{\mathrm{ab}}$, the result follows.
\end{proof}


\section*{Acknowledgements} I wish to express my hearty thanks to my PhD advisors Luis Arenas-Carmona and Giancarlo Lucchini Arteche for several helpful remarks. I also want to express my gratitude with Anid-Conicyt by the Doctoral fellowship No $21180544$.

\scriptsize

$ $

{\sc Claudio Bravo}\\
  Universidad de Chile, Facultad de Ciencias, 
  \\Casilla 653, Santiago, Chile\\
  \email{claudio.bravo.c@ug.uchile.cl}

\end{document}